\documentclass{amsart}

\usepackage{a4}
\usepackage{graphicx}
\usepackage{verbatim}
\usepackage{amssymb} 
\usepackage{comment}
\usepackage{multirow}
\usepackage{dirtytalk}
\usepackage{imakeidx}
\usepackage{xcolor}
\usepackage{enumitem}

\usepackage[utf8]{inputenc}

\newcommand{\matrices}[2]{\mathrm{M}(#1\times #2,\mathbb{R})}

\newcommand{\altidentity}[1]{I_{n_{#1}}^{\pm}}

\newcommand{\boxstar}{%
  {\setlength{\fboxsep}{0.5pt}\fbox{\scriptsize \(*\)}}%
}

\newcommand{\boxstartwo}{%
  {\setlength{\fboxsep}{0.5pt}\fbox{\scriptsize \(2*\)}}%
}

\usepackage{multicol}
\usepackage{mathdots}

\newtheorem{theorem}{Theorem}[section]
\newtheorem*{theorem*}{Theorem}
\newtheorem{lemma}[theorem]{Lemma}

\newtheorem{example}[theorem]{Example}

\newtheorem{prop}[theorem]{Proposition}
\newtheorem{coro}[theorem]{Corollary}

\theoremstyle{definition}
\newtheorem{definition}[theorem]{Definition}

\theoremstyle{remark}
\newtheorem{remark}[theorem]{Remark}

\numberwithin{equation}{section}
\makeatletter
\newcommand{\leqnomode}{\tagsleft@true}
\makeatother

\newcommand{\abs}[1]{\lvert#1\rvert}

\makeindex
\begin{document}

\title[Moduli spaces of presymplectic forms]
{The moduli spaces of presymplectic forms on almost abelian Lie algebras}

\author{Luis Pedro Castellanos Moscoso}
\address{Osaka Central Advanced Mathematical Institute (OCAMI), Osaka Metropolitan University}
\email{caste3.1416@gmail.com}





\begin{abstract}
    We obtain necessary and sufficient conditions to determine the existence of presymplectic forms of a given rank on all almost abelian Lie algebras. We also study the moduli space of presymplectic forms (this is the set of all closed 2-forms of a given rank under a certain natural equivalence relation) on almost abelian Lie algebras. Most importantly, we show that for any almost abelian Lie algebra its moduli space of symplectic forms is finite. Moreover, we show that up to such natural equivalence all symplectic forms are permutations of a canonical 2-form. The important step in the proof is obtaining canonical representatives for a certain congruence of matrices, which is of some interest for matrix theory on its own. 
\end{abstract}

\maketitle



\section{Introduction}
A classical problem in differential geometry is to determine which geometric structures a given manifold admits and to describe their moduli up to some natural equivalence relation. In particular, this paper discusses left-invariant presymplectic forms in almost abelian Lie groups. This paper is a continuation of previous papers \cite{firstpaper} and  \cite{diagonal}. The results of this paper generalize the results in those papers. There is a small difference with the results in \cite{firstpaper}. In Proposition 5.6. of that paper it is shown that, up to the same natural equivalence relation considered later in this paper, the set of nondegenerate 2-forms for the couple of almost abelian Lie algebras considered there, is finite.  This is slightly stronger than Theorem \ref{main2} of this paper, when we restrict it to those two Lie algebras, because even though it also proves finiteness under the same equivalence relation, it requires the 2-forms to be closed. There are no caveats with respect to the results in the latter paper. 

In the period between the publication of those two previous papers and this one the study of geometric structures on almost abelian Lie algebras has been quite active, with several new results in different geometries. See for example \cite{metricsclassification1}, \cite{metricsclassification2}, \cite{symplecticlassification}, \cite{complexclassification}, \cite{khalerclassfication} and the references within these and the previous two articles mentioned earlier for other related recent and classic results. In particular, in \cite{symplecticlassification} the authors also study symplectic structures on almost abelian Lie algebras, they obtain a classification of the Lie algebras that admit symplectic structures up to isomorphism, this can be compared to our first main result (see below), but we generalize to presymplectic forms (see next section for the definition).  
The main results of this paper can be summarized in the following two informal statements. The first main theorem allows us to easily determine the existence of presymplectic forms of any rank on all almost abelian Lie algebras.
\begin{theorem*} [Main result 1]
    Let $\mathfrak{g}$ be an almost abelian Lie algebra.  There exists a presymplectic $2$-form of rank $R$ if \say{enough} eigenvalues of the matrix associated to $\mathfrak{g}$ come in positive-negative pairs. 
\end{theorem*}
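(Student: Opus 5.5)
We would make the informal statement precise and then prove it directly from the closedness condition. Write $\mathfrak{g}=\mathbb{R}e_0\ltimes_A\mathbb{R}^n$, where $A\in\matrices{n}{n}$ is the matrix of $\mathrm{ad}_{e_0}$ restricted to the abelian ideal $\mathfrak{h}=\mathbb{R}^n$. A $2$-form can be written as $\omega=e^0\wedge\alpha+\beta$ with $\alpha\in\mathfrak{h}^*$ and $\beta\in\Lambda^2\mathfrak{h}^*$; we identify $\beta$ with a skew matrix $B$ and $\alpha$ with a vector $a$. Since the only nonzero brackets are $[e_0,x]=Ax$ for $x\in\mathfrak{h}$, the condition $d\omega=0$ only involves triples $(e_0,x,y)$. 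It reduces to $\beta(Ax,y)+\beta(x,Ay)=0$, that is,
\begin{equation*}
A^TB+BA=0,
\end{equation*}
with no condition on $\alpha$. So the presymplectic forms are parametrised by pairs $(a,B)$ with $B$ skew and $A^TB+BA=0$.

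The rank is then read off from $a$ and $B$. If $a\in\operatorname{im}B$, we change basis by $e_0\mapsto e_0+v$ for a suitable $v\in\mathfrak{h}$, which reduces $\omega$ to $\beta$ restricted appropriately, and the rank is $\operatorname{rank}B$. Otherwise the rank is $\operatorname{rank}B+2$. Consequently, rank $R$ is attainable exactly when there is a skew $B$ solving the equation with $\operatorname{rank}B=R$. The other option is $\operatorname{rank}B=R-2$, with $a\notin\operatorname{im}B$; this is possible as soon as $\operatorname{rank}B<n$, since we may choose $a$ freely. In particular every even rank $R\le 2$ is always realised. Examples are $B=0$ with $a\neq 0$, which gives $e^0\wedge e^1$.

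The core step is to determine the possible ranks of skew solutions of $A^TB+BA=0$. Such a $B$ is exactly a skew bilinear form for which $A$ is skew-adjoint, so $B$ is an $A$-invariant element of $\Lambda^2(\mathbb{R}^n)^*$ under the derivation action. We decompose $\mathbb{C}^n$ into generalized eigenspaces $V_\lambda$. Then $B(V_\lambda,V_\mu)=0$ unless $\lambda+\mu=0$. Hence $B$ pairs $V_\lambda$ with $V_{-\lambda}$ for $\lambda\neq0$, and it restricts to $V_0$.

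For $\lambda\neq0$, such a $B$ corresponds to a linear map $V_\lambda\to V_{-\lambda}^*$ intertwining $A$ with $-A^T$. We would apply the Jordan block structure (Sylvester equation $XJ_\lambda+J_{-\lambda}^{T}X=0$). This shows the maximal rank on $V_\lambda\oplus V_{-\lambda}$ is $2\sum_k\min$ over the matched Jordan block sizes. That is, it equals twice the dimension of the largest common part of the Jordan structures of $A|_{V_\lambda}$ and $-A|_{V_{-\lambda}}$. Every smaller even rank is also attained, by truncating.

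On $V_0$ we need the maximal rank of a skew form making a nilpotent $N$ skew-adjoint. For pairs of equal-size nilpotent Jordan blocks one gets full rank. For a single block of size $k$, the Toeplitz-type solutions allow rank $k$ when $k$ is even and $k-1$ when $k$ is odd. In general, blocks of sizes $k,l$ can be paired with rank $2\min(k,l)$. The maximum on $V_0$ is therefore a combinatorial optimum over such pairings. Finally, we would take real forms: complex-conjugate eigenvalue pairs are handled by pairing $V_\lambda\oplus V_{\bar\lambda}$ with $V_{-\lambda}\oplus V_{-\bar\lambda}$. The statement is then that rank $R$ exists iff $R\le R_{\max}+2$ (with the caveat about $R_{\max}=n$), where $R_{\max}$ is the sum of these block contributions; these are the ``enough'' eigenvalues coming in $\pm$ pairs.

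The main obstacle will be the $V_0$ (and more generally equal-eigenvalue, unequal-block) analysis, i.e. computing the exact maximal rank of skew $B$ with $N^TB+BN=0$ for an arbitrary nilpotent Jordan structure. Here we would first solve the Sylvester equation blockwise to obtain the upper-triangular Hankel/Toeplitz form of each block of $B$. Then we would bound the rank using the kernel filtration $\ker N^j$, and show that an explicit pairing of blocks achieves the bound.
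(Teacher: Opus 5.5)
Your outline is sound and reaches the paper's characterization (Theorem~\ref{Main1}). After a common start, though, it takes a genuinely different route.

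The common start is this. The reduction to $A^TB+BA=0$ with $a$ free, the dichotomy $\operatorname{rank}\omega\in\{\operatorname{rank}B,\operatorname{rank}B+2\}$, and the vanishing of $B$ between generalized eigenspaces with $\lambda+\mu\neq0$ are exactly Proposition~\ref{conditionsforexistenceclosedform} and conditions \eqref{conditionblockrealankel}, \eqref{conditioneigenvaluescomplexhankel}.

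For the rank bounds the paper works by explicit congruence. It uses the elimination Lemmas~\ref{step1}, \ref{firstlemma}, \ref{secondlemma} to replace any solution by one of the same rank in the ``simple form'' of Lemma~\ref{simpleformofsolutions}, with one shifted alternating identity per block row. It then applies rank-nondecreasing combinatorial moves and a rearrangement argument for the Manhattan distance (Lemmas~\ref{ranksforzeroeigenvaluesreal}, \ref{ranksfornonzeroeigenvaluesreal}).

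You bound the rank intrinsically instead. The image of the intertwiner $V_\lambda\to V_{-\lambda}^*$ is both a quotient of one module and a submodule of the other, so its Jordan type is contained in both partitions. With both lists sorted decreasingly, this gives $2\sum_i\min(n^+_{\mu,i},n^-_{\mu,i})=N^+_\mu+N^-_\mu-d_M(\vec L_{\lambda_\mu},\vec L_{-\lambda_\mu})$ at once. Your route is shorter and avoids the normal-form machinery entirely. The paper needs that machinery anyway for Theorem~\ref{main2}, and in exchange it produces explicit representatives of every rank.

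Two points still have to be written out.

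First, make the $V_0$ optimum explicit. Pairing unequal odd sizes $k\neq l$ never helps, since $2\min(k,l)\le(k-1)+(l-1)$. So the optimum is $N^0_{\mathbb R}-\#\{\ell\ \text{odd}: N_{\mathcal J_{N_{\mathbb R}}}(\ell,0)\ \text{odd}\}=\mathcal R_{\mathcal J^0_{\mathbb R}}$. Your kernel-filtration idea does give the matching upper bound:
\begin{itemize}
\item The radical of $B|_{V_0}$ is $N$-invariant.
\item $\bar V:=V_0/\operatorname{rad}(B|_{V_0})$ carries a nondegenerate skew form for which the induced $\bar N$ is skew-adjoint. Hence odd-size blocks of $\bar N$ occur with even multiplicity (the standard constraint for nilpotents in $\mathfrak{sp}$).
\item Let $\mu'_j=\operatorname{rank}N^{j-1}-\operatorname{rank}N^{j}$ and let $\kappa'_j$ be its analogue for $\bar N$. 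Then $\kappa'_j\le\mu'_j$ and $\kappa'_{2i-1}\equiv\kappa'_{2i}\pmod 2$.
\item Each odd size of odd multiplicity, i.e.\ each $i$ with $\mu'_{2i-1}\not\equiv\mu'_{2i}\pmod 2$, therefore costs at least one dimension.
\end{itemize}

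Second, your claim that truncation attains every smaller even rank of $B$ is false after realification. A pair $\pm\lambda$ with $\lambda\notin\mathbb R$ and $\operatorname{Re}\lambda\neq0$ contributes rank $4\operatorname{rank}X_\lambda$. For instance, $\mathcal C_1(a,b)\oplus\mathcal C_1(-a,b)$ with $a\neq0$ allows only $\operatorname{rank}B\in\{0,4\}$, matching $R_{\mathfrak J^{\neq0}_\mu}\subset4\mathbb Z$ in the paper. Your final statement still holds, but only for this reason: writing $S$ for the set of attainable ranks of $B$, the set $S\cup(S+2)$ of presymplectic ranks has no gaps. You should argue this explicitly.
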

The second main result is obtained when we study the moduli space of presymplectic forms (see next section for the precise definition) on almost abelian Lie algebras. For the symplectic case we obtain a particularly nice description: the moduli space of symplectic forms on any almost abelian Lie algebra is finite. This is our second main result. 
\begin{theorem*}  [Main result 2]
   Let $\mathfrak{g}$ be an almost abelian Lie algebra. Then the moduli space of symplectic forms on $\mathfrak{g}$  is finite.  Moreover, any symplectic form can be identified with a permutation of a canonical $2$-form. 
\end{theorem*}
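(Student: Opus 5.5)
Write $\mathfrak{g}=\mathbb{R}e_0\ltimes_A\mathbb{R}^{n}$ with $A=\mathrm{ad}(e_0)|_{\mathbb{R}^n}\in\matrices{n}{n}$, so $\dim\mathfrak{g}=n+1$ is even and $n$ is odd. Any $2$-form decomposes as $\omega=e^0\wedge\alpha+\beta$, with $\alpha\in(\mathbb{R}^n)^*$ and $\beta\in\Lambda^2(\mathbb{R}^n)^*$. Since $d e^0=0$ and $d$ on $(\mathbb{R}^n)^*$ is $-e^0\wedge A^{T}(\cdot)$, closedness of $\omega$ reduces to $d\beta=0$. In matrix form, with $B$ the skew matrix of $\beta$, this reads
\[
A^{T}B+BA=0 .
\]
The relevant equivalence is $\omega\sim\lambda\,\phi^{*}\omega$ for $\phi\in\mathrm{Aut}(\mathfrak{g})$ and $\lambda\neq 0$. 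The automorphisms of $\mathfrak{g}$ (at least those preserving the ideal $\mathbb{R}^n$) have the form $e_0\mapsto c\,e_0+v$, $x\mapsto Px$ with $PAP^{-1}=cA$. The plan is to reduce the moduli question to the classification of such skew $B$ under the congruence $B\mapsto P^{T}BP$, with $P$ ranging over the (conformal) centralizer of $A$.

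\textbf{Step 1: dispose of $\alpha$.} Nondegeneracy of $\omega$ forces $\operatorname{rank}B=n-1$ (it is odd-dimensional, so rank $n-1$ at most, and exactly $n-1$ for nondegeneracy). Then $\alpha$ must be nonzero on $\ker B$. Using automorphisms with $v\neq 0$, which act by $\alpha\mapsto c\alpha+\iota_v\beta$, and rescalings, I would normalize $\alpha$ to a fixed functional dual to a chosen generator of $\ker B$. This leaves only the class of $B$ to study.

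\textbf{Step 2: decompose $B$ by eigenvalues.} Put $A$ in real Jordan form. The equation $A^{T}B=-BA$ says $B$ intertwines $A$ with $-A^{T}$, so $B$ pairs the generalized eigenspace of $\lambda$ with that of $-\lambda$. Hence $B$ is block-supported on blocks $(\lambda,-\lambda)$. Within a pair $\lambda\neq 0$, the block is an arbitrary solution of a Sylvester-type equation between $J_\lambda$ and $-J_\lambda^{T}$, and skewness links the $(\lambda,-\lambda)$ and $(-\lambda,\lambda)$ blocks. For $\lambda=0$, $B$ restricted to the nilpotent part is a skew form commuting appropriately with the nilpotent Jordan blocks. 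The rank condition $n-1$ is what yields Main result 1: enough eigenvalues must pair up, with at most one unpaired dimension overall.

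\textbf{Step 3: canonical forms under congruence.} This is the main obstacle. One must show that, under $B\mapsto P^{T}BP$ with $P$ commuting with $A$ (allowing also $P$ that permutes eigenvalue pairs via $cA$, $c=\pm1$), every nondegenerate-on-the-complement solution $B$ is congruent to one of finitely many normal forms. These are canonical block forms, such as anti-diagonal identity-type blocks $\altidentity{i}$ with signs, placed in the $(\lambda,-\lambda)$ positions, and a permutation of these.

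For $\lambda\neq0$ the centralizer acts independently on the $\lambda$ and $-\lambda$ spaces, so $B_{\lambda,-\lambda}$ is a nondegenerate intertwiner. I would first treat a single Jordan block, where solutions are upper-triangular Toeplitz-like and can be reduced to the anti-identity by a Toeplitz centralizer element. I would then treat several equal-size blocks, where $\mathrm{GL}$ of the multiplicity space acts and the reduction to identity is Gaussian elimination. Different sizes give a triangular structure, handled by induction on the block size, which cleans off-diagonal pieces. For $\lambda=0$ and for complex eigenvalues, signature-type invariants may remain, namely the signs in $\altidentity{i}$. These are finitely many, which gives finiteness. The residual freedom of choosing which paired blocks match is what gives ``permutations of a canonical $2$-form''.

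Finally, collecting the finitely many sign choices and pairings gives a finite moduli space.
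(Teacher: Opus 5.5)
Your Steps 1 and 2 coincide with the paper's reduction. Normalizing $\alpha$ is Proposition \ref{bijjectiontomatricesforsymplecticcase}, since $\mathbb{R}^N/\mathrm{Im}(B)$ is a line. The splitting into $\pm\lambda$ pairs is the block structure forced by (\ref{conditionblockrealankel}) and (\ref{conditioneigenvaluescomplexhankel}). Everything therefore rests on Step 3, which is the content of Propositions \ref{canonicalformsreal} and \ref{canonicalformscomplex}, and there you only announce a plan. Finitely many sign choices give finiteness only once you have shown that every solution $B$ reaches one of them.

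For matched pairs $\lambda\neq0$ the plan is fine, and easier than you suggest: if $X,X_0$ are invertible solutions of $XJ_{-\lambda}=-J_{\lambda}^tX$, then $X^{-1}X_0$ commutes with $J_{-\lambda}$, so there is a single orbit. For self-paired components (eigenvalue $0$, or complex eigenvalues with zero real part) the same $S$ acts on both sides. Neither one-sided inversion nor Gaussian elimination on multiplicity spaces then normalizes a diagonal Toeplitz block together with its higher diagonals. You need a Toeplitz $S$ with $S^{\boxstar}(I^{\pm}T)S=\pm I^{\pm}$, i.e.\ a square root of $T^{-1}$ that is a polynomial in $T$, so that it stays in the centralizer and commutes with $I^{\pm}$ (cf.\ (\ref{conmmuteswithaltidentity})). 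This is Proposition \ref{existenceofroots} together with Lemma \ref{step1}. Likewise, ``induction on block size'' does not work as stated: clearing couplings with a pivot between blocks of different sizes breaks down (Remark \ref{whentheblocksdonthavethesamesize}). The paper has to pivot in a block row of maximal size and uses maximality of the rank to guarantee a square pivot with nonzero leading coefficient.

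The more serious omission is the rank defect. Because $n$ is odd, $B$ always has a one-dimensional kernel. Your claim that each $B_{\lambda,-\lambda}$ is a nondegenerate intertwiner fails exactly in the cases that matter. By Corollary \ref{maximaldimensioncaseoddreal}, the real part then contains one of the following:
\begin{itemize}
\item an unpaired odd zero-eigenvalue block;
\item a surplus $1\times1$ block at some $\alpha\neq0$;
\item surplus blocks of sizes $l$ and $l+1$ at $\alpha$ and $-\alpha$.
\end{itemize}
In the last two cases the generalized eigenspaces of $\alpha$ and $-\alpha$ have different dimensions. You must show that maximal rank confines the defect to such a configuration, and then normalize it (Lemma \ref{firstlemma} with $k=2$, Lemma \ref{special4dimensionlemma}).

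The defect can also genuinely couple blocks. For $\mathcal{J}_3(0)\oplus\mathcal{J}_2(0)$ there are rank-$4$ solutions whose $(2,2)$ block has zero leading coefficient for every element of the orbit. So the kernel cannot be pushed into the odd block by treating blocks one at a time; this is the $n_1=n_2+1$ situation of Lemma \ref{special4dimensionlemma}. Your outline addresses none of this, so it establishes the reduction but not the finiteness or the permutation normal form.
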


The precise statements correspond to Theorem \ref{Main1} and Theorem \ref{main2}, respectively. The crucial step to obtain these results is to obtain canonical representatives for a matrix congruence relation in Proposition $\ref{canonicalformsreal}$ and  Proposition $\ref{canonicalformscomplex}$. The proofs draw on ideas from Lemma 4.1 in \cite{Horn}.

\section{Preliminaries and Setting}\label{preliminaries}
\subsection{Left-invariant presymplectic $2$-forms}Let $G$ be a simply connected Lie group with dimension $D$ and $\mathfrak{g}$ its corresponding Lie algebra. For a given $R\leq D$, we have the corresponding set of all left-invariant $2$-forms of rank $R$ on $G$, that we will denote by
\[\Omega^2_{R, \; \text{LI}}  \left(G\right):= \left\{ \omega(\cdot{,}\cdot) \in \ \Gamma \left( \bigwedge\nolimits^{\!2}  T^*G \right) \mid \mathrm{rank}\;\omega=R,\; \text{left-invariant}  \right\}.
\]
If for a given $\omega \in \Omega^2_{R, \; \text{LI}}(G)$, we also have that it is closed, that is $d\omega=0$, then the pair $(G,\omega)$ is called \textit{presymplectic Lie group of rank R} and $\omega$ is called a \textit{left-invariant presymplectic form of rank $R$}.
We have the following natural equivalence relation.
\begin{definition}\label{symplectomorphism}
Let $\omega_1,\omega_2\in\Omega^2_{R, \; \text{LI}}(G)$. $(G,\omega_1)$ and $(G,\omega_2)$ are said to be \textit{equivalent up to automorphism} (resp. \textit{equivalent up to automorphism and scale}) if there exists $\phi \in \mathrm{Aut}(G)$ such that $\phi^{*}\omega_1=\omega_2$ (resp. if there exist $\phi \in \mathrm{Aut}(G)$ and a constant $c\neq0$ such that $c\cdot(\phi)^{*}\omega_1=\omega_2$). 
\end{definition}
It is well known that the space $\Omega^2_{R, \; \text{LI}}$ can be identified with the space of $2$-forms of rank $R$ on $\mathfrak{g}$, denoted by  

\[
\Omega^2_R(\mathfrak{g})= \bigwedge\nolimits^{\!2}_R \mathfrak{g}^* :=\left\{ \omega\left(\cdot{,}\cdot\right) \in \bigwedge\nolimits^{\!2} \mathfrak{g}^* \mid \mathrm{rank}\;\omega = R  \right\}.  
\]
For this set, we have the following natural equivalence relation. 
\begin{definition}\label{symplectomorphismlie}
Let $\omega_1, \omega_2  \in \Omega^2_R(\mathfrak{g})$. $(\mathfrak{g}, \omega_1)$ and $(\mathfrak{g},\omega_2)$ are said to be \textit{equivalent up to automorphism} (resp. \textit{equivalent up to automorphism and scale}) if there exists $\phi \in \mathrm{Aut}(\mathfrak{g})$ such that $\phi^{*}\omega_1=\omega_2$ (resp. if there exist $\phi \in \mathrm{Aut}(\mathfrak{g})$ and a constant $c\neq0$ such that $c\cdot(\phi)^{*}\omega_1=\omega_2$). 
\end{definition}
As both notions in Definitions~\ref{symplectomorphism} and \ref{symplectomorphismlie} of equivalence coincide for simply connected Lie groups,  we focus from now on only on the Lie algebra. The following is well known. 

\begin{prop}[cf. \cite{SymplecticLieGroups}, Chapter 0]\label{closedcondition} 
Let $\omega_{\mathfrak{g}} \in \Omega^2_R(\mathfrak{g})$, and $\omega_G\in \Omega^2_{R, \; \text{LI}}  \left(G\right)$ be the corresponding $2$-form on the Lie group. Then $\omega_G$ is closed if and only if $\omega_{\mathfrak{g}}$ satisfies, for all $x,y,z \in \mathfrak{g}$
\[
d\omega_{\mathfrak{g}}(x,y,z):=\omega_{\mathfrak{g}} (x,[y,z])+ \omega_{\mathfrak{g}} (z,[x,y])+\omega_{\mathfrak{g}} (y,[z,x])=0. 
\]
\end{prop}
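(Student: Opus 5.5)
The plan is to reduce everything to the Chevalley--Eilenberg description of the exterior derivative of a left-invariant form, and then compare it with the Cartan formula on $G$. Identify $\mathfrak{g}$ with the left-invariant vector fields on $G$, and $\omega_{\mathfrak{g}}$ with $\omega_G$ evaluated at the identity. By left-invariance, $d\omega_G$ is again left-invariant: left translations commute with $d$. Hence $d\omega_G=0$ if and only if $(d\omega_G)_e=0$. Since left-invariant vector fields span each tangent space, this holds if and only if $d\omega_G(X,Y,Z)=0$ for all left-invariant $X,Y,Z$.

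Next, I apply the invariant (Cartan) formula for the exterior derivative of a $2$-form:
\[
d\omega_G(X,Y,Z)=X\omega_G(Y,Z)-Y\omega_G(X,Z)+Z\omega_G(X,Y)-\omega_G([X,Y],Z)+\omega_G([X,Z],Y)-\omega_G([Y,Z],X).
\]
The first three terms vanish. Indeed, for left-invariant $X,Y,Z$ the function $\omega_G(Y,Z)$ is constant on $G$: at $g$ it equals $(L_{g^{-1}}^*\omega_G)_e(Y_e,Z_e)=\omega_{\mathfrak{g}}(Y,Z)$. A constant function is annihilated by any vector field.

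For the remaining three terms I use that the bracket of left-invariant vector fields is the Lie bracket of $\mathfrak{g}$, together with antisymmetry of $\omega$. This gives
\[
-\omega([X,Y],Z)=\omega(Z,[X,Y]),\qquad \omega([X,Z],Y)=\omega(Y,[Z,X]),\qquad -\omega([Y,Z],X)=\omega(X,[Y,Z]).
\]
The sum is exactly the expression $d\omega_{\mathfrak{g}}(x,y,z)$ in the statement, which proves the equivalence.

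There is no real obstacle here. The only points needing care are the sign conventions in the Cartan formula, since some authors include a factor $1/3$ or use the opposite sign convention. Such factors do not affect whether the expression vanishes, so the stated condition is insensitive to them.
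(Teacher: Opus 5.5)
Your argument is correct and is the standard one. The paper gives no proof of its own: it states the result as well known with a reference, and the cited fact is exactly what you derive, namely that on left-invariant forms the Cartan formula collapses (because $\omega_G(Y,Z)$ is constant for left-invariant $Y,Z$) to the Chevalley--Eilenberg expression in the statement.

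One small slip: $\omega_G(Y,Z)(g)=(\omega_G)_g(dL_gY_e,dL_gZ_e)=(L_g^*\omega_G)_e(Y_e,Z_e)$, so the pullback is by $L_g$ rather than $L_{g^{-1}}$. This does not affect the conclusion, since $\omega_G$ is invariant under every left translation.
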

A $2$-form $\omega_{\mathfrak{g}} \in \Omega^2_R(\mathfrak{g})$ that satisfies the previous property is called a \textit{closed $2$-form of rank $R$}, or a \textit{presymplectic form of rank R} on the Lie algebra $\mathfrak{g}$. Then the pair $(\mathfrak{g},\omega_{\mathfrak{g}})$ is called \textit{presymplectic Lie algebra of rank R}. 
\begin{remark}
    A presymplectic vector space of rank $R$ is a pair $(V,\omega)$, where $V$ is a vector space and $\omega \in \bigwedge\nolimits^{\!2} V^*$ (that is $\omega$ is a $2$-form). For every $\omega_{\mathfrak{g}} \in \Omega^2_R(\mathfrak{g})$, the pair $(\mathfrak{g},\omega_{\mathfrak{g}})$ is a presymplectic vector space. So the notion for a Lie algebra is stronger, as it requires the $2$-form to be closed. 
\end{remark}

We have the set of presymplectic forms on a Lie algebra $\mathfrak{g}$ 
\[
\Omega^2_R(\mathfrak{g})\supset \Omega^2_{R,\,closed}(\mathfrak{g}):= \left\{ \omega \in \Omega^2_R(\mathfrak{g}) \mid d\omega=0 \right\}.
\]

We identify $\mathfrak{g}\cong \mathbb{R}^{D}$, we have then the canonical basis $\lbrace e_1,\ldots,e_{D}\rbrace$ and the corresponding dual basis $\lbrace \varepsilon^1,\ldots,\varepsilon^{D}\rbrace$. Then the canonical $2$-form of rank $R=2r$ is given by 

\[
\omega_0^R:=\sum_{i=1}^r \varepsilon^{i}\wedge  \varepsilon^{r+i}.
\]
If $R=D$ we omit the superscript and write $\omega_0$. The corresponding  matrix $J_{(D,R)}$ for the canonical $2$-form of rank $R=2r$ is given by
\begin{equation}\label{matricesoftwoform}
\begin{aligned}
J_{(D,R)}&:= \begin{pmatrix}
J_R &  0\\
0 & 0
\end{pmatrix}, \; \text{where} \;  J_R:= \begin{pmatrix}
0 & I_r \\
-I_r & 0
\end{pmatrix}. 
    \end{aligned}
\end{equation}
$I_r$ is the identity matrix. Denote by $\mathrm{Sp}(D,R)$  the group of linear maps which preserve the canonical $2$-form of rank $R$. For $R=D$,  this is the usual symplectic group and, with $D=2d$, is usually denoted by $\mathrm{Sp}(2d)$. The symplectic group can be described by 

\begin{equation}\label{symplecticmatrices}
 \mathrm{Sp}(2d):=\left\{ A \in \mathrm{GL}(D,\mathbb{R}) \mid (A^t)J_DA=J_D \right\}.
\end{equation}
If $R<D$, then the group $\mathrm{Sp}(D,R)$ is less known but can be derived easily. It can be described by 
\begin{equation} 
    \begin{aligned}
  \mathrm{Sp}(D,R)&:= \left\{ \begin{pmatrix}
S & 0 \\
M & G
\end{pmatrix} \in \mathrm{GL}(D,\mathbb{R})\;\middle|\; \begin{aligned}
 &S\in \mathrm{Sp}(2r) \\ 
&M \in \matrices{D-R}{R}\\
&G \in \mathrm{GL}(D-R,\mathbb{R})
\end{aligned}
               \right\}.  \\
                  \end{aligned} 
\end{equation} 
With the basis fixed we also have the usual identification 
\begin{equation}\label{identificationwithskewsymmetric}
\left\{ B \in \mathrm{Skew}(D\times D,\mathbb{R}) \;\middle|\; \begin{aligned} \mathrm{Rank}(B)=R. 
\end{aligned}
 \right\} \xrightarrow{\text{bij}}  \Omega^2_{R}(\mathfrak{g}), \quad B \mapsto \omega 
\end{equation}
where $\omega(x,y):=x^tBy$. 

The general linear group $\mathrm{GL}(D,\mathbb{R})$ acts transitively on $\Omega^2_R(\mathfrak{g})$ by 
\[
g.\omega(\cdot{,}\cdot)=\omega(g^{-1}(\cdot) {,} g^{-1}(\cdot)) \; \; \; \forall g\in \mathrm{GL}(D,\mathbb{R}).
\] 
From the theory of homogeneous spaces we have the identification
\[
\Omega^2_R(\mathfrak{g}) \cong \mathrm{GL}(D,\mathbb{R}) / \mathrm{Sp}(D,R).
\]

Define $\mathbb{R}^{\times}:=\mathbb{R} \setminus \{0\}$. Then we can consider the set 
\[
\mathbb{R}^{\times}\mathrm{Aut}(\mathfrak{g}):=\left\{ c\phi \in \mathrm{GL}(D,\mathbb{R}) \mid \phi \in \mathrm{Aut}(\mathfrak{g}), c \in \mathbb{R}^{\times} \right\},
\]
which is a subgroup of $\mathrm{GL}(D,\mathbb{R})$. Hence it naturally acts on $\Omega^2_R(\mathfrak{g})$. We can then consider the orbit space of this action. The orbit space of the action of $\mathbb{R}^{\times}\mathrm{Aut}(\mathfrak{g})$ on $\Omega^2_R(\mathfrak{g}) $ we will call the \textit{moduli space of $2$-forms of rank R} and is denoted by 

\begin{equation}\label{definitionofmodulispace} 
\begin{aligned} 
\mathfrak{P}\Omega^2_R(\mathfrak{g}):&=\mathbb{R}^{\times}\mathrm{Aut}(\mathfrak{g})\setminus \Omega^2_R(\mathfrak{g}):=\left\{ \mathbb{R}^{\times}\mathrm{Aut}(\mathfrak{g}).\omega \mid \omega \in \Omega^2_R(\mathfrak{g}) \right\} \\
&\cong  \left\{ B \in \mathrm{Skew}(D\times D,\mathbb{R}) \;\middle|\; \begin{aligned} \mathrm{Rank}(B)=R 
\end{aligned}
 \right\} /_{\mathrm{cong}}  \mathbb{R}^{\times}\mathrm{Aut}(\mathfrak{g}).
\end{aligned}
\end{equation}
This last set denotes the congruence classes over the set $\mathbb{R}^{\times}\mathrm{Aut}(\mathfrak{g})$.  For $\omega \in \Omega^2_R(\mathfrak{g})$, $[\omega]$ will denote the corresponding orbit of $\mathfrak{P}\Omega^2_R(\mathfrak{g})$. One can easily see that, if $\omega_1, \omega_2 \in \Omega^2_R(\mathfrak{g})$ are in the same $\mathbb{R}^{\times}\mathrm{Aut}(\mathfrak{g})$-orbit, then they are equivalent up to automorphism and scale. Therefore there is a surjection from the moduli space $\mathfrak{P}\Omega^2_R(\mathfrak{g})$ onto the quotient space
\[
 \Omega^2_R(\mathfrak{g}) / \text{``up to automorphism and scale''}.
\]
This correspondence is not necessarily bijective.  The action of $\mathbb{R}^{\times}\mathrm{Aut}(\mathfrak{g})$ also preserves the closedness  of 2-forms.  We then also have the subset

\begin{align}
    \mathfrak{P}\Omega^2_R(\mathfrak{g})\supset  \mathfrak{P}\Omega^2_{R,\,closed}(\mathfrak{g})&:=\mathbb{R}^{\times}\mathrm{Aut}(\mathfrak{g})\setminus \Omega^2_{R,\,closed} \\
    &=\left\{ \mathbb{R}^{\times}\mathrm{Aut}(\mathfrak{g}).\omega \mid \omega \in \Omega^2_{R,\,closed} \right\}.
\end{align}

This set will be called \textit{moduli space of presymplectic forms of rank R}. In the following sections we study $\mathfrak{P}\Omega^2_{R,\,closed}(\mathfrak{g})$ for a particular family of Lie algebras. In the next section we will define this family and state some of its properties. 
\section{Almost abelian Lie algebras}
\begin{definition}
A non-abelian Lie algebra $\mathfrak{g}$ is called almost abelian if it contains a codimension 1 abelian subalgebra.
\end{definition}
The following facts and a lot more about the structure of almost abelian Lie algebras can be mostly found in \cite{almostabelian}. 
\begin{prop}
An almost abelian Lie algebra $\mathfrak{g}$ has a codimension 1 abelian ideal $L$, 
and is therefore isomorphic to the semidirect product
\[
\mathfrak{g}=\mathbb{R}e \ltimes L,
\]
for some $e\in \mathfrak{g}$.
\end{prop}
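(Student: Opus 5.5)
Let $\mathfrak{g}$ be almost abelian, and let $\mathfrak{a}\subset\mathfrak{g}$ be an abelian subalgebra of codimension $1$ given by the definition. The plan is to show first that $\mathfrak{a}$ is an ideal. We may then take $L=\mathfrak{a}$, pick any $e\notin L$, and read off the semidirect product structure from the action $\mathrm{ad}_e|_L$.

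For the ideal property, fix $e\in\mathfrak{g}\setminus\mathfrak{a}$, so that $\mathfrak{g}=\mathbb{R}e\oplus\mathfrak{a}$ as vector spaces. It suffices to show $[e,\mathfrak{a}]\subseteq\mathfrak{a}$, since $[\mathfrak{a},\mathfrak{a}]=0$. The obstacle is that a codimension $1$ subalgebra need not be an ideal in general; for example, the Borel subalgebra of $\mathfrak{sl}(2)$ is not. So abelianness has to be used, and one should expect a special case where $\mathfrak{a}$ fails to be an ideal but another choice of $L$ works.

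Write $[e,x]=\lambda(x)e+T(x)$ with $\lambda\in\mathfrak{a}^*$ and $T\colon\mathfrak{a}\to\mathfrak{a}$ linear. For $x,y\in\mathfrak{a}$, the Jacobi identity together with $[x,y]=0$ gives
\[
[x,[e,y]]=[y,[e,x]].
\]
Expanding both sides:
\begin{itemize}
\item the left side is $[x,\lambda(y)e]=-\lambda(y)\bigl(\lambda(x)e+T(x)\bigr)$;
\item the right side is $-\lambda(x)\bigl(\lambda(y)e+T(y)\bigr)$.
\end{itemize}
Comparing the components in $\mathfrak{a}$ yields
\[
\lambda(y)T(x)=\lambda(x)T(y)\qquad\text{for all } x,y\in\mathfrak{a}.
\]
If $\lambda=0$ we are done. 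Otherwise set $K=\ker\lambda$, which has codimension $1$ in $\mathfrak{a}$, and pick $x_0$ with $\lambda(x_0)=1$.
\begin{itemize}
\item Taking $y\in K$ and $x=x_0$ gives $T(y)=0$ for every $y\in K$.
\item Taking $x=x_0$ and arbitrary $y$ gives $T(y)=\lambda(y)T(x_0)$.
\end{itemize}

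In this exceptional case we replace $\mathfrak{a}$ by $L':=K\oplus\mathbb{R}e$. For $k\in K$ we have $[e,k]=\lambda(k)e+T(k)=0$, so $e$ centralizes $K$, and $K$ is abelian. Hence $L'$ is an abelian subspace of codimension $1$. To check that it is an ideal, it is enough to bracket with $x_0$:
\begin{itemize}
\item $[x_0,K]=0$;
\item $[x_0,e]=-e-T(x_0)$.
\end{itemize}
This lies in $L'$ exactly when $T(x_0)\in K\oplus\mathbb{R}e$. This is the delicate point.

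To settle it, I would change the complement: replace $e$ by $e'=e+T(x_0)$. Then
\[
[e',x_0]=e+T(x_0)+[T(x_0),x_0]=e',
\]
since $[T(x_0),x_0]=0$. So $[e',x_0]\in\mathbb{R}e'$, and one checks $[e',k]=0$ for $k\in K$. Therefore $L'=K\oplus\mathbb{R}e'$ is abelian, $[x_0,L']\subseteq L'$, and $L'$ is an ideal of codimension $1$. In this case $\mathfrak{g}\cong\mathfrak{aff}(1)\oplus\mathbb{R}^{\dim K}$, which is again almost abelian with ideal $L'$.

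In either case we obtain an abelian ideal $L$ of codimension $1$. Choosing $e\notin L$, the vector space decomposition $\mathfrak{g}=\mathbb{R}e\oplus L$ has bracket determined by $\mathrm{ad}_e|_L\in\mathfrak{gl}(L)$. This is precisely the semidirect product $\mathbb{R}e\ltimes L$.
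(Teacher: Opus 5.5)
Your proof is correct. It also differs from the paper, because the paper does not prove this proposition at all: it states it as a known structural fact and defers to \cite{almostabelian}.

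Your route is a self-contained Jacobi-identity argument. Applying Jacobi to two elements of the abelian subalgebra $\mathfrak{a}$ gives $\lambda(y)T(x)=\lambda(x)T(y)$, which forces a dichotomy. Either $\lambda=0$ and $\mathfrak{a}$ itself is the ideal, or $T$ vanishes on $K=\ker\lambda$. In the second case, after the shift $e'=e+T(x_0)$, you get $\mathfrak{g}\cong\mathfrak{aff}(1)\oplus\mathbb{R}^{\dim K}$ with abelian ideal $K\oplus\mathbb{R}e'$.

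The few checks you left implicit are immediate. Since $T(x_0)\in\mathfrak{a}$ and $e\notin\mathfrak{a}$, we have $e'\notin\mathfrak{a}$. Hence $K\oplus\mathbb{R}e'$ has codimension one. Also $x_0\notin K\oplus\mathbb{R}e'$, so $\mathfrak{g}=(K\oplus\mathbb{R}e')+\mathbb{R}x_0$, and bracketing only with $x_0$ suffices.

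Compared with the bare citation, your argument makes explicit that the ideal $L$ in the proposition need not be the subalgebra given by the definition. In the $\mathfrak{aff}(1)\oplus\mathbb{R}^{k}$ case, $\mathfrak{a}=\mathbb{R}x_0\oplus K$ is an abelian codimension-one subalgebra that is not an ideal, since $[e,x_0]=e+T(x_0)\notin\mathfrak{a}$. The paper's phrasing passes over this point silently. Your proof identifies this as the only exception and shows that a different abelian ideal of codimension one still exists there.
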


Next we need the description of the automorphism group $\mathrm{Aut}(\mathfrak{g})$. In Section 4 of \cite{almostabelian} the author describes $\mathrm{Aut}(\mathfrak{g})$ for almost abelian Lie algebras in terms of a decomposition of $\mathfrak{g}$ as a direct sum indecomposable Lie algebra and an abelian Lie algebra (see Proposition 7). For our purposes it will be more convenient to describe $\mathrm{Aut}(\mathfrak{g})$ adapted to the decomposition $\mathfrak{g}=\mathbb{R}e \ltimes L$, so we present the next proof. 

\begin{prop}\label{automorphismgroupalmostabelian} Let $\mathfrak{g}=\mathbb{R}e \ltimes L$ be an almost abelian Lie algebra. Write $\mathrm{ad}_{e}$ as
\begin{equation}\label{Adeinblock}
\mathrm{ad}_{e}= \begin{pmatrix}
0 & 0 \\
0 & A
\end{pmatrix}.
\end{equation}
If $\mathrm{rank}(A)\geq 2$, or  $\mathrm{rank}(A)=1$ and $A$ not nilpotent $(A^2\neq 0)$, then

\[
\mathrm{Aut}(\mathfrak{g})=
    \left\{ \phi=\begin{pmatrix}
\alpha & 0 \\
\gamma & \Delta
\end{pmatrix} \in \mathrm{GL}(D,\mathbb{R})\;\middle|\; \begin{aligned}
 &\gamma \in \mathbb{R}^{D-1}\\ 
&\alpha \in \mathbb{R}^{\times}\\
&\Delta A-\alpha A \Delta=0 
\end{aligned}
               \right\}.
\]
If $\mathrm{rank}(A) =1$ and $A$ is nilpotent $(A^2=0)$ then  

\[
\mathrm{Aut}(\mathfrak{g})=A_1\cup A_2,
\]
where 
\begin{align*}
    A_1&:=  \left\{ \begin{pmatrix}
\alpha & 0 \\
\gamma & \Delta
\end{pmatrix} \in \mathrm{GL}(D,\mathbb{R})\;\middle|\; \begin{aligned}
 &\gamma \in \mathbb{R}^{D-1}\\ 
&\alpha \in \mathbb{R}^{\times}\\
&\Delta A-\alpha A \Delta=0 
\end{aligned}
               \right\} \\
 A_2&:= \left\{ \begin{pmatrix}
\alpha & \beta^t \\
\gamma & \Delta
\end{pmatrix} \in \mathrm{GL}(D,\mathbb{R})\;\middle|\; \begin{aligned}
&0\neq\beta^t \in \mathbb{R}^{D-1} \\
 &\gamma \in \mathbb{R}^{D-1}\\
 &\mathrm{Im} (A)\subset \ker (\beta^t) \\
& A\Delta=u\otimes\beta^t \;
\text{ for some }u\in\mathrm{Im}A \\
&\Delta A=\alpha A\Delta-(A\gamma)\otimes\beta^t 
\end{aligned}
               \right\}              
\end{align*}

\end{prop}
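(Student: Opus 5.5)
Write $\phi\in\mathrm{GL}(D,\mathbb{R})$ in block form adapted to $\mathfrak{g}=\mathbb{R}e\oplus L$,
\[
\phi=\begin{pmatrix}\alpha & \beta^t\\ \gamma & \Delta\end{pmatrix},
\]
so that $\phi(e)=\alpha e+\gamma$ and $\phi(v)=(\beta^t v)e+\Delta v$ for $v\in L$. Since $L$ is abelian, the only nontrivial brackets are $[e,v]=Av$ and $[v,w]=0$ for $v,w\in L$. The automorphism condition $\phi[x,y]=[\phi x,\phi y]$ therefore reduces to two families of equations, which I will compute directly:

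\begin{enumerate}
\item[(i)] For $v,w\in L$: the left side is $0$, and the right side is $[(\beta^tv)e+\Delta v,(\beta^tw)e+\Delta w]=(\beta^tv)A\Delta w-(\beta^tw)A\Delta v$.
\item[(ii)] For $e$ and $v\in L$: the left side is $\phi(Av)=(\beta^tAv)e+\Delta Av$. The right side is
\[
[\alpha e+\gamma,(\beta^tv)e+\Delta v]=\alpha A\Delta v-(\beta^tv)A\gamma .
\]
\end{enumerate}

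Comparing $e$-components and $L$-components, (ii) becomes $\beta^tA=0$, i.e.\ $\mathrm{Im}(A)\subset\ker\beta^t$, together with $\Delta A=\alpha A\Delta-(A\gamma)\otimes\beta^t$. Condition (i) says $A\Delta\, w\,\beta^t v=A\Delta\, v\,\beta^t w$ for all $v,w$, which is exactly the statement that $A\Delta=u\otimes\beta^t$ for some $u$, necessarily $u\in\mathrm{Im}A$.

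If $\beta=0$, these equations reduce to $\Delta A=\alpha A\Delta$. Invertibility of $\phi$ then forces $\alpha\neq0$ and $\Delta$ invertible. This gives the set $A_1$, and the first displayed form of the group.

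The remaining work, and the main step, is to show that $\beta\neq0$ is impossible unless $\mathrm{rank}A=1$ and $A^2=0$. Suppose $\beta\neq0$. Then $A\Delta=u\otimes\beta^t$ has rank at most $1$.
\begin{itemize}
\item Invertibility of $\phi$ restricted to $L$ will force $\mathrm{rank}\,\Delta\geq D-2$. Indeed, the map $v\mapsto(\beta^tv,\Delta v)$ is injective on $L$, so $\ker\Delta$ has dimension at most $1$.
\item Hence $\mathrm{rank}A-1\le\mathrm{rank}(A\Delta)\le1$, giving $\mathrm{rank}A\le2$.
\end{itemize}
I then need to rule out rank $2$, and rank $1$ with $A^2\neq0$.

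\emph{Rank $1$ with $A^2\neq0$.} Here $\mathrm{Im}A\subset\ker\beta^t$ combined with $A^2\neq0$ should yield a contradiction via the second equation. Write $A=a\otimes b^t$ with $b^ta\neq0$. Applying $\beta^t$ to $\Delta A=\alpha A\Delta-(A\gamma)\otimes\beta^t$, and using $\beta^ta=0$, gives $(\beta^t\Delta a)\,b^t=0$. So $\Delta a\in\ker\beta^t$. Together with the rank considerations and invertibility of $\phi$, I expect to derive a contradiction, for instance by showing that $\phi$ maps $\mathbb{R}e\oplus\mathrm{Im}A$-type subspaces degenerately.

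\emph{Rank $2$.} Here $\ker\Delta$ is one-dimensional and $A\Delta$ has rank exactly $1$. I expect a similar contradiction.

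A cleaner route I would try first is structural. The ideal $L$ is characteristic whenever it is the unique codimension-one abelian ideal, and $\beta\neq0$ exactly means $\phi(L)\neq L$. So it suffices to show that if $L'\neq L$ is another codimension-one abelian ideal, then $\mathrm{rank}A=1$ and $A^2=0$. To see this, pick $e'=e+\ell\in L'$ with $\ell\in L$. Then $L\cap L'$ has codimension $1$ in $L$, and $[e',L\cap L']=A(L\cap L')=0$ by abelianness of $L'$. Hence $\mathrm{rank}A\le1$, and $\ker A\supset L\cap L'$. Since $L'$ is an ideal, $[e,e']=A\ell$ must lie in $L'\cap L\subset\ker A$, which gives $A^2\ell=0$. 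I expect that $\ell\notin\ker A$ (otherwise $L'$ would fail to be abelian or fail to be distinct from $L$), and then $A^2=0$ follows.

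This structural argument handles the hardest part of the case analysis. The explicit conditions defining $A_2$ then follow directly from (i) and (ii).
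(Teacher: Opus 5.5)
Your block setup and equations (i) and (ii) are correct; they are exactly the paper's three conditions. The case $\beta=0$ is also fine, though (i) is vacuous there, so ``$A\Delta=u\otimes\beta^t$'' is the right reformulation only when $\beta\neq0$.

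\textbf{The gap.} It is in the key step, ruling out $\beta\neq0$. Your direct route stops at $\mathrm{rank}A\le 2$ and leaves both remaining subcases as expectations, so everything rests on the structural route. There you correctly get $L\cap L'\subset\ker A$, hence $\mathrm{rank}A=1$ and $\ker A=L\cap L'$. The final step, however, uses the claim $\ell\notin\ker A$, and that claim is false. Take $\mathfrak{h}_3$ with $[e,x]=y$, $L=\mathrm{span}(x,y)$, and the automorphism $\phi(e)=x$, $\phi(x)=e$, $\phi(y)=-y$. It has $\beta\neq0$, and $L'=\phi(L)=\mathrm{span}(e,y)$ is an abelian ideal distinct from $L$. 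Since $\ell$ is determined only modulo $L\cap L'=\ker A=\mathbb{R}y$, every choice $e'=e+\ell\in L'$ has $\ell\in\ker A$. Then $A^2\ell=0$ carries no information. So, as written, $A^2=0$ is not proved.

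\textbf{The repair.} Apply the ideal property to brackets with $L$ rather than with $e$. For $v\in L$ you have $[v,e']=-Av\in L'\cap L=\ker A$, so $\mathrm{Im}A\subset\ker A$, i.e.\ $A^2=0$. With that line your argument is complete.

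\textbf{Comparison with the paper.} Your repaired route is genuinely different. You observe that $\phi(L)$ is another codimension-one abelian ideal and determine when such an ideal can exist. This explains conceptually why $\mathfrak{h}_3\oplus\mathbb{R}^{D-3}$ is exactly the case where $L$ fails to be characteristic. The paper instead stays with the equations, in two steps.
\begin{enumerate}
\item From (i) with $w\in\ker\beta^t$ it gets $\Delta(\ker\beta^t)\subset\ker A$. Because $\Delta$ is injective on $\ker\beta^t$, this gives $\mathrm{rank}A=1$ immediately. This is the sharpening your rank count missed: $A\Delta=u\otimes\beta^t$ already vanishes on $\ker\beta^t$.
\item For $A=a\otimes b^t$ with $b^ta\neq0$, it gets $\beta^ta=0$ and $A\Delta a=0$. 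The $L$-component of (ii) at $a$ then gives $(b^ta)\Delta a=0$, so $\phi(a)=0$, a contradiction. This is precisely the step your direct rank-one sketch was missing.
\end{enumerate}
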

\begin{proof}
    Consider a matrix 
    \[
  \phi= \begin{pmatrix}
\alpha & \beta^t \\
\gamma & \Delta 
\end{pmatrix} \in \mathrm{GL}(D,\mathbb{R}).
    \]
    Direct calculation from the automorphism condition 
   $\phi[\cdot,\cdot]=[\phi(\cdot),\phi(\cdot)]$  gives the following 3 conditions

    \begin{align}
        \beta^tA x&=0,  \label{1ofauto}\\
       \Delta A x-\alpha A \Delta x+(\beta^tx)A\gamma&=0, \label{2ofauto} \\
       (\beta^tx)A\Delta y-(\beta^ty)A \Delta x &=0 ,  \label{3ofauto}
    \end{align}
    for all $x,y \in L $.  First Let us prove that $\beta^t\neq 0$ implies that $\mathrm{rank} (A)=1$.  $\det (\phi)\neq 0$ gives us that $\ker \Delta\cap \ker \beta^t = \lbrace 0\rbrace$ (as maps acting on $L$), so that $\Delta \mid_{\ker \beta^t}$ is injective. As $\dim \ker \beta^t= \dim (L)-1$, we then get 
    \begin{equation}\label{equation1auto}
        \dim \Delta \ker \beta^t= \dim (L)-1 
    \end{equation}
   Now from $(\ref{3ofauto})$ we get, by choosing  $y \in \ker \beta^t$, that 
   
    \begin{equation}\label{equation2auto}
      \Delta \ker \beta^t \subset \ker A. 
    \end{equation}
    Combining $\eqref{equation1auto}$ and $\eqref{equation2auto}$ we get $\dim \ker A \geq \dim L-1 $, and this implies $\mathrm{rank}(A)=1$ (remember we exclude the case $\mathrm{rank} (A)  = 0$ from the definition of almost abelian Lie algebra). 
    
    Now consider the case $\mathrm{rank} (A)=1$. There exists $\rho \in L^{*}$ and $0\neq z \in L$ such that $Ax=\rho(x)z$ for all $x \in L$. In particular we have $A^2=\rho (z)A$. We now show that if $A$ is not nilpotent ($A^2\neq0$, that is $\rho(z)\neq0$), then again we must have, as before, that $\beta^t=0$.  Suppose instead that $\beta^t\neq 0$. Evaluate the previous 3 conditions in $z$. From \eqref{1ofauto} we get $\beta^t z=0$. Then from \eqref{3ofauto}, we get $\Delta z \in \ker A$ . This and \eqref{2ofauto} gives us $\Delta z=0$. At last we also have $\phi(z)=(\beta^t z, \Delta z)=0$, but this is not possible as $\phi$ is nonsingular, so indeed we must have $\beta^t =0$.  If $A$ is nilpotent ($A^2=0$ or equivalently $\rho(z)=0$),  we cannot conclude as before that $\Delta z = 0$, so we can indeed have $\beta^t \neq 0$ . For this case we can rewrite the first two conditions to make them a little more clear. We have 

\begin{align*}
\eqref{1ofauto} \iff & \mathrm{Im} (A)\subset \ker (\beta^t), \\
\eqref{3ofauto} \iff&  A\Delta  = u \otimes \beta^t , \text{ for some} \; u \in \mathrm{Im}(A).    
\end{align*}
\end{proof}

\begin{remark}
   The second case in Proposition \ref{automorphismgroupalmostabelian}: $\mathrm{rank}(A)=1$ and $A$ nilpotent is precisely the case $\mathfrak{g}=\mathfrak{H_3}\oplus \mathbb{R}^{D-3}$. That is $\mathfrak{g}$ is the direct sum of the 3-dimensional Heisenberg Lie algebra and an abelian Lie algebra. The fact that in this particular case the automorphism group is different from the rest of cases is also shown in \cite{almostabelian} (Proposition 8 and Proposition 9) and its explicit form is well known.
\end{remark}
In the following chapters we will only use the following.  
  \begin{coro} 
      Let $\mathfrak{g}=\mathbb{R}e \ltimes L$ be an almost abelian Lie algebra. For $\mathrm{ad_e}$ written as in \eqref{Adeinblock} we have 
      \begin{equation} 
        \mathbb{R}^{\times}\mathrm{Aut}(\mathfrak{g}) \supset  \left\{ \begin{pmatrix}
\alpha & 0 \\
\gamma & \Delta
\end{pmatrix} \in \mathrm{GL}(D,\mathbb{R})\;\middle|\; \begin{aligned}
 &\gamma \in \mathbb{R}^{D-1}\\ 
&\alpha \in \mathbb{R}^{\times}\\
&\Delta A-A\Delta=0 
\end{aligned}
               \right\}.
    \end{equation}
  \end{coro}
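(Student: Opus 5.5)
The plan is to show that every matrix in the right-hand set is of the form $c\phi$ with $c\in\mathbb{R}^{\times}$ and $\phi\in\mathrm{Aut}(\mathfrak{g})$. Fix such a matrix
\[
\psi=\begin{pmatrix}
\alpha & 0 \\
\gamma & \Delta
\end{pmatrix}\in \mathrm{GL}(D,\mathbb{R}), \qquad \Delta A = A\Delta .
\]
I would take $c=\alpha^{-1}$ and $\phi=\alpha\psi$, so that $\psi=\alpha^{-1}\phi$ trivially. Writing
\[
\phi=\begin{pmatrix}
\alpha' & 0 \\
\gamma' & \Delta'
\end{pmatrix},
\]
we have $\alpha'=\alpha^2$, $\gamma'=\alpha\gamma$ and $\Delta'=\alpha\Delta$. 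It therefore remains to check that $\phi$ lies in $\mathrm{Aut}(\mathfrak{g})$ as described in Proposition \ref{automorphismgroupalmostabelian}.

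First, $\phi\in \mathrm{GL}(D,\mathbb{R})$, since $\det\phi=\alpha^D\det\psi\neq 0$, and $\alpha'=\alpha^2\in\mathbb{R}^{\times}$. The one substantive condition is $\Delta' A-\alpha' A\Delta'=0$. Using $\Delta A=A\Delta$, we compute
\[
\Delta' A-\alpha' A \Delta' = \alpha\Delta A-\alpha^2\cdot\alpha A\Delta = \alpha(1-\alpha^2)A\Delta .
\]
This does not vanish in general, so the choice $c=\alpha^{-1}$ fails.

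The fix is to reconsider which scaling works. For $\phi=c^{-1}\psi$ we get $\alpha'=c^{-1}\alpha$ and $\Delta'=c^{-1}\Delta$, so the condition becomes
\[
c^{-1}\Delta A-c^{-2}\alpha A\Delta=0 .
\]
By commutativity this reduces to $c^{-1}(1-c^{-1}\alpha)A\Delta=0$. Choosing $c=\alpha$ makes it hold identically. Then $\phi=\alpha^{-1}\psi$ has $\alpha'=1$, $\gamma'=\alpha^{-1}\gamma$ and $\Delta'=\alpha^{-1}\Delta$, and the condition reads $\alpha^{-1}(\Delta A-A\Delta)=0$, which is exactly the hypothesis. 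Hence $\psi=\alpha\phi$ with $c=\alpha\in\mathbb{R}^{\times}$.

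Finally, this $\phi$ has the block form of the first set in Proposition \ref{automorphismgroupalmostabelian}. That set is all of $\mathrm{Aut}(\mathfrak{g})$ in the generic case, and it is the subset $A_1\subset\mathrm{Aut}(\mathfrak{g})$ in the Heisenberg case, so $\phi\in\mathrm{Aut}(\mathfrak{g})$ in both cases. The only real obstacle is choosing the right scalar, $c=\alpha$, so that the twisted commutation condition collapses to ordinary commutation; the rest is immediate.
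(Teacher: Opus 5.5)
Your argument is correct and is the routine verification the paper omits: writing $\psi=\alpha\phi$ with $\phi=\alpha^{-1}\psi$ normalizes the $e$-coefficient to $1$, so the twisted condition $\Delta'A-\alpha'A\Delta'=0$ of Proposition \ref{automorphismgroupalmostabelian} becomes $\alpha^{-1}(\Delta A-A\Delta)=0$. In a final write-up, drop the abandoned attempt with $c=\alpha^{-1}$, and state the invertibility check $\det(\alpha^{-1}\psi)=\alpha^{-D}\det\psi\neq 0$ for the choice you actually use.
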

\begin{proof}
    The proof is quite straightforward, so we omit it.
\end{proof}
The Lie algebra structure of an almost abelian Lie algebra $\mathfrak{g}=\mathbb{R}e \ltimes L$ is completely determined by $\mathrm{ad}_{e}$:
\[
\left[e,v\right] = \mathrm{ad}_{e}v, \; v\in L.
\]

Therefore, any pair $(V,T)$, where $V$ is a vector space and $T$ a nonzero linear map, determines an almost abelian Lie algebra $\mathfrak{g}$ and vice versa, but different maps can yield isomorphic Lie algebras. Two pairs $(V_1,T_1)$ and $(V_2,T_2)$ are said to be \emph{similar} if there exists an invertible map $\phi$ such that $\phi T_1 \phi^{-1}=T_2$. In this case we write $(V_1,T_1)\sim (V_2,T_2)$. Isomorphism classes of  almost abelian Lie algebras correspond to the similarity classes of linear operators on vector
spaces up to scaling.
\begin{theorem}
Two almost abelian Lie algebras $\mathfrak{g}=\mathbb{R}e \ltimes L$ and $\mathfrak{g}^{\prime}=\mathbb{R}e^{\prime} \ltimes L^{\prime}$ are isomorphic if and only if $(L,\mathrm{ad_{e}}|_L)\sim (L^{\prime},\lambda \mathrm{ad_{e^\prime}}|_{L^{\prime}})$ for some constant $\lambda \neq 0$.
\end{theorem}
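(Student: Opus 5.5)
The plan is to prove both directions directly from the semidirect product description $\mathfrak{g}=\mathbb{R}e\ltimes L$, where the bracket is determined by $[e,v]=\mathrm{ad}_e v$ and $[L,L]=0$.

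For the ``if'' direction, suppose $\phi: L\to L'$ is invertible with $\phi\,\mathrm{ad}_e|_L\,\phi^{-1}=\lambda\,\mathrm{ad}_{e'}|_{L'}$, where $\lambda\neq 0$. Define $\Phi:\mathfrak{g}\to\mathfrak{g}'$ by $\Phi(e)=\lambda e'$ and $\Phi|_L=\phi$; this is a linear bijection.
\begin{itemize}
\item The bracket on $L$ is preserved trivially, since both $L$ and $L'$ are abelian.
\item On mixed brackets we have $\Phi[e,v]=\phi\,\mathrm{ad}_e v$ and $[\Phi e,\Phi v]=\lambda\,\mathrm{ad}_{e'}\phi v$, and these agree by hypothesis.
\end{itemize}
Bilinearity then shows that $\Phi$ is an isomorphism.

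For the ``only if'' direction, let $\Psi:\mathfrak{g}\to\mathfrak{g}'$ be an isomorphism. The key step is to arrange that $\Psi(L)=L'$.

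In the generic case, I would show that $L$ is the unique codimension-one abelian ideal. Suppose $L_1\neq L$ is another one. Then $L\cap L_1$ has codimension $2$ and $\mathfrak{g}=L+L_1$. Any $x\in L\cap L_1$ commutes with both $L$ and $L_1$, so $x$ is central. Hence $\mathrm{ad}_e$ vanishes on the codimension-one subspace $L\cap L_1$ of $L$, so $\mathrm{rank}(A)\le 1$.

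Moreover, pick $e_1=e+w\in L_1$ with $w\in L$. Then $\mathrm{ad}_{e_1}|_{L_1}=0$. Since $[\mathfrak{g},\mathfrak{g}]=\mathrm{Im}(A)\subset L$, we can take $u\in L_1\setminus L$, write $u=ae+w'$ with $a\neq 0$, and obtain $[e_1,u]=[e,w']-a[e,w]=0$. I expect this to force $\mathrm{Im}(A)\subset L\cap L_1\subset\ker A$, that is, $A^2=0$.

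So whenever $\mathrm{rank}(A)\ge 2$, or $\mathrm{rank}(A)=1$ with $A^2\ne 0$, the ideal $L$ is canonical. The same holds for $L'$, since the rank conditions are isomorphism invariants. Therefore $\Psi(L)=L'$. This also matches Proposition \ref{automorphismgroupalmostabelian}, where $\beta=0$ in these cases.

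In the remaining case, where $A$ has rank one and square zero, $\mathfrak{g}\cong\mathfrak{H}_3\oplus\mathbb{R}^{D-3}$. Then $\mathfrak{g}'$ is of the same type, and any two rank-one square-zero operators of the same size are similar up to scale, both being conjugate to a single $2\times 2$ Jordan block plus zeros. So the conclusion can be verified directly there.

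Once $\Psi(L)=L'$ holds, write $\Psi(e)=\lambda e'+w$ with $w\in L'$. Here $\lambda\neq 0$ because $\Psi$ is bijective and $\Psi(L)=L'$. For $v\in L$,
\[
\Psi(\mathrm{ad}_e v)=[\lambda e'+w,\Psi v]=\lambda\,\mathrm{ad}_{e'}\Psi v,
\]
since $w$ and $\Psi v$ both lie in the abelian ideal $L'$. Taking $\phi=\Psi|_L$ gives the required similarity.

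The main obstacle is the uniqueness of the abelian ideal, that is, handling the Heisenberg-type exceptional case cleanly. The intended approach is to treat that case by the explicit normal form, as indicated above, rather than by the general argument.
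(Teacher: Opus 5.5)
Your ``if'' direction is correct. So is the final step once $\Psi(L)=L'$ is known, and so is the Heisenberg-type case, since two rank-one nilpotent operators on spaces of equal dimension are both similar to $\mathcal{J}_2(0)\oplus 0$. The paper quotes this theorem from the literature without proof, so there is no argument of its own to compare with.

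The gap is exactly the step you mark with ``I expect'': your computation does not give $A^2=0$. Since $u-ae_1=w'-aw\in L\cap L_1$, you have $[e_1,u]=A(w'-aw)$. This vanishes simply because $L\cap L_1$ is central, which you had already shown, so the identity carries no new information. More seriously, nothing you use up to that point distinguishes an abelian ideal from a mere abelian subalgebra, and for subalgebras the claim is false. In $\mathfrak{aff}(\mathbb{R})\oplus\mathbb{R}^k$ with $[e,x]=x$, the span of $e,y_1,\dots,y_k$ is a second codimension-one abelian subalgebra. Its intersection with $L$ is central, and your identity $[e_1,u]=0$ holds there, yet $A^2=A\neq 0$.

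The missing idea is one line. $L_1$ is an ideal of codimension one, so $\mathfrak{g}/L_1$ is one-dimensional, hence abelian, hence $[\mathfrak{g},\mathfrak{g}]\subset L_1$. Therefore $\mathrm{Im}(A)=[\mathfrak{g},\mathfrak{g}]\subset L\cap L_1\subset\ker A$, which is $A^2=0$. Uniqueness among abelian ideals is all you need, because $\Psi(L)$ is automatically an ideal of $\mathfrak{g}'$.

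You should also justify your claim that the dichotomy is an isomorphism invariant. First, $\mathrm{rank}(A)=\dim[\mathfrak{g},\mathfrak{g}]$. Second, if $Ax=\rho(x)z$, then $A^2=\rho(z)A$ and $[\mathfrak{g},[\mathfrak{g},\mathfrak{g}]]=\mathbb{R}\,\rho(z)z$. So in rank one, $A^2=0$ if and only if $[\mathfrak{g},[\mathfrak{g},\mathfrak{g}]]=0$. With these two additions your proof is complete. It then recovers the same dichotomy as Proposition \ref{automorphismgroupalmostabelian}, where $\beta=0$ outside the Heisenberg-type case.
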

This theorem allows us to consider, without loss of generality,  almost abelian Lie algebras  $\mathfrak{g}$ such that the map $ad_{e}$ is given in real Jordan normal form (see \cite{matrixanalysis}, Theorem 3.4.1.5.). So from now on we suppose that $\mathrm{ad}_e|_L=\mathcal{J}_N$, $\mathcal{J}_N$ denotes a matrix in real Jordan normal form.

\subsection{The sets $\Omega^2_{R,\,closed}$ and $\mathfrak{P}\Omega^2_{R,\,closed}$ for almost abelian Lie algebras.}
In this section we first rewrite the closedness condition of Proposition \ref{closedcondition} for almost abelian Lie algebras. Then we show that the study of the moduli space $\mathfrak{P}\Omega^2_{R,\, closed}(\mathfrak{g})$ can be reduced to a tractable matrix problem.  As before we consider an almost abelian Lie algebra
\[
\mathfrak{g}=\mathbb{R}e_1 \ltimes \mathrm{Span}(e_2,\ldots,e_D).
\]
$\dim(\mathfrak{g})=D=N+1$. Without loss of generality we suppose that $\mathrm{ad}_{e_1}|_L=\mathcal{J}_N$, where $\mathcal{J}_N$ is a matrix in real Jordan normal form. Recall also that $R=2r$. For the following calculations it will be useful to rewrite the matrices in $(\ref{matricesoftwoform})$ as follows 

\begin{equation}
    \begin{aligned}
       J_{(D,R)}=& \left( \begin{array}{c|c} 
0 & u^t_r \\ \hline
-u_r & (J_{(D,R)})_{1,1} \end{array} \right)
    \end{aligned}
  \end{equation}  

where $u_r\in \mathbb{R}^{D-1}$ is a vector whose $r$-th component is 1 and the rest 0. \newline $(J_{(D,R)})_{1,1} \in \matrices{D-1}{D-1}$ is the principal minor of $(J_{(D,R)})$ obtained after removing the first row and column. Notice that $\mathrm{Rank}((J_{(D,R)})_{1,1})=R-2$. In particular, we will need the following subset of the group of permutation matrices of dimension $D$: 

\[
\mathrm{Per}(D,\mathbb{R})_{e_1}:=\left\{ \left( \begin{array}{c|c} 
1 & 0 \\ \hline
0 & P 
\end{array} \right) \mid P\in \mathrm{Per}(D-1,\mathbb{R})  \right\},
\]

Here $\mathrm{Per}(D-1,\mathbb{R})$ is the group of permutation matrices of dimension $D-1$. We can think of the first set as the stabilizer of $e_1$. We use the following notation for a member of $\mathrm{Per}(D,\mathbb{R})_{e_1}$
\begin{align*}
  \bar{P}&=\left( \begin{array}{c|c} 
1 & 0 \\ \hline
0 & P 
\end{array} \right) \in \mathrm{Per}(D,\mathbb{R})_{e_1}.
\end{align*}
Using a similar notation, we define
\[
\bar{\mathcal{J}}_{N}:=\left( \begin{array}{c|c} 
0 & 0 \\ \hline
0 & \mathcal{J}_{N} \end{array} \right).
\]
Once again, using a similar notation, suppose that we have $\bar{B} \in \mathrm{Skew}(D\times D,\mathbb{R})$, $\mathrm{Rank}(\bar{B})=R$. Write this matrix as 
\begin{equation}\label{skewsymmetricmatrix}
    \bar{B}=\left( \begin{array}{c|c} 
0 & b^t \\ \hline
-b & B
\end{array} \right), \quad B\in \mathrm{Skew}(D-1\times D-1,\mathbb{R}).
\end{equation}
Notice that  $R-2\leq \mathrm{Rank} (B) \leq R$. Therefore, if $R=D$, then $\mathrm{Rank} (B)=D-2$. If $R\neq D$, then  we can have either $\mathrm{Rank} (B)=R-2$ or $\mathrm{Rank} (B)=R$. 
Next we consider the closedness condition in Proposition \ref{closedcondition} for almost abelian Lie algebras (see also Corollary 5.5 in \cite{symplecticlassification}).
\begin{prop}
    Let $\omega \in \Omega^2_R(\mathfrak{g})$ correspond to a skew-symmetric matrix $\bar{B}$ as in $(\ref{skewsymmetricmatrix})$. Then $d\omega=0$ if and only if 
    \begin{align}\label{conditionclosedmatrixsymplecticcase}
B\mathcal{J}_N+(\mathcal{J}_N)^tB=0.
 \end{align}
 \end{prop}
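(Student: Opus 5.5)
We plan to apply Proposition~\ref{closedcondition} directly and specialise the three-term expression $d\omega(x,y,z)$ to the almost abelian bracket. The only nonzero brackets are $[e_1,v]=\mathcal{J}_N v$ for $v\in L=\mathrm{Span}(e_2,\ldots,e_D)$, and $[L,L]=0$. Since $d\omega$ is an alternating trilinear form, it vanishes identically if and only if it vanishes on all triples of basis vectors. Such triples come in two types: triples with all three vectors in $L$, and triples containing $e_1$ once. Triples containing $e_1$ twice are automatically zero by alternation.

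First, for $x,y,z\in L$ every bracket $[y,z]$, $[x,y]$, $[z,x]$ vanishes, so $d\omega(x,y,z)=0$ with no condition on $\bar B$. Second, take $x=e_1$ and $y,z\in L$. Then $[y,z]=0$, and we get
\[
d\omega(e_1,y,z)=\omega(z,[e_1,y])+\omega(y,[z,e_1])=\omega(z,\mathcal{J}_N y)-\omega(y,\mathcal{J}_N z).
\]
Here $\mathcal{J}_N y$ and $\mathcal{J}_N z$ lie in $L$, and $\omega$ restricted to $L\times L$ is given by the block $B$ of \eqref{skewsymmetricmatrix}. Writing $y,z$ as coordinate vectors in $\mathbb{R}^{D-1}$, this equals
\[
z^tB\mathcal{J}_Ny-y^tB\mathcal{J}_Nz.
\]

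Next we use the skew-symmetry of $B$ to rewrite the first term: $z^tB\mathcal{J}_Ny=(z^tB\mathcal{J}_Ny)^t=y^t\mathcal{J}_N^tB^tz=-y^t\mathcal{J}_N^tBz$. Hence
\[
d\omega(e_1,y,z)=-y^t\bigl(\mathcal{J}_N^tB+B\mathcal{J}_N\bigr)z.
\]
This vanishes for all $y,z\in L$ if and only if $B\mathcal{J}_N+\mathcal{J}_N^tB=0$, which is \eqref{conditionclosedmatrixsymplecticcase}. The vector $b$ never enters the computation, so closedness imposes no condition on it.

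There is no real obstacle in this argument. The only care needed is bookkeeping: keeping track of the sign conventions $\omega(x,y)=x^t\bar B y$ and $[z,e_1]=-\mathcal{J}_Nz$, and checking that the first row and column of $\bar B$ (the vector $b$) drop out because $e_1$ is never produced as the value of a bracket.
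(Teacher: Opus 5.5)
Your proof is correct and takes the same route as the paper. You use Proposition~\ref{closedcondition} to reduce to $d\omega(e_1,y,z)$ with $y,z\in L$, and then identify this expression with the bilinear form of $B\mathcal{J}_N+\mathcal{J}_N^tB$. The differences are cosmetic: you explicitly justify why all other basis triples can be discarded, which the paper leaves implicit, and you apply skew-symmetry after passing to matrices, whereas the paper rewrites $\omega(e_i,[e_j,e_1])=\omega([e_1,e_j],e_i)$ before doing so.
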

 \begin{proof}
     From Proposition \ref{closedcondition}, $d\omega=0$ if and only if for all $i,j\neq 1$ 
\begin{align*}
    0&=d\omega(e_1,e_i,e_j)\\
    &=\omega(e_j,[ e_1,e_i ])+\omega(e_i,[ e_j,e_1 ] )\\
    &=\omega(e_j,[ e_1,e_i])+\omega([e_1,e_j],e_i)\\
   &=\omega(e_j,\bar{\mathcal{J}}_Ne_i)+\omega(\bar{\mathcal{J}}_Ne_j,e_i).
    \end{align*}
    We can rewrite this last equation as 
\[
e_j^t\bar{B}\bar{\mathcal{J}}_Ne_i+e_j^t\bar{\mathcal{J}}_N^t\bar{B}e_i=0, \;\;  \text{for all}\; i,j\neq 1.
\]
Therefore the condition becomes 
\begin{align}\label{closedconditionmatricessymplecticcase}
 B\mathcal{J}_N+({\mathcal{J}}_N)^tB=0.   
 \end{align}
    \end{proof}
Now we can state our first necessary and sufficient condition for the existence of $2$-forms of a given rank. First define the following set 
\[
\mathcal{ H}_{\mathcal{J}_N}^{K}:=\left\{ B \in \mathrm{Skew}(N\times N,\mathbb{R}) \;\middle|\; \begin{aligned}
B\mathcal{J}_N+\mathcal{J}_N^tB=0,  \\ \mathrm{Rank}(B)=K 
\end{aligned}
 \right\}.
\]

We state a convention: throughout the paper, whenever a rank-indexed set (like $\mathcal{ H}_{\mathcal{J}_N}^{K}$ and many that will appear later) is assigned a
negative rank, it is understood to be empty.
\begin{prop}\label{conditionsforexistenceclosedform}
We have the following:
\begin{enumerate}[label=\emph{(\alph*)}, leftmargin=*, itemsep=0.3em] 
    \item If $R=D$, then $\Omega^2_{D,\,\text{closed}}(\mathfrak{g})\neq \emptyset$ if and only if $\mathcal{ H}_{\mathcal{J}_N}^{D-2}\neq \emptyset$.
    \item  If $R\neq D$, then $\Omega^2_{R,\,\text{closed}}(\mathfrak{g})\neq \emptyset$ if and only if $\mathcal{H}_{\mathcal{J}_N}^{R} \cup \mathcal{ H}_{\mathcal{J}_N}^{R-2} \neq \emptyset$.
\end{enumerate}
    
\end{prop}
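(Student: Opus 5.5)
The plan is to combine the closedness criterion of the preceding proposition with the rank bounds for $\bar{B}$ versus $B$ noted after \eqref{skewsymmetricmatrix}, and then to construct explicit extensions for the converse directions. Write any $\omega\in\Omega^2_R(\mathfrak{g})$ as $\bar B$ in the block form \eqref{skewsymmetricmatrix}. By the preceding proposition, $d\omega=0$ if and only if $B\mathcal{J}_N+\mathcal{J}_N^tB=0$. The closedness condition involves only $B$, never $b$. So $\omega$ is closed of rank $R$ exactly when $B$ satisfies the equation and the bordered matrix $\bar B$ has rank $R$.

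\emph{Necessity.} Suppose $\omega$ is closed of rank $R$. Since $\bar B$ is $B$ bordered by one row and one column, $R-2\le \mathrm{Rank}(B)\le R$. Both ranks are even because the matrices are skew-symmetric, so $\mathrm{Rank}(B)\in\{R-2,R\}$. If $R=D$, then $B$ is $(D-1)\times(D-1)$ with $D-1$ odd, so $\mathrm{Rank}(B)\le D-2$ and hence $\mathrm{Rank}(B)=D-2$. Therefore $B\in\mathcal H^{D-2}_{\mathcal J_N}$ in case (a), and $B\in\mathcal H^{R}_{\mathcal J_N}\cup\mathcal H^{R-2}_{\mathcal J_N}$ in case (b).

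\emph{Sufficiency.} Given $B$ in the relevant set, I need a vector $b$ such that $\bar B$ has rank $R$. Any such $b$ works for closedness, since closedness does not depend on $b$.
\begin{itemize}
\item If $\mathrm{Rank}(B)=R$ (possible only when $R\neq D$), take $b=0$. Then $\mathrm{Rank}(\bar B)=R$.
\item If $\mathrm{Rank}(B)=R-2$, choose $b\notin \mathrm{Im}(B)$. This is possible because $\mathrm{Im}(B)$ is a proper subspace of $\mathbb{R}^{N}$: indeed $\mathrm{Rank}(B)=R-2\le D-2<N$. Applying $\mathrm{Sp}$-type normal form (congruence), bring $B$ to $J_{(N,R-2)}$. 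Then only the components of $b$ along $\ker B$ matter, and one checks directly that $\mathrm{Rank}(\bar B)=R-2+2=R$.
\end{itemize}
A cleaner way to see the last rank claim: the rank of the bordered skew matrix equals $\mathrm{Rank}(B)+2$ exactly when $b\notin\mathrm{Im}(B)$. To verify this, row-reduce using $b=Bx$ when $b\in \mathrm{Im} B$, which gives rank $\mathrm{Rank}(B)$. Otherwise the rank rises by $2$, since it must be even and strictly larger.

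I expect no real obstacle here. The only point requiring care is the rank-jump lemma for bordered skew-symmetric matrices: the rank is $\mathrm{Rank}(B)$ if $b\in\mathrm{Im}B$ and $\mathrm{Rank}(B)+2$ otherwise. I would state and prove this lemma first via the congruence argument above, and then assemble (a) and (b) from it.
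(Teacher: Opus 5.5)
Your proposal is correct and follows essentially the same route as the paper. Necessity comes from the closedness criterion together with the bound $R-2\le \mathrm{Rank}(B)\le R$ (and $\mathrm{Rank}(B)=D-2$ when $R=D$). Sufficiency comes from taking $b=0$ when $\mathrm{Rank}(B)=R$ and $b\notin\mathrm{Im}(B)$ when $\mathrm{Rank}(B)=R-2$, exactly as in the paper's table. The one difference is that you explicitly justify the bordered skew-rank fact, which the paper simply asserts.
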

\begin{proof}
    Let $\omega \in \Omega^2_{R,\,\text{closed}}(\mathfrak{g})$. This defines a skew-symmetric matrix $\bar{B}$ as in $(\ref{skewsymmetricmatrix})$. The block $B$ must satisfy equation $(\ref{conditionclosedmatrixsymplecticcase})$ and the possible ranks of $B$ are exactly as in the statement of this proposition. That is if $R=D$, $B \in \mathcal{H}_{\mathcal{J}_N}^{D-2}$. If $R<D$, $B \in \mathcal{H}_{\mathcal{J}_N}^{R} \cup \mathcal{ H}_{\mathcal{J}_N}^{R-2}$. 
    Now suppose there exists $B\in \mathcal{H}_{\mathcal{J}_N}^{R} \cup \mathcal{ H}
    _{\mathcal{J}_N}^{R-2} $. Define the matrix 
   \[
\bar{B}=\left( \begin{array}{c|c} 
0 & v^t \\ \hline
-v & B 
\end{array} \right),  \quad B\in \mathrm{Skew}(D-1\times D-1,\mathbb{R}). 
\] 
by choosing $v$ as in the following table
\[
\begin{array}{c|c|c}
\text{Case}
& \operatorname{rank}B & \text{Choice of }v\\ \hline
R=D & D-2 & v\notin\operatorname{Im}B \\
R<D,\; B\in\mathcal H_{\mathcal J_N}^{R} & R & v=0 \\
R<D,\; B\in\mathcal H_{\mathcal J_N}^{R-2} & R-2 & v\notin\operatorname{Im}B
\end{array}
\]

We have then for all cases that $\mathrm{rank}(\bar{B})=R$. The resulting matrix defines an $\omega \in \Omega^2_{R,\,\text{closed}}(\mathfrak{g})$. 
\end{proof}
The exact shape of the set $\mathcal{H}_{\mathcal{J}_N}^{K}$ is described in the next section and the exact conditions for $\mathcal{J}_{N}$ so that this set is nonzero are stated in Theorem \ref{Main1}. 

\subsubsection{The moduli space for almost abelian Lie algebras.}
We study the quotient space in $(\ref{definitionofmodulispace})$. We will consider only the subgroup of $\mathbb{R}^{\times}\mathrm{Aut}(\mathfrak{g})$  described by the following matrices $\bar{A}  \in \mathbb{R}^{\times}\mathrm{Aut}(\mathfrak{g})$, written as 
\[\bar{A} =   \begin{pmatrix}
\alpha & 0 \\
v & A
\end{pmatrix},
\]
with  
\[
A \in \mathcal{T}_{\mathcal{J}_{N}}:= \left\{ A \in \mathrm{GL}(N,\mathbb{R})\;\middle|\;  A\mathcal{J}_{N}-\mathcal{J}_{N}A=0
               \right\}.
\]
Let $\bar{B}$ be as in Equation  $(\ref{skewsymmetricmatrix})$. Consider the product 
\begin{align}\label{productformoduli}
      \bar{A}^t\bar{B}\bar{A}&=\left( \begin{array}{c|c} 
0 & (\alpha b^t+v^tB)A \\ \hline
A^t(Bv-b\alpha) & A^tBA
\end{array} \right) 
 \end{align}

We have two cases depending on the possible ranks of $B$: $R$ or  $R-2$, these correspond to $b\in \mathrm{Im}(B)$ or  $b\notin \mathrm{Im}(B)$ respectively. When $R=D$ only the latter case is possible. By varying $v$ the term $Bv=\text{Im}\; B$. In the former case the term $(A^t(Bv-b\alpha))$ can always be made 0. So we get the set 
\[
E_{1,R}:=\left\{ B \in \mathrm{Skew}(N\times N,\mathbb{R}) \;\middle|\; \begin{aligned}
 \mathrm{Rank}(B)=R 
\end{aligned}
 \right\}/_{\mathrm{cong}} \mathcal{T}_{\mathcal{J}_{N}}.
\]
In the latter case we need to consider also the term  $A^t(Bv-b\alpha)$ so we get the set

\begin{align*}
E_{2,R}&:=    
   \left\{(B,[b]) \;\middle|\; \begin{aligned}
B \in\mathrm{Skew}(N\times N,\mathbb{R}),  \\
\mathrm{Rank}(B)=R-2, \\
0\neq[b] \in \mathbb{R}^{N}/\mathrm{Im}(B). 
\end{aligned}
 \right\}\Big/ \sim \\
 \end{align*}

where $(B,[b])\sim(B^{\prime}, [b^{\prime}])$ if and only if there exist $\alpha \in \mathbb{R}^{\times}$ and $A \in  \mathcal{T}_{\mathcal{J}_{N}}$ such that 
\[
B^{\prime}=A^tBA  \; \;  \text{and}  \;\; \  [b^{\prime}]=\alpha A^t[b]. 
\]

Therefore we can write the surjections
\begin{align}
E_{2,D} &\twoheadrightarrow\mathfrak{P}\Omega^2_D(\mathfrak{g})  \label{modulifordimensionD} \\
E_{1,R} \,\cup \, E_{2,R}&\twoheadrightarrow\mathfrak{P}\Omega^2_R(\mathfrak{g}), \;  \;  R\neq D.      
\end{align}
 
 For $\mathfrak{P}\Omega^2_D(\mathfrak{g})$ we in fact have the following.

\begin{prop}\label{bijjectiontomatricesforsymplecticcase}
    We have the surjection 
\[
 \left\{ B \in \mathrm{Skew}(N\times N,\mathbb{R}) \;\middle|\; \begin{aligned}
 \mathrm{Rank}(B)=N-1 
\end{aligned}
 \right\}/_{\mathrm{cong}}  \mathcal{T}_{\mathcal{J}_{N}} \twoheadrightarrow  \mathfrak{P}\Omega^2_{D}(\mathfrak{g}).
\]
    \begin{proof}
        We refer to the Equation $(\ref{modulifordimensionD})$. We have $\mathrm{rank}(B)=D-2$ and  $0\neq w \notin \mathrm{Im}(B)$. From the dimensions of this case $\dim(\mathbb{R}^{N}/\mathrm{Im}(B))=1$, so that there is only one equivalence class for the second element of the set $E_{2,D}$.  
    \end{proof}
\end{prop}
 \begin{coro} \label{corollaryconditionsforpermutation}
     Let $[\omega] \in \mathfrak{P}\Omega^2_{D}$.  Let $\omega$ correspond to a skew-symmetric matrix $\bar{B}$ as in $(\ref{skewsymmetricmatrix})$ of rank $D$. Then $[\omega]=[\bar{P}.\omega_0]$ for some $\bar{P}\in \mathrm{Per}(D,\mathbb{R})_{e_1}$ if there exists $A\in \mathcal{T}_{\mathcal{J}_{N}} $ such that  
     \[
     A^tBA=P^t(J_{(D,D)})_{1,1}P .  
     \]
    
 \end{coro}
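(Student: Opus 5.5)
The plan is to build an explicit element of $\mathbb{R}^{\times}\mathrm{Aut}(\mathfrak{g})$ of block form $\bar{A}=\begin{pmatrix}\alpha & 0\\ v & A\end{pmatrix}$ with $\bar{A}^t\bar{B}\bar{A}=\bar{P}^tJ_{(D,D)}\bar{P}$. This suffices because the orbit of $\bar{P}.\omega_0$ corresponds, under the congruence description in (\ref{definitionofmodulispace}), to the matrix $\bar{P}^tJ_{(D,D)}\bar{P}$ (or $\bar{P}J_{(D,D)}\bar{P}^t$, depending on the convention for the action; since $\bar{P}^{-1}=\bar{P}^t$ is again in $\mathrm{Per}(D,\mathbb{R})_{e_1}$, the choice does not matter). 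By the Corollary above, every such $\bar{A}$ with $A\in\mathcal{T}_{\mathcal{J}_N}$ lies in $\mathbb{R}^{\times}\mathrm{Aut}(\mathfrak{g})$.

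First, compute $\bar{P}^tJ_{(D,D)}\bar{P}$ in block form using the decomposition of $J_{(D,R)}$ with $u_r$. This gives
\[
\bar{P}^tJ_{(D,D)}\bar{P}=\left( \begin{array}{c|c} 0 & u_r^tP \\ \hline -P^tu_r & P^t(J_{(D,D)})_{1,1}P \end{array}\right).
\]
Its lower-right block is exactly the target in the hypothesis. Take $A$ from the hypothesis, so that $A^tBA=P^t(J_{(D,D)})_{1,1}P$.

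Next, handle the first row and column using (\ref{productformoduli}). The off-diagonal block of $\bar{A}^t\bar{B}\bar{A}$ is $A^t(Bv-\alpha b)$, and we need $A^t(Bv-\alpha b)=-P^tu_r$. Since $A$ is invertible, this is equivalent to $Bv-\alpha b=-(A^t)^{-1}P^tu_r=:w$. The proof of Proposition \ref{bijjectiontomatricesforsymplecticcase} shows that $\mathrm{rank}(B)=N-1$, so $\mathbb{R}^N/\mathrm{Im}(B)$ is one-dimensional, and $b\notin\mathrm{Im}(B)$ because $\bar{B}$ has full rank $D$. Hence $[b]$ spans the quotient. If $[w]\neq 0$, there is a unique $\alpha\neq 0$ with $[-\alpha b]=[w]$, and then some $v$ gives $Bv=w+\alpha b$. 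The entries of $\bar{A}^t\bar{B}\bar{A}$ are then correct, so $[\omega]=[\bar{P}.\omega_0]$.

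The main obstacle is checking that $w\notin\mathrm{Im}(B)$, since otherwise we would need $\alpha=0$. This follows from a rank count. The matrix $\bar{P}^tJ_{(D,D)}\bar{P}$ has rank $D$, and its lower-right block $A^tBA$ has rank $N-1$. By the rank discussion after (\ref{skewsymmetricmatrix}), its off-diagonal column $-P^tu_r$ must therefore lie outside $\mathrm{Im}(A^tBA)=A^t\,\mathrm{Im}(B)$; otherwise the total rank would be $N-1<D$. Applying $(A^t)^{-1}$ gives $w\notin\mathrm{Im}(B)$, which completes the construction.
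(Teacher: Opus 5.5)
Your proof is correct and follows the paper's route. The paper notes that $\bar P^{-1}.\omega_0$ has lower-right block $P^t(J_{(D,D)})_{1,1}P$ and then invokes Proposition \ref{bijjectiontomatricesforsymplecticcase}. The content of that proposition is that $\mathbb{R}^N/\mathrm{Im}(B)$ is one-dimensional, so $\alpha$ and $v$ can absorb the first row and column, and this is exactly what you carry out by hand. Your explicit check that $w\notin\mathrm{Im}(B)$, which forces $\alpha\neq 0$, is the step the paper leaves implicit in that proposition.
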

\begin{proof}
    
Just need to notice that for any 
\begin{align*}
  \bar{P}^{-1}&=\left( \begin{array}{c|c} 
1 & 0 \\ \hline
0 & P^{-1} 
\end{array} \right) \in \mathrm{Per}(D,\mathbb{R})_{e_1}.
\end{align*}
The $2$-form  $\bar{P}^{-1}.\omega_0$ corresponds to the skew-symmetric matrix 

\[
\left( \begin{array}{c|c} 
0 & * \\ \hline
* & P^t(J_{(D,D)})_{1,1}P
\end{array} \right). 
\]
As the lower-right block $B$ of $\omega$ and
$\bar P^{-1}.\omega_0$ belong to the same congruence class over
$\mathcal T_{\mathcal J_N}$. Proposition
\ref{bijjectiontomatricesforsymplecticcase} implies that
\[
[\omega]=[\bar P^{-1}.\omega_0].
\]
\end{proof}

If we restrict ourselves to closed $2$-forms then from Proposition \ref{conditionsforexistenceclosedform}, we just need to add the condition that the block $B$ in $\ref{skewsymmetricmatrix}$ is in $\mathcal{ H}_{\mathcal{J}_N}^{K}$, with $K=R$ or $K=R-2$, appropriately. For example, we get the surjection

\begin{equation}    
\mathcal{H}_{\mathcal{J}_N}^{D-2}/\text{``congruence over $ \mathcal{T}_{\mathcal{J}_{N}}$''} \twoheadrightarrow
\mathfrak{P}\Omega^2_{D, \;\text{closed}}.
\end{equation} 

In Theorem \ref{main2} we will show that Corollary \ref{corollaryconditionsforpermutation} can always be applied for $\mathfrak{P}\Omega^2_{D,\, closed}(\mathfrak{g})$. In the next section we study the sets $\mathcal{H}_{\mathcal{J}_N}^{K}$, $ \mathcal{T}_{\mathcal{J}_{N}}$ and their congruence.

\section{A Matrix problem}\label{matrixproblemsection}
Again, in this section, let $N=D-1$, where $D$ is the dimension of the Almost abelian Lie algebra we are considering. Here we study in more detail the matrix equations obtained in the previous section for the study of closed $2$-forms.
Let

\[
\mathcal{J}_N
=
\mathcal{J}_{n_1}(\lambda_1)\oplus\cdots\oplus
\mathcal{J}_{n_k}(\lambda_k)
\oplus
\mathcal{C}_{n_{k+1}}(a_1,b_1)\oplus\cdots\oplus
\mathcal{C}_{n_p}(a_p,b_p) \in \mathrm{M}(N\times N,\mathbb{R}).
\]

be a matrix in real Jordan normal form (see \cite{matrixanalysis},Theorem 3.4.1.5. ). We recall the definition of the following sets
\begin{equation} \label{matrixsets}
    \begin{aligned}
  \mathcal{T}_{\mathcal{J}_{N}}&:= \left\{ A \in \mathrm{GL}(N,\mathbb{R})\;\middle|\;  A\mathcal{J}_{N}-\mathcal{J}_{N}A=0   
               \right\},  \\
  \mathcal{ H}_{\mathcal{J}_N}^{K}&:=\left\{ B \in \mathrm{Skew}(N\times N,\mathbb{R}) \;\middle|\; \begin{aligned}
B\mathcal{J}_N+\mathcal{J}_N^tB=0,  \\ \mathrm{Rank}(B)=K 
\end{aligned}
 \right\}.
    \end{aligned}
\end{equation}

First we need to describe the elements of these sets. Then we consider the following quotient space:
 \begin{align}\label{quotient space}     
 \mathcal{H}^K_{\mathcal{J}_N}/_{\mathrm{cong}}  \mathcal{T}_{\mathcal{J}_{N}}. 
\end{align}
In the following, we will use the notation $C=[C_{ij}]^k_{i=1,j=1} \in \mathrm{M}(N\times N,\mathbb{R})$ to denote a block matrix, with the same block shape as $\mathcal{J}_N$. Each block denoted by $C_{ij} \in \mathrm{M}(n_i\times n_j,\mathbb{R})$.
We need to study the equations 
\begin{equation}\label{permutationequation}
  C\mathcal{J}_{N}-\mathcal{J}_{N}C=0,
\end{equation}
\begin{equation}\label{transposepermutationequation}
    C\mathcal{J}_N+\mathcal{J}_N^tC=0.
\end{equation}
We search for a description of the solution space for these equations. This type of equations has been discussed in a more general setting (See \cite{topicsmatrixanalysis}, Section 4.4). The first one is just a commutativity equation and the second one is a special case of Lyapunov's equation. 
We can think of the matrix $\mathcal{J}_N$ as a block matrix 
\begin{equation}\label{jornanrealpluscomplex}
    \mathcal{J}_N=\mathcal{J}_{N_{\mathbb{R}}}\oplus \mathcal{J}_{N_{\mathbb{C}}}=\left( \begin{array}{c|c} 
\text{Real eigenvalues} & 0 \\ \hline
0 & \text{Complex eigenvalues}  
\end{array} \right),
\end{equation}
$N_{\mathbb{R}}+N_{\mathbb{C}}=N$.

From Theorem 4.4.6 in \cite{topicsmatrixanalysis} we know that a matrix $C$ satisfying Equation (\ref{permutationequation}) or Equation (\ref{transposepermutationequation})  will have the same diagonal shape. Therefore, we can write
\begin{align}
    \mathcal{T}_{\mathcal{J}_{N}}&= \mathcal{T}_{\mathcal{J}_{N_{\mathbb{R}}}}\oplus  \mathcal{T}_{\mathcal{J}_{N_{\mathbb{C}}}} , \\
     \mathcal{H}_{\mathcal{J}_{{N}}}^{K} &= \bigcup_{
     Y+W=K} \left(\mathcal{ H}_{\mathcal{J}_{N_{\mathbb{R}}}}^{Y} \oplus \mathcal{ H}_{\mathcal{J}_{N_{\mathbb{C}}}}^{W}\right).
\end{align}
The sets on the right hand side are defined as
\begin{align}
   \mathcal{T}_{\mathcal{J}_{N_{\mathbb{R}}}}&:= \left\{ A \in \mathrm{GL}(N_{\mathbb{R}},\mathbb{R})\;\middle|\;  A\mathcal{J}_{N_{\mathbb{R}}}-\mathcal{J}_{N_{\mathbb{R}}}A=0   
               \right\}, \\
   \mathcal{T}_{\mathcal{J}_{N_{\mathbb{C}}}}&:= \left\{ A \in \mathrm{GL}(N_{\mathbb{C}},\mathbb{R})\;\middle|\;  A\mathcal{J}_{N_{\mathbb{C}}}-\mathcal{J}_{N_{\mathbb{C}}}A=0   
               \right\}, \\ 
   \mathcal{ H}_{\mathcal{J}_{N_{\mathbb{R}}}}^{Y}&:=\left\{ B \in \mathrm{Skew}(N_{\mathbb{R}}\times N_{\mathbb{R}},\mathbb{R}) \;\middle|\; \begin{aligned}
B\mathcal{J}_{N_{\mathbb{R}}}+\mathcal{J}_{N_{\mathbb{R}}}^tB=0,  \\ \mathrm{Rank}(B)=Y 
\end{aligned}
 \right\},  \\
   \mathcal{H}_{\mathcal{J}_{N_{\mathbb{C}}}}^{W}&:=\left\{ B \in \mathrm{Skew}(N_{\mathbb{C}}\times N_{\mathbb{C}},\mathbb{R}) \;\middle|\; \begin{aligned}
B\mathcal{J}_{N_{\mathbb{C}}}+\mathcal{J}_{N_{\mathbb{C}}}^tB=0,  \\ \mathrm{Rank}(B)=W
\end{aligned}
 \right\}.
\end{align}

We now mention two facts about matrices that will be used in the following sections. The next proposition is known (it is used in Lemma 4.1 of \cite{Horn}, which as mentioned before inspired some of the proofs of the following chapters). It is a direct result of [\cite{existenceofroots}, Theorem 7]. 

\begin{prop} \label{existenceofroots}
Let $A$ be a real nonsingular matrix that has no negative real eigenvalues, then there exists a polynomial $p(A)$ in $A$ with real coefficients,  such that 
\[
p(A)^2=A^{-1}.
\]
\end{prop}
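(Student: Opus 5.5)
Since $A$ is nonsingular, $A^{-1}$ is nonsingular. Its eigenvalues are the reciprocals of those of $A$, so $A^{-1}$ also has no negative real eigenvalues. It therefore suffices to show that a real nonsingular matrix $M$ with no eigenvalues on $(-\infty,0)$ has a square root that is a real polynomial in $M$. Applying this to $M=A^{-1}$ gives a real polynomial $q$ with $q(A^{-1})^2=A^{-1}$. To turn $q(A^{-1})$ into a polynomial in $A$, write $A^{-1}=s(A)$ for some real polynomial $s$; this is possible by Cayley--Hamilton, since the characteristic polynomial of $A$ has nonzero constant term $\pm\det A$. Then $p:=q\circ s$ has real coefficients and $p(A)^2=A^{-1}$.

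For the square root of $M$ I would use the holomorphic functional calculus, or equivalently Hermite interpolation. Let $\sigma(M)$ be the spectrum and $m$ the minimal polynomial, with root multiplicities $\nu_\mu$. Take $f$ to be the principal branch of $\sqrt{z}$, which is holomorphic on $\mathbb{C}\setminus(-\infty,0]$, an open set containing $\sigma(M)$. Let $h$ be the Hermite interpolating polynomial that matches $f$ and its derivatives up to order $\nu_\mu-1$ at each $\mu\in\sigma(M)$. Then $f(M):=h(M)$ is well defined. The map $g\mapsto g(M)$ is a ring homomorphism on functions holomorphic near $\sigma(M)$, because polynomials agreeing to these orders differ by a multiple of $m$. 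Since $f^2=z$ as functions, we get $h(M)^2=M$.

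The step requiring care is showing that $h$ can be chosen with \emph{real} coefficients. Because $M$ is real, $\sigma(M)$ is closed under conjugation, and conjugate eigenvalues have equal multiplicities in $m$. The principal branch satisfies $f(\bar z)=\overline{f(z)}$ off the cut, and the same holds for all its derivatives. So the interpolation data at $\bar\mu$ are the conjugates of the data at $\mu$. By uniqueness of the Hermite interpolant of degree less than $\deg m$, the polynomial $\bar h$, with conjugated coefficients, satisfies the same conditions as $h$, hence $\bar h=h$ and $h$ is real. The hypothesis that $M$ has no negative eigenvalues, together with nonsingularity, is exactly what puts $\sigma(M)$ in the domain where the branch is holomorphic and conjugation-equivariant. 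Real eigenvalues are then positive, where $f$ is real-valued with real derivatives. This is the content of Theorem~7 in \cite{existenceofroots}, which can be cited directly.
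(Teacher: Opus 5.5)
Your proposal is correct and follows the paper's proof exactly: obtain a real polynomial square root of $A^{-1}$ from Theorem 7 of \cite{existenceofroots}, then precompose with a real polynomial expressing $A^{-1}$ in terms of $A$. Your Hermite-interpolation argument, using conjugation-equivariance of the principal branch and uniqueness of the interpolant, is a valid self-contained proof of the result the paper simply cites.
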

\begin{proof}
$A^{-1}$ has no negative real eigenvalues, so from [\cite{existenceofroots}, Theorem 7] there exists a polynomial $p_1$ such that 
\[
p_1(A^{-1})^2=A^{-1}.
\]
Also it is well known that there exists a polynomial $p_2$  such that $p_2(A)=A^{-1}$.  Setting $p:=p_1\circ p_2$, which is again a
polynomial with real coefficients, we obtain our desired polynomial.
\end{proof}
The next proposition is not a surprising fact. The proof follows easily by using a result in \cite{permutationmatrices}.
\begin{prop}
    \label{skewsymmetricpermutation}
     Consider a skew-symmetric matrix $A \in \matrices{N}{N}$ of rank $R$, such that the matrix $\abs{A} $ (obtained by substituting each nonzero element in $A$ by its absolute value) is a partial permutation matrix (so that some rows might be zero). Then there exists a permutation matrix $P$ such that
     \begin{align*}
     A=\begin{cases}
P^tJ_NP & \text{if} \;R=N  \\
P^tJ_{(N,R)}P &\text{if} \; R<N.
\end{cases}        
             \end{align*}        
  \end{prop}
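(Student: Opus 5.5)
The plan is to read off the combinatorial structure of $A$ directly and then write down the permutation explicitly. I will not need the cited result from \cite{permutationmatrices}.

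First, I would record what the hypotheses force on the entries. Since $A$ is skew-symmetric, its diagonal vanishes and $a_{ji}=-a_{ij}$. Because $\abs{A}$ is a partial permutation matrix, every nonzero entry equals $\pm1$, and each row and each column contains at most one nonzero entry. Let $S\subset\{1,\ldots,N\}$ be the set of indices of nonzero rows. For $i\in S$ let $\sigma(i)$ be the column of the unique nonzero entry in row $i$. Skew-symmetry gives $a_{\sigma(i)i}=-a_{i\sigma(i)}\neq 0$, so $\sigma(i)\in S$ and $\sigma(\sigma(i))=i$. Since the diagonal is zero, $\sigma(i)\neq i$. Hence $\sigma$ is a fixed-point-free involution of $S$, and $S$ splits into disjoint pairs $\{i,\sigma(i)\}$.

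Second, I would compute the rank. The nonzero rows of $A$ have their single nonzero entries in distinct columns, so they are linearly independent. Therefore $R=\mathrm{Rank}(A)=\abs{S}$, and in particular $R=2r$ is even, with $r$ pairs. Within each pair exactly one of $a_{i\sigma(i)}$ and $a_{\sigma(i)i}$ equals $+1$. I label the pairs $(p_k,q_k)$, $k=1,\ldots,r$, so that $a_{p_kq_k}=+1$ and $a_{q_kp_k}=-1$. This orientation choice is exactly what absorbs the sign ambiguity, which I expect to be the only delicate point. I also list the zero rows as $z_1,\ldots,z_{N-R}$.

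Third, I would define the permutation $\pi$ of $\{1,\ldots,N\}$ by
\[
\pi(k)=p_k,\qquad \pi(r+k)=q_k \quad (1\le k\le r),\qquad \pi(R+l)=z_l \quad (1\le l\le N-R).
\]
I take $P$ to be the permutation matrix with $Pe_{\pi(m)}=e_m$. Then
\[
(P^tJ_{(N,R)}P)_{ij}=e_i^tP^tJ_{(N,R)}Pe_j=(J_{(N,R)})_{\pi^{-1}(i)\,\pi^{-1}(j)}.
\]
This entry equals $1$ exactly when $(\pi^{-1}(i),\pi^{-1}(j))=(k,r+k)$, that is, when $(i,j)=(p_k,q_k)$. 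It equals $-1$ exactly when $(i,j)=(q_k,p_k)$, and it is $0$ otherwise. This agrees entrywise with $A$, so $A=P^tJ_{(N,R)}P$. In the case $R=N$ there are no zero rows and $J_{(N,N)}=J_N$, which gives the first case of the statement.
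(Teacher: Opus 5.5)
Your proof is correct, and it reaches the result by a more direct route than the paper.

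The paper works in stages. In the full-rank case it cites Theorem 1 of \cite{permutationmatrices} to write the symmetric permutation matrix $\abs{A}$ as $P_1^tQ_NP_1$, with $Q_N=Q_2\oplus\cdots\oplus Q_2$. It then notes that $P_1^tS_NP_1$ has the same support as $A$ but possibly wrong signs. It fixes the signs with a second permutation $P_2$, which swaps the two indices of each offending pair, and converts $S_N$ to $J_N$ with a third permutation $P_3$. Lower rank is handled by first permuting the zero rows and columns to the end and applying the same argument to the remaining block $A'$.

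You replace each of these stages:
\begin{itemize}
\item Instead of the citation, you prove the needed special case yourself, by observing that the support of $A$ is a fixed-point-free involution $\sigma$ of the set of nonzero rows.
\item The sign correction is absorbed into your orientation choice $a_{p_kq_k}=+1$.
\item $P_1$, $P_2$, $P_3$ and the zero-row reordering are folded into the single explicit permutation $\pi$.
\end{itemize}

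Your rank computation also makes explicit a point the paper uses tacitly: $\mathrm{Rank}(A)$ equals the number of nonzero rows. Hence $R$ is even, and the block $A'$ in the paper's reduction really has full rank.

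The paper's version is more modular and leans on a known result about symmetric permutation matrices. Yours is self-contained, treats all ranks uniformly, and gives $P$ by a formula. The paper instead builds $P$ as a product of permutations whose existence it partly leaves to the reader, namely the sign-fixing $P_2$ and the "well known" $P_3$.
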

 
 \begin{proof}
     Consider first the case $A$ is of even dimension and maximal rank.  $\abs{A} $ is a symmetric  ($\abs{A}^t=\abs{A}$)  permutation matrix . Define the matrix
     \[
     Q_{N}:=Q_2\oplus  \cdots  \oplus Q_2, \; \; Q_2=\left(\begin{array}{c|c} 
0&1\\ \hline
1&0
\end{array} \right).
     \]
     
     From Theorem 1 in \cite{permutationmatrices}, we can find a permutation matrix $P_1$ such that 
     \[
     \abs{A}=P_1^tQ_{N}P_1.
     \]

 Now define the matrix 
 \begin{equation}\label{9}
  S_N:=J_2\oplus  \cdots  \oplus J_2, \; \; J_2=\left(\begin{array}{c|c} 
0&1\\ \hline
-1&0
\end{array} \right).
 \end{equation}
 Clearly we have
 \[
\abs{ P_1^tS_{N}P_1} = \abs{A}
 \]
 $P_1^tS_{N}P_1$ is a skew-symmetric matrix such that the nonzero elements are in the same positions as those of $A$, but some elements could have the wrong sign.  We can find another permutation matrix $P_2$ that exchanges the necessary elements such that
 \[
 A=P_2^tP_1^tS_{N}P_1P_2.
 \]
  Finally, it is well known that there is a permutation matrix $P_3$  such that $S_N=P_3^tJ_{N}P_3$.
 So finally we get
 \[
 A=P_2^tP_1^tP_3^tJ_NP_3P_1P_2.
 \]
 Now let $A$ be of any rank. There exists a permutation matrix $P$ such that
 
 \[
 P^tAP=\left(\begin{array}{c|c} 
A^{\prime}&0\\ \hline
0&0
\end{array} \right),
 \]
 where $A^{\prime}$ is skew-symmetric of maximal rank. We  can apply the same argument as before using matrices of the shape 
 \[
 \left(\begin{array}{c|c} 
P_i&0\\ \hline
0&I \end{array} \right).
 \]
 \end{proof}
 
\subsection{The sets $\mathcal{T}_{\mathcal{J}_{N_{\mathbb{R}}}}$ and $\mathcal{H}_{\mathcal{J}_{{N}_{\mathbb{R}}}}^{K}$: real eigenvalues. }
In this section we suppose that the real Jordan normal form is given by 
\begin{equation}  \label{startingjordanformreal}
\mathcal{J}_{N_{\mathbb{R}}}=\mathcal{J}_{n_1}(\lambda_1)\oplus  \cdots  \oplus \mathcal{J}_{n_p}(\lambda_p).
\end{equation}
We first introduce some matrices and their properties. 
\subsubsection{Some special matrices and their properties} \label{sectionrealdefinitionmatrices}

We recall some of the notation used in \cite{Horn}, where, among other things, Equation (\ref{permutationequation}) is studied. We introduce some new definitions too.   
We say that a matrix $C_{ij} \in \mathrm{M}(n_i\times n_j,\mathbb{R})$ is \textit{upper Toeplitz} if it is of the form
 
\begin{align}\label{formtoeplitz}
C_{ij}=
\begin{cases}
\left( \begin{array}{ccccc} 
&c_{ij}&c_{ij}^{(2)}&\cdots& c_{ij}^{(n_i)} \\ 
&&c_{ij}&\ddots&\vdots \\
&&&\ddots&c_{ij}^{(2)} \\
0&&&& c_{ij}
\end{array} \right) \; \text{if} \; n_i\leq n_j,  \\
\left( \begin{array}{cccc} 
c_{ij}&c_{ij}^{(2)}&\cdots& c_{ij}^{(n_j)} \\ 
&c_{ij}&\ddots&\vdots \\
&&\ddots&c_{ij}^{(2)} \\
&&& c_{ij} \\
0&&&
\end{array} \right) \; \text{if} \; n_j\leq n_i.
\end{cases}
\end{align}

In a similar way we say a matrix is \textit{lower Hankel} if it is of the form 
\begin{align}\label{formhankel}
C_{ij}=
\begin{cases}
\left( \begin{array}{ccccc} 
0&&&&c_{ij}  \\ 
&&&\iddots&c_{ij}^{(2)} \\
&&c_{ij}&\iddots&\vdots \\
&c_{ij}&c_{ij}^{(2)}&\cdots& c_{ij}^{(n_i)}
\end{array} \right) \; \text{if} \; n_i\leq n_j,  \\
\left( \begin{array}{cccc} 
0&&&  \\ 
&&&c_{ij}  \\ 
&&\iddots&c_{ij}^{(2)} \\
&c_{ij}&\iddots&\vdots \\
c_{ij}&c_{ij}^{(2)}&\cdots& c_{ij}^{(n_j)}
\end{array} \right) \; \text{if} \; n_j\leq n_i.
\end{cases}
\end{align}
A matrix $C=[C_{ij}]^p_{i=1,j=1}$ is \textit{${N_{\mathbb{R}}}$-lower Hankel} if each block is lower Hankel. Define the following matrices having the same block partition as the matrix $C$. 
\begin{align}
    \mathcal{P}_{{N_{\mathbb{R}}}}=\mathcal{P}_{n_1}\oplus \cdots \oplus \mathcal{P}_{n_p}& \;\;, \text{with} \; \mathcal{P}_{n_i}=\left( \begin{array}{cccc}
    &&&1\\
    &&1&\\
    &\iddots&&\\
    1&&&
    \end{array}
    \right)\in \mathrm{M}(n_i\times n_i,\mathbb{R}), \\
    I_{{N_{\mathbb{R}}}}^{\pm}=I_{n_1}^{\pm}\oplus \cdots \oplus I_{n_p}^{\pm}& \;\;, \text{with} \; I_{n_i}^{\pm}=\left( \begin{array}{cccc}
    1&&&\\
    &-1&&\\
    &&1&\\
    &&&\ddots
    \end{array}
    \right)\in \mathrm{M}(n_i\times n_i,\mathbb{R}). 
\end{align}
We say that a matrix $C^{\prime}$ is \textit{${N_{\mathbb{R}}}$-upper alternating Toeplitz} if $C^{\prime}=I_{{N_{\mathbb{R}}}}^{\pm}C$ for some ${N_{\mathbb{R}}}$-upper Toeplitz matrix $C$. In the same way we define \textit{${N_{\mathbb{R}}}$-lower alternating Hankel} matrices.   Notice that if $C$ is ${N_{\mathbb{R}}}$-lower (alternating) Hankel matrix then 
\[
C=\mathcal{P}_{{N_{\mathbb{R}}}}C^{\prime}
\]
is a N-upper (alternating) Toeplitz matrix and vice versa. 

We recall here the definition of \textit{N-block star} of a block matrix.  

\[
C^{\boxstar}:=\mathcal{P}_{N_{\mathbb{R}}}C^{T}\mathcal{P}_{N_{\mathbb{R}}}.
\]
When referring to one single block $C_{ij}$ we will also use, unless there is some risk of confusion, the following notation 
\[
(C_{ij})^{\boxstar}:=\mathcal{P}_{n_j}C_{ij}^t\mathcal{P}_{n_i}.
\]

We will say that a matrix $C$ is \textit{N-skew-symmetric} if 
\[
C^{\boxstar}=-C.
\]
That is 
   
\begin{equation*}\label{blockofNskewsymetric}
C_{ji}=-(C_{ij})^{\boxstar}=-\mathcal{P}_{n_j}C_{ij}^t\mathcal{P}_{n_i}.
\end{equation*}

For $C_{ij}\in \matrices{n_i}{n_j}$, define
\[
\hat{C}_{ij}:=\altidentity{i}C_{ij}\altidentity{j}.
\]
It is easy to show that if $C$ is N-skew-symmetric, then $\mathcal{P}_{N_{\mathbb{R}}}C$ is skew-symmetric in the usual sense. 
If ${C}_{ij}$ is upper Toeplitz, then $\hat{C}_{ij}$ is again upper Toeplitz. Obviously 
\[
C_{ij}\altidentity{j}=\altidentity{i}\hat{C}_{ij}.
\]
In particular (for square $C_{ij}$),  if $p(C_{ij})$ is a polynomial in $C_{ij}$ and $p(\hat{C}_{ij})$ is the same polynomial in $\hat{C}_{ij}$ then 
\begin{equation}\label{commutationwithhat}
p(\hat{C}_{ij})\altidentity{j}=\altidentity{i}p(C_{ij}).
\end{equation}

If in addition to being $N$-upper alternating Toeplitz $C$ is also N-skew-symmetric we actually have 

\begin{equation}\label{conmmuteswithaltidentity}
\altidentity{i}C_{ii}=\begin{cases} 
C_{ii}\altidentity{i} & \text{if} \; n_i \; \text{is even}, \\
-C_{ii}\altidentity{i}& \text{if} \; n_i \; \text{is odd}.
\end{cases}
\end{equation}
\begin{proof}
    Notice that 
    \[
\altidentity{i}\mathcal{P}_{n_i}=\begin{cases}
-\mathcal{P}_{n_i}\altidentity{i} & \text{if} \; n_i \; \text{is even}, \\
\mathcal{P}_{n_i}\altidentity{i}  & \text{if} \; n_i \; \text{is odd}.
\end{cases}
\]

$\altidentity{i}C_{ii}$ is Toeplitz, therefore it is Persymmetric, that is 
\[
\altidentity{i}C_{ii}=\mathcal{P}_{n_i}C_{ii}^t\altidentity{i}\mathcal{P}_{n_i}.
\]
From our supposition $C_{ii}^t=-\mathcal{P}_{n_i}C_{ii}\mathcal{P}_{n_i}.$ Therefore,
\begin{equation} 
    \altidentity{i}C_{ii}=-\mathcal{P}_{n_i}(\mathcal{P}_{n_i}C_{ii}\mathcal{P}_{n_i})\altidentity{i}\mathcal{P}_{n_i}=\begin{cases}
C_{ii}\altidentity{i} & \text{if} \; n_i \; \text{is even}, \\
-C_{ii}\altidentity{i} & \text{if} \; n_i \; \text{is odd}.
\end{cases} 
\end{equation}
\end{proof}
\begin{remark} \label{diagonalsarezero}
This previous observation can also be understood as a consequence of the fact that for a $N$-skew-symmetric  N-upper Toeplitz  matrix $C$, the diagonals of $C_{ii}$ that have an odd number of elements have all their elements equal to $0$. So necessarily a nonzero diagonal will have an even number of entries. 
    \end{remark}

    \subsubsection{The solutions of equations (\ref{permutationequation}) and (\ref{transposepermutationequation})}
For reference in the following statements we state here two extra conditions a given block matrix $C=[C_{ij}]$ could satisfy with respect to $\mathcal{J}_N$:
\begin{align}    \label{conditionblocksrealtoeplitz}
     C_{ij}&=0 \;\; \text{whenever} \; \; \lambda_i \neq \lambda_j. \\ 
     \label{conditionblockrealankel}
     C_{ij}&=0 \;\; \text{whenever} \;\;\lambda_i\neq -\lambda_j.
     \end{align}

The next proposition is known (Lemma 4.4.11 in \cite{topicsmatrixanalysis} and the discussion following it).
\begin{prop}\label{conmmutationwithjordan}
 A block matrix $C$ satisfies 
\begin{align}\label{conmmutationwithjordanequation}
    C\mathcal{J}_{N_{\mathbb{R}}}-\mathcal{J}_{N_{\mathbb{R}}}C=0
\end{align}
if and only if
 $C$ is $N$-upper Toeplitz matrix and satisfies condition (\ref{conditionblocksrealtoeplitz}).

\end{prop}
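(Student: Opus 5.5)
We reduce the equation $C\mathcal{J}_{N_{\mathbb{R}}}-\mathcal{J}_{N_{\mathbb{R}}}C=0$ to its individual blocks and then solve a Sylvester equation for each block. Both $\mathcal{J}_{N_{\mathbb{R}}}$ and $C$ are written in the block partition of (\ref{startingjordanformreal}), and $\mathcal{J}_{N_{\mathbb{R}}}$ is block diagonal. Comparing the $(i,j)$ blocks shows that the equation is equivalent to the family of equations
\[
C_{ij}\mathcal{J}_{n_j}(\lambda_j)-\mathcal{J}_{n_i}(\lambda_i)C_{ij}=0,\qquad 1\le i,j\le p.
\]
Each block can therefore be treated separately. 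Write $\mathcal{J}_{n}(\lambda)=\lambda I_n+N_n$, where $N_n$ is the nilpotent shift with ones on the superdiagonal. The block equation then becomes
\[
(\lambda_j-\lambda_i)C_{ij}=N_{n_i}C_{ij}-C_{ij}N_{n_j}.
\]

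First, suppose $\lambda_i\neq\lambda_j$. The linear operator $X\mapsto N_{n_i}X-XN_{n_j}$ on $\matrices{n_i}{n_j}$ is nilpotent, because it is the difference of the commuting nilpotent operators $X\mapsto N_{n_i}X$ and $X\mapsto XN_{n_j}$. Hence it has no nonzero eigenvalue, so $C_{ij}=0$, which is condition (\ref{conditionblocksrealtoeplitz}). Equivalently, one can invoke the standard fact that a Sylvester equation $AX-XB=0$ has only the trivial solution when $A$ and $B$ have disjoint spectra.

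Second, suppose $\lambda_i=\lambda_j$. The equation reduces to $N_{n_i}C_{ij}=C_{ij}N_{n_j}$. Write $C_{ij}=(c_{kl})$. The entry $(k,l)$ of $N_{n_i}C_{ij}$ is $c_{k+1,l}$, with the convention that it is zero when $k=n_i$. The entry $(k,l)$ of $C_{ij}N_{n_j}$ is $c_{k,l-1}$, with the convention that it is zero when $l=1$. The equation is thus $c_{k+1,l}=c_{k,l-1}$, so $C_{ij}$ is constant along diagonals, i.e.\ Toeplitz.

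The boundary cases then force the zeros. Taking $l=1$ gives $c_{k+1,1}=0$ for $k<n_i$, so the first column vanishes below its first entry. Taking $k=n_i$ gives $c_{n_i,l-1}=0$ for $l>1$, so the last row vanishes except possibly at its final entry. Propagating these zeros along the diagonals yields exactly the two shapes in (\ref{formtoeplitz}), according to whether $n_i\le n_j$ or $n_j\le n_i$.

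For the converse, a direct check shows that any upper Toeplitz block of that shape satisfies $N_{n_i}C_{ij}=C_{ij}N_{n_j}$: both sides are the shifted Toeplitz matrix, and the boundary entries vanish. Together with $C_{ij}=0$ whenever $\lambda_i\ne\lambda_j$, this shows that every $N$-upper Toeplitz matrix satisfying (\ref{conditionblocksrealtoeplitz}) solves (\ref{conmmutationwithjordanequation}).

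The main obstacle is bookkeeping rather than ideas: the index chase at the boundary must produce the zero pattern of (\ref{formtoeplitz}) correctly in both rectangular cases. The cleanest route is to handle the case $n_i\le n_j$ explicitly and obtain the case $n_j\le n_i$ by transposing and conjugating with $\mathcal{P}$. Alternatively, the whole statement can simply be cited from Lemma 4.4.11 of \cite{topicsmatrixanalysis}.
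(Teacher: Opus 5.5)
Your proof is correct and follows the paper's route. The paper gives no argument of its own and simply cites Lemma 4.4.11 of \cite{topicsmatrixanalysis}. Its standard proof is exactly your scheme: blockwise reduction, $C_{ij}=0$ when $\lambda_i\neq\lambda_j$, and the index chase $c_{k+1,l}=c_{k,l-1}$ with boundary zeros in the nilpotent case. The paper writes out this same scheme for the companion equation in Proposition \ref{Propositionconditioncommuteswithtranspose}. Your nilpotent-operator argument for $\lambda_i\neq\lambda_j$ just makes the Sylvester-uniqueness fact that the paper invokes self-contained.
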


For equation (\ref{transposepermutationequation}) , we just slightly modify the proof of the previous proposition. First we consider the case of only one Jordan block.

\begin{lemma}\label{onejordanblocklemmatransposecommute}
Let $\mathcal{J}_r(0) \in \mathrm{M}(r\times r,\mathbb{R})$, $\mathcal{J}_s(0) \in \mathrm{M}(s\times s,\mathbb{R})$ and $X \in \mathrm{M}(r\times s,\mathbb{R})$. Then 
\[
X\mathcal{J}_s+\mathcal{J}_r^tX=0
\]
if and only if 
\begin{align*}
    X&=\left( \begin{array}{c} 
0\\ 
Y
\end{array} \right), Y\in \mathrm{M}(s\times s,\mathbb{R})\;\;\text{if}\; r\geq s\; \text{, or} \\ 
    X&=\left( \begin{array}{cc} 0&Y
\end{array} \right), Y\in \mathrm{M}(r\times r,\mathbb{R}) \; \; \text{if}\; r\leq s,
\end{align*}
where $Y$ is a lower alternating Hankel matrix.
\end{lemma}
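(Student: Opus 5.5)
The plan is to reduce the equation to the commutation equation of Proposition~\ref{conmmutationwithjordan} by a change of variables built from $\mathcal{P}_r$ and $\altidentity{}$-type matrices. (I write $I_r^{\pm}$, $\mathcal{P}_r$ for the $r\times r$ versions, and $\mathcal{J}_r=\mathcal{J}_r(0)$.) Two elementary identities do all the work:
\[
\mathcal{J}_r^t=\mathcal{P}_r\mathcal{J}_r\mathcal{P}_r, \qquad I_r^{\pm}\mathcal{J}_r I_r^{\pm}=-\mathcal{J}_r .
\]
The first holds because reversing the order of the basis turns the subdiagonal into the superdiagonal. The second holds because conjugating by $I_r^{\pm}$ multiplies entry $(i,i+1)$ by $(-1)^{i+1}(-1)^{i+2}=-1$. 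Both are one-line checks and can be verified directly on matrix units.

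First, set $W=\mathcal{P}_rX$. Multiplying the equation $X\mathcal{J}_s+\mathcal{J}_r^tX=0$ on the left by $\mathcal{P}_r$ and using the first identity gives $W\mathcal{J}_s+\mathcal{J}_rW=0$. Next, set $W=I_r^{\pm}V$. Multiplying on the left by $I_r^{\pm}$ and using the second identity gives $V\mathcal{J}_s=\mathcal{J}_rV$. Since $\mathcal{P}_r$ and $I_r^{\pm}$ are invertible, every step is reversible. Hence $X$ solves the original equation if and only if $V=I_r^{\pm}\mathcal{P}_rX$ solves the intertwining equation $V\mathcal{J}_s=\mathcal{J}_rV$.

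Second, I apply the rectangular single-eigenvalue case of Proposition~\ref{conmmutationwithjordan} (Lemma 4.4.11 of \cite{topicsmatrixanalysis}). It says that $V$ is upper Toeplitz in the sense of (\ref{formtoeplitz}): $V=(0\;\,T)$ if $r\le s$, and $V=\left(\begin{smallmatrix}T\\0\end{smallmatrix}\right)$ if $r\ge s$, with $T$ square upper triangular Toeplitz. If one prefers not to quote this, it can be recovered directly from the entrywise form of the equation, $x_{i,j-1}+x_{i-1,j}=0$ with $x_{i,0}=x_{0,j}=0$. This forces constancy up to alternating sign along antidiagonals and vanishing near the boundary.

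Finally, I translate back via $X=\mathcal{P}_rI_r^{\pm}V$. Left multiplication by $\mathcal{P}_r$ reverses the order of the rows. So an upper Toeplitz block $T$ becomes a lower Hankel block, the shape $(0\;\,T)$ stays $(0\;\,Y)$, and $\left(\begin{smallmatrix}T\\0\end{smallmatrix}\right)$ becomes $\left(\begin{smallmatrix}0\\Y\end{smallmatrix}\right)$. These are exactly the two zero-padding patterns in the statement. The sign pattern $I_r^{\pm}$ gives the alternating part. Since $\mathcal{P}_rI_r^{\pm}=\pm I_r^{\pm}\mathcal{P}_r$ (by the identity noted in the proof of (\ref{conmmuteswithaltidentity})), $X$ equals, up to an overall sign, $I^{\pm}$ times a lower Hankel matrix. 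An overall sign can be absorbed into the free parameters $c,c^{(2)},\dots$, so $Y$ is lower alternating Hankel. Conversely, every such $X$ arises from some upper Toeplitz $V$, which gives the ``if'' direction.

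The main obstacle is bookkeeping, not mathematics. I need to check that the zero blocks land on the correct side in each of the cases $r\le s$ and $r\ge s$. I also need to check that the alternating convention ($I^{\pm}$ on the left, possibly with an extra global sign or an $I^{\pm}$ on the right when $r\neq s$) matches the paper's definition of lower alternating Hankel. I would settle both by checking the $2\times 3$ and $3\times 2$ cases by hand against the entrywise recurrence.
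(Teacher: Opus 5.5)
Your argument is correct, but it takes a different route from the paper. The paper works entrywise. It notes $(X\mathcal{J}_s)_{ij}=x_{i,j-1}$ and $(\mathcal{J}_r^tX)_{ij}=x_{i-1,j}$, so the equation is the recurrence $x_{i,j-1}+x_{i-1,j}=0$, i.e.\ sign alternation along antidiagonals, and it stops there. The boundary conditions $x_{i,0}=x_{0,j}=0$ are left implicit; they kill the antidiagonals meeting the first row and first column, which is what produces the padding $\left(\begin{smallmatrix}0\\ Y\end{smallmatrix}\right)$ or $(0\;\,Y)$. The fallback you mention in passing is therefore exactly the paper's proof, with that step supplied.

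Your main argument instead substitutes $X=\mathcal{P}_rI_r^{\pm}V$. Using $\mathcal{J}_r^t=\mathcal{P}_r\mathcal{J}_r\mathcal{P}_r$ and $I_r^{\pm}\mathcal{J}_rI_r^{\pm}=-\mathcal{J}_r$, this turns the equation into $V\mathcal{J}_s=\mathcal{J}_rV$, and you then import the standard upper Toeplitz description of its solutions. The zero padding and the converse then come for free. The only cost is the sign bookkeeping you flag, and it does work out: $\mathcal{P}_rI_r^{\pm}=\pm I_r^{\pm}\mathcal{P}_r$, the lower right $s\times s$ corner of $I_r^{\pm}$ is $\pm I_s^{\pm}$, and global signs are absorbed into the Hankel parameters.

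The paper's route is more elementary and self-contained; yours is more structural and reusable. Since $(\mathcal{P}_rI_r^{\pm})^{-1}\mathcal{J}_r(\mu)^t(\mathcal{P}_rI_r^{\pm})=-\mathcal{J}_r(-\mu)$, the same substitution turns each block equation of Proposition \ref{Propositionconditioncommuteswithtranspose} into an intertwining equation of the type in Proposition \ref{conmmutationwithjordan}. That yields both condition (\ref{conditionblockrealankel}) and the alternating Hankel shape at once. Likewise, $\mathcal{P}_{r,2}I^{\pm}_{r,2}$ commutes with $\hat{J}_r$, so the analogous substitution reduces Lemma \ref{2upperhankel}, whose proof the paper omits, directly to Lemma \ref{onecomplexjordanblockcommute}.
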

\begin{proof}
Let $X=(x_{ij})$. 
\begin{align*}
    (XJ(0))_{ij}=x_{ij-1} \\
    (J(0)^tX)_{ij}=x_{i-1j}
\end{align*}
So the condition in the statement becomes 
\[x_{ij-1}+x_{i-1j}=0.\] 
\end{proof}
\begin{prop}\label{Propositionconditioncommuteswithtranspose}
 A block matrix $C$ satisfies 
\begin{align}\label{conmmutationwithtranspose}
    C\mathcal{J}_{N_{\mathbb{R}}}+\mathcal{J}_{N_{\mathbb{R}}}^tC=0
\end{align}
if and only if
 $C$ is $N_{\mathbb{R}}$-lower alternating Hankel matrix and satisfies condition (\ref{conditionblockrealankel}).
\end{prop}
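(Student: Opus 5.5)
The plan is to reduce to single pairs of Jordan blocks and then apply Lemma \ref{onejordanblocklemmatransposecommute}, following the pattern of the proof of Proposition \ref{conmmutationwithjordan}.

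\emph{Step 1: reduction to blocks.} Write $C=[C_{ij}]$ with the block partition of $\mathcal{J}_{N_{\mathbb{R}}}$. Since $\mathcal{J}_{N_{\mathbb{R}}}$ and its transpose are block diagonal, equation (\ref{conmmutationwithtranspose}) decouples into the independent block equations
\[
C_{ij}\mathcal{J}_{n_j}(\lambda_j)+\mathcal{J}_{n_i}(\lambda_i)^tC_{ij}=0 \quad \text{for all } i,j.
\]
Write $\mathcal{J}_{n}(\lambda)=\lambda I+\mathcal{J}_n(0)$. The $(i,j)$ equation then becomes
\[
(\lambda_i+\lambda_j)C_{ij}+C_{ij}\mathcal{J}_{n_j}(0)+\mathcal{J}_{n_i}(0)^tC_{ij}=0.
\]

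\emph{Step 2: blocks with $\lambda_i\neq-\lambda_j$.} Consider the operator $X\mapsto X\mathcal{J}_{n_j}(\lambda_j)+\mathcal{J}_{n_i}(\lambda_i)^tX$. This is a Sylvester operator. Its eigenvalues are the sums $\mu+\nu$, with $\mu$ an eigenvalue of $\mathcal{J}_{n_i}(\lambda_i)^t$ and $\nu$ one of $\mathcal{J}_{n_j}(\lambda_j)$, so the only eigenvalue is $\lambda_i+\lambda_j$. When $\lambda_i+\lambda_j\neq0$ the operator is therefore invertible, and $C_{ij}=0$. This is exactly condition (\ref{conditionblockrealankel}).

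If one prefers to avoid the spectral argument, the same conclusion follows by induction. Rewrite the equation as $C_{ij}=-(\lambda_i+\lambda_j)^{-1}\bigl(C_{ij}\mathcal{J}_{n_j}(0)+\mathcal{J}_{n_i}(0)^tC_{ij}\bigr)$. The map $X\mapsto X\mathcal{J}_{n_j}(0)+\mathcal{J}_{n_i}(0)^tX$ is nilpotent, since it is a sum of two commuting nilpotent operators. Iterating the identity enough times therefore forces $C_{ij}=0$.

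\emph{Step 3: blocks with $\lambda_i=-\lambda_j$.} Here the equation reduces exactly to the nilpotent case $C_{ij}\mathcal{J}_{n_j}(0)+\mathcal{J}_{n_i}(0)^tC_{ij}=0$. Lemma \ref{onejordanblocklemmatransposecommute} then says that $C_{ij}$ is a lower alternating Hankel block, padded by a zero block in the sense of (\ref{formhankel}).

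The one point needing care is that the lemma's recursion $x_{i,j-1}+x_{i-1,j}=0$ must be checked at the boundary (first row and first column). There it forces the zero entries, which gives the zero padding that matches the rectangular shapes in (\ref{formhankel}). It also forces the sign alternation along anti-diagonals, which is what the factor $I^{\pm}$ encodes.

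\emph{Step 4: assembly and converse.} Combining the two cases, every solution $C$ is $N_{\mathbb{R}}$-lower alternating Hankel and satisfies (\ref{conditionblockrealankel}). Conversely, suppose $C$ has these properties. Each block equation holds: for blocks with $\lambda_i=-\lambda_j$ by the converse direction of the lemma, and for the remaining blocks trivially, since they are zero. Hence (\ref{conmmutationwithtranspose}) holds.

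I expect no step to be a real obstacle. The only delicate point is confirming that the shape conventions in (\ref{formhankel}), together with the alternating sign, agree exactly with the output of Lemma \ref{onejordanblocklemmatransposecommute} in both the $r\le s$ and $r\ge s$ cases.
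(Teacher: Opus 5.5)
Your proposal is correct and follows the paper's proof. You decouple into the block equations $C_{ij}\mathcal{J}_{n_j}(\lambda_j)+\mathcal{J}_{n_i}(\lambda_i)^tC_{ij}=0$, show that these blocks vanish unless $\lambda_i+\lambda_j=0$, and in the remaining case subtract the scalar parts to reach the nilpotent equation of Lemma \ref{onejordanblocklemmatransposecommute}. The only difference is that you justify the vanishing step explicitly (via the Sylvester spectrum or the nilpotent iteration) and spell out the converse, whereas the paper asserts the vanishing as known ("as mentioned in the previous section") and leaves the converse implicit.
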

\begin{proof}
Equation (\ref{conmmutationwithtranspose}) is equivalent to the set 
\[
C_{ij}\mathcal{J}_{n_j}(\lambda_j)+\mathcal{J}_{n_i}(\lambda_i)^tC_{ij}=0.
\]
As mentioned in the previous section, each of these equations has a nontrivial solution if and only if $\lambda_j+\lambda_i=0$. So we only consider equations of the form
\[
C_{ij}\mathcal{J}_{n_j}(\lambda_j)+\mathcal{J}_{n_i}(-\lambda_j)^tC_{ij}=0.
\]
Using the identity $\mathcal{J}_{n_k}(\lambda_k)=\lambda_k I_{n_k}+\mathcal{J}_{n_k}(0)$ we get
\[
C_{ij}\mathcal{J}_{n_j}(0)+\mathcal{J}_{n_i}(0)^tC_{ij}=0.
\]

This is the type of equation we got in Lemma \ref{onejordanblocklemmatransposecommute}.  This completes the proof.
\end{proof}
We can now rewrite the matrix sets in (\ref{matrixsets}) as
\begin{equation} \label{matrixsetsrealrewrite}
    \begin{aligned}
  \mathcal{T}_{\mathcal{J}_{N_{\mathbb{R}}}}&:= \left\{ [A_{ij}] \in \mathrm{GL}({N_{\mathbb{R}}},\mathbb{R})\;\middle|\; \begin{aligned}
 &{N_{\mathbb{R}}}\text{-upper Toeplitz} \\ 
&\text{Satisfies} \;(\ref{conditionblocksrealtoeplitz}) 
\end{aligned}
               \right\}  \\
              \mathcal{H}_{\mathcal{J}_{N_{\mathbb{R}}}}^K&:=\left\{ [B_{ij}] \in \mathrm{Skew}({N_{\mathbb{R}}}\times {N_{\mathbb{R}}},\mathbb{R}) \;\middle|\; \begin{aligned}
 &{N_{\mathbb{R}}}\text{-lower alternating Hankel}  \\ 
&\text{Satisfies} \;(\ref{conditionblockrealankel}) \\
&Rank=K
\end{aligned}
 \right\}\
    \end{aligned}
\end{equation}

In the next section we answer the question: for a given $\mathcal{J}_{N_{\mathbb{R}}}$, for which $K$ is $\mathcal{H}_{\mathcal{J}_{N_{\mathbb{R}}}}^K\neq \emptyset$.  Then we examine the shape of the quotient space $(\ref{quotient space})$.  In fact we will start with the latter, as it will let us answer the former more easily.

\subsection{The quotient space} \label{realquotientspacesection}

First we show that we can study an equivalent problem. 
\begin{lemma} \label{equivalentmatrixproblemreal}
Define the sets 
\begin{equation} \label{matrixsetsrealrewritewithstar}
    \begin{aligned}
 \dot{\mathcal{T}}_{\mathcal{J}_{N_{\mathbb{R}}}}&:= \left\{ [A_{ij}] \in \mathrm{GL}({N_{\mathbb{R}}},\mathbb{R})\;\middle|\; \begin{aligned}
 &{N_{\mathbb{R}}}\text{-upper Toeplitz} \\ 
&\text{Satisfies} \;(\ref{conditionblocksrealtoeplitz}) 
\end{aligned}
               \right\}  \\
               \dot{\mathcal{H}}_{\mathcal{J}_{N_{\mathbb{R}}}}^{K}&:=\left\{ [B_{ij}] \in {N_{\mathbb{R}}}\text{-Skew}({N_{\mathbb{R}}}\times {N_{\mathbb{R}}},\mathbb{R}) \;\middle|\; \begin{aligned}
 &{N_{\mathbb{R}}}\text{-upper alternating Toeplitz}  \\ 
&\text{Satisfies} \;(\ref{conditionblockrealankel}) \\
&Rank=K
\end{aligned}
 \right\}\
    \end{aligned}
\end{equation}
Then there exists a bijection 

\begin{equation}\label{bijectionquotient space}     
 \mathcal{H}_{\mathcal{J}_{N_{\mathbb{R}}}}^{K}/_{\mathrm{cong}}\mathcal{T}_{\mathcal{J}_{N_{\mathbb{R}}}} \xrightarrow{bij}  \dot{\mathcal{H}}_{\mathcal{J}_{N_{\mathbb{R}}}}^{K}/_{\text{N-blockstar cong}}\dot{\mathcal{T}}_{\mathcal{J}_{N_{\mathbb{R}}}}
\end{equation} 
Here the right side means congruence with respect to the $\boxstar$ operator.
\end{lemma}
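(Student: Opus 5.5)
The bijection will be induced by left multiplication by the block reversal matrix, $B\mapsto \mathcal{P}_{N_{\mathbb{R}}}B$. Throughout, write $\mathcal{P}:=\mathcal{P}_{N_{\mathbb{R}}}$ and use $\mathcal{P}^t=\mathcal{P}=\mathcal{P}^{-1}$. The plan has three steps: show that this map sends $\mathcal{H}^K_{\mathcal{J}_{N_{\mathbb{R}}}}$ bijectively onto $\dot{\mathcal{H}}^K_{\mathcal{J}_{N_{\mathbb{R}}}}$, show that it intertwines the two congruence actions, and conclude that it descends to a bijection of orbit spaces.

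\textbf{Step 1: the map identifies the two sets.} Take $B\in \mathcal{H}^K_{\mathcal{J}_{N_{\mathbb{R}}}}$ and set $C:=\mathcal{P}B$.
\begin{itemize}
\item By the characterization \eqref{matrixsetsrealrewrite}, $B$ is $N_{\mathbb{R}}$-lower alternating Hankel and satisfies \eqref{conditionblockrealankel}.
\item By the remark after the definitions, $\mathcal{P}$ times an $N_{\mathbb{R}}$-lower (alternating) Hankel matrix is $N_{\mathbb{R}}$-upper (alternating) Toeplitz, and conversely. So $C$ is $N_{\mathbb{R}}$-upper alternating Toeplitz.
\item Multiplying by the block diagonal matrix $\mathcal{P}$ does not move blocks, so condition \eqref{conditionblockrealankel} is preserved.
\item Since $\mathcal{P}$ is invertible, $\mathrm{Rank}(C)=\mathrm{Rank}(B)=K$.
\item For N-skew-symmetry, compute
\[
C^{\boxstar}=\mathcal{P}(\mathcal{P}B)^t\mathcal{P}=\mathcal{P}B^t\mathcal{P}\mathcal{P}=\mathcal{P}B^t=-\mathcal{P}B=-C .
\]
\end{itemize}
Reading each of these implications backwards (the identity $C^{\boxstar}=-C \Leftrightarrow (\mathcal{P}C)^t=-\mathcal{P}C$ was already noted in the paper) shows that $C\mapsto \mathcal{P}C$ is the inverse map. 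Hence $B\mapsto \mathcal{P}B$ is a bijection $\mathcal{H}^K_{\mathcal{J}_{N_{\mathbb{R}}}}\to\dot{\mathcal{H}}^K_{\mathcal{J}_{N_{\mathbb{R}}}}$.

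\textbf{Step 2: the actions correspond.} The groups $\mathcal{T}_{\mathcal{J}_{N_{\mathbb{R}}}}$ and $\dot{\mathcal{T}}_{\mathcal{J}_{N_{\mathbb{R}}}}$ are literally the same set, by \eqref{matrixsetsrealrewrite} and \eqref{matrixsetsrealrewritewithstar}. For $A$ in this group,
\[
\mathcal{P}(A^tBA)=(\mathcal{P}A^t\mathcal{P})(\mathcal{P}B)A=A^{\boxstar}(\mathcal{P}B)A .
\]
So ordinary congruence $B\mapsto A^tBA$ corresponds exactly to N-block star congruence $C\mapsto A^{\boxstar}CA$. I take the latter to be the meaning of ``N-blockstar cong'' in the statement.

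\textbf{Step 3: descent to orbits.} By Step 2 the map sends each orbit onto an orbit, so it is well defined on the quotients. It is surjective by Step 1. It is injective because the same identity, read backwards, turns $C'=A^{\boxstar}CA$ into $\mathcal{P}C'=A^t(\mathcal{P}C)A$.

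\textbf{Expected obstacle.} The only point needing care is the Hankel$\leftrightarrow$Toeplitz correspondence for rectangular blocks $B_{ij}$ with $n_i\neq n_j$. One has to check that left multiplication by $\mathcal{P}_{n_i}$ carries the lower alternating Hankel shape of \eqref{formhankel} exactly onto an upper alternating Toeplitz shape of \eqref{formtoeplitz}, with the zero rows or columns placed correctly. This amounts to reversing the rows of each block. I would verify it on the pattern of Lemma \ref{onejordanblocklemmatransposecommute}, in both cases $r\ge s$ and $r\le s$.

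Since $I^{\pm}$ commutes with $\mathcal{P}$ only up to sign, I would track whether the alternation ends up as left or right multiplication by $I^{\pm}$. An overall sign of $\pm1$ on a block preserves all the relevant properties, so this should cause no difficulty.
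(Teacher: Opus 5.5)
Your proposal is correct and follows the same route as the paper: the bijection is induced by $B\mapsto\mathcal{P}_{N_{\mathbb{R}}}B$. It uses that $\mathcal{T}_{\mathcal{J}_{N_{\mathbb{R}}}}$ and $\dot{\mathcal{T}}_{\mathcal{J}_{N_{\mathbb{R}}}}$ are the same set, together with the identity $\mathcal{P}_{N_{\mathbb{R}}}(A^tBA)=A^{\boxstar}(\mathcal{P}_{N_{\mathbb{R}}}B)A$. You also spell out details that the paper's two-line proof leaves implicit: that the map carries $\mathcal{H}^K_{\mathcal{J}_{N_{\mathbb{R}}}}$ onto $\dot{\mathcal{H}}^K_{\mathcal{J}_{N_{\mathbb{R}}}}$, including the harmless per-block-row sign from moving $I^{\pm}$ past $\mathcal{P}$, and that the map descends to a bijection of orbit spaces.
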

    \begin{proof}
   Let $B,C \in \mathcal{H}_{\mathcal{J}_{N_{\mathbb{R}}}}^{K} $. recall from Section (\ref{sectionrealdefinitionmatrices}) that $\mathcal{P}_{N_{\mathbb{R}}}B$ and $\mathcal{P}_{N_{\mathbb{R}}}C$ are $N$-skew-symmetric $N$-upper Toeplitz. Then we just have to notice that  for any $A\in \dot{\mathcal{T}}_{\mathcal{J}_{N_{\mathbb{R}}}}$
        \begin{align*}
             &A^tCA=B \\
             \Leftrightarrow  &\mathcal{P}_{N_{\mathbb{R}}}\mathcal{P}_{N_{\mathbb{R}}}A^t\mathcal{P}_{N_{\mathbb{R}}}\mathcal{P}_{N_{\mathbb{R}}}CA=B \\
              \Leftrightarrow  &A^{\boxstar}(\mathcal{P}_{N_{\mathbb{R}}}C)A=\mathcal{P}_{N_{\mathbb{R}}}B.
        \end{align*}

    \end{proof}

From now on we focus on the right hand quotient space of $(\ref{bijectionquotient space})$.  We mention one more time that inspiration for the proof comes from the proof of Lemma 4.1 in \cite{Horn}, the equivalent statement to that lemma will be Proposition \ref{canonicalformsreal}. Before we give a proof of this proposition we will introduce again some more matrices and notation. 

\subsubsection{Some more matrices and notation.} \label{Some more matrices and notation.}
For the use in the rest of this section and unclutter notation we define the following 
\begin{align*}
    n_{l,m}:=\mathrm{min}(n_l,n_m), \\
    s_{(l,m;k)}:=n_{l,m}-k+1.
\end{align*}
 Consider again an upper Toeplitz block $C_{lm}$. We denote by $C_{lm}^{\langle u \rangle} \in \matrices{s_{(l,m;u)}}{s_{(l,m;u)}}$ the submatrix formed by only considering the diagonals with elements $c_{lm}^{(t)}$, with $t\geq u$ that is 
\[
C_{lm}^{\langle u \rangle}:=\left( \begin{array}{cccc} 
c_{lm}^{(u)}&c_{lm}^{(u+1)}&\cdots& c_{lm}^{(n_{l,m})} \\ 
&c_{lm}^{(u)}&\ddots&\vdots \\
&&\ddots&c_{lm}^{(u+1)} \\
&&& c_{lm}^{(u)}\\
\end{array} \right) \in \matrices{s_{(l,m;u)}}{s_{(l,m;u)}}.
\]
If $c_{lm}^{(u)}\neq 0$, then $C_{lm}^{\langle u \rangle}$ is upper Toeplitz matrix with nonzero diagonal. Another kind of matrix that will appear later is the matrix obtained by erasing the submatrix $C_{lm}^{\langle u \rangle}$ from the original matrix $C_{lm}$, we will denote this matrix by $C_{lm}^{<u}$. That is this matrix is defined by 
\begin{equation} \label{matrixwithoutcorner}
\matrices{n_l}{n_m}\ni C_{lm}^{<u}:=\left( \begin{array}{cccccc} 
&c_{lm}&\cdots &c_{lm}^{(u-1)}& & 0 \\ 
&&c_{lm}&\cdots&\ddots& \\
&&&\ddots&&c_{lm}^{(u-1)} \\
&&&& c_{lm}& \vdots \\
0&&&& &c_{lm}
\end{array} \right)    
\end{equation}
The matrix depicted is in the case $n_l\leq n_m$, but the equivalent construction is obvious when $n_l\geq n_m$. Now suppose we have an upper Toeplitz matrix $A\in \matrices{k}{k}$, we use the notation similar as in (\ref{formtoeplitz}) to denote diagonals of the matrix by $a^{(i)}$. For any $n,m$, such that $k\leq\min(n,m)$ we can \say{extend} the matrix $A$ to another upper Toeplitz matrix  in $\matrices{n}{m}$ by a map

\[
\mathrm{ToeplitzExt}_{n,m}:\matrices{k}{k}\rightarrow \matrices{n}{m}
\]
To define this map, define $C:=\mathrm{ToeplitzExt}_{n,m}(A)$ and again use the notation $c^{(i)}$ for the diagonals, then we can define the map by  
\begin{equation}
   c^{(i)}:=\begin{cases}
       a^{(i)} & 1\leq i\leq k \\
       0&  \text{other cases}.
   \end{cases}
\end{equation}

The important property of this extended matrix is that, for example in the case $n_l\leq n_m$, it has the shape
\begin{equation}\label{blockappearstwice}
    \mathrm{ToeplitzExt}_{n,m}(A)= \left( \begin{array}{c|cc} 
0&*&* \\ 
0&0&A 
\end{array} \right)=\left( \begin{array}{c|cc} 
0&A &* \\ 
0&0&*
\end{array} \right),\end{equation}
where the $0$ and $*$ are selected so that it agrees with the definition. That is the matrix $A$ appears both in the lower and upper corner of the square part of each matrix. Of course we have  a similar situation when $n_m\leq n_l$ in the obvious way. 

For example, for  $n_l\leq n_m$, if we are considering again an upper Toeplitz block $C_{lm}^{\langle u \rangle}$ then 

\[
\mathrm{ToeplitzExt}_{n_l,n_m}(C_{lm}^{\langle u \rangle}):=\left( \begin{array}{cccccc} 
&c_{lm}^{(u)}&\cdots &c_{lm}^{(n_l)}& & 0 \\ 
&&c_{lm}^{(u)}&\cdots&\ddots& \\
&&&\ddots&&c_{lm}^{(n_l)} \\
&&&& c_{lm}^{(u)}& \vdots \\
0&&&& &c_{lm}^{(u)}
\end{array} \right) 
\]

We define one more matrix

\[
 I_{lm}^{[k]\pm}:=\left(  \begin{array}{c|c} 
0&I_{s_{(l,m;k)}}^{\pm}\\ \hline
0&0
\end{array} \right) \in \matrices{n_l}{n_m}.
 \]
 For convenience, we extend this notation by setting
\[
I_{lm}^{[k]\pm}:=0
\qquad\text{whenever}\qquad
k>\min(n_l,n_m).
\]
Finally notice the following 
 \begin{align}\label{conjugationofE}
(I_{lm}^{[k]\pm})^{\boxstar}:=
 \begin{cases}
-I_{ml}^{[k]\pm} &\text{if} \; s_{(l,m;k)} \; \text{is even}, \\
I_{ml}^{[k]\pm} &\text{if} \; s_{(l,m;k)} \; \text{is odd}.
\end{cases}
\end{align}

\subsubsection{The set $\dot{\mathcal{H}}_{\mathcal{J}_{N_{\mathbb{R}}}}^{K}/_{\text{N-blockstar cong}}\dot{\mathcal{T}}_{\mathcal{J}_{N_{\mathbb{R}}}}$}

\begin{lemma}\label{step1}
     Let $C=[C_{ij}]\in \dot{\mathcal{H}}_{\mathcal{J}_{N_{\mathbb{R}}}}^{K}$. Suppose $C_{lm}$ is such that $c_{lm}^{(k)} \neq 0$ and $c_{lm}^{(q)}=0$ for all $q<k$.  There exists  $S \in\dot{\mathcal{T}}_{\mathcal{J}_{N_{\mathbb{R}}}}$ such that 
   
\[
S^{\boxstar}CS=\begin{cases}
\left( \begin{array}{ccccc} 
*&&&&\\ 
&\ddots&&*&\\
&&tI_{lm}^{[k]\pm}&& \\
&*&&\ddots &\\
&&& & *
\end{array} \right) \;\; \text{if} \; l=m, \\
\left(\begin{array}{ccccccc} 
*&&&&&&  \\ 
&\ddots&&&&*& \\
&&*&\cdots&tI_{lm}^{[k]\pm}&& \\
&&\vdots&*&\vdots&& \\
&&-t(I_{lm}^{[k]\pm})^{\boxstar}&\cdots&*&& \\
&*&&&&\ddots& \\
&&&&&&*
\end{array} \right) \;\; \text{if} \; l\neq m.
\end{cases} 
\]          
That is $S^{\boxstar}CS \in \dot{\mathcal{H}}_{\mathcal{J}_{N_{\mathbb{R}}}}^{K} $ is such that the $l,m$  component is $tI_{lm}^{[k]\pm}$.

 \end{lemma}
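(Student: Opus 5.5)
The plan is to take $S$ block diagonal, $S=S_{11}\oplus\cdots\oplus S_{pp}$. Every block $S_{ii}$ will be the identity except $S_{mm}$ (in the case $l\neq m$) or $S_{ll}$ (in the case $l=m$), and that block will be an invertible upper Toeplitz matrix.

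A block diagonal matrix with upper Toeplitz diagonal blocks is $N_{\mathbb{R}}$-upper Toeplitz. It satisfies (\ref{conditionblocksrealtoeplitz}) trivially, so $S\in\dot{\mathcal{T}}_{\mathcal{J}_{N_{\mathbb{R}}}}$. Since $\boxstar$-congruence by an element of $\dot{\mathcal{T}}$ preserves $\dot{\mathcal{H}}^K$, the matrix $S^{\boxstar}CS$ again lies in $\dot{\mathcal{H}}^K_{\mathcal{J}_{N_{\mathbb{R}}}}$; this is just the bijection of Lemma \ref{equivalentmatrixproblemreal} read backwards. Its $(i,j)$ block is $S_{ii}^{\boxstar}C_{ij}S_{jj}$. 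The other blocks are only what the statement marks as $*$. By $N$-skew-symmetry, the $(m,l)$ block is automatically $-t(I^{[k]\pm}_{lm})^{\boxstar}$ once the $(l,m)$ block is $tI^{[k]\pm}_{lm}$. So everything reduces to controlling the single block $(l,m)$.

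Write $C_{lm}=I^{\pm}_{n_l}T$ with $T$ upper Toeplitz whose first nonzero diagonal is the $k$-th one, with value $c:=c^{(k)}_{lm}\neq 0$. The matrix $T^{\langle k\rangle}$ is an invertible upper Toeplitz matrix of size $s=s_{(l,m;k)}\le n_m$.

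\emph{Case $l\neq m$.} Put $S_{ll}=I$ and $S_{mm}=\mathrm{ToeplitzExt}_{n_m,n_m}\bigl(c\,(T^{\langle k\rangle})^{-1}\bigr)$. This is invertible, since its diagonal equals $1$. The product $TS_{mm}$ of a rectangular upper Toeplitz matrix with a square one is upper Toeplitz. Its first $k-1$ diagonals vanish, and its diagonals from the $k$-th on are those of $T^{\langle k\rangle}\,c\,(T^{\langle k\rangle})^{-1}=cI_s$. Here I use the shape (\ref{blockappearstwice}) to see that only the corner block interacts. Hence $C_{lm}S_{mm}=c\,I^{\pm}_{n_l}(\text{shift to diagonal }k)=c\,I^{[k]\pm}_{lm}$, and we may take $t=c$.

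\emph{Case $l=m$.} Now both factors carry the same $S_{ll}$. For upper Toeplitz square matrices we have $S^{\boxstar}=S$ (persymmetry). Square upper Toeplitz matrices commute, and $S I^{\pm}=I^{\pm}\hat S$. Together these give
\[
S_{ll}^{\boxstar}C_{ll}S_{ll}=I^{\pm}_{n_l}\,\hat S_{ll}\,T\,S_{ll}=I^{\pm}_{n_l}\,T\,\hat S_{ll}S_{ll}.
\]
Write $T=c\,\mathcal{J}_{n_l}(0)^{k-1}V$, where $V$ is upper Toeplitz with unit diagonal. By Remark \ref{diagonalsarezero}, only every second diagonal of $T$ starting from the $k$-th can be nonzero. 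Hence $V$ is a polynomial in $\mathcal{J}_{n_l}(0)^2$, and so $\hat V=V$.

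Since $V$ has sole eigenvalue $1$, Proposition \ref{existenceofroots} gives a real polynomial $p$ with $p(V)^2=V^{-1}$. Set $S_{ll}=p(V)$. By (\ref{commutationwithhat}), $\hat S_{ll}=p(\hat V)=p(V)$, so $\hat S_{ll}S_{ll}=V^{-1}$. Therefore $T\hat S_{ll}S_{ll}=c\,\mathcal{J}_{n_l}(0)^{k-1}$, and $S^{\boxstar}CS$ has $(l,l)$ block $c\,I^{\pm}\mathcal{J}_{n_l}(0)^{k-1}=c\,I^{[k]\pm}_{ll}$. Again $t=c$.

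The main obstacle is this diagonal case. A one-sided inverse is not available there, and one needs a congruence square root that is compatible with the hat-twist coming from $I^{\pm}$. The key checks are that $\hat V=V$, which follows from the parity of the nonzero diagonals, and that $p(V)$ is Toeplitz and invertible. The off-diagonal case is routine bookkeeping of Toeplitz diagonals across rectangular shapes, handled by (\ref{blockappearstwice}). The case $n_l>n_m$ is symmetric: there one modifies $S_{ll}$ instead.
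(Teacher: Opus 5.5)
Your argument is correct up to the normalization of $t$ discussed in the second paragraph. For $l=m$ it is essentially the paper's proof. The paper also takes $S$ block diagonal, extracts a polynomial square root (Proposition \ref{existenceofroots}) of the normalized corner $C_{ll}^{\langle k\rangle,\mathrm{na}}$, and uses the parity of the nonzero diagonals, through (\ref{conmmuteswithaltidentity}), to commute $I^{\pm}$ past it. You run the same computation inside the algebra $\mathbb{R}[\mathcal{J}_{n_l}(0)]$ via $T=c\,\mathcal{J}_{n_l}(0)^{k-1}V$ with $\hat V=V$. This replaces the corner-plus-$\mathrm{ToeplitzExt}$ bookkeeping by a cleaner factorization.

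For $l\neq m$ your route genuinely differs. The paper modifies both $S_{ll}$ and $S_{mm}$, by extensions of $p(\hat C_{lm}^{\langle k\rangle,\mathrm{na}})$ and $p(C_{lm}^{\langle k\rangle,\mathrm{na}})$, so that one square-root device handles both cases uniformly. You notice that the block $S_{ll}^{\boxstar}C_{lm}S_{mm}$ then involves two independent factors. So one may take $S_{ll}=I$ and correct $S_{mm}$ on the right by the Toeplitz inverse of $T^{\langle k\rangle}$. This is more elementary, and it shows the square root is only needed when the same block multiplies from both sides. Your right-multiplication computation in fact works for both shapes $n_l\le n_m$ and $n_l>n_m$, so the closing remark about switching to $S_{ll}$ is unnecessary.

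One adjustment is needed. The lemma is used afterwards with $t\in\{1,-1\}$: in Lemma \ref{firstlemma} onward, and ultimately in Proposition \ref{skewsymmetricpermutation}, which needs entries $\pm1$. You instead end with $t=c$. A rescaling fixes this:
\begin{itemize}
\item For $l\neq m$, drop the factor $c$ from $S_{mm}$. Its diagonal $1/c$ is still nonzero, and this gives $t=1$.
\item For $l=m$, replace $S_{ll}$ by $|c|^{-1/2}S_{ll}$, which gives $t=\operatorname{sgn}(c)$.
\end{itemize}
In the case $l=m$ the sign itself cannot be removed by a block-diagonal congruence. The $k$-th diagonal of $T\hat S_{ll}S_{ll}$ equals $c\sigma^2$, where $\sigma$ is the diagonal entry of $S_{ll}$.
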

\begin{proof}

Define the matrix
\[
C_{lm}^{\langle k \rangle,\mathrm{na}}:=tI^{\pm}_{s_{(l,m;k)}}C_{lm}^{\langle k \rangle} ,
\]
where $t\in \lbrace 1,-1\rbrace$ is selected such that the diagonal elements of $C_{lm}^{\langle k \rangle,\mathrm{na}}$ are positive, the matrix $C_{lm}^{\langle k \rangle,\mathrm{na}}$ is upper Toeplitz (the \say{na} comes from non alternating, suggestive of what the operation does). From Proposition \ref{existenceofroots}, we can find a polynomial $p(C_{lm}^{\langle k \rangle,\mathrm{na}})$ such that $p(C_{lm}^{\langle k \rangle,\mathrm{na}})^2=(C_{lm}^{\langle k \rangle,\mathrm{na}})^{-1}$.
In particular, we have 
\[
p(C_{lm}^{\langle k \rangle,\mathrm{na}})C_{lm}^{\langle k \rangle,\mathrm{na}}p(C_{lm}^{\langle k \rangle,\mathrm{na}})=I_{s_{(l,m;k)}}.
\]
\textbf{Case $l=m$}. (Notice that for this case we necessarily have $\lambda_l=0$).  $C_{ll}^{\langle k \rangle}$ is an even dimensional matrix (see Remark \ref{diagonalsarezero}), so from (\ref{conmmuteswithaltidentity})  $C_{ll}^{\langle k \rangle,\mathrm{na}}I^{\pm}_{s_{(l,l;k)}}=I^{\pm}_{s_{(l,l;k)}}C_{ll}^{\langle k \rangle,\mathrm{na}}$. Therefore, we also have 
\[
I_{s_{(l,l;k)}}^{\pm}p(C_{ll}^{\langle k \rangle,\mathrm{na}})=p(C_{ll}^{\langle k \rangle,\mathrm{na}})I_{s_{(l,l;k)}}^{\pm}
\]
Now define the matrix 
\[
 \matrices{n_l}{n_l} \ni S_{ll}:=\mathrm{ToeplitzExt}_{n_l,n_l}(p(C_{ll}^{\langle k \rangle,\mathrm{na}})). 
 \] 
For such a matrix we have 
\begin{align*}
S_{ll}C_{ll}S_{ll}&=\left(\begin{array}{c|c} 
0&p(C_{ll}^{\langle k \rangle,\mathrm{na}})C_{ll}^{\langle k \rangle}p(C_{ll}^{\langle k \rangle,\mathrm{na}}) \\ \hline
0&0
\end{array} \right) \\
&=t\left(\begin{array}{c|c} 
0&p(C_{ll}^{\langle k \rangle,\mathrm{na}})I^{\pm}_{s_{(l,l;k)}}C_{ll}^{\langle k \rangle,\mathrm{na}}p(C_{ll}^{\langle k \rangle,\mathrm{na}}) \\ \hline
0&0
\end{array} \right) \\
&=t\left(\begin{array}{c|c} 
0&I^{\pm}_{s_{(l,l;k)}}p(C_{ll}^{\langle k \rangle,\mathrm{na}})C_{ll}^{\langle k \rangle,\mathrm{na}}p(C_{ll}^{\langle k \rangle,\mathrm{na}}) \\ \hline
0& 0
\end{array} \right)=t I_{ll}^{[k]\pm}.
\end{align*}
Finally, define the nonsingular diagonal matrix  $ S=[S_{ij}] \in \dot{\mathcal{T}}_{\mathcal{J}_{N_{\mathbb{R}}}}$ by 
\[
S_{ij} :=
\begin{cases}
I_{n_i} & i = j,\ i \neq l,\\
\mathrm{ToeplitzExt}_{n_l,n_l}(p(C_{ll}^{\langle k \rangle,\mathrm{na}})) & i = j = l,\\
0 & i \neq j.
\end{cases}
\]
This matrix gives our desired result.

\textbf{Case $l\neq m$.} As in (\ref{commutationwithhat}) we have 
\begin{align*}
    p(\hat{C}_{lm}^{\langle k \rangle,\mathrm{na}})C_{lm}^{\langle k \rangle}p(C_{lm}^{\langle k \rangle,\mathrm{na}})&=tp(\hat{C}_{lm}^{\langle k \rangle,\mathrm{na}})I^{\pm}_{s_{(l,m;k)}}C_{lm}^{\langle k \rangle,\mathrm{na}}p(C_{lm}^{\langle k \rangle,\mathrm{na}}) \\
      &=tI^{\pm}_{s_{(l,m;k)}}.
\end{align*}
Similarly as in the previous lemma, define the upper Toeplitz matrices

\[
 \matrices{n_m}{n_m} \ni S_{mm}:=\mathrm{ToeplitzExt}_{n_m,n_m}\left(p(C_{lm}^{\langle k \rangle,\mathrm{na}})\right),
 \]
 \[
 \matrices{n_l}{n_l} \ni S_{ll}:=\mathrm{ToeplitzExt}_{n_l,n_l}\left(p(\hat{C}_{lm}^{\langle k \rangle,\mathrm{na}})\right).
\] 
Then, we have
\begin{align*}
S_{ll}C_{lm}S_{mm}&=\left(\begin{array}{c|c} 
0&p(\hat{C}_{lm}^{\langle k \rangle,\mathrm{na}})C_{lm}^{\langle k \rangle}p(C_{lm}^{\langle k \rangle,\mathrm{na}}) \\ \hline
0& 0
\end{array} \right)=tI_{lm}^{[k]\pm}.
\end{align*}
Finally, define the diagonal nonsingular matrix  $ S=[S_{ij}] \in \dot{\mathcal{T}}_{\mathcal{J}_{N_{\mathbb{R}}}}$ by 
\[
S_{ij} :=
\begin{cases}
I_{n_i} & i = j,\ i \neq l, i \neq m,\\
\mathrm{ToeplitzExt}_{n_l,n_l}\left(p(\hat{C}_{lm}^{\langle k \rangle,\mathrm{na}})\right) & i = j = l,\\
\mathrm{ToeplitzExt}_{n_m,n_m}\left(p(C_{lm}^{\langle k \rangle,\mathrm{na}})\right) & i = j = m,\\
0 & i \neq j.
\end{cases}
\]
This matrix gives our desired result. 

\end{proof}   

Next using the \say{alternating Identity} matrices we obtained in the previous Lemmas we eliminate elements in their corresponding columns and rows.

 \begin{lemma}\label{firstlemma}   Let $C=[C_{ij}]\in \dot{\mathcal{H}}_{\mathcal{J}_{N_{\mathbb{R}}}}^{K}$.  Suppose $C_{ll}$ is such that $c_{ll}^{(k)} \neq 0$ and $c_{ll}^{(q)}=0$ for all $q<k$. 
 Then, there exists  $S \in\dot{\mathcal{T}}_{\mathcal{J}_{N_{\mathbb{R}}}}$ and $t\in \lbrace 1,-1\rbrace $ such that

  \[
B=[B_{ij}]:=S^{\boxstar}CS
\]        
satisfies 
\begin{enumerate}
    \item $B_{ll}=tI_{ll}^{[k]\pm},$
    \item $b_{il}^{(u)}=b_{li}^{(u)}=0$ for all   $u\geq k$ and all $i\neq l$. 
\end{enumerate}

 \end{lemma}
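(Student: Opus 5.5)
First I apply Lemma \ref{step1} with $m=l$. This gives $S_1\in\dot{\mathcal{T}}_{\mathcal{J}_{N_{\mathbb{R}}}}$ such that $C':=S_1^{\boxstar}CS_1$ lies in $\dot{\mathcal{H}}^K_{\mathcal{J}_{N_{\mathbb{R}}}}$ and has $C'_{ll}=tI_{ll}^{[k]\pm}$. Since $S_1$ is block diagonal and each block is upper Toeplitz, the first nonzero diagonal of every block is unchanged up to a nonzero factor. So the hypothesis on $c_{ll}^{(q)}$ still holds for $C'$.

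Next I clear, in each block $C'_{il}$ with $i\neq l$, the diagonals of index $u\ge k$. I do this with a unipotent transvection
\[
E=I+X,
\]
where $X$ has a single nonzero off-diagonal block $X_{li}$ (upper Toeplitz, to be chosen). Such an $E$ lies in $\dot{\mathcal{T}}_{\mathcal{J}_{N_{\mathbb{R}}}}$ provided $\lambda_i=\lambda_l=0$. The case $\lambda_i\neq 0$ needs no work: condition (\ref{conditionblockrealankel}) forces $C'_{il}=0$ whenever $\lambda_i\neq-\lambda_l=0$.

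Then $E^{\boxstar}C'E=C'+X^{\boxstar}C'+C'X+X^{\boxstar}C'X$. The block $(i,l)$ picks up the term $(X_{li})^{\boxstar}C'_{ll}=t(X_{li})^{\boxstar}I_{ll}^{[k]\pm}$ together with terms involving $C'_{ii}$ and $C'_{il}$. Block $(l,i)$ is handled automatically, because $N$-skew-symmetry is preserved by $\boxstar$-congruence.

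Multiplying an upper Toeplitz matrix by $I_{ll}^{[k]\pm}$ shifts its diagonals by $k-1$. So I pick $X_{li}$ with diagonals $x^{(v)}$, $v\ge1$, chosen so that the contribution on diagonal $v+k-1$ cancels $c'^{(v+k-1)}_{il}$. This is a triangular system, since diagonal $u$ of the correction depends only on $x^{(1)},\dots,x^{(u-k+1)}$. I solve it recursively in $u=k,k+1,\dots$. Because the leading coefficient is $\pm t\neq0$, each step is solvable. The remaining terms involving $C'_{ii}$, $C'_{il}$ and the quadratic term only affect higher diagonals, or can be absorbed into the recursion, since they are computed from the already-determined lower diagonals of $X_{li}$.

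The main obstacle is keeping $B_{ll}=tI_{ll}^{[k]\pm}$ intact. The quadratic term $(X_{li})^{\boxstar}C'_{ii}X_{li}$ and the cross terms $(X_{li})^{\boxstar}C'_{il}+C'_{li}X_{li}$ land in block $(l,l)$ and may alter the diagonals of index $\ge k$ there. I expect to deal with this in two moves:
\begin{itemize}
\item choose $X_{li}$ supported only on diagonals that, after multiplication, fall at indices $\ge k$;
\item re-apply the normalization of Lemma \ref{step1}.
\end{itemize}
That normalization is block diagonal, again given by a polynomial square root. By the $\mathrm{ToeplitzExt}$ shape (\ref{blockappearstwice}), it rescales the cleared blocks $B_{il}$ only by upper Toeplitz factors, so diagonals $u\ge k$ that are already zero stay zero. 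If this does not close cleanly, the fallback is to build the correction simultaneously for all $i\neq l$, and for the $(l,l)$ block, via one block-triangular $E$. I would then verify, by induction on the diagonal index, that one joint recursion determines all unknowns.

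Finally I iterate over all $i\neq l$. A transvection for index $i$ touches only blocks in row and column $i$ and in row and column $l$. The blocks $C'_{jl}$ with $j\neq i$ can only change by terms of the form $C'_{ji}X_{li}$, so they are unaffected. Once row and column $l$ are cleared they stay cleared, which gives properties (1) and (2).
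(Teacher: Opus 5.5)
Your strategy is the paper's: normalize $C_{ll}$ by Lemma~\ref{step1}, then clear with a unipotent element of $\dot{\mathcal{T}}_{\mathcal{J}_{N_{\mathbb{R}}}}$ whose only off-diagonal blocks lie in block row $l$. The composite of your transvections over all $i\neq l$ is exactly the paper's single matrix $S$.

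The trouble is your bookkeeping of where the extra terms land. It produces a spurious obstacle and an incorrectly justified repair. Take $X$ supported only in block $(l,i)$. Then $X$ has no block in block column $l$, so
$(C'X)_{ll}=\sum_c C'_{lc}X_{cl}=0$ and $(X^{\boxstar}C')_{ll}=\sum_c (X_{cl})^{\boxstar}C'_{cl}=0$.
The quadratic term is $(X_{li})^{\boxstar}C'_{ll}X_{li}$, and it sits in block $(i,i)$. So do the cross terms $C'_{il}X_{li}$ and $(X_{li})^{\boxstar}C'_{li}$.

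Hence $B_{ll}=C'_{ll}=tI_{ll}^{[k]\pm}$ exactly, and $B_{il}=C'_{il}+(X_{li})^{\boxstar}C'_{ll}$, with no terms involving $C'_{ii}$. Since $C'_{ll}$ has a single nonzero diagonal, diagonal $u$ of the correction involves only $x^{(u-k+1)}$. The solution is therefore explicit: it is the paper's choice $X_{lj}=-\mathrm{ToeplitzExt}_{n_l,n_j}(C_{lj}^{\langle k\rangle,\mathrm{na}})$, which gives $B_{lj}=C_{lj}^{<k}$.

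The terms you assign to block $(l,l)$, namely $(X_{li})^{\boxstar}C'_{ii}X_{li}$ and $(X_{li})^{\boxstar}C'_{il}+C'_{li}X_{li}$, are what you would get for $X$ supported in block $(i,l)$. That placement would not let you use $C'_{ll}$ at all. The same slip appears in your last paragraph: the relevant term is $C'_{jl}X_{li}$, and it lands in block $(j,i)$, which is why $C'_{jl}$ is untouched.

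So the observation your proposal is missing is simply that block $(l,l)$ is never altered. Once you have it, your final paragraph completes the proof and nothing needs repairing.

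Do not keep the repair as a safety net, because its justification is false. Re-applying Lemma~\ref{step1} replaces $B_{il}$ by $B_{il}S_{ll}$, where $S_{ll}=\mathrm{ToeplitzExt}_{n_l,n_l}(p(\cdot))$ is upper Toeplitz with, in general, nonzero higher diagonals. The lemma leaves the diagonals $b_{il}^{(a)}$, $a<k$, arbitrary, so the product feeds them into diagonals $\geq k$. For example, diagonal $k$ acquires, up to signs, $\sum_{a<k}b_{il}^{(a)}s^{(k-a+1)}$.

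Had $B_{ll}$ really been perturbed, re-normalizing would undo your clearing. It is harmless here only because $B_{ll}$ is still $tI_{ll}^{[k]\pm}$, in which case Lemma~\ref{step1} returns $S_{ll}=\pm I_{n_l}$. Delete the ``main obstacle'' paragraph and the ``fallback'', and replace them by the computation above.
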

 \begin{proof}
         First, from Lemma \ref{step1} we can without loss of generality suppose that $C_{ll}=tI_{ll}^{[k]\pm}$ for some $t\in \lbrace 1,-1\rbrace $. For $C_{lm}$, $m\neq l$, define the following matrix  (selecting $t$ as we did in Lemma $\ref{step1})$ 
\begin{equation}\label{matrixtoeliminate}
C_{lm}^{\langle k \rangle,\mathrm{na}}:=tI^{\pm}_{s_{(l,m;k)}}C_{lm}^{\langle k \rangle}.
\end{equation}
Then define the matrix 
    
\begin{equation}\label{completethematrix}
 \matrices{n_l}{n_m} \ni S_{lm} := \mathrm{ToeplitzExt}_{n_l,n_m}(C_{lm}^{\langle k \rangle,\mathrm{na}}).
\end{equation} Notice that 
\[
(tI_{ll}^{[k]\pm})(-S_{lm})
=
-\left(C_{lm}-C_{lm}^{<k}\right).
\]

Finally, define $ S=[S_{ij}] \in \dot{\mathcal{T}}_{\mathcal{J}_{N_{\mathbb{R}}}}$ as 

\[
S_{ij} :=
\begin{cases}
I_{n_i} & i = j,\\
-\mathrm{ToeplitzExt}_{n_l,n_j}(C_{lj}^{\langle k \rangle,\mathrm{na}}) & \text{all} \;\; i=l, \; j\neq l ,\\
0 & \text{other cases}.
\end{cases}
\]
Written explicitly this matrix looks like
\[
S=\left( \begin{array}{ccccccc} 
I_{n_1}&&& &&&\\ 
&\ddots&&&&&\\
&&\ddots&&&&\\
-S_{l1}&\cdots&-S_{l\,l-1}&I_{n_l}&-S_{l\,l+1}&\cdots &-S_{l\,p} \\
&&&&\ddots&& \\
&&&&&\ddots&\\
&&&&& &I_{n_p}
\end{array} \right).
\]
Then consider the product 
\[
S^{\boxstar}CS,
\]
which gives our desired answer. Notice that there is no problem defining the previous matrix $S$, because if $C_{ll}\neq 0$,  condition (\ref{conditionblockrealankel}) implies $\lambda_l=0$. Then for all $i\neq l$, if $C_{li} \neq 0$ we have also that $\lambda_i =0$. So there is no problem with having $S_{li}\neq0$, as it satisfies condition (\ref{conditionblocksrealtoeplitz}).
 \end{proof}
 We can immediately obtain the following Corollary. 
\begin{coro}\label{corollarytofirstlemma}
   Let $C=[C_{ij}]\in \dot{\mathcal{H}}_{\mathcal{J}_{N_{\mathbb{R}}}}^{K}$.  Suppose $C_{ll}$ is such that $c_{ll} \neq 0$.  Then, there exists  $S \in\dot{\mathcal{T}}_{\mathcal{J}_{N_{\mathbb{R}}}}$ and $t\in \lbrace 1,-1\rbrace $ such that

  \[
S^{\boxstar}CS=
\left( \begin{array}{ccccccc} 
&&&0&&&\\ 
&*&&\vdots&&*&\\ 
&&&0&&&\\
0&\cdots&0&tI_{n_l}^\pm&0&\cdots& 0 \\
&&&0& &&\\
&*&&\vdots& &*&\\
&&&0& &&

\end{array} \right).
\]           
\end{coro}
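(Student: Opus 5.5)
This is the special case $k=1$ of Lemma \ref{firstlemma}, so I would apply that lemma with $k=1$ and then read off what its two conclusions say when $k=1$.

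\textbf{The diagonal block.} The hypothesis $c_{ll}\neq 0$ means $c_{ll}^{(1)}\neq 0$, and the condition ``$c_{ll}^{(q)}=0$ for all $q<1$'' holds vacuously. Lemma \ref{firstlemma} then gives $S\in\dot{\mathcal{T}}_{\mathcal{J}_{N_{\mathbb{R}}}}$ and $t\in\{1,-1\}$ such that $B=S^{\boxstar}CS$ has $B_{ll}=tI_{ll}^{[1]\pm}$. Since $s_{(l,l;1)}=n_l$, the zero column and zero row in the definition of $I_{ll}^{[1]\pm}$ are empty. Hence $I_{ll}^{[1]\pm}=I_{n_l}^{\pm}$, which gives the displayed diagonal block $tI_{n_l}^{\pm}$.

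\textbf{The off-diagonal blocks.} Conclusion (2) of the lemma gives $b_{il}^{(u)}=b_{li}^{(u)}=0$ for all $u\geq 1$ and all $i\neq l$. Every block of $B$ is upper (alternating) Toeplitz, because $B\in\dot{\mathcal{H}}_{\mathcal{J}_{N_{\mathbb{R}}}}^{K}$ and the congruence preserves this set. Such a block is determined entirely by its diagonals $b^{(u)}$ with $1\leq u\leq \min(n_i,n_l)$. All of these vanish, so $B_{li}=B_{il}=0$ for every $i\neq l$. This is exactly the zero row and zero column of blocks in the displayed matrix.

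\textbf{Closing the argument.} The only remaining point is that $B$ again lies in $\dot{\mathcal{H}}_{\mathcal{J}_{N_{\mathbb{R}}}}^{K}$, so that the starred blocks have the stated shape. The lemma already asserts this implicitly. Explicitly, the $\boxstar$-congruence by an element of $\dot{\mathcal{T}}_{\mathcal{J}_{N_{\mathbb{R}}}}$ preserves $N$-skew-symmetry, rank, the Toeplitz structure and condition (\ref{conditionblockrealankel}), by Lemma \ref{equivalentmatrixproblemreal}. I do not expect a real obstacle: the only care needed is checking the degenerate $k=1$ reading of the notation $I_{ll}^{[k]\pm}$ and $C^{\langle k\rangle}$, which is what makes the zeroing of the $l$-th block row and column complete rather than partial.
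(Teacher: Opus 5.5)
Your proof is correct and follows the paper's route exactly. The paper gives no separate argument and simply says the corollary is immediate from Lemma~\ref{firstlemma}, which is your specialization to $k=1$: $I_{ll}^{[1]\pm}=I_{n_l}^{\pm}$ because $s_{(l,l;1)}=n_l$, and conclusion (2) holding for all $u\geq 1$ makes every off-diagonal block in the $l$-th block row and column vanish.
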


 \begin{lemma}\label{secondlemma}
     Let $C=[C_{ij}]\in \dot{\mathcal{H}}_{\mathcal{J}_{N_{\mathbb{R}}}}^{K}$. Suppose that $C_{lm}$, with $l\neq m$ and $n_l=n_m$, is such that $c_{lm}^{(k)} \neq 0$ and $c_{lm}^{(q)}=0$ for all $q<k$. Suppose that  $c_{ll}^{(q)}=c_{mm}^{(q)}=0$ for $q\leq k$.
   Then, there exists  $S \in\dot{\mathcal{T}}_{\mathcal{J}_{N_{\mathbb{R}}}}$ and $t\in \lbrace 1,-1\rbrace $ such that

  \[
B=[B_{ij}]:=S^{\boxstar}CS
\]         
satisfies
\begin{enumerate}
    \item $B_{lm}=tI_{lm}^{[k]\pm},$
    \item $B_{ll}=B_{mm}=0$.
    \item $b_{il}^{(u)}=b_{li}^{(u)}=b_{im}^{(u)}=b_{mi}^{(u)}=0$ for all   $u\geq k$  for all $i\neq m,l$ ( That is for such blocks the matrix has the same shape as described in $(\ref{matrixwithoutcorner})$).
\end{enumerate}
 \end{lemma}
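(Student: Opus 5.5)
First normalize the pivot block and then clear the rows and columns of the two indices $l$ and $m$, following the pattern of Lemma \ref{firstlemma}. Apply Lemma \ref{step1} to the block $C_{lm}$ with pivot index $k$. This gives $S^{(1)}\in\dot{\mathcal{T}}_{\mathcal{J}_{N_{\mathbb{R}}}}$, block diagonal with nontrivial blocks only in positions $l$ and $m$, such that $C_{lm}$ becomes $tI_{lm}^{[k]\pm}$. By $N$-skew-symmetry, $C_{ml}$ then becomes $-t(I_{lm}^{[k]\pm})^{\boxstar}=\pm tI_{ml}^{[k]\pm}$ by (\ref{conjugationofE}). Since $S^{(1)}_{ll}$ and $S^{(1)}_{mm}$ are upper Toeplitz, the new diagonal blocks $B_{ll}=S_{ll}^{\boxstar}C_{ll}S_{ll}$ and $B_{mm}$ still have vanishing diagonals of order $\le k$. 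So we may assume $C_{lm}=tI_{lm}^{[k]\pm}$ from the start. Because $n_l=n_m$, the square part $I^{\pm}_{s_{(l,m;k)}}$ sits in the upper right corner of an $n_l\times n_l$ block, so it acts as a genuine "unit" on diagonals of order $\ge k$.

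\textbf{Clearing the rest of rows and columns $l,m$.} Next use $C_{lm}$ as a pivot to kill the diagonals of order $\ge k$ in $C_{mi}$ for $i\neq l,m$, and use $C_{ml}$ as a pivot to kill those in $C_{li}$. Mimicking (\ref{matrixtoeliminate})--(\ref{completethematrix}), define
\[
S_{li}:=-\mathrm{ToeplitzExt}_{n_l,n_i}\bigl(\pm tI^{\pm}C_{mi}^{\langle k\rangle}\bigr),\qquad S_{mi}:=-\mathrm{ToeplitzExt}_{n_m,n_i}\bigl(\pm tI^{\pm}C_{li}^{\langle k\rangle}\bigr),
\]
with sign choices dictated by (\ref{conjugationofE}). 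Set $S$ to be the identity plus these off-diagonal blocks in rows $l$ and $m$. Then $(S^{\boxstar}CS)_{mi}=C_{mi}+C_{ml}S_{li}+C_{mm}S_{mi}$. The middle term cancels the order-$\ge k$ part of $C_{mi}$. The term $C_{mm}S_{mi}$ only contributes diagonals of order $>k$, since $C_{mm}$ vanishes to order $\le k$. The blocks $(i,l)$ and $(i,m)$ follow by skew-symmetry. Condition (\ref{conditionblocksrealtoeplitz}) holds because $C_{lm}\neq 0$ forces $\lambda_l=-\lambda_m$, and $C_{mi}\neq0$ forces $\lambda_i=-\lambda_m=\lambda_l$. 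Hence $S_{li}$ is allowed, and similarly $S_{mi}$.

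\textbf{Killing $C_{ll}$ and $C_{mm}$.} Use $S$ with identity diagonal and a single off-diagonal block $S_{ml}$, or $S_{lm}$, chosen upper Toeplitz. The congruence changes $C_{ll}$ by
\[
C_{lm}S_{ml}+S_{ml}^{\boxstar}C_{ml}+S_{ml}^{\boxstar}C_{mm}S_{ml}.
\]
Since $C_{lm}=tI_{lm}^{[k]\pm}$ and $n_l=n_m$, the linear part $X\mapsto C_{lm}X+X^{\boxstar}C_{ml}$ is surjective onto the "$N$-skew" upper alternating Toeplitz parts of order $\ge k$. The quadratic term has order $\ge k+2$ or so. This requires $\lambda_l=\lambda_m$, and indeed $C_{ll}\neq0$ forces $\lambda_l=0$, which together with $\lambda_l=-\lambda_m$ gives $\lambda_m=0$. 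Solve diagonal by diagonal in increasing order, absorbing the quadratic corrections inductively; equivalently, first kill the order-$\ge k$ part of $C_{mm}$ using $S_{ml}$ and then that of $C_{ll}$ using $S_{lm}$. This step cannot disturb $C_{lm}$ at order $k$, though it may alter higher diagonals of $C_{lm}$, which Lemma \ref{step1} then re-normalizes. Repeat the clearing step afterward.

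\textbf{Main obstacle.} The main difficulty is ensuring the steps do not undo one another and that $B_{ll}$ and $B_{mm}$ end up fully $0$, including diagonals of order below $k$, which the hypothesis already gives. The cleanest fix is to order the operations so that each later transformation affects only strictly higher diagonals than those already normalized, and to induct on diagonal order $u\ge k$.
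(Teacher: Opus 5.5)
Your proposal is correct and is essentially the paper's argument: you normalize $C_{lm}$ with Lemma~\ref{step1}, kill $C_{ll}$ and $C_{mm}$ using off-diagonal upper Toeplitz blocks between $l$ and $m$, and then clear rows and columns $l,m$ with the index-swapped $\mathrm{ToeplitzExt}$ blocks (the paper kills both diagonal blocks at once with $-\frac{1}{2}S_{lm}$, $-\frac{1}{2}S_{ml}$ and iterates, since the leftover $\frac{1}{4}S_{ml}^{\boxstar}C_{mm}S_{ml}$ has strictly higher order, and because the final clearing is exact once $C_{ll}=C_{mm}=0$, your first clearing pass is redundant). Watch two index slips: by your own formula $S_{ml}$ modifies $C_{ll}$, not $C_{mm}$, and the pivot that kills $C_{mi}$ is $C_{ml}$ acting through $S_{li}$, not $C_{lm}$.
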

 \begin{proof}
     From Lemma \ref{step1} we can without loss of generality suppose that $C_{lm}=t I_{lm}^{[k]\pm}$, $t\in \lbrace 1,-1\rbrace $. The proof of this case has two parts. First, we eliminate the elements $C_{ll}$ and $C_{mm}$. Similarly as in previous lemmas define the matrices
     
\begin{align*}
    C_{mm}^{\langle k \rangle,\mathrm{na}}&:=tI^{\pm}_{s_{(m,m;k)}}C_{mm}^{\langle k \rangle}, \\
   C_{ll}^{\langle k \rangle,\mathrm{na}}&:=tI^{\pm}_{s_{(l,l;k)}}C_{ll}^{\langle k \rangle}.
\end{align*}
  Then we define the corresponding matrices 
  \begin{align*}
    \matrices{n_l}{n_m} \ni S_{lm}&:=\mathrm{ToeplitzExt}_{n_l,n_m}(C_{mm}^{\langle k \rangle,\mathrm{na}}) \\
    \matrices{n_m}{n_l} \ni S_{ml}&:=\mathrm{ToeplitzExt}_{n_m,n_l}(C_{ll}^{\langle k \rangle,\mathrm{na}})
\end{align*}

If either $C_{ll}\neq 0$ or $C_{mm}\neq 0$, from condition (\ref{conditionblockrealankel}) we have $\lambda_l=0$ or $\lambda_m=0$ respectively. So  $C_{lm}\neq 0 $ implies that, in either case, $\lambda_l=\lambda_m=0$. So in any case we have no problem defining the following matrix $S=[S_{ij}]$ satisfying $(\ref{conditionblocksrealtoeplitz})$  and with blocks given by 
\[
S_{ij} :=
\begin{cases}
I_{n_i} & i = j,\\
-\frac{1}{2}\mathrm{ToeplitzExt}_{n_l,n_m}(C_{mm}^{\langle k \rangle,\mathrm{na}}) & i =l, j=m,\\
-\frac{1}{2}\mathrm{ToeplitzExt}_{n_m,n_l}(C_{ll}^{\langle k \rangle,\mathrm{na}}) & i = m, j= l,\\
0 & \text{others} .
\end{cases}
\]

\[
S= \left( \begin{array}{ccccccc} 
I_{n_1}&&&&&& \\ 
&\ddots&&&&& \\
&&I_{n_l}&\cdots&-\frac{1}{2}S_{lm}&& \\
&&\vdots&\ddots&\vdots& \\
&&-\frac{1}{2}S_{ml}&\cdots&I_{n_m}&& \\
&&&&&\ddots& \\
&&&&&&I_{n_z}
\end{array} \right).
\]
Consider the product
\begin{align}
S^{\boxstar}CS=:B=[B_{ij}].
\end{align}
We analyze the blocks $B_{ll},B_{mm},B_{lm}, B_{ml}$. 

\begin{align}  
B_{ll}&=C_{ll}-\frac{1}{2}tI_{lm}^{[k]\pm}S_{ml}+\frac{1}{2}t(S_{ml})^{\boxstar}(I_{lm}^{[k]\pm})^{\boxstar}+\frac{1}{4}(S_{ml})^{\boxstar}C_{mm}S_{ml}, \\ 
B_{mm}&=C_{mm}-\frac{1}{2}t(S_{lm})^{\boxstar}I_{lm}^{[k]\pm}+\frac{1}{2}t(I_{lm}^{[k]\pm})^{\boxstar}S_{lm}+\frac{1}{4}(S_{lm})^{\boxstar}C_{ll}S_{lm},\\ \label{eq2}
B_{lm}&=tI_{lm}^{[k]\pm}-\frac{1}{2}C_{ll}S_{lm}-\frac{1}{2}(S_{ml})^{\boxstar}C_{mm}-t\frac{1}{4}(S_{ml})^{\boxstar}(I_{lm}^{[k]\pm})^{\boxstar}S_{lm}, \\
B_{ml}&=-t(I_{lm}^{[k]\pm})^{\boxstar}-\frac{1}{2}C_{mm}S_{ml}-\frac{1}{2}(S_{lm})^{\boxstar}C_{ll}+t\frac{1}{4}(S_{lm})^{\boxstar}I_{lm}^{[k]\pm}S_{ml}.
\end{align}
Let us analyze the first equation. For the second term we have
\[
-\frac{1}{2}tI_{lm}^{[k]\pm}S_{ml}=-\frac{1}{2}C_{ll}.
\]
For the third term we have from (\ref{conjugationofE})  and (\ref{conmmuteswithaltidentity}).
\[
\frac{1}{2}t(S_{ml})^{\boxstar}(I_{lm}^{[k]\pm})^{\boxstar}= -\frac{1}{2}C_{ll}.
\]
So we get that 
\[
B_{ll}=\frac{1}{4}(S_{ml})^{\boxstar}C_{mm}S_{ml}
\]
A similar calculation gives that the second equation becomes
\[
B_{mm}=\frac{1}{4}(S_{lm})^{\boxstar}C_{ll}S_{lm}.
\]
Notice that all the terms in both $B_{ll}$ and $B_{mm}$ are strictly upper triangular (recall that $c_{ll}^{(k)}=c_{mm}^{(k)}=0$), therefore the result of the product is a smaller upper triangular matrix than the original corresponding blocks of $C$. Next we analyze the blocks $B_{lm}, B_{ml}$. Notice that all the elements in the right hand side, except the first one in both $B_{lm}$ and $B_{ml}$ 
are again strictly upper triangular matrices because they all involve multiplication of strictly upper triangular matrices. We conclude that the matrix $B$ has the same properties as the initial matrix $C$, but the elements  $B_{ll}$ and $B_{mm}$ are smaller strictly upper triangular matrices (the first nonzero diagonal is smaller). We can repeat the procedure so far several times, until the resulting matrices $B_{ll}$ and $B_{mm}$ are zero. From this we can now assume, without loss of generality that $C_{ll}=C_{mm}=0$, so that the matrix $C$ is of the shape
 
\[
C=\left(\begin{array}{ccccccc} 
&&&&&&  \\ 
&*&&&&*& \\
&&0&\cdots&tI_{lm}^{[k]\pm}&& \\
&&\vdots&*&\vdots&& \\
&&-t(I_{lm}^{[k]\pm})^{\boxstar}&\cdots&0&& \\
&*&&&&*& \\
&&&&&&
\end{array} \right).
\]

 Now we eliminate the rest of elements in the $l$-th  and $m$-th rows and columns. As before define the matrices
 
\begin{align*}
    C_{mq}^{\langle k \rangle,\mathrm{na}}&:=tI^{\pm}_{s_{(m,q;k)}}C_{mq}^{\langle k \rangle},\\
    C_{lq}^{\langle k \rangle,\mathrm{na}} &:=tI^{\pm}_{s_{(l,q;k)}}C_{lq}^{\langle k \rangle}.        
\end{align*}
 Then we define the corresponding matrices 
  \begin{align*}
    \matrices{n_l}{n_q} \ni R_{lq}&:=\mathrm{ToeplitzExt}_{n_l,n_q}(C_{mq}^{\langle k \rangle,\mathrm{na}}) \\
    \matrices{n_m}{n_q} \ni R_{mq}&:=\mathrm{ToeplitzExt}_{n_m,n_q}(C_{lq}^{\langle k \rangle,\mathrm{na}})
\end{align*}

Notice the change of indices in the extension. From our initial conditions $C_{lm}\neq 0$, so we have $\lambda_{l}+\lambda_{m}=0$. Suppose that   $C_{lq}\neq 0$ we have $\lambda_{l}+\lambda_{q}=0$. Combining both of these equations we get $\lambda_{m}-\lambda_{q}=0$. So we can define the following matrix $R=[R_{ij}]$ satisfying condition (\ref{conditionblocksrealtoeplitz}) by
 
\[
R_{ij} :=
\begin{cases}
I_{n_i} & i = j,\\
-\mathrm{ToeplitzExt}_{n_l,n_j}(C_{mj}^{\langle k \rangle,\mathrm{na}})  & i =l, j\neq l,m,\\
-\mathrm{ToeplitzExt}_{n_m,n_j}(C_{lj}^{\langle k \rangle,\mathrm{na}}) & i = m, j\neq m,l ,\\
0 & \text{others} .
\end{cases}
\] 
If we expand this, the matrix looks like 
 \[
R=
\left(\begin{smallmatrix}
I_{n_1}&&&0&&&&0&&& \\
&\ddots&&\vdots&&0&&\vdots&&0&  \\ 
&&I_{n_{l-1}}&0&&&&0&&&  \\ 
-R_{l1}&\cdots&-R_{l\, l-1}&I_{n_l}&-R_{l\, l+1}&\cdots&-R_{l\, m-1}&0&-R_{l\, m+1}&\cdots&-R_{lp} \\
&&&0&I_{n_{l+1}}&&&0&&& \\
&0&&\vdots&&\ddots&&\vdots&&0& \\
&&&0&&&I_{n_{m-1}}&0&&& \\
-R_{m1}&\cdots&-R_{m\, l-1}&&-R_{m\, l+1}&\cdots&-R_{m\, m-1}&I_{n_m}&-R_{m\, m+1}&\cdots&-R_{mp} \\ \\
&&&0&&&&0&I_{n_{m+1}}&& \\
&0&&\vdots&&0&&\vdots&&\ddots& \\
&&&0&&&&0&&&I_{n_p}
\end{smallmatrix} \right).
\]    

Finally consider the product
\[
R^{\boxstar}CR,
\]
which gives our desired result. 
\end{proof}
Again, we immediately have the next Corollary. 
\begin{coro}\label{corollarytosecondlemma}
     Let $C=[C_{ij}]\in \dot{\mathcal{H}}_{\mathcal{J}_{N_{\mathbb{R}}}}^{K}$. Suppose that for some $C_{lm}$, with $l\neq m$ and $n_l=n_m$, we have $c_{lm} \neq 0$.  Also suppose that $c_{ll}=0$ and $c_{mm}= 0$.
   Then, there exists  $S \in\dot{\mathcal{T}}_{\mathcal{J}_{N_{\mathbb{R}}}}$ and $t\in \lbrace 1,-1\rbrace $ such that

  \[
S^{\boxstar}CS=
\left(\begin{array}{ccccccccccc}
&&&0&&&&0&&& \\
&*&&\vdots&&*&&\vdots&&*&  \\ 
&&&0&&&&0&&&  \\ 
0&\cdots&0&0&0&\cdots&0&tI_{n_l}^{\pm}&0&\cdots&0 \\
&&&0&&&&0&&& \\
&*&&\vdots&&*&&\vdots&&*& \\

&&&0&&&&0&&& \\
0&\cdots&0&-t(I_{n_l}^{\pm})^{\boxstar}&0&\cdots&0&0&0&\cdots&0 \\
&&&0&&&&0&&& \\
&*&&\vdots&&*&&\vdots&&*& \\
&&&0&&&&0&&&

\end{array} \right).
\]          
\end{coro}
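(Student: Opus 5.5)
This follows by specializing Lemma \ref{secondlemma} to the first diagonal, $k=1$. Since $c_{lm}\neq 0$ is the first diagonal, the hypothesis ``$c_{lm}^{(k)}\neq 0$ and $c_{lm}^{(q)}=0$ for $q<k$'' holds with $k=1$, and the set of $q<1$ is empty. The remaining hypothesis $c_{ll}^{(q)}=c_{mm}^{(q)}=0$ for $q\leq 1$ is exactly $c_{ll}=c_{mm}=0$. Lemma \ref{secondlemma} then gives $S\in\dot{\mathcal{T}}_{\mathcal{J}_{N_{\mathbb{R}}}}$ and $t\in\{1,-1\}$ such that $B=S^{\boxstar}CS$ satisfies the following:
\begin{itemize}
\item $B_{lm}=tI_{lm}^{[1]\pm}$;
\item $B_{ll}=B_{mm}=0$;
\item $b_{il}^{(u)}=b_{li}^{(u)}=b_{im}^{(u)}=b_{mi}^{(u)}=0$ for all $u\geq 1$ and all $i\neq l,m$.
\end{itemize}

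Next I translate these conditions into the displayed block shape. With $n_l=n_m$ we have $s_{(l,m;1)}=n_l$, so the zero column and zero row in the definition of $I_{lm}^{[1]\pm}$ disappear and $I_{lm}^{[1]\pm}=I_{n_l}^{\pm}$. For $i\neq l,m$, every diagonal of the upper Toeplitz blocks $B_{il},B_{li},B_{im},B_{mi}$ vanishes, since their diagonals are indexed by $u\geq 1$. Hence these blocks are entirely zero, which gives the zero rows and columns in the display.

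Finally I identify $B_{ml}$. Because $B$ lies in $\dot{\mathcal{H}}_{\mathcal{J}_{N_{\mathbb{R}}}}^{K}$, it is $N$-skew-symmetric, so
\[
B_{ml}=-(B_{lm})^{\boxstar}=-t(I_{n_l}^{\pm})^{\boxstar}.
\]
This is the remaining entry of the displayed matrix. One could simplify it further using (\ref{conjugationofE}), but the statement keeps it in starred form, so no further work is needed.

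The only point needing care is that $S^{\boxstar}CS$ stays in $\dot{\mathcal{H}}_{\mathcal{J}_{N_{\mathbb{R}}}}^{K}$, i.e.\ keeps its rank, $N$-skew-symmetry and Toeplitz structure, so that the skew relation can be used. This holds because $S$ is invertible and commutes with $\mathcal{J}_{N_{\mathbb{R}}}$, and the argument inside Lemma \ref{secondlemma} already relies on this invariance. I expect no real obstacle: the corollary is a direct specialization.
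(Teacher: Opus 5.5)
Your proposal is correct and is exactly the paper's argument: the paper's entire proof is the remark that the corollary follows from Lemma \ref{secondlemma}. You have simply written out the specialization $k=1$. In that case $I_{lm}^{[1]\pm}=I_{n_l}^{\pm}$ because $n_l=n_m$, condition (3) of the lemma forces every block $B_{il},B_{li},B_{im},B_{mi}$ with $i\neq l,m$ to vanish, and the $N$-skew-symmetry of $S^{\boxstar}CS\in\dot{\mathcal{H}}_{\mathcal{J}_{N_{\mathbb{R}}}}^{K}$ supplies the $(m,l)$ block $-t(I_{n_l}^{\pm})^{\boxstar}$.
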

\begin{proof}
    Follows easily from previous Lemma. 
\end{proof}

\begin{remark}\label{whentheblocksdonthavethesamesize}
    
Lemma $(\ref{secondlemma})$ starts with a block $C_{lm}$ with $n_l=n_m$ and from there we eliminated elements in the corresponding column and row.  We can ask why not  consider, for example,  the case $n_l > n_m$, but the procedure does not work as it is. Consider, for example, the second term in Equation $(\ref{eq2})$: $\frac{1}{2}t(S_{lm})^{\boxstar}I_{lm}^{[k]\pm}$. Notice it is a product of the form

 \[
\left(\begin{array}{c|c} 
0&*\\ 
\end{array} \right)\left(\begin{array}{c} 
*\\ \hline
0
\end{array} \right)
 \]
 (The $0$ parts here can have different sizes) So if the difference between $n_l$ and $n_m$ is big enough we have that $\frac{1}{2}t(S_{lm})^{\boxstar}I_{lm}^{[k]\pm}=0$.
 \end{remark}
 We consider next only one particular case of such a block with $n_l>n_m$, where the problem is avoided. It will be necessary for later results.  
 \begin{lemma}\label{special4dimensionlemma}
     Let $p=2$ (in Equation $(\ref{startingjordanformreal})$), that is $\mathcal{J}_{N_{\mathbb{R}}}=\mathcal{J}_{n_1}(\lambda_1)\oplus \mathcal{J}_{n_2}(\lambda_2)$. Consider a matrix $C=[C_{ij}]\in \dot{\mathcal{H}}_{\mathcal{J}_{N_{\mathbb{R}}}}^{ N_{\mathbb{R}}-1}$.  Suppose also that $n_1=n_2+1$ and that $c_{11}=c_{22}= 0$, but $c_{12}\neq 0$.
 Then, there exists  $S \in\dot{\mathcal{T}}_{\mathcal{J}_{N_{\mathbb{R}}}}$ and $t\in \lbrace 1,-1\rbrace $ such that
    \[
    S^{\boxstar}CS=\left(\begin{array}{c|c} 
0&tI_{12}^{[1]\pm}\\ \hline
-t(I_{12}^{[1]\pm})^{\boxstar}& 0
\end{array} \right)
    \]
 \end{lemma}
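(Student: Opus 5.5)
The plan is to imitate the proof of Lemma \ref{secondlemma} and to use the rank hypothesis $\mathrm{Rank}(C)=N_{\mathbb{R}}-1=2n_2$ to dispose of whatever that procedure cannot reach. Throughout write $n_1=n_2+1$.

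\textbf{Step 1: normalize $C_{12}$.} Since $c_{12}\neq 0$, the block $C_{12}^{\langle 1\rangle}\in\matrices{n_2}{n_2}$ is invertible upper Toeplitz. Lemma \ref{step1} (case $l\neq m$, $k=1$) then gives a block-diagonal $S\in\dot{\mathcal{T}}_{\mathcal{J}_{N_{\mathbb{R}}}}$ with $S^{\boxstar}CS$ having $(1,2)$ block $tI_{12}^{[1]\pm}$ and $(2,1)$ block $-t(I_{12}^{[1]\pm})^{\boxstar}$. Here $I_{12}^{[1]\pm}$ is the $n_1\times n_2$ matrix whose top $n_2$ rows form $I^{\pm}_{n_2}$ and whose last row is zero. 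From now on I assume $C$ has this form.

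\textbf{Step 2: kill $C_{11}$ and $C_{22}$.} Use the same shape of transformation as in Lemma \ref{secondlemma}: $S=\begin{pmatrix} I_{n_1} & -\tfrac12 S_{12}\\ -\tfrac12 S_{21} & I_{n_2}\end{pmatrix}$, where $S_{12}\in\matrices{n_1}{n_2}$ and $S_{21}\in\matrices{n_2}{n_1}$ are $\mathrm{ToeplitzExt}$'s of the non-alternating versions of $C_{22}^{\langle q\rangle}$ and $C_{11}^{\langle q\rangle}$, with $q$ the index of the first nonzero diagonal. Condition (\ref{conditionblocksrealtoeplitz}) holds for this $S$, because a nonzero diagonal block forces $\lambda_1=\lambda_2=0$. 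I would then expand $B_{11},B_{22},B_{12},B_{21}$ exactly as in (\ref{eq2}). The point to check, in view of Remark \ref{whentheblocksdonthavethesamesize}, is that when $n_1-n_2=1$ the products $I_{12}^{[1]\pm}S_{21}$, $(S_{21})^{\boxstar}(I_{12}^{[1]\pm})^{\boxstar}$, $(S_{12})^{\boxstar}I_{12}^{[1]\pm}$ and $(I_{12}^{[1]\pm})^{\boxstar}S_{12}$ do not vanish. The one-row zero strip only shifts the support by one diagonal, so each product should still reproduce $\tfrac12 C_{11}$, respectively $\tfrac12 C_{22}$, at least on all diagonals except possibly the last one, the corner entry, of $C_{11}$. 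This bookkeeping uses (\ref{conjugationofE}) and (\ref{conmmuteswithaltidentity}).

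\textbf{Step 3: iterate.} The leftover terms are products of strictly upper triangular Toeplitz matrices, so they push the first nonzero diagonal of $C_{11}$ and $C_{22}$ strictly further out. The same products perturb $B_{12}$ only on diagonals of index $\ge 2$, which Lemma \ref{step1} renormalizes at the next round. Iterating finitely many times gives $C_{22}=0$, and $C_{11}$ zero except possibly its extreme corner diagonal $c_{11}^{(n_1)}$.

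\textbf{Step 4: use the rank, then finish.} This is where I expect the main obstacle: the corner of $C_{11}$ lies in the part that the single zero row of $I_{12}^{[1]\pm}$ prevents $S_{21}$ from reaching. I would first try to use the rank hypothesis. The invertible $n_2\times n_2$ part of $C_{12}$ and its counterpart in $C_{21}$ already contribute rank $2n_2=N_{\mathbb{R}}-1$. The remaining entry sits in a row and column that otherwise meet only zeros of the off-diagonal blocks, so a Schur-complement computation shows that a nonzero corner entry would raise the rank to $N_{\mathbb{R}}$, a contradiction. Alternatively, Remark \ref{diagonalsarezero} kills it outright whenever that diagonal has an odd number of entries. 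Once $C_{11}=C_{22}=0$, one last application of Lemma \ref{step1} restores $C_{12}=tI_{12}^{[1]\pm}$ exactly, which gives the stated normal form.
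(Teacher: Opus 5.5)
Your Steps 1--3 follow the paper's proof: normalize $C_{12}$ with Lemma~\ref{step1}, apply the transformation of Lemma~\ref{secondlemma} with $-\tfrac12 S_{12}$ and $-\tfrac12 S_{21}$ off the diagonal, and iterate. Step~4, however, is built on a misdiagnosis, and its main argument is false.

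\emph{The corner is reached.} The paper extends $C_{11}^{\langle 2\rangle,\mathrm{na}}$ and $C_{22}^{\langle 2\rangle,\mathrm{na}}$. The index is $k+1=2$, where $k=1$ is the first nonzero diagonal of $C_{12}$; the extra $+1$ compensates for the zero row of $I_{12}^{[1]\pm}$. Write $S_{21}=(0\mid U)$ with $U$ upper Toeplitz of size $n_2$. Then
\[
C_{12}S_{21}=t\begin{pmatrix}0 & I^{\pm}_{n_2}U\\ 0 & 0\end{pmatrix},
\]
so $u^{(i)}$ lands on diagonal $i+1$ of the $n_1\times n_1$ block. Hence diagonals $2,\dots,n_1$ of $C_{11}$, corner included, are all reached. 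The only unreachable diagonal is the main one, which is exactly why $c_{11}=0$ is assumed. Likewise, for $S_{12}=\begin{pmatrix}W\\0\end{pmatrix}$, the product $C_{21}S_{12}$ is, up to sign, $t\,I^{\pm}_{n_2}\mathcal{J}_{n_2}(0)W$, again a one-diagonal shift. So Step~3 already ends with $C_{11}=C_{22}=0$.

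If your $q$ means the first nonzero diagonal of $C_{11}$ (resp.\ $C_{22}$), the extension is misaligned whenever $q>2$. That happens at every later round. It also happens at the first round when $n_2$ is odd, because diagonal $2$ of $C_{11}$ then has odd length and vanishes.

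\emph{The rank argument fails.} The corner $(1,n_1)$ of $C_{11}$ lies in row $1$, which also carries the entry $t$ of $C_{12}=tI_{12}^{[1]\pm}$. The row and column that meet only zeros of the off-diagonal blocks are the last row and the first column of block $1$, not the corner's. In fact the first column of $C$ is identically zero: $c_{11}=0$, and $C_{21}=-t(I_{12}^{[1]\pm})^{\boxstar}$ has zero first column. So $\mathrm{rank}\,C\le N_{\mathbb R}-1$ whatever the corner is, and the rank hypothesis cannot detect it. Indeed that hypothesis is automatic once $c_{12}\neq0$ and $c_{22}=0$.

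Your fallback is the correct observation, and it holds unconditionally. The corner diagonal has exactly one entry, an odd number, so Remark~\ref{diagonalsarezero} forces it to vanish. Replace Step~4 by that remark (or drop it) and use index $2$ in Step~2; the result is the paper's proof.
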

 \begin{proof}
This situation is not covered by neither Lemma \ref{firstlemma} nor Lemma \ref{secondlemma}, but still from Lemma \ref{step1} we can suppose without loss of generality that 
     \[
     C=\left(\begin{array}{c|c} 
C_{11}&tI_{12}^{[1]\pm}\\ \hline
-t(I_{12}^{[1]\pm})^{\boxstar}&C_{22}
\end{array} \right)
     \]
Similarly as in the previous Lemmas define the following matrices

\begin{equation*}  
 C_{11}^{\langle 2 \rangle,\mathrm{na}}:=tI^{\pm}_{n_1-1}C_{11}^{\langle 2 \rangle},
\end{equation*}
\begin{equation*}
     C_{22}^{\langle 2 \rangle,\mathrm{na}}:=tI^{\pm}_{n_2-1}C_{22}^{\langle 2 \rangle}.
\end{equation*}
Then define the corresponding matrices
\begin{align*}
    \matrices{n_1}{n_2} \ni S_{12}  &:=\mathrm{ToeplitzExt}_{n_1,n_2}(C_{22}^{\langle 2 \rangle,\mathrm{na}}), \\
     \matrices{n_2}{n_1} \ni S_{21} &:=\mathrm{ToeplitzExt}_{n_2,n_1}( C_{11}^{\langle 2 \rangle,\mathrm{na}})
\end{align*}

As in Lemma \ref{secondlemma}, define the matrix
\[
S=\left(\begin{array}{c|c} 
I_{n_1}&-\frac{1}{2}S_{12}\\ \hline
-\frac{1}{2}S_{21}&I_{n_2}
\end{array} \right),
\]
and calculate $S^{\boxstar} CS$. The fact that $n_1-n_2=1$ and $c_{11}=c_{22}=0$ avoids the problem mentioned in Remark \ref{whentheblocksdonthavethesamesize}. 
Therefore the rest of the arguments is the same as in Lemma \ref{secondlemma}. 
\end{proof}

The previous Lemmas give an idea of the shape of the quotient space in the right hand side of (\ref{bijectionquotient space}). We get a particularly nice description of such quotient space for the case of maximal possible rank in Proposition \ref{canonicalformsreal}. But before proving that Proposition we answer the following question mentioned at the end of previous Section: what Jordan normal forms $\mathcal{J}_{N_{\mathbb{R}}}$ admit solutions of a given rank $R$ for equation (\ref{transposepermutationequation})?    First in light of the previous results it will be convenient decompose the Jordan normal and ordering it as follows:
\begin{equation}\label{jordannormalforminspecialorderreal} 
\mathcal J_{N_{\mathbb R}}
=
\mathcal J_{\mathbb R}^{0}
\oplus
\mathcal J_{\mathbb R}^{\neq 0}.
\end{equation}
Where the terms are defined as follows
\[
\mathcal J_{\mathbb R}^{0}
:=
\bigoplus_{r=1}^{k_{0}}
\mathcal J_{n_{r}^{0}}(0),
\qquad
n_{1}^{0}
\geq
n_{2}^{0}
\geq
\cdots
\geq
n_{k_{0}}^{0}.
\]
\[
\mathcal J_{\mathbb R}^{\neq 0}
:=
\bigoplus_{\mu=1}^{s}
\mathfrak J_{\mu}^{\neq 0}.
\]
\[
\mathfrak J_{\mu}^{\neq 0}
:=
\mathfrak J_{\mu}^{+}
\oplus
\mathfrak J_{\mu}^{-}.
\]
We choose $\lambda_{\mu}>0$ for all $\mu$, then 
\[
\mathfrak J_{\mu}^{+}
:=
\bigoplus_{r=1}^{k_{\mu}^{+}}
\mathcal J_{n_{\mu,r}^{+}}(\lambda_{\mu}),
\qquad
n_{\mu,1}^{+}
\geq
n_{\mu,2}^{+}
\geq
\cdots
\geq
n_{\mu,k_{\mu}^{+}}^{+}.
\]
\[
\mathfrak J_{\mu}^{-}
:=
\bigoplus_{r=1}^{k_{\mu}^{-}}
\mathcal J_{n_{\mu,r}^{-}}(-\lambda_{\mu}),
\qquad
n_{\mu,1}^{-}
\geq
n_{\mu,2}^{-}
\geq
\cdots
\geq
n_{\mu,k_{\mu}^{-}}^{-}.
\]
We have the partial sizes 
\[
N_{\mu}^{+}
:=
\sum_{r=1}^{k_{\mu}^{+}}n_{\mu,r}^{+},
\qquad
N_{\mu}^{-}
:=
\sum_{r=1}^{k_{\mu}^{-}}n_{\mu,r}^{-}.
\] 
Then we have the total sizes 
\[
N_{\mathbb R}^{0}
:=
\sum_{r=1}^{k_{0}}n_{r}^{0},
\qquad
N_{\mathbb R}^{\neq 0}
=
\sum_{\mu=1}^{s}
\left(
N_{\mu}^{+}
+
N_{\mu}^{-}
\right).
\]

Of course we have $ N_{\mathbb R} = N_{\mathbb R}^{0} + N_{\mathbb R}^{\neq 0}$. For an eigenvalue $\lambda_{\mu}>0$, define the following vectors in  $\mathbb{R}^{\max(k_\mu^+,k_\mu^-)}$ as 
\begin{equation}\label{vectorsrealcase}
\begin{aligned} 
\vec L_{\lambda_{\mu}} &= (n_{\mu,1}^{+},\ldots,n_{\mu,k_{\mu}^{+}}^{+},0,\ldots,0), \\
\vec L_{-\lambda_{\mu}} &= (n_{\mu,1}^{-},\ldots,n_{\mu,k_{\mu}^{-}}^{-},0,\ldots,0).
\end{aligned}
\end{equation}
(The $0$ are added as needed to complete the size of the vectors). Let $d_M(\vec{L}_{\lambda_{\mu}}, \vec{L}_{-\lambda_{\mu}})$ be the Manhattan distance between these two vectors. Recall the Manhattan distance is just the sum of the absolute values of the differences of corresponding components, that is if we write $\vec{L}_{\lambda_{\mu}}=((l_{\lambda_{\mu}})_k)$ and   $\vec{L}_{-\lambda_{\mu}}=((l_{-\lambda_{\mu}})_k)$ then 
\[
d_M(\vec{L}_{\lambda_{\mu}}, \vec{L}_{-\lambda_{\mu}})=\sum_{k=1}^{{\max(k_{\mu}^+,k_{\mu}^-)}}\abs{(l_{\lambda_{\mu}})_k-(l_{-\lambda_{\mu}})_k}.
\]
Using the same notation as in \cite{complexclassification}, let $N_{\mathcal{J}_{N_{\mathbb{R}}}}(m, \lambda)$ denote the number of Jordan blocks of size $m$ of $\mathcal{J}_{N_{\mathbb{R}}}$ with  eigenvalue $\lambda \in \mathbb{R}$.  Now we define the following numbers
\begin{equation}\label{maximalranksrealcasepartial}
    \begin{aligned}
        \mathcal R_{\mathcal J_{\mathbb R}^{0}}
:=&  N_{\mathbb R}^{0}
-  \sum_{\ell\ \mathrm{odd}} \left(  N_{\mathcal J_{\mathbb R}^{0}}(\ell,0)\; \mathrm{mod}\;2
\right), \\
\mathcal R_{\mathfrak J_{\mu}^{\neq 0}} :=& N_{\mu}^{+} + N_{\mu}^{-} - d_M \left( \vec L_{\lambda_{\mu}},  \vec L_{-\lambda_{\mu}}
\right), \\
\mathcal R_{\mathcal J_{\mathbb R}^{\neq 0}} :=&
N_{\mathbb R}^{\neq 0} - \sum_{\mu=1}^{s} d_M \left( \vec L_{\lambda_{\mu}}, \vec L_{-\lambda_{\mu}} \right),  \\
     \end{aligned}
\end{equation}
We have the sum of this numbers
\begin{equation} \label{maximalrankreal}
     \mathcal{R}_{\mathcal{J}_{N_{\mathbb{R}}}}:=N_{\mathbb{R}}-\left( \sum_{\ell \;\text{}odd} {(N_{\mathcal{J}_{N_{\mathbb{R}}}}(\ell,0)}\;\mathrm{mod}\,2)  +  \sum_{\mu=1}^s d_M(\vec{L}_{\lambda_{\mu}}, \vec{L}_{-\lambda_{\mu}}) \right). 
\end{equation}
Now using this numbers define the following sets 

\begin{equation}\label{possibleranksrealsets}
\begin{aligned}
R_{\mathcal J_{\mathbb R}^{0}} &:= \left\{ r \;\middle|\; r\in 2\mathbb Z,\ 0\leq r\leq \mathcal R_{\mathcal J_{\mathbb R}^{0}} \right\} = \left\{
0,2,\ldots,\mathcal R_{\mathcal J_{\mathbb R}^{0}} \right\}, 
\\
R_{\mathfrak J_{\mu}^{\neq 0}}
&:=
\left\{
r \;\middle|\; r\in 2\mathbb Z,\ 0\leq r\leq
\mathcal R_{\mathfrak J_{\mu}^{\neq 0}}
\right\}
=
\left\{
0,2,\ldots,\mathcal R_{\mathfrak J_{\mu}^{\neq 0}}
\right\},
\\
R_{\mathcal J_{\mathbb R}^{\neq 0}}
&:= \sum_{\mu=1}^{s}
R_{\mathfrak J_{\mu}^{\neq 0}}= 
\left\{
r \;\middle|\; r\in 2\mathbb Z,\ 0\leq r\leq
\mathcal R_{\mathcal J_{\mathbb R}^{\neq 0}}
\right\}
= \left\{ 0,2,\ldots,\mathcal R_{\mathcal J_{\mathbb R}^{\neq 0}}
\right\}, \\ 
R_{\mathcal J_{N_{\mathbb R}}}
&:= R_{\mathcal J_{\mathbb R}^{0}}
+R_{\mathcal J_{\mathbb R}^{\neq 0}}=\left\{
r \;\middle|\; r\in 2\mathbb Z,\ 0\leq r\leq
\mathcal R_{\mathcal{J}_{N_{\mathbb{R}}}}
\right\}=\left\{ 0,2,\ldots,\mathcal R_{\mathcal{J}_{N_{\mathbb{R}}}}
\right\}.
\end{aligned}
\end{equation}

If the Jordan normal form is given in this order, every matrix in
$\dot{\mathcal H}^{K}_{\mathcal J_{N_{\mathbb R}}}$ is block
diagonal, with components corresponding to the zero eigenvalue and
to each pair $\pm\lambda_{\mu}$. Therefore, when studying the ranks
of the solutions, we may consider the different components of
\eqref{jordannormalforminspecialorderreal} separately.

Before that we state next Lemma, that states the existence of solutions with a simple form, where in each row and corresponding column there is only one nonzero diagonal block. This will help us to think and simplify about the later lemmas. 
\begin{lemma}\label{simpleformofsolutions}
    If $A=[A_{ij}] \in \dot{\mathcal{H}}_{\mathcal{J}_{N_{\mathbb{R}}}}^{K} $ , then there also exist $B=[B_{ij}] \in \dot{\mathcal{H}}_{\mathcal{J}_{N_{\mathbb{R}}}}^{K} $ with the following properties
    \begin{enumerate}
    \item For each $i$ there exists at most one $j$ such that $B_{ij}\neq 0$,  for all other $l\neq j$, $B_{il}= 0$.
    \item For each $B_{ij}\neq 0$ with $i\leq j$, we have
    \[
B_{ij}=I_{ij}^{[k]\pm},
\qquad
B_{ji}=-\left(B_{ij}\right)^{\boxstar},
\]
for some appropriate $k$.        
    \end{enumerate}
    \end{lemma}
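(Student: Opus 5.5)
Starting from $A$, I will run a reduction algorithm built from Lemma \ref{step1}, Lemma \ref{firstlemma} and Lemma \ref{secondlemma}, with Lemma \ref{special4dimensionlemma} as a fallback. I then read $B$ off the result, possibly after discarding leftover entries in a way that keeps the rank equal to $K$. Because $B$ only has to be \emph{some} element of $\dot{\mathcal{H}}^{K}_{\mathcal{J}_{N_{\mathbb{R}}}}$ and need not be congruent to $A$, I have two tools. The first is congruence by elements of $\dot{\mathcal{T}}_{\mathcal{J}_{N_{\mathbb{R}}}}$, which preserves rank and membership in $\dot{\mathcal{H}}$. The second is constructing block-diagonal-type matrices directly, as long as their rank is $K$.

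\textbf{Step 1 (reduction).} I would argue by induction on the number of blocks $p$. Among all nonzero blocks $C_{lm}$ (including $l=m$), pick one whose leading nonzero diagonal index $k$ is smallest. Prefer a diagonal block $C_{ll}$ if one attains this minimum. If the chosen block is $C_{ll}$, Lemma \ref{firstlemma} makes $B_{ll}=tI^{[k]\pm}_{ll}$ and clears all diagonals of index $\ge k$ in row and column $l$. The minimality of $k$ means nothing of smaller index is left there, so row and column $l$ become zero off the diagonal block. If the chosen block is off-diagonal with $n_l=n_m$, minimality gives $c^{(q)}_{ll}=c^{(q)}_{mm}=0$ for $q\le k$. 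Then Lemma \ref{secondlemma} isolates the pair $(l,m)$ in the same way. Once the rows and columns $l$ (and $m$) are isolated, the rank splits as $s+\operatorname{rank}(\text{rest})$. I then apply induction to the remaining principal block submatrix, which still lies in the corresponding $\dot{\mathcal{H}}$ for the smaller Jordan form. Finally I rescale $t=-1$ to $t=1$ by congruence with $-I$ on one block, or with $I^\pm$-type diagonal Toeplitz matrices, to get exactly $I^{[k]\pm}_{ij}$.

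\textbf{Step 2 (the obstacle: $n_l\ne n_m$).} This is the main difficulty. By Remark \ref{whentheblocksdonthavethesamesize}, the elimination can fail when the minimal-index nonzero block is off-diagonal with $n_l\neq n_m$. Here I would abandon congruence and use the freedom to replace $A$. For the zero-eigenvalue part and for each pair $\pm\lambda_\mu$, I would compute directly the maximal rank attainable with matrices of the simple form. For each $\mu$, pair the sorted blocks of $\mathfrak J^+_\mu$ with those of $\mathfrak J^-_\mu$ in order, using $I^{[1]\pm}$ blocks, which gives rank $2\min$ per pair. This attains $\mathcal R_{\mathfrak J^{\neq0}_\mu}$. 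For $\mathcal J^0_{\mathbb R}$, use even diagonal blocks $I^{[1]\pm}_{ll}$ on the even-sized blocks, and pair odd-sized blocks of equal size. Shifting $k$ upward lowers the rank of each such pair in steps of $2$ (or of $1+1$), so every even rank from $0$ up to the maximum is realized.

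The upper bound, that every $A$ has rank at most this maximum, I would obtain from the rank additivity of Step 1 in the cases where the lemmas apply. Where they do not, I would use a Jordan-structure count: $\mathcal{P}A$ intertwines $\mathcal{J}$ with $-\mathcal{J}^t$, so its rank is bounded by the standard dimension count on the generalized eigenspaces for $\lambda$ and $-\lambda$. Once the maximum is established, any even $K$ in the range is realized by a simple-form $B$. I expect the delicate point to be verifying this upper bound and the odd-block parity correction for the zero eigenvalue. Lemma \ref{special4dimensionlemma} handles the size-difference-one case that arises in that parity count.
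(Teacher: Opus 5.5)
There is a genuine gap.

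\textbf{Step 1 is missing the device that makes the reduction work.} Your Step 1 stalls exactly where you say it does. When the smallest leading index is attained only by rectangular off-diagonal blocks, neither Lemma \ref{firstlemma} nor Lemma \ref{secondlemma} applies. Lemma \ref{special4dimensionlemma} covers only one two-block, corank-one configuration. The paper's proof avoids this with the following steps.
\begin{enumerate}
\item Delete the zero scalar rows of $A$ together with the zero columns matching them under $\mathcal P_{N_{\mathbb R}}$ (bottom rows and leftmost columns of blocks). This keeps the rank and gives an element of $\dot{\mathcal H}$ for a Jordan form with smaller blocks.
\item Once there are no zero rows, if some diagonal block has nonzero main coefficient, use Corollary \ref{corollarytofirstlemma}. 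Otherwise a block row of maximal size must contain a square off-diagonal block with nonzero main coefficient, since otherwise its last scalar row would be zero. Then Corollary \ref{corollarytosecondlemma} applies.
\item Thus only the $k=1$ corollaries are ever needed, and the obstruction of Remark \ref{whentheblocksdonthavethesamesize} never arises. One isolates, deletes the new zero rows of the complement, and iterates.
\item At the end, reinsert the deleted rows and columns. This turns each $tI^{\pm}_m$ into some $tI^{[k]\pm}_{ij}$, and $t$ is then simply replaced by $1$.
\end{enumerate}
Your proposed normalization of $t$ by congruence does not work for diagonal blocks. Congruence by $\mathrm{diag}(\dots,-I_{n_l},\dots)$ leaves $C_{ll}$ unchanged. $I^{\pm}_{n_l}$ is not Toeplitz, so it is not in $\dot{\mathcal T}_{\mathcal J_{N_{\mathbb R}}}$. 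Since only existence is claimed, direct replacement is what is needed.

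\textbf{Step 2 rests on a rank bound you do not prove.} Your fallback reduces the lemma to the bound $\operatorname{rank}A\le\mathcal R_{\mathcal J_{N_{\mathbb R}}}$ for every $A$. That bound is the entire difficulty.
\begin{itemize}
\item You cannot import it from Lemmas \ref{ranksforzeroeigenvaluesreal} and \ref{ranksfornonzeroeigenvaluesreal}, whose proofs begin by invoking the present lemma.
\item ``Rank additivity of Step 1'' does not give it either. The isolated pieces $tI^{[k]\pm}$ with $k>1$ would still have to be shown to sum to at most $\mathcal R$.
\item For a pair $\pm\lambda_\mu$, your intertwining idea can be made rigorous, but only with Jordan types, not dimensions. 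The image of the $\lambda_\mu$-generalized eigenspace under $\mathcal P_{N_{\mathbb R}}A$ is a quotient of a module of type $(n^+_{\mu,i})_i$ and a submodule of one of type $(n^-_{\mu,i})_i$. Its partition therefore lies in both, giving $\sum_i\min(n^+_{\mu,i},n^-_{\mu,i})$ on each of the two eigenspaces. Comparing generalized eigenspace dimensions gives only $2\min(N^+_\mu,N^-_\mu)$, which is too weak.
\item For the zero eigenvalue, intertwining gives only $N^0_{\mathbb R}$. The parity deficit $\sum_{\ell\ \mathrm{odd}}(N_{\mathcal J_{N_{\mathbb R}}}(\ell,0)\bmod 2)$ requires skew-symmetry, and you do not supply an argument for it.
\end{itemize}
One way to get the zero-eigenvalue bound:
\begin{enumerate}
\item The kernel of $\mathcal P_{N_{\mathbb R}}A$ is $\mathcal J$-invariant, and the induced nilpotent on the quotient lies in $\mathfrak{sp}$. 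So its odd Jordan blocks have even multiplicity.
\item The quotient's partition is contained in the original one.
\item Removing one box from a partition changes the number of odd sizes with odd multiplicity by exactly one. Hence the size drops by at least the deficit.
\end{enumerate}
Lemma \ref{special4dimensionlemma} plays no role in this count. Finally, your construction of the maximum for $\mathcal J^0_{\mathbb R}$ omits the unpaired odd-size blocks. These need $I^{[2]\pm}_{ll}$ to reach $\mathcal R_{\mathcal J^0_{\mathbb R}}$.
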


    \begin{proof}
Delete all zero scalar rows of $A$ and the corresponding scalar columns,
and denote the resulting matrix by $B=[B_{ij}]$. Since only zero rows and
columns are deleted,
\[
\operatorname{rank}B=\operatorname{rank}A=K.
\]
Also, 
\[
B\in\dot{\mathcal H}_{\mathcal J'_{N'}}^{K},
\]
where $N'\leq N$ and $\mathcal J'_{N'}$ is obtained from
$\mathcal J_{N_{\mathbb R}}$ by decreasing, if necessary, the sizes of some
Jordan blocks.

If $B=0$, then $K=0$, and there is nothing to prove. Suppose that $B\neq0$. If one of its diagonal blocks has nonzero main coefficient, we apply Corollary \ref{corollarytofirstlemma} to that block. Otherwise, all the diagonal main coefficients are zero. In the latter case, choose a block row of maximal size. Since $B$ has no zero scalar rows or columns, this block row contains a square off-diagonal block $B_{ij}$, $i\neq j$, with nonzero main coefficient. Since $b_{ii}=b_{jj}=0$, Corollary \ref{corollarytosecondlemma} applies to the pair of block rows and columns indexed by $i$ and $j$.

In either case, the corresponding block row and column, or pair of block
rows and columns, is isolated from the remaining matrix. We then consider
the complementary block submatrix. This submatrix may contain zero scalar
rows and corresponding columns and we delete them and repeat the preceding
argument. At each step, at least one block row or column is removed from consideration. Therefore, the procedure ends after finitely many steps. We obtain a matrix $C$ of rank $K$ such that every nonzero block row contains exactly one nonzero block and every nonzero block is of the form
\[
t I_m^\pm,
\qquad
t\in\{-1,1\},
\]
for some positive integer $m$.

Finally, we reinsert all the deleted zero rows and columns in their original
positions, extending the blocks of $C$ back to the original block
sizes of $\mathcal J_{N_{\mathbb R}}$. Every nonzero
reduced block $tI_m^\pm$ becomes a block of the form
\[
tI_{ij}^{[k]\pm}
\]
for an appropriate $k$. We obtain in the end a matrix
\[
D=[D_{ij}]\in\dot{\mathcal H}_{\mathcal J_{N_{\mathbb R}}}^{K}.
\]

Since only zero rows and columns have been inserted,
\[
\operatorname{rank}D=K.
\]

By construction, every block row of $D$ contains only one nonzero block and has almost the required form except for the constant $t$. Since only the existence of a matrix of rank $K$ with the stated form is
required, we may replace each nonzero block $tI_{ij}^{[k]\pm}$, for all $i\leq j$, by
$I_{ij}^{[k]\pm}$, making the corresponding replacement in the opposite
block according to the $N$-skew-symmetry relation. These replacements give us a matrix with the required form. 
\end{proof}

Therefore when exploring existence of a certain rank questions, we only need to consider matrices of such  form. First, we study the block $\mathcal J_{\mathbb R}^{0}$ of the decomposition \eqref{jordannormalforminspecialorderreal}.

\begin{lemma} \label{ranksforzeroeigenvaluesreal}
We have that 
\[
\dot{\mathcal{H}}_{\mathcal J_{\mathbb R}^{0}}^{R}\neq \emptyset
\]
if and only if $R \in  R_{\mathcal J_{\mathbb R}^{0}} $. 
\end{lemma}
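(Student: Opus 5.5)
We prove both directions using the normal form of Lemma \ref{simpleformofsolutions}. That lemma lets us assume every matrix in $\dot{\mathcal H}^{R}_{\mathcal J^0_{\mathbb R}}$ is replaced, without changing its rank, by one in which each block row contains at most one nonzero block. Each such block is either a diagonal block $I_{ll}^{[k]\pm}$, or an off-diagonal pair $I_{lm}^{[k]\pm}$, $-(I_{lm}^{[k]\pm})^{\boxstar}$. The rank of such a matrix is simply the sum of the ranks of its nonzero blocks. So the problem reduces to a combinatorial optimisation over matchings of the Jordan blocks $\mathcal J_{n^0_r}(0)$.

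\textbf{Sufficiency.} First we exhibit a solution of maximal rank $\mathcal R_{\mathcal J^0_{\mathbb R}}$.
\begin{enumerate}
\item For each block of even size $n_l$, take $C_{ll}=I_{n_l}^{\pm}$. This is $N$-skew-symmetric, since $s_{(l,l;1)}=n_l$ is even, and it has rank $n_l$.
\item For blocks of odd size $\ell$, a diagonal block cannot have full rank: by Remark \ref{diagonalsarezero} a nonzero diagonal must have an even number of entries, so the best is rank $\ell-1$ via $I_{ll}^{[2]\pm}$.
\item Pair the odd blocks of equal size $\ell$ two by two. For each pair $(l,m)$ take $C_{lm}=I_{n_l}^{\pm}$ and $C_{ml}=-(I_{n_l}^{\pm})^{\boxstar}$, which gives rank $2\ell$.
\item If $N_{\mathcal J^0_{\mathbb R}}(\ell,0)$ is odd, one block of size $\ell$ is left over. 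Give it $I^{[2]\pm}_{ll}$, losing exactly one.
\end{enumerate}
The total rank is $N^0_{\mathbb R}-\sum_{\ell\ \mathrm{odd}}(N(\ell,0)\bmod 2)$. Every smaller even rank is obtained by shrinking blocks: replace $I^{[k]\pm}$ by $I^{[k+1]\pm}$ for a diagonal block (lowering the rank by $2$), or by $I^{[k+1]\pm}$ in a pair (lowering the rank by $2$). All these stay in $\dot{\mathcal H}$, and the rank always remains even because the matrix is congruent to a skew-symmetric one.

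\textbf{Necessity.} Evenness of $R$ is automatic, since $\mathcal P C$ is skew-symmetric. The main step is the bound $R\le \mathcal R_{\mathcal J^0_{\mathbb R}}$, which we prove on the normal form.
\begin{enumerate}
\item A diagonal block on a block of odd size contributes at most $n_l-1$.
\item An off-diagonal pair between blocks of sizes $n_l\ne n_m$ contributes at most $2\min(n_l,n_m)\le n_l+n_m-|n_l-n_m|$.
\item Hence the rank equals $N^0_{\mathbb R}$ minus a total defect. This defect is at least the sum of $1$ over odd blocks used diagonally or unused, plus $|n_l-n_m|$ over mixed-size pairs.
\item It remains to show the defect is at least the number of odd $\ell$ with $N(\ell,0)$ odd. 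A parity argument does this. Consider the odd-size blocks of size $\ell$, and view the pairs as edges of a matching. If $N(\ell,0)$ is odd, some size-$\ell$ block is either unmatched or matched to a block of a different size.
\item In the first case it costs at least $1$. In the second case the pair costs $|n_l-n_m|\ge1$.
\item A single mixed pair can absorb the deficiency for at most two sizes. If $n_l,n_m$ are both odd and distinct, then $|n_l-n_m|\ge 2$, so it still pays $1$ per size. If one size is even, the pair pays at least $1$ and covers only one odd size.
\end{enumerate}
Summing over sizes gives the bound.

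I expect step 6, the charging argument for mixed pairs covering two deficient odd sizes, to be the main obstacle. I will handle it by assigning each mixed edge's cost $|n_l-n_m|$ to its odd endpoints. This cost is $\ge 2$ when both endpoints are odd, so each odd endpoint receives at least $1$.
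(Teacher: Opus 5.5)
\textbf{The gap: sliding diagonal blocks by one.} Your sliding step fails for diagonal blocks as you state it. By \eqref{conjugationofE}, a diagonal block $I_{ll}^{[k]\pm}$ satisfies $(I_{ll}^{[k]\pm})^{\boxstar}=-I_{ll}^{[k]\pm}$ only when $s_{(l,l;k)}=n_l-k+1$ is even. This is exactly Remark \ref{diagonalsarezero}: a nonzero diagonal of $C_{ll}$ has an even number of entries.

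Replacing $I_{ll}^{[k]\pm}$ by $I_{ll}^{[k+1]\pm}$ turns $s$ into the odd number $s-1\geq 1$. Then $(I_{ll}^{[k+1]\pm})^{\boxstar}=+I_{ll}^{[k+1]\pm}$, the matrix leaves $\dot{\mathcal H}$, and its rank drops by $1$, not $2$. Your remark that the rank stays even presupposes exactly the $N$-skew-symmetry that has been lost.

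Concretely, take $\mathcal J^0_{\mathbb R}=\mathcal J_4(0)$. Your procedure turns $I_4^{\pm}$ into $I_{11}^{[2]\pm}$, which has rank $3$. There is no off-diagonal pair to slide instead, so the rank $2\in R_{\mathcal J^0_{\mathbb R}}$ is never produced.

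The repair is the paper's rule: slide diagonal blocks by two, $I_{ll}^{[k]\pm}\to I_{ll}^{[k+2]\pm}$. This keeps $s$ even and lowers the rank by exactly $2$. Your rule $I_{lm}^{[k]\pm}\to I_{lm}^{[k+1]\pm}$ for off-diagonal pairs is correct as it stands, because there $N$-skew-symmetry only determines $C_{ml}$ from $C_{lm}$.

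\textbf{Necessity: correct, but a different route.} With this correction the argument is complete. Your necessity part is correct but organised differently from the paper's.

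The paper takes a normal-form solution and modifies it without ever decreasing the rank. It enlarges every nonzero block to $I^{[1]\pm}$ (or $I^{[2]\pm}$ for odd diagonal blocks). It then dissolves every pair not joining two blocks of the same odd size into two diagonal blocks. Finally it re-pairs leftover odd diagonal blocks of equal size, ending at the shape of the maximal example.

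You instead bound the total rank defect directly, by a parity and charging argument on the matching. Both routes rest on the same inequality: a pair between two distinct odd sizes loses at least $2$, and any other mixed pair loses at least $1$. Your version makes the count explicit and avoids checking that each modification is rank-nondecreasing. The paper's version is constructive and exhibits the maximizer within the same normal-form framework.
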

\begin{proof}
For simplicity, throughout this proof we write
\[
n_i:=n_i^0,
\qquad
1\leq i\leq k_0.
\]
     First we show that
     \[
     \dot{\mathcal{H}}_{\mathcal J_{\mathbb R}^{0}}^{\mathcal R_{\mathcal J_{\mathbb R}^{0}}}\neq \emptyset.
     \]
    We will construct a block matrix $A=[A_{ij}]_{1\leq i,j\leq k_0}$ in this set. We use the notation introduced at the beginning of this section . We will construct it with the properties described in Lemma $\ref{simpleformofsolutions}$, so that we only define one nonzero block in each row and such nonzero blocks will only have one nonzero diagonal.  For all $i$ such that $n_i$ is even, just make $A_{ii}=I_{ii}^{[1]\pm}$.  Next consider an $i$ such that $n_i$ is odd. In such case recall that (from Remark \ref{diagonalsarezero}) we have $a_{ii}$ must be $0$. If you can find $j\neq i$ with $n_j=n_i$,  define $A_{ij}=I_{ij}^{[1]\pm}$ and also the corresponding $A_{ji}$ according to the N-skew-symmetry relation. If such a $j$ does not exist, make $A_{ii}=I_{ii}^{[2]\pm}$. Notice that in this last case the corresponding odd dimension diagonal block has rank $n_i-1$.  Repeat this procedure for all block rows not yet considered.  We obtain at the end a matrix of rank  $\mathcal R_{\mathcal J_{\mathbb R}^{0}}$, with the shape described in Lemma $\ref{simpleformofsolutions}$.
    
 Next we construct matrices with all the rest of smaller ranks in  $R_{\mathcal J_{\mathbb R}^{0}}$. The idea is to \say{slide} up matrix of a given rank to obtain a new one with smaller rank.  Let 
 \[
 C=[C_{ij}] \in \dot{\mathcal{H}}_{\mathcal J_{\mathbb R}^{0}}^{K}
 \]
 for some $K$, be as in Lemma \ref{simpleformofsolutions}. Consider a block row of $C$ that has a nonzero block. Let it be the $i$-th row. Such a nonzero block has the shape $I_{ij}^{[k]\pm}$ for some $k$. Define a new matrix by making the change 

\[
I_{ij}^{[k]\pm} \to \begin{cases}I_{ij}^{[k+2]\pm} &j=i, \\
I_{ij}^{[k+1]\pm}&j\neq i.
    
\end{cases}
\]

In the latter case do the appropriate change in the corresponding column too, according to the N-skew-symmetry relation.  Call this resulting matrix $C^{\prime}$.  $C^{\prime}$ is still  $N$-skew-symmetric $N$-upper alternating Toeplitz and satisfies (\ref{conditionblockrealankel}).  Also it has exactly two extra zero rows (and corresponding columns), so it has rank $K-2$. So we have
     \[
     C^{\prime} \in \dot{\mathcal{H}}_{\mathcal J_{\mathbb R}^{0}}^{K-2}
     \]
      Repeat the procedure if necessary. The odd ranks are not possible immediately from $N$-skew-symmetry.

      Finally we show that 
      \[
     \dot{\mathcal{H}}_{\mathcal J_{\mathbb R}^{0}}^{R}= \emptyset,
     \]
     for all $R>\mathcal R_{\mathcal J_{\mathbb R}^{0}}$. Again the odd ranks are immediately not possible. Let 
     \[
     B \in \dot{\mathcal{H}}_{\mathcal J_{\mathbb R}^{0}}^{S},
     \]
     for some $S\geq\mathcal R_{\mathcal J_{\mathbb R}^{0}}$. Suppose without loss of generality that  it has the shape given in Lemma \ref{simpleformofsolutions}. We show that $S= \mathcal{R}_{{\mathcal J_{\mathbb R}^{0}}}$. The idea is that every time an odd block cannot be \say{paired} with another odd block of the same size, it always decreases the rank of the matrix.  We will transform the matrix $B$ in several steps, without decreasing the rank, until we obtain a matrix similar to the matrix constructed at the beginning of this proof. After each transformation we will still refer to the matrix as $B$, so as to not introduce more notation.  First we make each nonzero block \say{as big as possible}. Consider a nonzero block $B_{ij}$ such that $B_{ij}= I_{ij}^{[k]\pm}$, for some $k$. Substitute such block using the following rules
    \begin{itemize}
        \item If $i=j$ and $n_i$ is odd instead make $B_{ii}=I_{ii}^{[2]\pm} $ (recall once again that $b_{ii}=0$ necessarily for odd sized blocks in the diagonal). For the case $n_i=1$ just leave the block as zero matrix. 
        \item For the rest of cases make $B_{ij}=I_{ij}^{[1]\pm}$.
    \end{itemize}
     The resulting matrix $B$ has bigger or equal rank. Now for this new matrix $B$ consider all the nonzero blocks $B_{ij}$ with $i<j$, except those such that $n_i=n_j$ odd,  make both $B_{ij}=B_{ji}=0$. Then make $B_{ii}=I_{ii}^{[1]\pm}$ if $n_i$ is even or $B_{ii}=I_{ii}^{[2]\pm}$ if $n_i$ is odd, similarly make $B_{jj}=I_{jj}^{[1]\pm}$ if $n_j$ is even or $B_{jj}=I_{jj}^{[2]\pm}$ if $n_j$ is odd.  Again the resulting matrix $B$ has the same or bigger rank. For this new matrix $B$, consider a nonzero block $B_{ii}$, with $n_i$ odd. If there is another nonzero block $B_{jj}$, $j\neq i$ , but $n_j=n_i$, make both $B_{ii}=B_{jj}=0$ and instead make $B_{ij}=I_{ij}^{[1]\pm}$ and define $B_{ji}$ according to the N-skew-symmetry relation . Repeat the procedure for all  the rest of such pair of  blocks you can find. The resulting matrix is again of the same or bigger rank. Moreover, it is of the same shape (up to some conjugation of the blocks), and therefore it has the same rank as the matrix $A$ constructed at the beginning of this proof.   
    
\end{proof}
Next we study a block $\mathfrak J_{\mu}^{\neq 0}$ of the decomposition \eqref{jordannormalforminspecialorderreal}.
\begin{lemma} \label{ranksfornonzeroeigenvaluesreal}
We have that 
\[
\dot{\mathcal{H}}_{\mathfrak J_{\mu}^{\neq 0}}^{R}\neq \emptyset
\]
if and only if $R \in  R_{\mathfrak J_{\mu}^{\neq 0}} $. 
\end{lemma}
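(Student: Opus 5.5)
Since $\lambda_\mu\neq 0$, condition (\ref{conditionblockrealankel}) forces every block of $C\in\dot{\mathcal H}^{R}_{\mathfrak J_{\mu}^{\neq 0}}$ coupling two blocks of $\mathfrak J_\mu^{+}$, or two blocks of $\mathfrak J_\mu^{-}$, to vanish. In particular all diagonal blocks vanish. So I would write
\[
C=\left(\begin{array}{c|c} 0 & X\\ \hline -X^{\boxstar} & 0\end{array}\right),
\]
where $X=[X_{ij}]$, $1\le i\le k_\mu^+$, $1\le j\le k_\mu^-$, has arbitrary upper alternating Toeplitz blocks $X_{ij}\in\matrices{n^+_{\mu,i}}{n^-_{\mu,j}}$. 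Consequently $\mathrm{rank}\, C=2\,\mathrm{rank}\, X$, so every $R$ occurring is even. The first thing to record is the identity
\[
\mathcal R_{\mathfrak J_\mu^{\neq0}}=\sum_k\big(a_k+b_k-\abs{a_k-b_k}\big)=2\sum_k \min(a_k,b_k),
\]
where $a_k=(l_{\lambda_\mu})_k$ and $b_k=(l_{-\lambda_\mu})_k$ are the sorted (zero-padded) entries of $\vec L_{\pm\lambda_\mu}$. So the claim reduces to showing that the maximal rank equals $2\sum_k\min(a_k,b_k)$, and that every smaller even rank also occurs.

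\textbf{Existence.} For $r\le \min(k_\mu^+,k_\mu^-)$, I pair the $r$-th largest block of $\mathfrak J_\mu^{+}$ with the $r$-th largest block of $\mathfrak J_\mu^{-}$. On that pair I put $X_{rr}=I^{[1]\pm}_{rr}$, and set every other block of $X$ to zero. Each such pair contributes rank $2\min(a_r,b_r)$, and the blocks left unpaired contribute nothing. This gives an element of rank exactly $\mathcal R_{\mathfrak J_\mu^{\neq0}}$. For the smaller even ranks I use the same sliding argument as in Lemma \ref{ranksforzeroeigenvaluesreal}: replacing a nonzero block $I^{[k]\pm}_{ij}$ (with $i\neq j$) by $I^{[k+1]\pm}_{ij}$, and adjusting the transposed block according to $N$-skew-symmetry, lowers the rank by exactly $2$. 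Iterating this reaches every even rank down to $0$.

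\textbf{Upper bound.} Let $R$ be a rank that occurs. By Lemma \ref{simpleformofsolutions} there is a matrix of the same rank $R$ in which each block row contains at most one nonzero block, of the form $I^{[k]\pm}_{ij}$. Here this means $X$ has at most one nonzero block in each block row and each block column. Such an $X$ determines a partial matching $\sigma$ between the $+$-blocks and the $-$-blocks. The rank of $X$ is then
\[
\sum_{i} s_{(i,\sigma(i);k_i)}\le\sum_i \min\big(n^+_{\mu,i},n^-_{\mu,\sigma(i)}\big).
\]
It remains to show that the sorted matching maximizes $\sum_i\min(a_i,b_{\sigma(i)})$ over all partial matchings. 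I would do this by an exchange argument. Missing partners count as size $0$, which is harmless. If $a_1\ge a_2$ and $b_1\ge b_2$, then
\[
\min(a_1,b_2)+\min(a_2,b_1)\le \min(a_1,b_1)+\min(a_2,b_2),
\]
which one checks by cases on the relative order of the four numbers. Repeatedly uncrossing pairs therefore transforms any matching into the sorted one without decreasing the sum. Hence $R\le \mathcal R_{\mathfrak J_\mu^{\neq0}}$.

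The main point to verify carefully is that Lemma \ref{simpleformofsolutions} really preserves the rank rather than only bounding it, so that the maximal rank is attained by a matching-type matrix. Its proof does keep the rank equal to $K$, since it only uses congruences and the deletion and reinsertion of zero rows and columns. After that, the only remaining work is the elementary inequality in the exchange step.
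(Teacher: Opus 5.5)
Your proof is correct and follows the paper's route: the same descending-order pairing with $I^{[1]\pm}$ blocks for the maximal rank, the same sliding argument for the smaller even ranks, and the same reduction via Lemma \ref{simpleformofsolutions} to a matching-type matrix whose rank is maximized by the sorted pairing. The only differences are small tightenings. You prove the optimal-matching fact, which the paper calls well known, by an explicit uncrossing inequality, and you handle partial matchings by zero padding, a case the paper's formula $N_{\mu}^{+}+N_{\mu}^{-}-d_M(\vec L_{\lambda_\mu},P\vec L_{-\lambda_\mu})$ passes over.
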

\begin{proof}
The proof follows the same order as previous Lemma. 
For simplicity, for this proof we enumerate the Jordan blocks in
$\mathfrak J_\mu^{\neq0}$ consecutively and write
\[
n_i:=
\begin{cases}
n_{\mu,i}^{+},
&1\leq i\leq k_\mu^{+},\\[1mm]
n_{\mu,i-k_\mu^{+}}^{-},
&k_\mu^{+}<i\leq k_\mu^{+}+k_\mu^{-}.
\end{cases}
\]
So the indices $1,\ldots,k_\mu^{+}$ correspond to the blocks with eigenvalue $\lambda_\mu$ and the indices $
k_\mu^{+}+1,\ldots,k_\mu^{+}+k_\mu^{-}
$ correspond to the blocks with eigenvalue $-\lambda_\mu$.

First we show that
     \[
\dot{\mathcal H}_{\mathfrak J_{\mu}^{\neq 0}}^{
\mathcal R_{\mathfrak J_{\mu}^{\neq 0}}
}
\neq\emptyset.
\]
    We will construct a block matrix $A=[A_{ij}]_{1\leq i,j\leq k_\mu^{+}+k_\mu^{-}}$ in this set. Again we suppose it has the shape of Lemma $\ref{simpleformofsolutions}$, so that we only define one nonzero block in each row and such nonzero blocks will only have one nonzero diagonal. 
    For each $i$ such that
$1\leq i\leq\min(k_\mu^+,k_\mu^-)$, define $A_{i\,k_\mu^++i}=I_{i\,k_\mu^++i}^{[1]\pm}$ and the corresponding $A_{k_\mu^++i\,\,i}$  according to the N-skew-symmetry relation. 

Each such pairing introduces
\[
\abs{n_i-n_{k_\mu^++i}}=\left|n_{\mu,i}^+-n_{\mu,i}^-\right|
\]
zero rows. All blocks not paired in the previous way make them zero. The resulting rank of the matrix A is $\mathcal R_{\mathfrak J_{\mu}^{\neq 0}}$. 
    
 Next we construct matrices with all the rest of smaller ranks in  $ R_{\mathfrak J_{\mu}^{\neq 0}}$. Here the idea is again \say{slide} up matrix of a given rank to obtain a new one with smaller rank.  As the proof of this is exactly the same as in the previous Lemma we omit it.
 
       Finally we show that 
             
      \[
     \dot{\mathcal H}_{\mathfrak J_{\mu}^{\neq 0}}^{R}= \emptyset,
     \]
     for all $R>\mathcal R_{\mathfrak J_{\mu}^{\neq 0}}$. Let 
     \[
     B \in \dot{\mathcal{H}}_{\mathfrak J_{\mu}^{\neq 0}}^{S},
     \]
     for some $S\geq\mathcal R_{\mathfrak J_{\mu}^{\neq 0}}$. Suppose without loss of generality that  it has the shape given in Lemma \ref{simpleformofsolutions}. We show that $S= \mathcal{R}_{\mathfrak J_{\mu}^{\neq 0}}$. The idea is that the \say{pairing} constructed at the beginning for the matrix $A$ at the beginning of this proof gives the maximum possible rank. We will transform the matrix $B$ in several steps, without decreasing the rank, until we obtain a matrix similar to the matrix constructed at the beginning of this proof. After each transformation we will still refer to the matrix as $B$, so as to not introduce more notation.  First make each nonzero block as big as possible. That is if for a nonzero block $B_{ij}$, we have $B_{ij}=I_{ij}^{[k]\pm}$, for some $k>1$, substitute it for $B_{ij}=I_{ij}^{[1]\pm}$ (recall in this case we necessarily have that for nonzero blocks $i\neq j$). The resulting matrix $B$ has equal or higher rank.  Any such resulting matrix $B$ is formed by \say{pairing} blocks corresponding to a couple of positive-negative eigenvalues as these are the only blocks that can be nonzero. That is if $B_{ij}\neq 0 $ with $i<j$, then $1\leq i\leq k_{\mu}^+$ and $k_{\mu}^+ < j\leq k_{\mu}^++k_{\mu}^-$. There are of course blocks that will be left without a pair, such block rows will be necessarily zero.  Every nonzero block $B_{ij}$ introduces necessarily $\abs{n_i-n_j}$ zero rows. Therefore the rank of this matrix $B$ can be described by 
  \[
  N_{\mu}^{+} + N_{\mu}^{-} - d_M \left( \vec L_{\lambda_{\mu}},  P\vec L_{-\lambda_{\mu}}
\right)
  \]
  for some $P\in \mathrm{Per}(\max(k_{\mu}^+,k_{\mu}^-),\mathbb{R})$.  So the matrix with highest rank we could have is   
  \[
  N_{\mu}^{+} + N_{\mu}^{-} -\min_{P\in \mathrm{Per}(\max(k_{\mu}^+,k_{\mu}^-),\mathbb{R})} d_M(\vec{L}_{\lambda_{\mu}}, P\vec{L}_{-\lambda_{\mu}})
  \]
    It is well known that this is achieved when the vectors are paired in descending order, that is exactly when $P=I$, which is the previous construction.  This completes our proof.
\end{proof}
Now we can combine the previous two Lemmas and give the first main result of this section. 

\begin{prop} \label{sufficientneccesaryconditionsexistence}
  We have that 
\[
\dot{\mathcal{H}}_{\mathcal J_{N_{\mathbb R}}}^{R}\neq \emptyset
\]
if and only if $R \in  R_{\mathcal J_{N_{\mathbb R}}} $.
\end{prop}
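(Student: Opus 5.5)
The plan is to reduce the statement to Lemma \ref{ranksforzeroeigenvaluesreal} and Lemma \ref{ranksfornonzeroeigenvaluesreal} through the block decomposition \eqref{jordannormalforminspecialorderreal}. With the Jordan form ordered this way, condition (\ref{conditionblockrealankel}) forces every $C\in\dot{\mathcal H}^{R}_{\mathcal J_{N_{\mathbb R}}}$ to vanish on any block $C_{ij}$ with $\lambda_i\neq-\lambda_j$. The blocks of eigenvalue $0$ can therefore only interact with each other. Likewise, the blocks with eigenvalue $\pm\lambda_\mu$ only interact with the blocks of eigenvalue $\mp\lambda_\mu$, for the same $\mu$.

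Hence $C$ is block diagonal, say $C=C^0\oplus C^1\oplus\cdots\oplus C^s$, with $C^0$ supported on $\mathcal J^0_{\mathbb R}$ and $C^\mu$ supported on $\mathfrak J^{\neq0}_\mu$. Each summand is again $N$-skew-symmetric, $N$-upper alternating Toeplitz, and satisfies (\ref{conditionblockrealankel}) for its own Jordan form. Conversely, any direct sum of such matrices lies in the full set. Since rank is additive over direct sums, I obtain
\[
\dot{\mathcal H}^{R}_{\mathcal J_{N_{\mathbb R}}}\neq\emptyset
\iff
R=R_0+\sum_{\mu=1}^s R_\mu \ \text{with}\ \dot{\mathcal H}^{R_0}_{\mathcal J^0_{\mathbb R}}\neq\emptyset,\ \dot{\mathcal H}^{R_\mu}_{\mathfrak J^{\neq0}_\mu}\neq\emptyset .
\]

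By the two lemmas, $R_0$ must range over $R_{\mathcal J^0_{\mathbb R}}=\{0,2,\ldots,\mathcal R_{\mathcal J^0_{\mathbb R}}\}$ and each $R_\mu$ over $R_{\mathfrak J^{\neq0}_\mu}=\{0,2,\ldots,\mathcal R_{\mathfrak J^{\neq0}_\mu}\}$. The achievable $R$ thus form the Minkowski sum of these sets. I then verify the set identities claimed in \eqref{possibleranksrealsets}.

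First, every $\mathcal R_{\mathfrak J^{\neq0}_\mu}$ is even. Indeed, $N^+_\mu+N^-_\mu-d_M=\sum_k\bigl((l_{\lambda_\mu})_k+(l_{-\lambda_\mu})_k-\abs{(l_{\lambda_\mu})_k-(l_{-\lambda_\mu})_k}\bigr)=2\sum_k\min\bigl((l_{\lambda_\mu})_k,(l_{-\lambda_\mu})_k\bigr)$. Second, $\mathcal R_{\mathcal J^0_{\mathbb R}}$ is even. The even-sized blocks contribute even sizes, and for each odd $\ell$ we have $\ell N(\ell,0)-(N(\ell,0)\bmod 2)\equiv N(\ell,0)-(N(\ell,0)\bmod 2)\equiv 0 \pmod 2$.

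A Minkowski sum of sets of the form $\{0,2,\ldots,2a_i\}$ is exactly $\{0,2,\ldots,2\sum a_i\}$. This follows by induction, adding one set at a time: every even number up to the total can be written by greedily filling the summands. Finally, $\mathcal R_{\mathcal J^0_{\mathbb R}}+\sum_\mu\mathcal R_{\mathfrak J^{\neq0}_\mu}=\mathcal R_{\mathcal J_{N_{\mathbb R}}}$ by \eqref{maximalrankreal}, because $N^0_{\mathbb R}+\sum_\mu(N^+_\mu+N^-_\mu)=N_{\mathbb R}$. This gives both directions of the statement.

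The main point needing care is the first step, the block-diagonal splitting of $\dot{\mathcal H}$. I must check that the $N$-skew-symmetry and the Toeplitz conditions are imposed blockwise, so that restricting to a summand, or taking a direct sum of summands, stays inside the correct set. I also need the relabelling of Jordan blocks into the special order to be harmless, i.e.\ implemented by a block permutation compatible with $\boxstar$. Both are direct from the definitions, since $\mathcal P_{N_{\mathbb R}}$ and $I^{\pm}_{N_{\mathbb R}}$ are themselves block diagonal with one factor per Jordan block.
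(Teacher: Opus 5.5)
Your proposal is correct and follows the same route as the paper. Condition (\ref{conditionblockrealankel}) makes every element of $\dot{\mathcal H}^{R}_{\mathcal J_{N_{\mathbb R}}}$ block diagonal with respect to \eqref{jordannormalforminspecialorderreal}, and the achievable ranks are then the Minkowski sum of those given by Lemmas \ref{ranksforzeroeigenvaluesreal} and \ref{ranksfornonzeroeigenvaluesreal}; your parity checks and your proof that this Minkowski sum equals $\{0,2,\ldots,\mathcal R_{\mathcal J_{N_{\mathbb R}}}\}$ supply what the paper only asserts in \eqref{possibleranksrealsets}.
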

\begin{proof}
    As mentioned before if $A \in \dot{\mathcal{H}}_{\mathcal J_{N_{\mathbb R}}}^{R}$, then $A$ is a block diagonal matrix with respect to the decomposition  \eqref{jordannormalforminspecialorderreal}. Therefore, we just need to consider the Minkowski sum of the possible ranks obtained in the previous Lemmas over all possible eigenvalues, and this gives our desired result. 
    
\end{proof}
If we fix $R$, several matrices ${\mathcal{J}_{N_{\mathbb{R}}}}$ can satisfy $\dot{\mathcal{H}}_{\mathcal J_{N_{\mathbb R}}}^{R}\neq \emptyset$. We describe those possible $\mathcal{J}_{N_{\mathbb{R}}}$, when $R$ is maximal.  
\begin{coro}\label{maximaldimensioncaseevenreal}
     Let $N_{\mathbb{R}}$ be even. $\dot{\mathcal{H}}_{\mathcal{J}_{N_{\mathbb{R}}}}^{N_{\mathbb{R}}}\neq \emptyset$ if and only if  
       \begin{enumerate}
    \item $N_{\mathcal{J}_{N_{\mathbb{R}}}}(m,0)$ is even for all odd $m\in 
            \mathbb{N}^+$.
            \item $N_{\mathcal{J}_{N_{\mathbb{R}}}}(m, \lambda)
=N_{\mathcal{J}_{N_{\mathbb{R}}}}(m, -\lambda)$ for all
$\lambda \in \mathbb{R}\setminus\{0\}$ and all $m\in\mathbb N^+$.   
     \end{enumerate}   
     \end{coro}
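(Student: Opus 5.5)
By Proposition \ref{sufficientneccesaryconditionsexistence}, $\dot{\mathcal{H}}_{\mathcal{J}_{N_{\mathbb{R}}}}^{N_{\mathbb{R}}}\neq\emptyset$ holds if and only if $N_{\mathbb{R}}\in R_{\mathcal J_{N_{\mathbb R}}}$. Since $N_{\mathbb R}$ is even, this happens exactly when $\mathcal R_{\mathcal J_{N_{\mathbb R}}}=N_{\mathbb R}$, because $\mathcal R_{\mathcal J_{N_{\mathbb R}}}\leq N_{\mathbb R}$ always. From \eqref{maximalrankreal}, this reduces to the vanishing of
\[
\sum_{\ell\ \mathrm{odd}}\left(N_{\mathcal{J}_{N_{\mathbb{R}}}}(\ell,0)\;\mathrm{mod}\;2\right)+\sum_{\mu=1}^s d_M(\vec L_{\lambda_\mu},\vec L_{-\lambda_\mu}).
\]
Every summand is a nonnegative integer, so the sum is zero if and only if each summand is zero. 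I will translate each of these vanishing conditions into conditions (1) and (2).

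The first sum vanishes if and only if $N_{\mathcal J_{N_{\mathbb R}}}(\ell,0)$ is even for every odd $\ell$, which is condition (1). For the second sum, fix $\mu$. By definition \eqref{vectorsrealcase}, the vectors $\vec L_{\pm\lambda_\mu}$ list the block sizes for $\pm\lambda_\mu$ in nonincreasing order, padded with zeros to a common length. The condition $d_M(\vec L_{\lambda_\mu},\vec L_{-\lambda_\mu})=0$ says the two vectors are equal. Two nonincreasing sequences of positive integers, padded by zeros, are equal exactly when they are the same multiset. That is, $k_\mu^+=k_\mu^-$ and, for every $m$, the number of blocks of size $m$ coincides, so $N_{\mathcal J_{N_{\mathbb R}}}(m,\lambda_\mu)=N_{\mathcal J_{N_{\mathbb R}}}(m,-\lambda_\mu)$.

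It remains to treat eigenvalues $\lambda\neq0$ for which $-\lambda$ does not occur. In the decomposition \eqref{jordannormalforminspecialorderreal}, such a $\lambda$ still gives a pair $\mathfrak J_\mu^{\neq0}$ with $\lambda_\mu=|\lambda|$ and one of $k_\mu^\pm$ equal to zero. Then one vector is identically zero and the distance is the sum of the block sizes, which is positive. So condition (2), which also covers the case where both counts are zero, matches exactly the vanishing of every $d_M$ term, with $\mu$ ranging over all $|\lambda|$ for $\lambda$ an eigenvalue.

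I do not expect a real obstacle here. The only point needing care is the bookkeeping just described: $\mathcal R_{\mathcal J_{N_{\mathbb R}}}$ is an integer bounded by $N_{\mathbb R}$, and unpaired eigenvalues are included among the $\mu$'s. With those checked, the corollary is immediate.
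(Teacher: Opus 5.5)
Your proof is correct and follows the paper's argument: you reduce via Proposition \ref{sufficientneccesaryconditionsexistence} to the vanishing of the nonnegative defect sum in \eqref{maximalrankreal}, then read each vanishing term as condition (1) or condition (2). Your extra bookkeeping makes explicit what the paper leaves implicit: that $\mathcal R_{\mathcal J_{N_{\mathbb R}}}\le N_{\mathbb R}$, that $d_M=0$ means equality of the multisets of block sizes, and that unpaired eigenvalues are covered by allowing $k_\mu^\pm=0$.
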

\begin{proof}
  From Proposition \ref{sufficientneccesaryconditionsexistence} we get 
    \begin{equation*}
    \begin{aligned}          
       \sum_{l \;\text{}odd} {(N_{\mathcal{J}_{N_{\mathbb{R}}}}(l,0)}\;\mathrm{mod}\,2)  +  \sum_{i} d_M(\vec{L}_{\lambda_i}, \vec{L}_{-\lambda_i})=0.
        \end{aligned}
    \end{equation*}
All the terms involved are positive, so we get that for all odd $l$ and all $i$    
\begin{enumerate}
    \item $N_{\mathcal{J}_{N_{\mathbb{R}}}}(l,0)\;\mathrm{mod}\,2=0$,
    \item $d_M(\vec{L}_{\lambda_i}, \vec{L}_{-\lambda_i})=0$,
    \end{enumerate}
These two correspond, in order, to the conditions stated in this corollary. 

\end{proof}

\begin{coro}\label{maximaldimensioncaseoddreal}
    Let $N_{\mathbb{R}}$ be odd and $R=N_{\mathbb{R}}-1$, then $\dot{\mathcal{H}}_{\mathcal{J}_{N_{\mathbb{R}}}}^{N_{\mathbb{R}}-1}\neq \emptyset$ if and only if  one of the following
    
    \begin{enumerate}
            \item $N_{\mathcal{J}_{N_{\mathbb{R}}}}(m, \lambda)=N_{\mathcal{J}_{N_{\mathbb{R}}}}(m, -\lambda)$ for all $\lambda \in \mathbb{R}\setminus\lbrace 0 \rbrace$, $m\in \mathbb{N}^+$  and there exists only one odd $l\in \mathbb{N}^+$ such that $N_{\mathcal{J}_{N_{\mathbb{R}}}}(l,0)$ is odd. 
            \item $N_{\mathcal{J}_{N_{\mathbb{R}}}}(m,0)$ is even for all odd $m\in 
            \mathbb{N}^+$. One of the following:
            \begin{enumerate}
                
                \item There exists a unique $\alpha \in \mathbb R\setminus\{0\}$ such that $N_{\mathcal J_{N_{\mathbb R}}}(1,\alpha)- N_{\mathcal J_{N_{\mathbb R}}}(1,-\alpha)=1$.
For every pair $(m,\lambda)\notin\{(1,\alpha),(1,-\alpha)\}$ with $\lambda\in\mathbb R\setminus\{0\}$, we have 
\[
N_{\mathcal J_{N_{\mathbb R}}}(m,\lambda)
=
N_{\mathcal J_{N_{\mathbb R}}}(m,-\lambda).
\]

                 \item  There exist a unique $l\in \mathbb{N}^+$ and a unique $\alpha \in \mathbb{R}\setminus\lbrace 0 \rbrace$  such that $N_{\mathcal{J}_{N_{\mathbb{R}}}}(l,\alpha)-N_{\mathcal{J}_{N_{\mathbb{R}}}}(l,-\alpha)=1$ and $N_{\mathcal{J}_{N_{\mathbb{R}}}}(l+1,-\alpha)-N_{\mathcal{J}_{N_{\mathbb{R}}}}(l+1,\alpha)=1$.  For every pair $(m,\lambda)$ different from $(l,\alpha)$, 
$(l+1,\alpha)$, $(l,-\alpha)$ and 
$(l+1,-\alpha)$ with $\lambda\in\mathbb R\setminus\{0\}$, we have $N_{\mathcal J_{N_{\mathbb R}}}(m,\lambda)
=
N_{\mathcal J_{N_{\mathbb R}}}(m,-\lambda).$   
                \end{enumerate} 
        \end{enumerate}
    \end{coro}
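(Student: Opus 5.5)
The plan is to reduce the statement to the numerical criterion of Proposition \ref{sufficientneccesaryconditionsexistence}. Since $R_{\mathcal J_{N_{\mathbb R}}}=\{0,2,\ldots,\mathcal R_{\mathcal J_{N_{\mathbb R}}}\}$, we have $\dot{\mathcal H}^{N_{\mathbb R}-1}_{\mathcal J_{N_{\mathbb R}}}\neq\emptyset$ if and only if $\mathcal R_{\mathcal J_{N_{\mathbb R}}}\geq N_{\mathbb R}-1$. Write
\[
\mathcal D:=\sum_{\ell\ \mathrm{odd}}\left(N_{\mathcal J_{N_{\mathbb R}}}(\ell,0)\ \mathrm{mod}\ 2\right)+\sum_{\mu=1}^{s} d_M(\vec L_{\lambda_\mu},\vec L_{-\lambda_\mu}).
\]
By \eqref{maximalrankreal} the condition reads $\mathcal D\leq 1$.

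First I will show that $\mathcal D\equiv N_{\mathbb R}\pmod 2$, so that for odd $N_{\mathbb R}$ the condition is exactly $\mathcal D=1$. For the zero eigenvalue, $N^0_{\mathbb R}\equiv\sum_{\ell\ \mathrm{odd}}\ell\, N(\ell,0)\equiv\sum_{\ell\ \mathrm{odd}}(N(\ell,0)\ \mathrm{mod}\ 2)$. For each pair $\pm\lambda_\mu$, with padded sorted vectors $(a_i)$ and $(b_i)$, we have $a_i+b_i\equiv|a_i-b_i|$ for every $i$. Hence $N^+_\mu+N^-_\mu\equiv d_M(\vec L_{\lambda_\mu},\vec L_{-\lambda_\mu})$. (This is just the statement that $\mathcal R_{\mathcal J_{N_{\mathbb R}}}$ is even.)

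Since all summands of $\mathcal D$ are nonnegative integers, $\mathcal D=1$ splits into two cases.
\begin{enumerate}
\item The zero part contributes $1$ and every $d_M$ vanishes.
\item The zero part contributes $0$ and exactly one $\mu$ has $d_M=1$.
\end{enumerate}
For both cases I use one translation fact: two nonincreasing padded vectors are equal if and only if the multisets of block sizes coincide. So $d_M(\vec L_{\lambda},\vec L_{-\lambda})=0$ if and only if $N(m,\lambda)=N(m,-\lambda)$ for all $m$. With this, case 1 is exactly condition (1) of the corollary. The zero-part condition in case 2 is exactly the first sentence of condition (2).

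The main work is characterizing $d_M(\vec L_{\alpha},\vec L_{-\alpha})=1$ in terms of multiplicities. Suppose $d_M=1$. Then there is a unique index $i$ with $|a_i-b_i|=1$, and all other coordinates agree. After possibly replacing $\alpha$ by $-\alpha$, assume $a_i=b_i+1$. Then the multiset for $\alpha$ is obtained from that for $-\alpha$ by replacing one entry $b_i$ by $b_i+1$. There are two subcases.
\begin{itemize}
\item If $b_i=0$ (a padding zero), then $\alpha$ has one extra block of size $1$. This is condition (2)(a).
\item If $b_i=l\geq1$, then $N(l+1,\alpha)-N(l+1,-\alpha)=1$ and $N(l,-\alpha)-N(l,\alpha)=1$, with all other multiplicities equal. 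This is condition (2)(b) after renaming $\alpha\leftrightarrow-\alpha$. The statement is symmetric under this renaming, since it only asserts existence of such an $\alpha$.
\end{itemize}
Uniqueness of $\alpha$ and $l$ in the statement corresponds to only one $\mu$ having nonzero $d_M$.

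For the converse, assume the multiset for $\alpha$ arises from that for $-\alpha$ by increasing a single entry $l$ to $l+1$, or by adding one entry $1$. Sorting both multisets in nonincreasing order, they differ in exactly one position. Raising the last copy of $l$ in the sorted list to $l+1$ keeps the list sorted, and its position is exactly where the other sorted list places the new $l+1$. The analogous statement holds for appending a $1$. Hence $d_M=1$.

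I expect this sorted-position bookkeeping to be the only delicate point. I will handle it by the explicit ``last occurrence of $l$'' argument above. Combining the two cases gives the corollary.
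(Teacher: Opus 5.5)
Your proposal follows the paper's proof. Both use Proposition \ref{sufficientneccesaryconditionsexistence} to reduce to the defect $\mathcal D$ being exactly $1$, then split into the zero-eigenvalue case and the case of a single $\mu$ with $d_M=1$, which in turn yields (2-a) and (2-b). You also supply two things the paper only asserts: the parity argument showing $\mathcal R_{\mathcal J_{N_{\mathbb R}}}$ is even, and the translation of $d_M=1$ into multiplicity conditions.

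There is one slip, at exactly the point you flagged as delicate. In a nonincreasing list the copies of $l$ form a contiguous run. It is the \emph{first} copy of $l$, the one directly after the entries $>l$, whose raising to $l+1$ keeps the list sorted. Raising the last copy fails as soon as $l$ occurs at least twice, e.g.\ $(3,3)\to(3,4)$. Equivalently, the new $l+1$ in the other sorted list is its last copy of $l+1$, and that sits at the position of the first copy of $l$.

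With ``first'' in place of ``last'', the converse is complete. The appended $1$ is the case $l=0$, raising the padding zero at position $k^-+1$.
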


\begin{proof}
From Proposition \ref{sufficientneccesaryconditionsexistence} we get the condition $ N_{\mathbb{R}}-1\leq \mathcal{R}_{\mathcal{J}_{N_{\mathbb{R}}}} \leq  N_{\mathbb{R}}$. As $\mathcal{R}_{\mathcal{J}_{N_{\mathbb{R}}}}$ is even then the condition becomes 
    \begin{equation*}
        \sum_{l \;\text{}odd} {(N_{\mathcal{J}_{N_{\mathbb{R}}}}(l,0)}\;\mathrm{mod}\,2) + \sum_{i} d_M(\vec{L}_{\lambda_i}, \vec{L}_{-\lambda_i}) = 1.
    \end{equation*}         
       We get that one of the following must hold
                \begin{enumerate}
              \item \[\sum_{l \;\text{}odd} {(N_{\mathcal{J}_{N_{\mathbb{R}}}}(l,0)}\;\mathrm{mod}\,2)=1,  \;\;  \sum_{i} d_M(\vec{L}_{\lambda_i}, \vec{L}_{-\lambda_i})=0.\]
            
             \item \[\sum_{l \;\text{}odd} {(N_{\mathcal{J}_{N_{\mathbb{R}}}}(l,0)}\;\mathrm{mod}\,2)=0,  \;\;  \sum_{i} d_M(\vec{L}_{\lambda_i}, \vec{L}_{-\lambda_i})=1.\]      
            
          \end{enumerate}
      
       These correspond in order to the conditions $(1),(2)$ in the statement of this corollary.  Notice that there is only one way to satisfy
       \[\sum_{l \;\text{}odd} {(N_{\mathcal{J}_{N_{\mathbb{R}}}}(l,0)}\;\mathrm{mod}\,2)=1.\]
  However there are two possible ways to satisfy 
  \[
  \sum_{i} d_M(\vec{L}_{\lambda_i}, \vec{L}_{-\lambda_i})=1,
  \]
  which correspond to the conditions $(2-a)$ and $(2-b)$ in the statement of this corollary.
\end{proof}
\begin{example} {\label{examplereal}}
    Consider the case $N_{\mathbb{R}}=5$ and 
    \[
    \mathcal{J}_5= \mathcal{J}_3(\lambda_1)\oplus \mathcal{J}_2(-\lambda_1), \;\; \lambda_1 \neq 0.
    \]
    For this Jordan normal form we have $\mathcal{R}_{\mathcal{J}_{N_{\mathbb{R}}}}=4$. Therefore according to Proposition $\ref{sufficientneccesaryconditionsexistence}$, we must have  $\dot{\mathcal{H}}_{\mathcal{J}_{N_{\mathbb{R}}}}^{R}\neq \emptyset$ for all even $R\leq 4$. Indeed we can have that 
\[
A_1=\left(\begin{array}{ccc|cc} 
0&0&0&1&0\\ 
0&0&0&0&-1\\ 
0&0&0&0&0\\ \hline 
0&1&0&0&0\\
0&0&-1&0&0
\end{array} \right) \in \dot{\mathcal{H}}_{\mathcal{J}_{N_{\mathbb{R}}}}^{ 4}.
\]
By using the procedure we used in Lemma ($\ref{ranksfornonzeroeigenvaluesreal}$) of \say{sliding up} we obtain the matrix 
\[
A_2=
\left(\begin{array}{ccc|cc} 
0&0&0&0&1\\ 
0&0&0&0&0\\ 
0&0&0&0&0\\ \hline 
0&0&-1&0&0\\
0&0&0&0&0
\end{array} \right) \in \dot{\mathcal{H}}_{\mathcal{J}_{N_{\mathbb{R}}}}^{2}.
\]

\end{example}

Finally, we show when the rank is maximal the quotient space in the right side of \eqref{bijectionquotient space} is finite and has a very nice description.
\begin{prop}\label{canonicalformsreal}
Let $C=[C_{ij}]\in \dot{\mathcal{H}}_{\mathcal{J}_{N_{\mathbb{R}}}}^{N_{\mathbb{R}}}$ if $N_{\mathbb{R}}$ is even or $C=[C_{ij}]\in \dot{\mathcal{H}}_{\mathcal{J}_{N_{\mathbb{R}}}}^{ N_{\mathbb{R}}-1}$ if $N_{\mathbb{R}}$ is odd. Then there exists $S \in\dot{\mathcal{T}}_{\mathcal{J}_{N_{\mathbb{R}}}}$ and a permutation matrix $P \in \mathrm{Per}(N_{\mathbb{R}},\mathbb{R})$ such that
\[
S^{\boxstar}CS=\begin{cases}
\mathcal{P}_{N_{\mathbb{R}}}P^tJ_{N_{\mathbb{R}}}P  &\text{if} \; {N_{\mathbb{R}}} \; \text{is even}, \\
\mathcal{P}_{N_{\mathbb{R}}}P^tJ_{({N_{\mathbb{R}}},{N_{\mathbb{R}}}-1)}P  & \text{if} \; {N_{\mathbb{R}}}\; \text{is odd}.
\end{cases}
\]
\end{prop}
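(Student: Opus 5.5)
The plan is to reduce $C$ step by step, using Corollary \ref{corollarytofirstlemma} and Corollary \ref{corollarytosecondlemma}, to a matrix in which every nonzero block row contains exactly one nonzero block, of the form $tI_{ij}^{[k]\pm}$ with $t\in\{1,-1\}$. Then $\mathcal{P}_{N_{\mathbb{R}}}S^{\boxstar}CS$ is an honest skew-symmetric matrix whose entrywise absolute value is a partial permutation matrix, of rank $N_{\mathbb{R}}$ or $N_{\mathbb{R}}-1$. Proposition \ref{skewsymmetricpermutation} then writes it as $P^tJ_{N_{\mathbb{R}}}P$ or $P^tJ_{(N_{\mathbb{R}},N_{\mathbb{R}}-1)}P$. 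Multiplying by $\mathcal{P}_{N_{\mathbb{R}}}$, and using $\mathcal{P}_{N_{\mathbb{R}}}^2=I$, gives the claim. The signs $t$ need no separate treatment, since Proposition \ref{skewsymmetricpermutation} already allows arbitrary sign patterns.

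The reduction is by induction on the number of block rows. At every stage the remaining complementary submatrix is again an element of the same kind of $\dot{\mathcal{H}}$ set, for the Jordan form obtained by deleting the blocks already isolated. Its rank equals the original rank minus the rank of the isolated part, so it is still of maximal rank, with corank $0$ or at most $1$.

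\textbf{Even case, corank $0$.} If some diagonal block has $c_{ll}\neq 0$, I apply Corollary \ref{corollarytofirstlemma} to isolate block $l$ as $tI^{\pm}_{n_l}$. Otherwise I pick a block row $l$ of maximal size $n_l$ among the remaining blocks. The key observation concerns the first scalar column of block column $l$. For a block $C_{il}$ with $n_i<n_l$, the upper Toeplitz shape $(0\mid *)$ makes its first column zero. For $n_i=n_l$, the only possibly nonzero entry in that column is the main coefficient $c_{il}$, sitting in the bottom position. So if $c_{il}=0$ for every $i$ with $n_i=n_l$ (including $i=l$), that column vanishes and the rank drops, which is a contradiction. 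Hence there is $m\neq l$ with $n_m=n_l$ and $c_{lm}\neq 0$, while $c_{ll}=c_{mm}=0$. Corollary \ref{corollarytosecondlemma} then isolates the pair $(l,m)$ with blocks $tI^{\pm}_{n_l}$ and $-t(I^{\pm}_{n_l})^{\boxstar}$.

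\textbf{Odd case, corank $1$.} The argument above can fail exactly once, and I expect this to be the main obstacle. In that situation the maximal-size block $l$ has a zero first column, so all the kernel is used up there. Two sub-cases must then be handled.
\begin{enumerate}
\item Block $l$ pairs with itself. Then $n_l$ is odd and the next diagonal is nonzero, so Lemma \ref{firstlemma} with $k=2$ brings $C_{ll}$ to $tI_{ll}^{[2]\pm}$ and clears the rest of its row and column.
\item Block $l$ pairs with a block of size $n_l-1$ through $c^{(1)}$ of a non-square block. Here Lemma \ref{special4dimensionlemma} is the tool, applied to the $2\times 2$ block submatrix formed by $l$ and its partner.
\end{enumerate}
To reach the setting of Lemma \ref{special4dimensionlemma}, I first use the elimination of Lemma \ref{firstlemma}/\ref{secondlemma} type with index $k$ to decouple this pair from the rest; this is legitimate because corank $1$ forbids any further deficiency. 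Establishing this decoupling is the step I would check most carefully. The guide here is Corollary \ref{maximaldimensioncaseoddreal}, which lists exactly which unbalanced configurations (one unpaired odd zero block, or a $(l,l+1)$ size mismatch in a $\pm\alpha$ pair) can occur. I would match each configuration to one of the two sub-cases above, after which the remaining blocks fall under the even-case argument.
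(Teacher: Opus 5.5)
The even case is fine and is essentially the paper's argument. The gap is in the odd case. You treat the defective block at the moment your maximal-size rule first fails, while other blocks may still be coupled to it, and two of your claims break there.

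\textbf{Sub-case (1) does not isolate block $l$.} Lemma \ref{firstlemma} with $k=2$ only kills the coefficients $c_{li}^{(u)}$ with $u\geq 2$. The main coefficient of a rectangular block $C_{li}$ with $n_i=n_l-1$ survives. For example, for $\mathcal{J}_3(0)\oplus\mathcal{J}_2(0)$ take $C_{11}=tI_{11}^{[2]\pm}$, $C_{22}=0$, $C_{12}$ with main coefficient $x\neq0$ and $c_{12}^{(2)}=0$, and $C_{21}=-(C_{12})^{\boxstar}$. This matrix has rank $4$ and is already in the output form of Lemma \ref{firstlemma}, yet block $1$ is not isolated. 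What it needs is Lemma \ref{special4dimensionlemma}. So your sub-cases overlap and the choice between them matters. Moreover, this Jordan type falls under condition (1) of Corollary \ref{maximaldimensioncaseoddreal}, so matching configurations to sub-cases by Jordan type does not work.

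\textbf{The decoupling in sub-case (2) is not available.} Take $\mathcal{J}_3(0)\oplus\mathcal{J}_2(0)^{\oplus 3}$ with $C_{12}=I_{12}^{[1]\pm}$, $C_{34}=I_{34}^{[1]\pm}$, and a small nonzero main coefficient in $C_{13}$. Let the lower blocks be fixed by $N$-skew-symmetry and everything else be zero. The rank is still $8$ and all diagonal main coefficients vanish. Your rule fails at block $1$ immediately, and the pair $(1,2)$ is tied to $(3,4)$ through $C_{13}$. Removing $C_{13}$ with the rectangular pivot $C_{12}$ is exactly the situation of Remark \ref{whentheblocksdonthavethesamesize}. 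Lemmas \ref{firstlemma} and \ref{secondlemma} need a square pivot, and Lemma \ref{special4dimensionlemma} is stated only for $p=2$.

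\textbf{The fix is to postpone the defect, as the paper does.} Apply Corollaries \ref{corollarytofirstlemma} and \ref{corollarytosecondlemma} wherever their hypotheses hold, until neither applies. The latter needs only $n_l=n_m$, $c_{lm}\neq0$, $c_{ll}=c_{mm}=0$, not maximality. In the example above this clears $C_{13}$ through the square pair $(3,4)$.

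The remainder then has corank one, with all diagonal and all square off-diagonal main coefficients zero, and it is small:
\begin{enumerate}
\item A block $l$ of maximal size $n$ has a zero first scalar column. Hence it is the unique block of that size, and all other scalar rows of the remainder are linearly independent.
\item Row $n-1$ of block row $l$ is supported on the last scalar columns of $l$ and of the blocks of size $n-1$.
\item The last row of any other block $i$ is supported on last scalar columns of blocks strictly larger than $n_i$.
\end{enumerate}
Counting these supports leaves only two possibilities. Either block $l$ stands alone, with $n=1$ or with $n$ odd, eigenvalue $0$ and $c_{ll}^{(2)}\neq0$; then Lemma \ref{firstlemma} with $k=2$ applies with nothing else present. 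Or there are exactly two blocks, of sizes $n$ and $n-1$, with nonzero main coefficient between them; then Lemma \ref{special4dimensionlemma} applies literally with $p=2$.

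The paper identifies this remainder via Corollary \ref{maximaldimensioncaseoddreal}. The count above is the safer route, since your first example shows the two-block remainder also occurs under condition (1). With this reordering no decoupling step is needed, and your final appeal to Proposition \ref{skewsymmetricpermutation} goes through.
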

\begin{proof}
Suppose first that $N_{\mathbb R}$ is even. Then
$\mathcal J_{N_{\mathbb R}}$ is as described in Corollary
\ref{maximaldimensioncaseevenreal}, and $C$ is nonsingular.

If one of the diagonal blocks has nonzero main coefficient, we apply
Corollary \ref{corollarytofirstlemma} to that block. Otherwise, all
the diagonal main coefficients are zero. In this case, choose a block
row of maximal size. There must exist in this block row a nonzero
square off-diagonal block $C_{ij}$, with $i<j$, whose main coefficient
is nonzero. If no such block existed, then, from the shape of
the blocks in
$\dot{\mathcal H}_{\mathcal J_{N_{\mathbb R}}}$, the block row would
contain a zero scalar row. The matrix $C$ would then have a zero row,
contradicting its nonsingularity. Since all the diagonal main
coefficients are zero, we have $c_{ii}=c_{jj}=0$, and hence
Corollary \ref{corollarytosecondlemma} applies to $C_{ij}$. In either case, the corresponding block row and column, or pair of
block rows and columns, is isolated from the remaining part of the
matrix, in the sense that subsequent congruence transformations
do not affect those previously isolated block rows and columns.

If some block rows and columns have not yet been isolated, we repeat
the same argument for those rows and columns.  Then after finitely many steps we obtain
$S_1,\ldots,S_r\in
\dot{\mathcal T}_{\mathcal J_{N_{\mathbb R}}}$. Set
$S:=S_1\cdots S_r$ and $C':=S^{\boxstar}CS$. Then
$S\in\dot{\mathcal T}_{\mathcal J_{N_{\mathbb R}}}$, and every
scalar row and column of $\mathcal P_{N_{\mathbb R}}C'$ contains
exactly one nonzero entry. Therefore,
\[
\left|\mathcal P_{N_{\mathbb R}}C'\right|
\]
is a permutation matrix. Also $\mathcal P_{N_{\mathbb R}}C'$ is skew-symmetric. Therefore, Proposition \ref{skewsymmetricpermutation} gives a
permutation matrix
$P\in\operatorname{Per}(N_{\mathbb R},\mathbb R)$ such that
\[
\mathcal P_{N_{\mathbb R}}C'
=
P^tJ_{N_{\mathbb R}}P.
\]
It follows that
\[
S^{\boxstar}CS
=
\mathcal P_{N_{\mathbb R}}P^tJ_{N_{\mathbb R}}P.
\]

Now suppose that $N_{\mathbb R}$ is odd. We apply the same procedure
as in the even-dimensional case for as long as one of Corollaries
\ref{corollarytofirstlemma} and
\ref{corollarytosecondlemma} can be applied. More precisely, if one
of the remaining diagonal blocks has nonzero main coefficient, we
apply Corollary \ref{corollarytofirstlemma}. Otherwise, all the
remaining diagonal main coefficients are zero, and we choose, among
the remaining block rows, one of maximal size. If this block row
contains a square off-diagonal block with nonzero main coefficient,
then its corresponding diagonal main coefficients are zero, and
Corollary \ref{corollarytosecondlemma} applies.

As in the even-dimensional case, every successive
congruence transformation leaves the previously isolated components
unchanged. Moreover, every component isolated by the two corollaries
is nonsingular. Since $\operatorname{rank}C=N_{\mathbb R}-1$, the
part that remains after these reductions has rank deficiency one.

By Corollary \ref{maximaldimensioncaseoddreal}, the Jordan matrix
$\mathcal J_{N_{\mathbb R}}$ satisfies one of the conditions
$(1)$, $(2\text{-a})$, or $(2\text{-b})$. All the matched
full-rank components occurring in these conditions have already been
isolated by the preceding procedure. So only the
exceptional block or blocks described in the corresponding condition
can remain.

Suppose first that condition $(1)$ holds. Then the remaining part
consists of one zero-eigenvalue Jordan block
$\mathcal J_{n_r}(0)$ of odd size. Its corresponding diagonal block
has rank $n_r-1$. Since $n_r$ is odd, its main coefficient satisfies
$c_{rr}=0$, and the maximality of its rank implies
$c_{rr}^{(2)}\neq0$. Therefore, Lemma \ref{firstlemma} applies to
this block and transforms it into
$\varepsilon I_{rr}^{[2]\pm}$, where
$\varepsilon\in\{-1,1\}$.

Suppose next that condition $(2\text{-a})$ holds. After all the
matched blocks have been isolated, there remains one unmatched
one-dimensional Jordan block with eigenvalue $\alpha\neq0$. By
condition \eqref{conditionblockrealankel}, the corresponding scalar
row and column are zero. They account for the unique rank deficiency,
so no further transformation is necessary.

Finally, suppose that condition $(2\text{-b})$ holds. After all the
matched blocks have been isolated, there remain two Jordan blocks of
sizes $l$ and $l+1$ with opposite nonzero eigenvalues. After
reversing the order of these two blocks if necessary, this is
precisely the situation of Lemma
\ref{special4dimensionlemma}. Applying that lemma, with its
congruence matrix completed by identity blocks on all the previously
isolated components, gives the required form of the remaining part.

Consequently, similarly to the even-dimensional case, in every case
there exists
$S\in\dot{\mathcal T}_{\mathcal J_{N_{\mathbb R}}}$ such that, for
$C':=S^{\boxstar}CS$, the matrix
\[
\left|\mathcal P_{N_{\mathbb R}}C'\right|
\]
is a partial permutation matrix of rank $N_{\mathbb R}-1$.

Furthermore, $\mathcal P_{N_{\mathbb R}}C'$ is skew-symmetric.
Therefore, Proposition \ref{skewsymmetricpermutation} gives a
permutation matrix
$P\in\operatorname{Per}(N_{\mathbb R},\mathbb R)$ such that
\[
\mathcal P_{N_{\mathbb R}}C'
=
P^tJ_{(N_{\mathbb R},N_{\mathbb R}-1)}P.
\]
Finally,
\[
S^{\boxstar}CS
=
\mathcal P_{N_{\mathbb R}}
P^tJ_{(N_{\mathbb R},N_{\mathbb R}-1)}P.
\]
\end{proof}

We would like to obtain similar results for any rank, but as mentioned in Remark \ref{whentheblocksdonthavethesamesize}, our procedure does not work for all possible matrices we would encounter. 
\section{The sets $\mathcal{T}_{\mathcal{J}_{N_{\mathbb{C}}}}$ and $\mathcal{ H}_{\mathcal{J}_{N_{\mathbb{C}}}}^{K}$: Complex eigenvalues}
The study of complex eigenvalues will follow the same order as the real eigenvalues case, most of the proofs are very similar, but there are important differences, so we treat it separately.   Consider a matrix in real Jordan normal form  $\mathcal{J}_{N_{\mathbb{C}}}$ with only complex eigenvalues $\alpha_j=a_j+ib_j$, $b_j>0$. We write it as 
\[
\mathcal{J}_{N_{\mathbb{C}}}=\mathcal{C}_{n_1}(a_1,b_1)\oplus \cdots \oplus \mathcal{C}_{n_p}(a_p,b_p). 
\]

\subsection{Some special matrices and their properties} \label{sectioncomplexdefinitionmatrices}
We say that a matrix $B_{ij} \in \mathrm{M}(2n_i\times 2n_j,\mathbb{R})$ is \textit{2-upper Toeplitz} if it is of the same form as in equation (\ref{formtoeplitz}) but each $b_{ij}^{(t)} \in \matrices{2}{2}$.
We say that a block matrix $B=[B_{ij}]^p_{i=1,j=1} \in \mathrm{M}(N_{\mathbb{C}}\times N_{\mathbb{C}},\mathbb{R})$ is \textit{2-N-upper Toeplitz} if each block is $2$-upper Toeplitz. 
In a similar way we say a matrix is \textit{2-lower Hankel} if it is of the same form as in equation (\ref{formhankel})  but each $b_{ij}^{(t)} \in \matrices{2}{2}$. A matrix $B=[B_{ij}]^p_{i=1,j=1}$ is \textit{2-N-lower Hankel} if each block is $2$-lower Hankel. Let us define the following matrices having the same block partition as the matrix $B$.
\begin{align}
    \mathcal{P}_{N_{\mathbb{C}},2}=\mathcal{P}_{n_1,2}\oplus \cdots \oplus \mathcal{P}_{n_p,2}& \;\;, \;\mathcal{P}_{n_i,2}:=\left( \begin{array}{cccc}
    &&&I_2\\
    &&I_2&\\
    &\iddots&&\\
    I_2&&&
    \end{array}
    \right)\in \mathrm{M}(2n_i\times 2n_i,\mathbb{R}), \\
    I_{N_{\mathbb{C}},2}^{\pm}=I_{n_1,2}^{\pm}\oplus \cdots \oplus I_{n_p,2}^{\pm}& \;\;,  \; I_{n_i,2}^{\pm}:=\left( \begin{array}{cccc}
    I_2&&&\\
    &-I_2&&\\
    &&I_2&\\
    &&&\ddots
    \end{array}
    \right)\in \mathrm{M}(2n_i\times 2n_i,\mathbb{R}).
\end{align}

 A matrix $B^{\prime}$ is \textit{2-$N_{\mathbb{C}}$-upper alternating Toeplitz (lower Hankel)} if $B^{\prime}=I_{N_{\mathbb{C}},2}^{\pm}B$ for some $2$-$N_{\mathbb{C}}$-upper Toeplitz (lower Hankel) matrix $B$.  Notice that if $B^{\prime}$ is $2$-$N_{\mathbb{C}}$-lower (alternating) Hankel then 
\[
B=\mathcal{P}_{N_{\mathbb{C}},2}B^{\prime}
\]
is $2$-$N_{\mathbb{C}}$-upper (alternating) Toeplitz and vice versa.
Similarly to the previous section we can define the matrix 
\[
\hat{B}_{ij}:=I_{n_i,2}^{\pm}B_{ij}I_{n_j,2}^{\pm}.
\] 
This satisfies 
\[
B_{ij}I_{n_j,2}^{\pm}=I_{n_i,2}^{\pm}\hat{B}_{ij}.
\]
And (for $B_{ij}$ square) if $p(B_{ij})$ is a polynomial in $B_{ij}$ and $p(\hat{B}_{ij})$ is the same polynomial in $\hat{B}_{ij}$ then 
\begin{equation}\label{commutationwithhatcomplexcase}
p(\hat{B}_{ij})I_{n_j,2}^{\pm}=I_{n_i,2}^{\pm}p(B_{ij}).
\end{equation}
We define the \textit{2-N-block star} of a block matrix.  

\[
B^{\boxstartwo}:=\mathcal{P}_{N_{\mathbb{C}},2}B^{T}\mathcal{P}_{N_{\mathbb{C}},2}.
\]
For a single block  this means
\[
B_{ij}^{\boxstartwo}=\mathcal{P}_{n_j,2}B_{ij}^t\mathcal{P}_{n_i,2}.
\]
We will say that a matrix $B$ is \textit{2-N-skew-symmetric} if 
\[
B^{\boxstartwo}=-B.
\]
That is 
\begin{equation*}
B_{ji}=-\mathcal{P}_{n_j,2}B_{ij}^t\mathcal{P}_{n_i,2}.
\end{equation*}

We list some extra properties such a matrix $B$ could have. These conditions will appear in the Lemmas of next section. 
\begin{align}
    B_{ij}=0,&\;\; \text{whenever} \;\; (a_i,b_i)\neq (a_j,b_j). \label{conditioneigenvaluescomplextoeplitz} \\ 
    B_{ij}=0,&\;\,\text{whenever} \,(-a_i,b_i)\neq(a_j,b_j) . \label{conditioneigenvaluescomplexhankel}  \\
    b_{ij}^{(t)} J_2-J_2b_{ij}^{(t)}=0,& \;\; \text{for all} \; t. \label{8} 
    \end{align}

A block $b_{ij}^{(t)} \in \matrices{2}{2}$ satisfying Equation $(\ref{8})$  has the following shape 
\[
b_{ij}^{(t)}=\begin{pmatrix}
    a &b \\
    -b & a
\end{pmatrix},
\]
for some $a,b \in \mathbb{R}$. Recall such a block has eigenvalues $a\pm bi$. Also we have the trivial observation that such a matrix if it is not zero has full rank 2.

If $B$ is a skew-symmetric (in the usual sense) $2$-$N$-lower alternating Hankel matrix, satisfying property $(\ref{8})$, then the matrix $\mathcal{P}_{N_{\mathbb{C}},2}B$ is a $2$-$N$-skew-symmetric $2$-N-upper alternating Toeplitz matrix satisfying $(\ref{8})$. 
Let $B$ be a $2$-$N$-skew-symmetric $2$-N-upper alternating Toeplitz matrix satisfying $(\ref{8})$ . Equation $\eqref{conmmuteswithaltidentity}$ and Remark \ref{diagonalsarezero} are not satisfied exactly in the same way for blocks $B_{ii}$, but we have some similar properties. We don't have zero diagonals as in the real case instead we have that all its elements are of the following shape 
\begin{equation}\label{diagonalelemtsincomplexcase}   
b_{ii}^{(k)}=\begin{cases}
    \begin{pmatrix}
        a & 0  \\
        0&a
    \end{pmatrix} & \text{belongs to a diagonal with an even number of blocks}, \\
    \begin{pmatrix}
        0 & a  \\
        -a & 0 
    \end{pmatrix} & \text{belongs to a diagonal with an odd number of blocks}.
\end{cases}
\end{equation}
For some $a\in \mathbb{R}$. Define the following matrix 
\begin{equation}\label{thematrixthatcommutescomplexcase}
W_{n_i}^{\pm}=\begin{cases} 
I_{n_i,2}^{\pm} & \text{if} \; n_i \; \text{is even}, \\
I_{n_i,2}^{\pm}S_{2n_i}& \text{if} \; n_i \; \text{is odd}.
\end{cases}
\end{equation}
$S_{2n_i}$ is as in \eqref{9}. The matrix $C_{ii}:=(W_{n_i}^{\pm})^{-1}B_{ii}$ is $2$-N-upper Toeplitz matrix. We have the following commutation relation that is the equivalent of \eqref{conmmuteswithaltidentity}

\begin{equation}\label{conmmuteswith2altidentity}
C_{ii}^{\boxstartwo}W_{n_i}^{\pm}=W_{n_i}^{\pm}C_{ii}
\end{equation}
\begin{proof}
 First $B_{ii}^{\boxstartwo}=-B_{ii}$ because $B$ is $2$-$N$-skew-symmetric. Also we can easily show that for both of parities, $(W_{n_i}^{\pm})^{\boxstartwo}=-W_{n_i}^{\pm}$. Now we can calculate

   \begin{equation}
                  C_{ii}^{\boxstartwo}W_{n_i}^{\pm}=((W_{n_i}^{\pm})^{-1}B_{ii})^{\boxstartwo}W_{n_i}^{\pm}=B_{ii}^{\boxstartwo}((W_{n_i}^{\pm})^{-1})^{{\boxstartwo}}W_{n_i}^{\pm}=B_{ii}=W_{n_i}^{\pm}C_{ii}.
          \end{equation}
\end{proof}

Notice that the matrices we just described behave in practice in a very similar way to the real case. Therefore, by doing the appropriate modifications many of the proofs of previous section translate to the complex case. For example by substituting in those proofs $\altidentity{i}$ for $W_{n_i}^{\pm}$, etc.

\subsection{The solutions of equations (\ref{permutationequation}) and (\ref{transposepermutationequation})}
 As in the real case we first consider the case of only one Jordan block. 

\begin{lemma}\label{onecomplexjordanblockcommute}
Let $\mathcal{C}_r(a,b) \in \mathrm{M}(2r\times 2r,\mathbb{R})$, $\mathcal{C}_s(a,b) \in \mathrm{M}(2s\times 2s,\mathbb{R})$ and $X=(x_{ij}) \in \mathrm{M}(2r\times 2s,\mathbb{R})$, with each $x_{ij}\in \matrices{2}{2}$. Then 
\begin{equation}\label{1}
X\mathcal{C}_s(a,b)-\mathcal{C}_r(a,b)X=0
\end{equation}
if and only if $X$ is 2-upper Toeplitz matrix and satisfies condition (\ref{8}).
\end{lemma}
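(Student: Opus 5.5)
Write $\mathcal{C}_r(a,b)=D_r+N_r$. Here $D_r:=\mathrm{diag}(M,\ldots,M)$ with
\[
M=\begin{pmatrix} a & b\\ -b & a\end{pmatrix}
\]
(or its transpose, depending on the paper's convention for the real Jordan block; the argument is identical). The matrix $N_r$ is the block nilpotent shift having $I_2$ on the first block superdiagonal. Equation (\ref{1}), read entrywise on the $2\times 2$ blocks $x_{ij}$, becomes
\[
x_{ij}M-Mx_{ij}+x_{i,j-1}-x_{i+1,j}=0,
\]
with the conventions $x_{i,0}=0$ and $x_{r+1,j}=0$. The plan is to imitate the scalar argument behind Proposition \ref{conmmutationwithjordan}: induct along anti-diagonals, starting from the bottom-left corner. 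The one new ingredient is the operator $\mathrm{ad}_M(y)=yM-My$ on $\matrices{2}{2}$. Its kernel is exactly the set of matrices of the shape in (\ref{8}). Its image is the complementary $2$-dimensional space of matrices of the form $\begin{pmatrix} c & d\\ d & -c\end{pmatrix}$, and $\mathrm{ad}_M$ acts invertibly on that space, because $b>0$.

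\textbf{Step 1 (vanishing below the Toeplitz band).} Use induction on $j-i$, starting at the corner $(r,1)$, where the equation reads $x_{r1}M-Mx_{r1}=0$. In general the lower-order terms $x_{i,j-1}$ and $x_{i+1,j}$ either vanish by the inductive hypothesis or lie in $\ker\mathrm{ad}_M$. Splitting $\matrices{2}{2}=\ker\mathrm{ad}_M\oplus\mathrm{Im}\,\mathrm{ad}_M$, the equation forces the image component of $x_{ij}$ to vanish. The kernel component of $x_{ij}$ is then pinned down by the equation for the next entry along the anti-diagonal. This is exactly the scalar argument of Lemma 4.4.11 in \cite{topicsmatrixanalysis}. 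It gives $x_{ij}=0$ in the same positions as in the scalar case, namely when $i-j>\max(0,r-s)$ in the appropriate indexing.

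\textbf{Step 2 (Toeplitz structure and commutation).} For the remaining entries, the equation says that $\mathrm{ad}_M(x_{ij})$ equals the difference $x_{i+1,j}-x_{i,j-1}$. I would prove by induction along the same order that all $x_{ij}$ lie in $\ker\mathrm{ad}_M$. The key observation is that a difference of two matrices of the form (\ref{8}) again has the form (\ref{8}), so it lies in $\ker\mathrm{ad}_M$. But $\ker\mathrm{ad}_M\cap\mathrm{Im}\,\mathrm{ad}_M=0$. Hence both sides of the equation vanish, which gives $\mathrm{ad}_M(x_{ij})=0$ and $x_{i+1,j}=x_{i,j-1}$. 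The second identity is precisely the $2$-upper Toeplitz condition, and the first is (\ref{8}).

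\textbf{Step 3 (converse).} If $X$ is $2$-upper Toeplitz and satisfies (\ref{8}), then $X$ commutes with $D$ blockwise, since every block commutes with $M$. It also commutes with $N$ exactly as in the scalar Toeplitz case, since $N$ only involves $I_2$'s. Therefore $X$ commutes with $\mathcal{C}$, so (\ref{1}) holds.

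The main obstacle is Step 1 together with the inductive bookkeeping in Step 2 when $r\neq s$. The point needing care is that at each stage the inhomogeneous term lies in $\ker\mathrm{ad}_M$, so that the direct-sum decomposition applies. I would organize the induction by anti-diagonal index $i-j$ and, within each anti-diagonal, by $j$, mirroring Horn--Johnson's scalar proof.
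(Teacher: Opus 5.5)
Your proof is correct and is essentially the paper's argument. When the paper adds $\hat{J}_r(\cdot)\hat{J}_s$ of the equation to the equation itself, this amounts to applying blockwise $y\mapsto y+J_2yJ_2$, which is twice the projection onto $\mathrm{Im}\,\mathrm{ad}_M$ along $\ker\mathrm{ad}_M$. The paper then runs the same induction over block diagonals starting from the corner $(r,1)$. The only difference is ordering: the paper first shows that all blocks commute with $J_2$ and then quotes the scalar Toeplitz result for the decoupled nilpotent equation, whereas your Step 2 extracts the shift relation $x_{i+1,j}=x_{i,j-1}$ within the same induction.

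One small slip: the zero region in Step 1 should read $i-j>\min(0,r-s)$, not $\max(0,r-s)$. This is harmless, because Step 2 together with the conventions $x_{i,0}=x_{r+1,j}=0$ already yields the correct shape.
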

\begin{proof}
    First we write
    \begin{align*}
        \mathcal{C}_s(a,b)=aI_{2s}+\mathcal{C}_s(0,0)+b\hat{J}_s, \\
        \mathcal{C}_r(a,b)=aI_{2r}+\mathcal{C}_r(0,0)+b\hat{J}_r,
    \end{align*}
    with
    \[
 \matrices{2i}{2i}\ni \hat{J}_i={J}_{2}\oplus \cdots \oplus {J}_{2} \;\;, \text{with} \; {J}_{2}=\left( \begin{array}{cc}
    &1\\
    -1&
    \end{array}
    \right).
\]
    So (\ref{1}) becomes
    \begin{equation}\label{2}
    X\mathcal{C}_s(0,0)-\mathcal{C}_r(0,0)X+bX\hat{J}_s-b\hat{J}_rX=0.
    \end{equation}
    Next we show that we can decouple the first and second half of this equation. We will show that equation (\ref{2}) implies
    \[
    bX\hat{J}_s-b\hat{J}_rX=0.
    \]
    First some algebra. 
    Multiply (\ref{2}) by $\hat{J}_r$ from the left and by $\hat{J}_s$ from the right, to obtain 
    \begin{align}\label{3}
        \hat{J}_r\left(X\mathcal{C}_s(0,0)-\mathcal{C}_r(0,0)X\right)\hat{J}_s-b\hat{J}_rX+bX\hat{J}_s=0.
    \end{align}
    
    Calculate (\ref{2})+(\ref{3}) 
 
   \begin{multline}\label{4}
         (X\mathcal{C}_s(0,0)-\mathcal{C}_r(0,0)X)+\hat{J}_r(X\mathcal{C}_s(0,0)-\mathcal{C}_r(0,0)X)\hat{J}_s\\+2bX\hat{J}_s-2b\hat{J}_rX=0.
            \end{multline}
  It is easy to calculate that
    \begin{equation}\label{5}
        \begin{aligned} 
    \left((X\mathcal{C}_s(0,0)-\mathcal{C}_r(0,0)X)+\hat{J}_r(X\mathcal{C}_s(0,0)-\mathcal{C}_r(0,0)X)\hat{J}_s\right)_{ij} \\=x_{ij-1}+J_2x_{ij-1}J_2-(x_{i+1j}+J_2x_{i+1j}J_2)
     \end{aligned} 
  \end{equation}
  We proceed by induction over the diagonals. Recall that the $k$-th diagonal of, for example, the matrix $X$,  is usually denoted by $\mathrm{diag}_k(X)$, defined as 
  \[  
    \mathrm{diag}_k(X):= \lbrace x_{ij} \mid   j-i=k \rbrace,    
    \;\; \; 1-r \leq k \leq s-1.        
      \]
  
   We begin with the $(1-r)$-th block diagonal, which consists only of the
block in position $(r,1)$. From equation (\ref{5}) we get that 
   \[
   \left((X\mathcal{C}_s(0,0)-\mathcal{C}_r(0,0)X)+\hat{J}_r(X\mathcal{C}_s(0,0)-\mathcal{C}_r(0,0)X)\hat{J}_s\right)_{r\:1}=0.
   \]
   So then from equation (\ref{4}) we get 
   \[
   (bX\hat{J}_s-b\hat{J}_rX)_{r\:1}=0.
   \]
   So it holds for the $(1-r)$-th block diagonal. Now suppose that our hypothesis is true for the $q$-th diagonal and that 
   \[(bX\hat{J}_s-b\hat{J}_rX)_{lm}
   \]
   belongs to the $(q+1)$-th diagonal. From the induction hypothesis about the $q$-th diagonal 
    \[
    x_{lm-1}J_2-J_2x_{lm-1}=0
     \Longrightarrow x_{lm-1}+J_2x_{lm-1}J_2=0,
    \]
    \[
    x_{l+1m}J_2-J_2x_{l+1m}=0
     \Longrightarrow x_{l+1m}+J_2x_{l+1m}J_2=0.
    \]
      Therefore from equation (\ref{4}) and (\ref{5}) we get
    \[
    (bX\hat{J}_s-b\hat{J}_rX)_{lm}=0
    \]
    This completes the induction, and shows that (\ref{2}) becomes two separate equations  \begin{equation}
        \begin{aligned}
          X\mathcal{C}_s(0,0)-\mathcal{C}_r(0,0)X=0   \\
        bX\hat{J}_s-b\hat{J}_rX=0
        \end{aligned}        
    \end{equation}
    The first equation requires $X$ to be 2-upper Toeplitz (compare with Lemma 4.4.11 in \cite{topicsmatrixanalysis}), and the second equation implies (\ref{8}).   
    
    Conversely, if $X$ is $2$-upper Toeplitz and satisfies \eqref{8}, then both of the separated equations hold, and therefore \eqref{1} holds.

\end{proof}
Now we have the equivalent of the complex case for Proposition \ref{conmmutationwithjordan}.
\begin{prop}\label{conmmuteswithjordancomplex}
A block matrix $B=[B_{ij}]^k_{i=1,j=1}$ commutes with $\mathcal{J}_{N_{\mathbb{C}}}$, i.e 
\begin{equation}\label{commutesjordancomplexequation}
  B\mathcal{J}_{N_{\mathbb{C}}}-\mathcal{J}_{N_{\mathbb{C}}}B=0,  
\end{equation}

if and only if $B$ is 2-$N$-upper Toeplitz, satisfies property (\ref{conditioneigenvaluescomplextoeplitz}) and each block satisfies property (\ref{8}). 
\end{prop}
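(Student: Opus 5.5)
The plan is to reduce to the single-block statement of Lemma \ref{onecomplexjordanblockcommute}, in parallel with how Proposition \ref{conmmutationwithjordan} reduces to one Jordan block. Since $\mathcal{J}_{N_{\mathbb{C}}}$ is block diagonal, equation \eqref{commutesjordancomplexequation} splits blockwise into the family of Sylvester equations
\[
B_{ij}\,\mathcal{C}_{n_j}(a_j,b_j)-\mathcal{C}_{n_i}(a_i,b_i)\,B_{ij}=0,\qquad 1\leq i,j\leq p,
\]
and each equation can be treated independently.

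\textbf{Off-diagonal eigenvalue pairs.} First I would show that $B_{ij}=0$ whenever $(a_i,b_i)\neq(a_j,b_j)$, which is condition \eqref{conditioneigenvaluescomplextoeplitz}. Over $\mathbb{C}$, the spectrum of $\mathcal{C}_{n_i}(a_i,b_i)$ is $\{a_i\pm ib_i\}$ and that of $\mathcal{C}_{n_j}(a_j,b_j)$ is $\{a_j\pm ib_j\}$. We have $b_i,b_j>0$, so these two spectra intersect only when $(a_i,b_i)=(a_j,b_j)$. When the spectra are disjoint, the Sylvester equation has only the trivial solution; this is the standard result the paper already uses in the real case (Theorem 4.4.6 in \cite{topicsmatrixanalysis}).

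\textbf{Equal eigenvalue pairs.} When $(a_i,b_i)=(a_j,b_j)$, the block equation is exactly \eqref{1} with $r=n_i$ and $s=n_j$. Lemma \ref{onecomplexjordanblockcommute} then says it holds if and only if $B_{ij}$ is $2$-upper Toeplitz and satisfies \eqref{8}. Combining the two cases gives the forward direction.

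\textbf{Converse.} Suppose $B$ is $2$-$N$-upper Toeplitz, satisfies \eqref{conditioneigenvaluescomplextoeplitz}, and every block satisfies \eqref{8}. Blocks with distinct eigenvalue pairs are zero, so their equations hold trivially. The remaining blocks satisfy their equations by the converse part of Lemma \ref{onecomplexjordanblockcommute}. Hence \eqref{commutesjordancomplexequation} holds.

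The only step with real content is the vanishing of the off-diagonal blocks. Since it is the classical disjoint-spectra criterion applied to real matrices, I expect no genuine obstacle: the single-block analysis, which is the hard part, has already been carried out in Lemma \ref{onecomplexjordanblockcommute}.
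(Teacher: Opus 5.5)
Your proposal is correct and follows the paper's proof. Both split the equation into blockwise Sylvester equations, use the disjointness of the spectra $\{a_i\pm ib_i\}$ and $\{a_j\pm ib_j\}$ (with $b_i,b_j>0$) to force $B_{ij}=0$ when $(a_i,b_i)\neq(a_j,b_j)$, and invoke Lemma \ref{onecomplexjordanblockcommute} for the remaining blocks. You actually supply more than the paper does: it only asserts the nontrivial-solution criterion and leaves the converse implicit.
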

\begin{proof}
Equation (\ref{commutesjordancomplexequation}) is equivalent to the set of equations
\[
B_{ij}\mathcal{C}_{n_j}(a_j,b_j)-\mathcal{C}_{n_i}(a_i,b_i)B_{ij}=0.
\]
Each of these equations has a nontrivial solution if and only if $(a_i,b_i)=(a_j,b_j)$. So we only consider equations of the form
\[
B_{ij}\mathcal{C}_{n_j}(a_i,b_i)-\mathcal{C}_{n_i}(a_i,b_i)B_{ij}=0.
\]
These are the equations considered in previous Lemma.  This completes the proof.
\end{proof}
\begin{lemma}\label{2upperhankel}
Let $\mathcal{C}_r(-a,b) \in \mathrm{M}(2r\times 2r,\mathbb{R})$, $\mathcal{C}_s(a,b) \in \mathrm{M}(2s\times 2s,\mathbb{R})$ and $X=(x_{ij}) \in \mathrm{M}(2r\times 2s,\mathbb{R})$. Then 
\begin{equation}\label{6}
X\mathcal{C}_s(a,b)+\mathcal{C}_r(-a,b)^tX=0
\end{equation}
if and only if $X$ is $2$-lower alternating Hankel matrix and satisfies property (\ref{8}).

\end{lemma}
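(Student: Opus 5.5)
I will follow the proof of Lemma \ref{onecomplexjordanblockcommute}, adapting it the way Lemma \ref{onejordanblocklemmatransposecommute} and Proposition \ref{Propositionconditioncommuteswithtranspose} adapt the commutation case to the Lyapunov-type equation. First split the Jordan blocks:
\[
\mathcal{C}_s(a,b)=aI_{2s}+\mathcal{C}_s(0,0)+b\hat{J}_s,\qquad \mathcal{C}_r(-a,b)^t=-aI_{2r}+\mathcal{C}_r(0,0)^t+b\hat{J}_r^t=-aI_{2r}+\mathcal{C}_r(0,0)^t-b\hat{J}_r .
\]
The scalar parts cancel, and \eqref{6} becomes
\begin{equation*}
X\mathcal{C}_s(0,0)+\mathcal{C}_r(0,0)^tX+b\left(X\hat{J}_s-\hat{J}_rX\right)=0.
\end{equation*}
The goal is to decouple this into the two equations $X\mathcal{C}_s(0,0)+\mathcal{C}_r(0,0)^tX=0$ and $X\hat{J}_s-\hat{J}_rX=0$. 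The second equation says exactly that every $2\times 2$ block satisfies \eqref{8}, since $\hat{J}$ is block diagonal with blocks $J_2$. The first is the block version of Lemma \ref{onejordanblocklemmatransposecommute}: its $(i,j)$ block reads $x_{i\,j-1}+x_{i-1\,j}=0$, where $\mathcal{C}(0,0)$ acts as a block shift by $I_2$. This forces $X$ to be $2$-lower alternating Hankel, i.e.\ zero above the appropriate anti-diagonal, with blocks on each anti-diagonal alternating in sign.

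For the decoupling I repeat the trick of Lemma \ref{onecomplexjordanblockcommute}. Multiply the equation by $\hat{J}_r$ on the left and $\hat{J}_s$ on the right, and add the result to the original. Using $\hat{J}^2=-I$, the $b$-terms give $\hat{J}_rX\hat{J}_s\hat{J}_s-\hat{J}_r\hat{J}_rX\hat{J}_s=-\hat{J}_rX+X\hat{J}_s$, so the $b$-part doubles to $2b(X\hat{J}_s-\hat{J}_rX)$. The shift part becomes an expression whose $(i,j)$ block is
\[
\left(x_{i\,j-1}+J_2x_{i\,j-1}J_2\right)+\left(x_{i-1\,j}+J_2x_{i-1\,j}J_2\right),
\]
because $J_2$ commutes with the $I_2$ shifts. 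A block commutes with $J_2$ if and only if $x+J_2xJ_2=0$, so this expression vanishes at $(i,j)$ as soon as $x_{i\,j-1}$ and $x_{i-1\,j}$ commute with $J_2$.

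The key step is then an induction over anti-diagonals, since the shift now involves the indices $(i,j-1)$ and $(i-1,j)$, which lie on the anti-diagonal $i+j-1$. The starting block is $(1,1)$, whose shift terms involve only out-of-range indices, so they are zero. There $2b(X\hat{J}_s-\hat{J}_rX)_{11}=0$, and since $b>0$, $x_{11}$ commutes with $J_2$. If all blocks on anti-diagonal $i+j=q$ commute with $J_2$, then the shift expression vanishes on anti-diagonal $q+1$, hence so does the $b$-part there. This is where I expect the only real care to be needed: choosing the right ordering (anti-diagonals starting from the top-left corner rather than diagonals from the corner $(r,1)$) and checking that the boundary terms vanish.

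Once the $b$-part vanishes identically, the original equation reduces to the shift equation, which gives the alternating Hankel form. The converse is immediate: a $2$-lower alternating Hankel $X$ with blocks satisfying \eqref{8} satisfies both separated equations, and hence \eqref{6}.
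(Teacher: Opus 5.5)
Your proposal is correct and takes the same route as the paper. You split off $aI$, $\mathcal{C}(0,0)$ and $b\hat{J}$, then decouple into $X\mathcal{C}_s(0,0)+\mathcal{C}_r(0,0)^tX=0$ and $X\hat{J}_s-\hat{J}_rX=0$ using the trick of Lemma \ref{onecomplexjordanblockcommute}. The paper omits the details of this decoupling; your induction over anti-diagonals starting at the block $(1,1)$, where the shift terms are $x_{i\,j-1}$ and $x_{i-1\,j}$, is the right adaptation of the diagonal induction used in that lemma.
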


\begin{proof}
First we write
    \begin{align*}
        \mathcal{C}_s(a,b)=aI_{2s}+\mathcal{C}_s(0,0)+b\hat{J}_s, \\
        \mathcal{C}_r(-a,b)^t=-aI_{2r}+\mathcal{C}_r(0,0)^t-b\hat{J}_r,
    \end{align*}
So (\ref{6})  becomes 
\[
X\mathcal{C}_s(0,0)+\mathcal{C}_r(0,0)^tX+bX\hat{J}_s-b\hat{J}_rX=0.
\]
This can be separated into the following two equations
\begin{equation}
        X\mathcal{C}_s(0,0)+\mathcal{C}_r(0,0)^tX=0         
    \end{equation}
    \begin{equation}
        X\hat{J}_s-\hat{J}_rX=0.
    \end{equation}
    The first one requires $X$ to be $2$-lower alternating Hankel (compare with  Lemma \ref{onejordanblocklemmatransposecommute} ), the second one  implies (\ref{8}).  The proof is similar to Lemma \ref{onecomplexjordanblockcommute} so we omit it. 
 \end{proof}
The next proposition is the equivalent of Proposition \ref{Propositionconditioncommuteswithtranspose}.
   \begin{prop}\label{transposeconmmuteswithjordancomplex}
A block matrix $B=[B_{ij}]^k_{i=1,j=1}$ satisfies 
\begin{equation} \label{commutesjordancomplextranspose}
  B\mathcal{J}_{N_{\mathbb{C}}}+\mathcal{J}_{N_{\mathbb{C}}}^tB=0  
\end{equation}

if and only if $B$ is 2-$N$-lower alternating Hankel, satisfies (\ref{conditioneigenvaluescomplexhankel})  and each block satisfies property (\ref{8}). 

\end{prop}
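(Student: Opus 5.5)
The plan mirrors the proof of Proposition \ref{Propositionconditioncommuteswithtranspose}. Write $B=[B_{ij}]$ in the block partition of $\mathcal{J}_{N_{\mathbb{C}}}$, with $B_{ij}\in \matrices{2n_i}{2n_j}$. Equation (\ref{commutesjordancomplextranspose}) is then equivalent to the family of independent block equations
\[
B_{ij}\,\mathcal{C}_{n_j}(a_j,b_j)+\mathcal{C}_{n_i}(a_i,b_i)^tB_{ij}=0,\qquad 1\leq i,j\leq p .
\]
So the whole statement reduces to understanding a single such Sylvester-type equation.

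The first step is to show that $B_{ij}=0$ unless $(a_i,b_i)=(-a_j,b_j)$, which is condition (\ref{conditioneigenvaluescomplexhankel}). Rewrite the equation as $B_{ij}X-YB_{ij}=0$ with $X=\mathcal{C}_{n_j}(a_j,b_j)$ and $Y=-\mathcal{C}_{n_i}(a_i,b_i)^t$. By the standard Sylvester theorem (cf. \cite{topicsmatrixanalysis}, Section 4.4), a nonzero solution exists only if $X$ and $Y$ share an eigenvalue. The spectrum of $X$ is $\{a_j\pm ib_j\}$, and the spectrum of $Y$ is $\{-a_i\pm ib_i\}$. Since $b_i,b_j>0$, these sets intersect exactly when $a_j=-a_i$ and $b_j=b_i$. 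This gives condition (\ref{conditioneigenvaluescomplexhankel}).

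In the remaining case we have $(a_i,b_i)=(-a_j,b_j)$, so the block equation is exactly (\ref{6}) of Lemma \ref{2upperhankel}, with $r=n_i$, $s=n_j$, $a=a_j$, $b=b_j$. That lemma says the solutions are precisely the $2$-lower alternating Hankel matrices satisfying (\ref{8}). Assembling the blocks gives the forward direction. For the converse, each block of a matrix with the stated shape satisfies its block equation: zero blocks trivially, and nonzero blocks by the converse part of Lemma \ref{2upperhankel}. Hence (\ref{commutesjordancomplextranspose}) holds.

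The main obstacle is the part of Lemma \ref{2upperhankel} whose proof was omitted, namely decoupling the nilpotent equation $X\mathcal{C}_s(0,0)+\mathcal{C}_r(0,0)^tX=0$ from $X\hat J_s-\hat J_rX=0$. If I had to supply it, I would copy the induction of Lemma \ref{onecomplexjordanblockcommute}. First multiply the combined equation on both sides by $\hat J_r$ and $\hat J_s$. Then add the result to the original, so that the $b$-terms double and the nilpotent terms appear in the form $x+J_2xJ_2$. Finally, induct over the block anti-diagonals, starting from the corner entry, where only the $b$-terms survive. At each step, the induction hypothesis forces $x+J_2xJ_2=0$ on the neighbouring entries, which kills the nilpotent contribution and yields the commutation with $J_2$ on the next anti-diagonal.
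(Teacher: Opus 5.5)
Your proposal is correct and follows the paper's proof. The paper splits the equation into block equations, discards the blocks with $(a_i,b_i)\neq(-a_j,b_j)$ by the common-eigenvalue (Sylvester) criterion, and applies Lemma \ref{2upperhankel} to the remaining blocks; your identification $a=a_j$ is the right one. Your sketch of the decoupling that the paper omits in that lemma is also sound: add the equation conjugated by $\hat J_r,\hat J_s$, then induct over the block anti-diagonals starting from the $(1,1)$ corner.
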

   \begin{proof}
Equation (\ref{commutesjordancomplextranspose}) is equivalent to the set 
\[
B_{ij}\mathcal{C}_{n_j}(a_j,b_j)+\mathcal{C}_{n_i}(a_i,b_i)^tB_{ij}=0.
\]
Each of these equations has a non trivial solution if and only if $(-a_i,b_i)=(a_j,b_j)$. So we only consider equations of the form
\[
B_{ij}\mathcal{C}_{n_j}(a_i,b_i)+\mathcal{C}_{n_i}(-a_i,b_i)^tB_{ij}=0.
\]
These are the equations considered in the previous lemma, so this completes the proof.
\end{proof} 

We can now rewrite the matrix sets in (\ref{matrixsets}) for the complex case
\begin{equation} \label{matrixsetscomplexrewrite}
    \begin{aligned}
  \mathcal{T}_{\mathcal{J}_{{N_{\mathbb{C}}}}}&:= \left\{ [A_{ij}] \in \mathrm{GL}(N_{\mathbb{C}},\mathbb{R})\;\middle|\; \begin{aligned}
 &2\text{-}N_{\mathbb{C}}\text{-upper Toeplitz} \\ 
&\text{Satisfies} \;(\ref{conditioneigenvaluescomplextoeplitz}) \; \text{and} \; (\ref{8}).
\end{aligned}
               \right\}  \\
               \mathcal{H}_{\mathcal{J}_{N_{\mathbb{C}}}}^{K}&:=\left\{ [B_{ij}] \in \mathrm{Skew}(N_{\mathbb{C}}\times N_{\mathbb{C}},\mathbb{R}) \;\middle|\; \begin{aligned}
 &2\text{-}N_{\mathbb{C}}\text{-lower alternating Hankel}  \\ 
&\text{Satisfies} \;(\ref{conditioneigenvaluescomplexhankel}) \; \text{and} \; (\ref{8}) \\
&Rank=K.
\end{aligned}
 \right\}\
    \end{aligned}
\end{equation}

In the next section, as in the real case, we answer the question: for a given $\mathcal{J}_{N_{\mathbb{C}}}$, for which $K$ is $\mathcal{H}_{\mathcal{J}_{N_{\mathbb{C}}}}^K\neq \emptyset$.  Then we examine the shape of the quotient space $(\ref{quotient space})$.  In fact we will start with the latter, as it will let us answer the former more easily.

\subsection{The quotient space}

First as in the real case, we show that we can study an equivalent problem. 
\begin{lemma} \label{equivalentmatrixproblemcomplex}
Define the sets 
\begin{equation} \label{matrixsetscomplexrewritewithstar}
    \begin{aligned}
  \dot{\mathcal{T}}_{\mathcal{J}_{N_{\mathbb{C}}}}&:= \left\{ [A_{ij}] \in \mathrm{GL}(N_{\mathbb{C}},\mathbb{R})\;\middle|\; \begin{aligned}
 &2\text{-}N_{\mathbb{C}}\text{-upper Toeplitz} \\ 
&\text{Satisfies}  \;(\ref{conditioneigenvaluescomplextoeplitz}) \; \text{and} \; (\ref{8}).
\end{aligned}
               \right\}  \\
               \dot{\mathcal{H}}_{\mathcal{J}_{N_{\mathbb{C}}}}^{K}&:=\left\{ [B_{ij}] \in 2\text{-}N_{\mathbb{C}}\text{-Skew}(N_{\mathbb{C}}\times N_{\mathbb{C}},\mathbb{R}) \;\middle|\; \begin{aligned}
 &2\text{-}N_{\mathbb{C}}\text{-upper alternating Toeplitz}  \\ 
&\text{Satisfies}  \;(\ref{conditioneigenvaluescomplexhankel}) \; \text{and} \; (\ref{8}). \\
&Rank=K
\end{aligned}
 \right\}\
    \end{aligned}
\end{equation}
Then, there exists a bijection 

\begin{equation}\label{bijectionquotientspacecomplex}     
 \mathcal{H}_{\mathcal{J}_{N_{\mathbb{C}}}}^{K}/_{\mathrm{cong}}\mathcal{T}_{\mathcal{J}_{N_{\mathbb{C}}}} \xrightarrow{bij}  \dot{\mathcal{H}}_{\mathcal{J}_{N_{\mathbb{C}}}}^{K}/_{\text{2-N-blockstar cong }} \dot{\mathcal{T}}_{\mathcal{J}_{N_{\mathbb{C}}}}.
\end{equation} 
\end{lemma}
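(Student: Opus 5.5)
The plan is to imitate the proof of Lemma \ref{equivalentmatrixproblemreal}, replacing $\mathcal{P}_{N_{\mathbb{R}}}$ by $\mathcal{P}_{N_{\mathbb{C}},2}$. There are three ingredients. First, $\mathcal{P}_{N_{\mathbb{C}},2}$ is a symmetric involution: $\mathcal{P}_{N_{\mathbb{C}},2}^2=I$ and $\mathcal{P}_{N_{\mathbb{C}},2}^t=\mathcal{P}_{N_{\mathbb{C}},2}$. Second, $\dot{\mathcal{T}}_{\mathcal{J}_{N_{\mathbb{C}}}}=\mathcal{T}_{\mathcal{J}_{N_{\mathbb{C}}}}$ by the definitions in \eqref{matrixsetscomplexrewrite} and \eqref{matrixsetscomplexrewritewithstar}, so the acting group is the same on both sides. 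Third, the map $B\mapsto \mathcal{P}_{N_{\mathbb{C}},2}B$ sends $\mathcal{H}^K_{\mathcal{J}_{N_{\mathbb{C}}}}$ bijectively onto $\dot{\mathcal{H}}^K_{\mathcal{J}_{N_{\mathbb{C}}}}$. Once these are in hand, the bijection of quotients follows from a one-line computation.

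\textbf{Step 1: the map between the sets.} Multiplying a $2$-lower alternating Hankel block on the left by $\mathcal{P}_{n_i,2}$ reverses its block rows and gives a $2$-upper alternating Toeplitz block. This was noted in Section \ref{sectioncomplexdefinitionmatrices}, possibly up to the sign convention for $I^{\pm}_{n_i,2}$ in the alternating factor, which I would check directly from the definitions. The map is invertible, since $\mathcal{P}_{N_{\mathbb{C}},2}$ is an involution, and it preserves rank. It also preserves conditions (\ref{conditioneigenvaluescomplexhankel}) and (\ref{8}). Block-zero patterns are untouched. For (\ref{8}), note that each $2\times 2$ entry is only moved, never altered, because $\mathcal{P}_{n_i,2}$ permutes whole $2\times 2$ blocks and acts on them by $I_2$.

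Skew-symmetry transfers as well. For $B^t=-B$, we compute $(\mathcal{P}B)^{\boxstartwo}=\mathcal{P}(\mathcal{P}B)^t\mathcal{P}=\mathcal{P}B^t\mathcal{P}\mathcal{P}=\mathcal{P}B^t=-\mathcal{P}B$, where $\mathcal{P}:=\mathcal{P}_{N_{\mathbb{C}},2}$. The converse computation is identical. Hence $\mathcal{P}B$ is $2$-$N$-skew-symmetric exactly when $B$ is skew-symmetric. This is the statement recorded just before \eqref{diagonalelemtsincomplexcase}, and I will use it as given.

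\textbf{Step 2: compatibility with the actions.} For $A\in\mathcal{T}_{\mathcal{J}_{N_{\mathbb{C}}}}$ and $B,C\in\mathcal{H}^K_{\mathcal{J}_{N_{\mathbb{C}}}}$, I insert $\mathcal{P}^2=I$:
\[
A^tCA=B \iff \mathcal{P}A^t\mathcal{P}\,\mathcal{P}CA=\mathcal{P}B \iff A^{\boxstartwo}(\mathcal{P}C)A=\mathcal{P}B.
\]
So $C$ and $B$ are congruent over $\mathcal{T}_{\mathcal{J}_{N_{\mathbb{C}}}}$ if and only if $\mathcal{P}C$ and $\mathcal{P}B$ are $2$-N-blockstar congruent over $\dot{\mathcal{T}}_{\mathcal{J}_{N_{\mathbb{C}}}}$. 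Consequently the map $[B]\mapsto[\mathcal{P}B]$ is well defined and injective, and it is surjective by Step 1.

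The only step needing care is Step 1, specifically checking that the alternating sign patterns and condition (\ref{8}) survive the block reversal. I would verify this entrywise: the $(i,j)$ block entry of $\mathcal{P}_{n_l,2}X$ is $x_{n_l+1-i,\,j}$, and this immediately converts the Hankel indexing into Toeplitz indexing. The remaining steps are formal.
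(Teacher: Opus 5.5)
Your proposal is correct and follows the paper's proof. The paper likewise uses the map $B\mapsto\mathcal{P}_{N_{\mathbb{C}},2}B$, relying on the remark in Section~\ref{sectioncomplexdefinitionmatrices} that it carries $\mathcal{H}^K_{\mathcal{J}_{N_{\mathbb{C}}}}$ onto $\dot{\mathcal{H}}^K_{\mathcal{J}_{N_{\mathbb{C}}}}$, together with the identity $A^tCA=B\iff A^{\boxstartwo}(\mathcal{P}_{N_{\mathbb{C}},2}C)A=\mathcal{P}_{N_{\mathbb{C}},2}B$. Your additional checks (symmetry and involutivity of $\mathcal{P}_{N_{\mathbb{C}},2}$, preservation of rank, of (\ref{conditioneigenvaluescomplexhankel}) and (\ref{8}), and $\dot{\mathcal{T}}_{\mathcal{J}_{N_{\mathbb{C}}}}=\mathcal{T}_{\mathcal{J}_{N_{\mathbb{C}}}}$) just make explicit what the paper leaves implicit.
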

    \begin{proof}
   Let $B,C \in \mathcal{H}_{\mathcal{J}_{N_{\mathbb{C}}}}^K $. Recall from Section (\ref{sectioncomplexdefinitionmatrices}) that $\mathcal{P}_{N_{\mathbb{C}},2}B$ and $\mathcal{P}_{N_{\mathbb{C}},2}C$ are $2$-$N_{\mathbb C}$-skew-symmetric $2$-$N_{\mathbb C}$-upper Toeplitz. Then we just have to notice that 
        \begin{align*}
             &A^tCA=B \\
             \Leftrightarrow  &\mathcal{P}_{N_{\mathbb{C}},2}(\mathcal{P}_{N_{\mathbb{C}},2}A^t\mathcal{P}_{N_{\mathbb{C}},2})\mathcal{P}_{N_{\mathbb{C}},2}CA=B \\
              \Leftrightarrow  &A^{\boxstartwo}(\mathcal{P}_{N_{\mathbb{C}},2}C)A=\mathcal{P}_{N_{\mathbb{C}},2}B.
        \end{align*}
        
           \end{proof}

We concentrate then in the latter quotient space.  Before describing such quotient space we first introduce some new matrices and notation. 
\subsubsection{Some more matrices and notation.} 
This section is the analogue of Section \ref{Some more matrices and notation.} but for the complex eigenvalues case. We have a similar notation as the one introduced there 
\begin{align*}
    n_{l,m}:=\mathrm{min}(n_l,n_m), \\
    s_{(l,m;k)}:=n_{l,m}-k+1.
\end{align*}
We define the equivalent matrices in the obvious ways. For a $2$-upper Toeplitz block $B_{lm}$, we denote by $B_{lm}^{\langle u\rangle} \in \matrices{2s_{(l,m;u)}}{2s_{(l,m;u)}}$ the submatrix formed by only considering the diagonals with elements $b_{lm}^{(t)}$, with $t\geq u$ that is

\[
B_{lm}^{\langle u\rangle}:=\left( \begin{array}{cccc} 
b_{lm}^{(u)}&b_{lm}^{(u+1)}&\cdots& b_{lm}^{(n_{l,m})} \\ 
&b_{lm}^{(u)}&\ddots&\vdots \\
&&\ddots&b_{lm}^{(u+1)} \\
&&& b_{lm}^{(u)}\\
\end{array} \right)
\]
If $b_{lm}^{(u)}\neq 0$, then $B_{lm}^{\langle u\rangle}$ is a $2$-upper Toeplitz matrix with nonzero block diagonal. Then again the matrix formed by erasing the submatrix $B_{lm}^{\langle u\rangle}$ from the original matrix $B_{lm}$, we will denote this matrix by $B_{lm}^{<u}$. This matrix is just as described in Equation \eqref{matrixwithoutcorner}, in the obvious way.

Now we define the equivalent \say{extension} operation.  Suppose we have a $2$-upper Toeplitz matrix $A\in \matrices{2k}{2k}$, we use the notation as in (\ref{formtoeplitz}) to denote the nonzero diagonals of the matrix by $a^{(i)}$. For any $n,m$, such that $k\leq\min(n,m)$ we can \say{extend} the matrix $A$ to another $2$-upper Toeplitz matrix  in $\matrices{2n}{2m}$ by a map (we use the same notation as in the real case, as no confusion will arise)

\[
\mathrm{ToeplitzExt}_{n,m}:\matrices{2k}{2k}\rightarrow \matrices{2n}{2m}
\]
To define this map, define $B:=\mathrm{ToeplitzExt}_{n,m}(A)$ and again use the notation $b^{(i)}$ for the diagonals, then we can define the map by  
\begin{equation}
   b^{(i)}:=\begin{cases}
       a^{(i)} & 1\leq i\leq k \\
       0&  \text{other cases}.
   \end{cases}
\end{equation}

Of course again such a matrix has the important property described in Equation $(\ref{blockappearstwice})$.  We define a couple of more matrices, in which we embed some of the previous matrices in the upper corner
\[
I_{lm,2}^{[k]\pm}:=
\left(  \begin{array}{c|c} 
0&I_{s_{(l,m;k)},2}^{\pm}\\ \hline
0&0
\end{array} \right) \in \matrices{2n_l}{2n_m}.
\]
\[
 W_{ll}^{[k]\pm}:=\left(  \begin{array}{c|c} 
0&W_{s_{(l,l;k)}}^{\pm}\\ \hline
0&0
\end{array} \right) \in \matrices{2n_l}{2n_l}.
 \]
$W$ is as in \eqref{thematrixthatcommutescomplexcase}.
For convenience, we extend the notation by setting
\begin{align*}
    I_{lm,2}^{[k]\pm}&:=0
\qquad\text{whenever}\qquad
k>\min(n_l,n_m), \\
W_{ll}^{[k]\pm}&:=0
\qquad\text{whenever}\qquad
k>n_l.
\end{align*}

Finally notice the following 
 \begin{align}\label{conjugationofEcomlex}
(I_{lm,2}^{[k]\pm})^{\boxstartwo}=
 \begin{cases}
-I_{ml,2}^{[k]\pm} &\text{if} \; s_{(l,m;k)} \; \text{is even}, \\
I_{ml,2}^{[k]\pm} &\text{if} \; s_{(l,m;k)} \; \text{is odd}.
\end{cases}
\end{align}

\subsubsection{The set $\dot{\mathcal{H}}_{\mathcal{J}_{N_{\mathbb{C}}}}^{K}/_{\text{2-N-blockstar cong }} \dot{\mathcal{T}}_{\mathcal{J}_{N_{\mathbb{C}}}}$.}

The following Lemmas are the equivalent of the ones in section \ref{realquotientspacesection}. 

\begin{lemma}\label{step1complexcase}
     Let $B=[B_{ij}] \in \dot{\mathcal{H}}_{\mathcal{J}_{N_{\mathbb{C}}}}^{K}$.  Suppose that $B_{lm}$ is such that $b_{lm}^{(k)} \neq 0$ and $b_{lm}^{(q)}=0$ for all $q<k$.    
    Then there exists $S=[S_{ij}] \in \dot{\mathcal{T}}_{\mathcal{J}_{N_{\mathbb{C}}}}$  such that 

\[
S^{\boxstartwo}BS=\begin{cases}
\left( \begin{array}{ccccc} 
*&&&&*\\ 
&\ddots&&&\\
&&tW_{ll}^{[k]\pm}&& \\
&*&&\ddots &\\
&&& &*
\end{array} \right) \;\; \text{if} \; l=m, \\
\left(\begin{array}{ccccccc} 
*&&&&&&  \\ 
&\ddots&&&&*& \\
&&*&\cdots&tI_{lm,2}^{[k]\pm}&& \\
&&\vdots&*&\vdots&& \\
&&-t(I_{lm,2}^{[k]\pm})^{\boxstartwo}&\cdots&*&& \\
&*&&&&\ddots& \\
&&&&&&*
\end{array} \right) \;\; \text{if} \; l\neq m,
\end{cases} 
\]          
for some $t\in\{1,-1\}$. That is $S^{\boxstartwo}BS \in \dot{\mathcal{H}}_{\mathcal{J}_{N_{\mathbb{C}}}}^{K} $ is such that the $l,l$ component is \(tW_{ll}^{[k]\pm}\) if \(l=m\), and the
$l,m$ component is \(tI_{lm,2}^{[k]\pm}\) if \(l\neq m\).
\end{lemma}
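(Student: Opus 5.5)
I will mimic the proof of Lemma \ref{step1}, working with $2\times 2$ blocks instead of scalars and with $W_{n_l}^{\pm}$ (for diagonal blocks) or $I^{\pm}_{s,2}$ (for off-diagonal blocks) in place of the alternating identity. The key algebraic fact is that every $2\times2$ entry satisfying (\ref{8}) has the form $\begin{pmatrix} a&b\\-b&a\end{pmatrix}$. This subalgebra of $\matrices{2}{2}$ is isomorphic to $\mathbb{C}$. Consequently, a $2$-upper Toeplitz matrix with entries of this form is a polynomial in the nilpotent shift with coefficients in a copy of $\mathbb{C}$, and all such matrices commute.

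\textbf{Case $l\neq m$.} First set $C_{lm}^{\langle k\rangle,\mathrm{na}}:=I^{\pm}_{s_{(l,m;k)},2}B_{lm}^{\langle k\rangle}$, which is $2$-upper Toeplitz, satisfies (\ref{8}), and has invertible diagonal entry $b_{lm}^{(k)}$. Next I need a $2$-upper Toeplitz matrix $Q$, satisfying (\ref{8}), with $Q^2=(C_{lm}^{\langle k\rangle,\mathrm{na}})^{-1}$. I would obtain it directly rather than through Proposition \ref{existenceofroots}: write $C_{lm}^{\langle k\rangle,\mathrm{na}}=z(I+N)$ with $z$ the complex diagonal entry and $N$ nilpotent, take a complex square root of $z^{-1}$, and use the binomial series for $(I+N)^{-1/2}$. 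This is where the complex case is actually easier: no sign $t$ is needed to avoid negative eigenvalues, because any nonzero complex number has a square root. The sign $t$ can then be taken to be $1$, or kept only to match the statement.

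With $Q$ in hand, set $S_{mm}:=\mathrm{ToeplitzExt}_{n_m,n_m}(Q)$ and $S_{ll}:=\mathrm{ToeplitzExt}_{n_l,n_l}(\hat Q)$, where $\hat Q=I^{\pm}QI^{\pm}$, and let all other diagonal blocks be identities. The block-star of a block-diagonal $2$-Toeplitz matrix is again $2$-Toeplitz, so $(S^{\boxstartwo}BS)_{lm}=S_{ll}^{\boxstartwo}B_{lm}S_{mm}$. By property (\ref{blockappearstwice}) the corner reduces to $\hat Q' I^{\pm} C^{\mathrm{na}} Q$, where $\hat Q'$ comes from $S_{ll}^{\boxstartwo}$. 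Using (\ref{commutationwithhatcomplexcase}) and commutativity, this equals $I^{\pm}_{s,2}$. One point needs checking here: $\mathcal{P}_{n,2}X^t\mathcal{P}_{n,2}$ of a $2$-Toeplitz $X$ with entries of type (\ref{8}) replaces each entry $x$ by $x^t$, that is, by its complex conjugate. So I should instead take $S_{ll}$ built from the conjugate-transposed root, so that $S_{ll}^{\boxstartwo}$ equals $\mathrm{ToeplitzExt}(\hat Q)$. The $(m,l)$ block is then forced by $2$-$N$-skew-symmetry, which is preserved by the congruence.

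\textbf{Case $l=m$.} Here I use $C_{ll}:=(W^{\pm}_{s})^{-1}B_{ll}^{\langle k\rangle}$, which is $2$-upper Toeplitz and satisfies (\ref{conmmuteswith2altidentity}). I look for $S_{ll}$ with $S_{ll}^{\boxstartwo}W^{\pm}C_{ll}S_{ll}=W^{\pm}$. Using (\ref{conmmuteswith2altidentity}) with polynomials, this amounts to finding $Q$ with $\bar Q\,C_{ll}\,Q=I$, where the bar denotes the entrywise conjugation induced by $\boxstartwo$. This is the main obstacle. By (\ref{diagonalelemtsincomplexcase}), the leading entry of $C_{ll}$ is a real scalar $a$ times $I_2$, so $\bar Q_0 a Q_0=|Q_0|^2a$ can only normalize $a$ up to sign. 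This is exactly why $t\in\{\pm1\}$ appears in the statement.

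To handle the higher diagonals, I would look at the relation $C_{ll}^{\boxstartwo}W=WC_{ll}$, which says that $C_{ll}$ is "Hermitian" with respect to the involution $X\mapsto W^{-1}X^{\boxstartwo}W$. Write $C_{ll}=ta(I+N)$, with $N$ nilpotent and self-adjoint under this involution. Then $Q:=|a|^{-1/2}(I+N)^{-1/2}$, defined by the real-coefficient binomial series, is also self-adjoint and commutes with $C_{ll}$. This gives $Q^{*}C_{ll}Q=tI$, i.e. $S^{\boxstartwo}BS$ has $(l,l)$ block $tW^{[k]\pm}_{ll}$. The last step is to extend $Q$ by $\mathrm{ToeplitzExt}$ and pad $S$ with identities elsewhere. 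Membership $S\in\dot{\mathcal{T}}_{\mathcal{J}_{N_{\mathbb{C}}}}$ holds because $S$ is block-diagonal and $2$-Toeplitz with entries satisfying (\ref{8}).
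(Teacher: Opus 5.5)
Your proposal is correct and is essentially the paper's proof. Both use a block-diagonal $2$-Toeplitz congruence that normalizes the leading corner. When $l\neq m$ you take $S_{ll}=\bigl(\mathrm{ToeplitzExt}(\hat Q)\bigr)^{\boxstartwo}$, which is exactly the paper's choice. When $l=m$ you use that a real-coefficient square root is self-adjoint under $X\mapsto (W^{\pm})^{-1}X^{\boxstartwo}W^{\pm}$, which is what the paper extracts from \eqref{conmmuteswith2altidentity}.

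The only real difference is that you build the square root explicitly, as a complex scalar root times the binomial series in the nilpotent part, instead of citing Proposition \ref{existenceofroots}. This lets you take $t=1$ when $l\neq m$. One small correction: your intermediate condition $\bar Q C_{ll}Q=I$ should include the alternating-sign twist coming from $W^{\pm}$, which your final paragraph does handle correctly.
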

\begin{proof}

The proof is very similar to Lemma \ref{step1}. Define
\[
B_{lm}^{\langle k \rangle,\mathrm{na}}:=\begin{cases}
t(W^{\pm}_{s_{(l,l;k)}})^{-1}B_{ll}^{\langle k\rangle} & l=m \\
tI_{s_{(l,m;k)},2}^{\pm}B_{lm}^{\langle k\rangle} & l\neq m.
\end{cases}
\]
where $t\in \lbrace 1,-1\rbrace$ is selected such that $B_{lm}^{\langle k \rangle,\mathrm{na}}$  has no negative real eigenvalues. $B_{lm}^{\langle k \rangle,\mathrm{na}}$ is $2$-upper Toeplitz. From Proposition \ref{existenceofroots}, we can find a polynomial $p(B_{lm}^{\langle k \rangle,\mathrm{na}})$ such that $p(B_{lm}^{\langle k \rangle,\mathrm{na}})^2=(B_{lm}^{\langle k \rangle,\mathrm{na}})^{-1}$.
In particular, we have 
\begin{align*}
    p(B_{lm}^{\langle k \rangle,\mathrm{na}})B_{lm}^{\langle k \rangle,\mathrm{na}}p(B_{lm}^{\langle k \rangle,\mathrm{na}})=I_{2s_{(l,m;k)}}
\end{align*}

\textbf{Case $l=m$}. 
First notice that 
\begin{align*}
    p(B_{ll}^{\langle k \rangle,\mathrm{na}})^{\boxstartwo}B_{ll}^{\langle k\rangle}p(B_{ll}^{\langle k \rangle,\mathrm{na}})&=tp(B_{ll}^{\langle k \rangle,\mathrm{na}})^{\boxstartwo}W^{\pm}_{s_{(l,l;k)}}B_{ll}^{\langle k \rangle,\mathrm{na}}p(B_{ll}^{\langle k \rangle,\mathrm{na}})\\
&=tW^{\pm}_{s_{(l,l;k)}}p(B_{ll}^{\langle k \rangle,\mathrm{na}})B_{ll}^{\langle k \rangle,\mathrm{na}}p(B_{ll}^{\langle k \rangle,\mathrm{na}}) \\
&=tW^{\pm}_{s_{(l,l;k)}}.
\end{align*}
Here we used the commutation in \eqref{conmmuteswith2altidentity}. 
Define the matrix 
\[
 \matrices{2n_l}{2n_l} \ni S_{ll}:=\mathrm{ToeplitzExt}_{n_l,n_l}\left(p(B_{ll}^{\langle k \rangle,\mathrm{na}})\right). 
 \] 
For such a matrix we have 
\begin{align*}
S_{ll}^{\boxstartwo}B_{ll}S_{ll}&=\left(\begin{array}{c|c} 
0&p(B_{ll}^{\langle k \rangle,\mathrm{na}})^{\boxstartwo}B_{ll}^{\langle k\rangle}p(B_{ll}^{\langle k \rangle,\mathrm{na}}) \\ \hline
0& 0
\end{array} \right)\\
&=t W_{ll}^{[k]\pm}.
\end{align*}

Finally, define the nonsingular diagonal matrix  $ S=[S_{ij}] \in \dot{\mathcal{T}}_{\mathcal{J}_{N_{\mathbb{C}}}}$ by 
\[
S_{ij} :=
\begin{cases}
I_{2n_i} & i = j,\ i \neq l,\\
\mathrm{ToeplitzExt}_{n_l,n_l}\left(p(B_{ll}^{\langle k \rangle,\mathrm{na}})\right) & i = j = l,\\
0 & i \neq j.
\end{cases}
\]
This matrix gives our desired result.

\textbf{Case $l\neq m$.} From (\ref{commutationwithhatcomplexcase}) we have 
\begin{align*}
    p(\hat{B}_{lm}^{\langle k \rangle,\mathrm{na}})B_{lm}^{\langle k \rangle}p(B_{lm}^{\langle k \rangle,\mathrm{na}})&=tp(\hat{B}_{lm}^{\langle k \rangle,\mathrm{na}})I^{\pm}_{s_{(l,m;k)},2}B_{lm}^{\langle k \rangle,\mathrm{na}}p(B_{lm}^{\langle k \rangle,\mathrm{na}}) \\
      &=tI^{\pm}_{s_{(l,m;k)},2}.
\end{align*}
Then \say{extend} these matrices

\begin{align*}
 \matrices{2n_m}{2n_m} \ni T_{mm}&:=\mathrm{ToeplitzExt}_{n_m,n_m} \left(p(B_{lm}^{\langle k \rangle,\mathrm{na}})\right), \\
 \matrices{2n_l}{2n_l} \ni T_{ll}&:=\mathrm{ToeplitzExt}_{n_l,n_l}\left(p(\hat{B}_{lm}^{\langle k \rangle,\mathrm{na}})\right).
\end{align*}
We have
\begin{align*}
T_{ll}B_{lm}T_{mm}&=\left(\begin{array}{c|c} 
0&p(\hat{B}_{lm}^{\langle k \rangle,\mathrm{na}})B_{lm}^{\langle k\rangle}p(B_{lm}^{\langle k \rangle,\mathrm{na}}) \\ \hline
0& 0
\end{array} \right)=tI_{lm,2}^{[k]\pm}.
\end{align*}
Finally, define the diagonal nonsingular matrix  $ S=[S_{ij}] \in \dot{\mathcal{T}}_{\mathcal{J}_{N_{\mathbb{C}}}}$ by 
\[
S_{ij} :=
\begin{cases}
I_{2n_i} & i = j,\ i \neq l, i \neq m,\\
(\mathrm{ToeplitzExt}_{n_l,n_l}\left(p(\hat{B}_{lm}^{\langle k \rangle,\mathrm{na}})\right))^{\boxstartwo} & i = j = l,\\
\mathrm{ToeplitzExt}_{n_m,n_m} \left(p(B_{lm}^{\langle k \rangle,\mathrm{na}})\right) & i = j = m,\\
0 & i \neq j.
\end{cases}
\]
This matrix gives our desired result. 
\end{proof}  
Next the analogue of Lemma (\ref{firstlemma}).
\begin{lemma}\label{firstlemmacomplex}   Let $B=[B_{ij}]\in \dot{\mathcal{H}}_{\mathcal{J}_{N_{\mathbb{C}}}}^{K}$.  Suppose $B_{ll}$ is such that $b_{ll}^{(k)} \neq 0$ and $b_{ll}^{(q)}=0$ for all $q<k$. Also suppose that for all other $i\neq l$ with $n_i=n_l$, we have $b_{il}^{(q)}=b_{li}^{(q)}=0$ for all $q< k$.  Then, there exists  $S \in \dot{\mathcal{T}}_{\mathcal{J}_{N_{\mathbb{C}}}}$ and $t\in \lbrace 1,-1\rbrace $ such that

  \[
D=[D_{ij}]:=S^{\boxstartwo}BS
\]        
satisfies 
\begin{enumerate}
    \item $D_{ll}=tW_{ll}^{[k]\pm},$
    \item $D_{il}=D_{li}=0$ for all $i\neq l$ such that $n_i=n_l$,
\end{enumerate}
 \end{lemma}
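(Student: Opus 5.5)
First I would normalize the diagonal block. By Lemma \ref{step1complexcase} with $l=m$ there is a block-diagonal $S_0\in\dot{\mathcal T}_{\mathcal J_{N_{\mathbb C}}}$ with $S_0^{\boxstartwo}BS_0$ having $(l,l)$ block $tW_{ll}^{[k]\pm}$, $t\in\{1,-1\}$. Since $S_0$ is block diagonal, the other blocks are multiplied on the left and right by $2$-upper Toeplitz matrices with invertible leading coefficient. Their lowest nonzero diagonals therefore do not move down. So the hypothesis $b_{il}^{(q)}=b_{li}^{(q)}=0$ for $q<k$ and $n_i=n_l$ still holds, and I may assume from now on that $B_{ll}=tW_{ll}^{[k]\pm}$.

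Next I would clear the blocks $B_{li}$ with $i\neq l$, $n_i=n_l$, using a unipotent congruence like the one in Lemma \ref{firstlemma}. For each such $i$ with $B_{li}\ne0$, condition (\ref{conditioneigenvaluescomplexhankel}) applied to $B_{ll}\neq 0$ and to $B_{li}\neq 0$ gives $(a_l,b_l)=(a_i,b_i)$. Hence a block in position $(l,i)$ respects (\ref{conditioneigenvaluescomplextoeplitz}). Since $n_i=n_l$, we have $B_{li}=B_{li}^{\langle k\rangle}$ placed in the corner, so I define
\[
X_{li}:=t\,(W^{\pm}_{s_{(l,l;k)}})^{-1}B_{li}^{\langle k\rangle},\qquad S_{li}:=\mathrm{ToeplitzExt}_{n_l,n_i}(X_{li}).
\]
The choice is made so that $tW_{ll}^{[k]\pm}S_{li}=B_{li}$; this is the analogue of the identity used in Lemma \ref{firstlemma}, using the corner property (\ref{blockappearstwice}). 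I also check that $X_{li}$ is $2$-upper Toeplitz and satisfies (\ref{8}). The factor $W^{\pm}$ is either $I_{n,2}^{\pm}$ or $I^{\pm}_{n,2}S_{2n}$, and $S_{2n}$ is a direct sum of copies of $J_2$, which commutes with matrices of the form (\ref{8}); so $S_{li}$ belongs to the admissible class. Then I set $S=I-\sum_i E_{li}\otimes S_{li}$, i.e. identity blocks on the diagonal and $-S_{li}$ in the $l$-th block row, and compute $D=S^{\boxstartwo}BS$.

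The main obstacle is checking that this single congruence kills both $B_{li}$ and $B_{il}$ and leaves $D_{ll}=tW_{ll}^{[k]\pm}$. Expanding,
\[
D_{li}=B_{li}-B_{ll}S_{li},\qquad D_{il}=B_{il}-S_{li}^{\boxstartwo}B_{ll},
\]
plus a quadratic term $S_{li}^{\boxstartwo}B_{ll}S_{lj}$ in the $(i,j)$ positions, which does not affect row or column $l$. Also $D_{ll}=B_{ll}$, since the $l$-th block column of $S$ is the $l$-th unit block column. The first difference vanishes by construction. For the second I would use $2$-$N$-skew-symmetry, $B_{il}=-B_{li}^{\boxstartwo}$, together with $B_{ll}^{\boxstartwo}=-B_{ll}$, which is (\ref{conmmuteswith2altidentity}) in its equivalent form $(W^{\pm})^{\boxstartwo}=-W^{\pm}$. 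These give
\[
S_{li}^{\boxstartwo}B_{ll}=-(B_{ll}S_{li})^{\boxstartwo}=-B_{li}^{\boxstartwo}=B_{il},
\]
so $D_{il}=0$ as well. Applying $\boxstartwo$ to the extended corner blocks needs care, because $\boxstartwo$ reverses block order. I expect this bookkeeping, rather than any conceptual point, to be the delicate part; the same point arises for the $\boxstar$ operation in the real case. The square shape $n_i=n_l$ is exactly what makes the corner placements compatible and avoids the issue of Remark \ref{whentheblocksdonthavethesamesize}. Finally, the composite $S_0S$ lies in $\dot{\mathcal T}_{\mathcal J_{N_{\mathbb C}}}$, which proves the lemma.
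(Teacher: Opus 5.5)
Your proposal is correct and follows the paper's route. You first normalize $B_{ll}$ to $tW_{ll}^{[k]\pm}$ via Lemma \ref{step1complexcase}, then apply the unipotent congruence with blocks $-\mathrm{ToeplitzExt}_{n_l,n_i}\bigl(t(W^{\pm})^{-1}B_{li}^{\langle k\rangle}\bigr)$ in the $l$-th block row; the paper fills every $j\neq l$, while you fill only those with $n_j=n_l$, which is all the statement needs. Your explicit check that $D_{il}=0$, using $(B_{ll}S_{li})^{\boxstartwo}=S_{li}^{\boxstartwo}B_{ll}^{\boxstartwo}$ and $2$-$N$-skew-symmetry, supplies a step the paper leaves implicit.
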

 \begin{proof} The proof is similar to that of Lemma \ref{firstlemma}, so we skip some details. First, from Lemma \ref{step1complexcase} we can without loss of generality suppose that $B_{ll}=tW_{ll}^{[k]\pm}$ for some $t\in \lbrace 1,-1\rbrace $. For $B_{lj}$, $j\neq l$,  
\begin{equation}\label{matrixtoeliminatecomplex}
B_{lj}^{\langle k \rangle,\mathrm{na}}:=t(W_{s_{(l,j;k)}}^{\pm})^{-1}B_{lj}^{\langle k\rangle}.
\end{equation}
Then define the matrix 
    
\begin{equation}\label{completethematrixcomplex}
 \matrices{2n_l}{2n_j} \ni S_{lj} := \mathrm{ToeplitzExt}_{n_l,n_j}(B_{lj}^{\langle k \rangle,\mathrm{na}}).
\end{equation} 
We have the following
\[ (tW_{ll}^{[k]\pm})(-S_{lj}) = -\left(B_{lj}-B_{lj}^{<k}\right). \]

Finally, define $ S=[S_{ij}] \in \dot{\mathcal{T}}_{\mathcal{J}_{N_{\mathbb{C}}}}$ as 

\[
S_{ij} :=
\begin{cases}
I_{2n_i} & i = j,\\
-\mathrm{ToeplitzExt}_{n_l,n_j}(B_{lj}^{\langle k \rangle,\mathrm{na}}) & \text{all} \;\; i=l, \; j\neq l ,\\
0 & \text{other cases}.
\end{cases}
\]
Then consider the product 
\[
S^{\boxstartwo}BS,
\]
which gives our desired answer. Notice that there is no problem defining the previous matrix $S$, because if $B_{ll}\neq 0$,  condition (\ref{conditioneigenvaluescomplexhankel}) implies $\lambda_l=ib$ for some $b\in \mathbb{R}$. Then for all $m\neq l$, if $B_{lm} \neq 0$ we have also that $\lambda_m =ib$. So there is no problem with having $S_{lm}\neq0$, as it satisfies condition (\ref{conditioneigenvaluescomplextoeplitz}). 
 \end{proof}

 \begin{coro}
    \label{firstcorollarycomplexcase}  Let $B=[B_{ij}] \in \dot{\mathcal{H}}_{\mathcal{J}_{N_{\mathbb{C}}}}^{K}$. Suppose that $B_{ll}$, is such that $b_{ll} \neq 0$.   Then there exists a nonsingular $2$-$N$-upper Toeplitz matrix  $S\in \dot{\mathcal{T}}_{\mathcal{J}_{N_{\mathbb{C}}}}$ such that

  \[
S^{\boxstartwo}BS=
\left( \begin{array}{ccccccc} 
&&&0&&&\\ 
&*&&\vdots&&*&\\ 
&&&0&&&\\
0&\cdots&0&tW_{n_l}^{\pm}&0&\cdots& 0 \\
&&&0& &&\\
&*&&\vdots& &*&\\
&&&0& &&

\end{array} \right).
\]          
For some $t\in \{1,-1\}$. 
 \end{coro}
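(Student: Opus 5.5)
The plan is to derive this corollary from Lemma \ref{firstlemmacomplex} with $k=1$. The one real issue is that the lemma only clears the off-diagonal blocks $B_{li}$, $B_{il}$ with $n_i=n_l$, while the corollary asks for the whole $l$-th block row and column to be zero apart from the diagonal block.

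First I check the hypotheses with $k=1$. We have $b_{ll}^{(1)}=b_{ll}\neq 0$, and the condition on $b_{il}^{(q)},b_{li}^{(q)}$ for $q<1$ is empty. So the lemma supplies $S_1\in\dot{\mathcal{T}}_{\mathcal{J}_{N_{\mathbb{C}}}}$ and $t\in\{1,-1\}$ such that $D=S_1^{\boxstartwo}BS_1$ satisfies $D_{ll}=tW_{ll}^{[1]\pm}=tW_{n_l}^{\pm}$, since $s_{(l,l;1)}=n_l$.

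Next I handle the blocks with $n_i\neq n_l$. The idea is to use the elimination matrix from the proof of Lemma \ref{firstlemmacomplex} for every $j\neq l$, not just those with $n_j=n_l$. Put
\[
S_{lj}=\mathrm{ToeplitzExt}_{n_l,n_j}\bigl(t(W^{\pm}_{s_{(l,j;1)}})^{-1}D_{lj}^{\langle 1\rangle}\bigr).
\]
Because $k=1$, we have $D_{lj}^{\langle 1\rangle}=D_{lj}$ up to the zero padding. The identity $(tW_{ll}^{[1]\pm})(-S_{lj})=-(D_{lj}-D_{lj}^{<1})$ then reads $tW_{n_l}^{\pm}S_{lj}=D_{lj}$, since $D_{lj}^{<1}=0$. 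This is the point where taking $k=1$ matters: $W_{ll}^{[1]\pm}=W_{n_l}^{\pm}$ is invertible and fills the whole block, so the size mismatch described in Remark \ref{whentheblocksdonthavethesamesize} never occurs. The only care needed is that $W^{\pm}$ of size $s_{(l,j;1)}=\min(n_l,n_j)$ has to be placed at the corner that matches the position of $D_{lj}$ inside the $2n_l\times 2n_j$ block. I would fix this by using property \eqref{blockappearstwice}: the extended Toeplitz block contains the same square piece in both corners, and it can be read from whichever corner is needed.

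Finally I check that the combined matrix lies in $\dot{\mathcal{T}}_{\mathcal{J}_{N_{\mathbb{C}}}}$, and this is the step I expect to be the main obstacle. The matrix is $S_2$, equal to the identity except that its $l$-th block row carries $-S_{lj}$. Condition (\ref{conditioneigenvaluescomplextoeplitz}) holds by the eigenvalue argument at the end of the proof of Lemma \ref{firstlemmacomplex}. Condition (\ref{8}) holds because $W^{\pm}$ and the entries of $D$ are built from matrices of the form $\begin{pmatrix}a&b\\-b&a\end{pmatrix}$ and $I_2$. Here one has to check that multiplying by $S_{2n}$ keeps this commuting shape, and it does, since $J_2$ commutes with $J_2$.

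With that in place, I expand $S_2^{\boxstartwo}DS_2$ as in Lemma \ref{firstlemma}. The terms $-tW_{n_l}^{\pm}S_{lj}$ cancel $D_{lj}$. By $2$-$N$-skew-symmetry together with \eqref{conmmuteswith2altidentity}, the column blocks $D_{jl}$ cancel as well. The corrections that land on the remaining blocks only change entries marked $*$. Taking $S=S_1S_2$ gives the stated form.
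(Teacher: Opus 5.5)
Your route is the paper's: apply Lemma \ref{firstlemmacomplex} with $k=1$, then run its row elimination for every $j\neq l$. You are right that the lemma as stated only clears the blocks with $n_i=n_l$, a point the paper's one-line proof passes over.

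The gap is in your explicit $S_{lj}$. The identity you need, $tW^{\pm}_{n_l}S_{lj}=D_{lj}$, is correct. But $S_{lj}=\mathrm{ToeplitzExt}_{n_l,n_j}\bigl(t(W^{\pm}_{s_{(l,j;1)}})^{-1}D_{lj}^{\langle 1\rangle}\bigr)$ does not satisfy it when $n_j<n_l$ and $n_j\not\equiv n_l \pmod 2$.

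In that case $D_{lj}$ occupies the top $2n_j$ rows, so it is multiplied by the upper-left $2n_j\times 2n_j$ corner of $W^{\pm}_{n_l}$. By \eqref{thematrixthatcommutescomplexcase}, that corner is $I^{\pm}_{n_j,2}$ or $I^{\pm}_{n_j,2}S_{2n_j}$ according to the parity of $n_l$, not of $n_j$. It therefore differs from $W^{\pm}_{n_j}$ by a factor $S_{2n_j}^{\pm1}$, where $S_{2n_j}=J_2\oplus\cdots\oplus J_2$ as in \eqref{9}.

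A concrete instance: take $n_l=2$, $n_j=1$. Then $W^{\pm}_2=\mathrm{diag}(I_2,-I_2)$, $W^{\pm}_1=J_2$, and $D_{lj}=\begin{pmatrix} e\\ 0\end{pmatrix}$. Your formula gives $S_{lj}=\begin{pmatrix} tJ_2^{-1}e\\ 0\end{pmatrix}$, hence $tW^{\pm}_2S_{lj}=\begin{pmatrix} -J_2e\\ 0\end{pmatrix}$. The $(l,j)$ block of $S_2^{\boxstartwo}DS_2$ is then $\begin{pmatrix}(I_2+J_2)e\\ 0\end{pmatrix}$, which is nonzero whenever $e\neq 0$. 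This is not a question of which corner is read, so \eqref{blockappearstwice} cannot fix it: the diagonal entries of $W^{\pm}_{n_l}$ are all $\pm I_2$ if $n_l$ is even and all $\pm J_2$ if $n_l$ is odd. The same formula appears in the proof of Lemma \ref{firstlemmacomplex}. It is only correct there when $n_j\geq n_l$ or $n_j\equiv n_l\pmod 2$, which includes the case $n_j=n_l$ that the lemma's conclusion actually claims.

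The repair is short. Since $k=1$, skip the extension and set $S_{lj}:=t(W^{\pm}_{n_l})^{-1}D_{lj}$. Write $D_{lj}=I^{\pm}_{n_l,2}C_{lj}$ with $C_{lj}$ $2$-upper Toeplitz. Then $S_{lj}$ equals $tC_{lj}$ for $n_l$ even and $tS_{2n_l}^{-1}C_{lj}$ for $n_l$ odd. In both cases it is again $2$-upper Toeplitz with entries commuting with $J_2$, so together with your eigenvalue check $S_2\in\dot{\mathcal{T}}_{\mathcal{J}_{N_{\mathbb{C}}}}$. With this choice:
\begin{itemize}
\item the $(l,j)$ blocks cancel exactly;
\item the $(j,l)$ blocks vanish because congruence preserves $2$-$N$-skew-symmetry, so \eqref{conmmuteswith2altidentity} is not needed there;
\item $D_{ll}=tW^{\pm}_{n_l}$ is untouched, because block column $l$ of $S_2$ is that of the identity.
\end{itemize}
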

 \begin{proof}
         This follows easily from Lemma \ref{firstlemmacomplex}.    
 \end{proof}
Next the equivalent of Lemma \ref{secondlemma}.
 \begin{lemma}\label{secondlemmacomplex}
     Let $B=[B_{ij}]\in \dot{\mathcal{H}}_{\mathcal{J}_{N_{\mathbb{C}}}}^{K}$. Suppose that $B_{lm}$, with $l\neq m$ and $n_l=n_m$, is such that $b_{lm}^{(k)} \neq 0$ and $b_{lm}^{(q)}=0$ for all $q<k$.  Also suppose that for all other $z$ such that $n_z=n_l$, but $z\neq l$ and $z\neq m$ we have $b_{zl}^{(q)}=b_{lz}^{(q)}=b_{zm}^{(q)}=b_{mz}^{(q)}=0$ for all $q<k $ . Finally suppose that  $b_{ll}^{(q)}=b_{mm}^{(q)}=0$ for $q\leq k$.
   Then, there exists  $S \in \dot{\mathcal{T}}_{\mathcal{J}_{N_{\mathbb{C}}}}$ and $t\in \lbrace 1,-1\rbrace $ such that

  \[
D=[D_{ij}]:=S^{\boxstartwo}BS
\]         
satisfies
\begin{enumerate}
    \item $D_{lm}=tI_{lm,2}^{[k]\pm}$,
    \item $D_{ll}=D_{mm}=0$, 
    \item $D_{il}=D_{li}=D_{im}=D_{mi}=0$ for all $i\neq l,m$ such that $n_i=n_l$,
    \item $d_{il}^{(u)}=d_{li}^{(u)}=d_{im}^{(u)}=d_{mi}^{(u)}=0$ for all   $u\geq k$ and for all $i\neq l,m$ with $n_i\neq n_l$ ( That is for such blocks the matrix has the same shape as described in $(\ref{matrixwithoutcorner})$).
\end{enumerate}

 \end{lemma}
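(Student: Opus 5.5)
The proof follows the two-stage scheme of Lemma \ref{secondlemma}, with the real objects replaced by their complex analogues: $I_{n_i}^{\pm}$ by $I_{n_i,2}^{\pm}$ or $W_{n_i}^{\pm}$, $\boxstar$ by $\boxstartwo$, and the scalar diagonals $c^{(t)}$ by the $2\times 2$ blocks $b^{(t)}$ satisfying (\ref{8}).

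\textbf{Normalisation.} By Lemma \ref{step1complexcase} we may assume $B_{lm}=tI_{lm,2}^{[k]\pm}$ for some $t\in\{1,-1\}$. This congruence is block diagonal, so it does not change which leading diagonals of the other blocks vanish. The hypotheses on $b_{zl}^{(q)}$, $b_{ll}^{(q)}$, $b_{mm}^{(q)}$ are therefore preserved.

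\textbf{Stage 1: clearing the diagonal blocks.} Put
\[
B_{ll}^{\langle k\rangle,\mathrm{na}}:=t(W^{\pm}_{s_{(l,l;k)}})^{-1}B_{ll}^{\langle k\rangle},\qquad
B_{mm}^{\langle k\rangle,\mathrm{na}}:=t(W^{\pm}_{s_{(m,m;k)}})^{-1}B_{mm}^{\langle k\rangle},
\]
and let $S$ be the identity except for the off-diagonal blocks
\[
S_{lm}=-\tfrac12\mathrm{ToeplitzExt}_{n_l,n_m}(B_{mm}^{\langle k\rangle,\mathrm{na}}),\qquad
S_{ml}=-\tfrac12\mathrm{ToeplitzExt}_{n_m,n_l}(B_{ll}^{\langle k\rangle,\mathrm{na}}).
\]
This $S$ lies in $\dot{\mathcal{T}}_{\mathcal{J}_{N_{\mathbb{C}}}}$: condition (\ref{8}) holds because $W^{\pm}$ and the blocks $b^{(t)}$ are built from matrices commuting with $J_2$. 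Condition (\ref{conditioneigenvaluescomplextoeplitz}) holds because $B_{ll}\neq0$ forces $a_l=0$, and $B_{lm}\neq 0$ then forces $(a_m,b_m)=(a_l,b_l)$.

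Expand $B_{ll},B_{mm},B_{lm},B_{ml}$ of $S^{\boxstartwo}BS$ exactly as in Lemma \ref{secondlemma}. The two cross terms in the new $B_{ll}$ should each equal $-\tfrac12 B_{ll}$, so that they cancel $B_{ll}$. This uses (\ref{conjugationofEcomlex}) together with the commutation relation (\ref{conmmuteswith2altidentity}) in place of (\ref{conmmuteswithaltidentity}). What remains is a triple product whose first nonzero diagonal index is strictly larger, because $b_{ll}^{(q)}=b_{mm}^{(q)}=0$ for $q\le k$. All correction terms to $B_{lm}$ are likewise strictly higher. Iterating finitely many times gives $B_{ll}=B_{mm}=0$, with $B_{lm}$ renormalised by Lemma \ref{step1complexcase} again if needed.

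\textbf{Stage 2: clearing the rest of rows and columns $l$ and $m$.} For $q\neq l,m$ set $B_{lq}^{\langle k\rangle,\mathrm{na}}:=tI^{\pm}_{s_{(l,q;k)},2}B_{lq}^{\langle k\rangle}$, and similarly for $m$. Take $R$ equal to the identity except for
\[
R_{lq}=-\mathrm{ToeplitzExt}_{n_l,n_q}(B_{mq}^{\langle k\rangle,\mathrm{na}}),\qquad
R_{mq}=-\mathrm{ToeplitzExt}_{n_m,n_q}(B_{lq}^{\langle k\rangle,\mathrm{na}}),
\]
with the swapped indices as in the real case. The eigenvalue bookkeeping works as before: $(-a_l,b_l)=(a_m,b_m)$ and $(-a_l,b_l)=(a_q,b_q)$ give $(a_m,b_m)=(a_q,b_q)$, so $R\in\dot{\mathcal{T}}_{\mathcal{J}_{N_{\mathbb{C}}}}$.

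Computing $R^{\boxstartwo}BR$, the product $tI_{lm,2}^{[k]\pm}R_{mq}$ subtracts exactly the corner $B_{lq}-B_{lq}^{<k}$. This gives item (4) for every $q$ with $n_q\neq n_l$.

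For $q$ with $n_q=n_l$, the hypothesis $b_{lq}^{(s)}=0$ for $s<k$ means the whole block equals its $\langle k\rangle$ corner. These blocks are therefore annihilated completely, which gives item (3).

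The blocks $B_{lm}$, $B_{ll}$, $B_{mm}$ are only perturbed by products involving $B_{ll}=B_{mm}=0$, or by strictly higher terms that cancel by $\boxstartwo$-skew-symmetry. I would verify this last point carefully.

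\textbf{Main obstacle.} The delicate step is the sign cancellation in Stage 1. In the complex case the diagonal blocks carry $W^{\pm}$, which involves $S_{2n}$ for odd sizes, rather than $I^{\pm}$. I must check that $(S_{ml})^{\boxstartwo}(I_{lm,2}^{[k]\pm})^{\boxstartwo}$ really reproduces $-\tfrac12 B_{ll}$, combining the parity of $s_{(l,m;k)}$ in (\ref{conjugationofEcomlex}) with the identity $(W^{\pm})^{\boxstartwo}=-W^{\pm}$. My first attempt would be to write $B_{ll}^{\langle k\rangle}=tW^{\pm}B_{ll}^{\langle k\rangle,\mathrm{na}}$, and to use that $\mathrm{ToeplitzExt}$ commutes with $\boxstartwo$ up to the corner embedding. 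If the sign comes out wrong for one parity, I would replace $W^{\pm}$ in the definition of $B_{ll}^{\langle k\rangle,\mathrm{na}}$ by $I^{\pm}_{\cdot,2}$, adjusting by $S_{2n}$, which also commutes with $J_2$.
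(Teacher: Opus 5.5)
Your two-stage scheme is the paper's: normalise $B_{lm}$ by Lemma \ref{step1complexcase}, remove $B_{ll},B_{mm}$ by iterating the $-\tfrac12$ off-diagonal congruence, then clear rows and columns $l,m$ with the index-swapped $R$. Your side checks are also sound, namely that block-diagonal congruences preserve the vanishing hypotheses and that $B_{lm}$ is renormalised between iterations.

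\textbf{The Stage 1 normalisation fails for odd $s$.} The problem is the use of $W^{\pm}$. The pivot here is $B_{lm}=tI_{lm,2}^{[k]\pm}$, not a $W$-block. Moreover $I_{lm,2}^{[k]\pm}\,\mathrm{ToeplitzExt}_{n_m,n_l}(X)$ is just the top-right corner embedding of $I^{\pm}_{s,2}X$, where $s=s_{(l,m;k)}$. So the cancellation needs $B_{lm}S_{ml}=B_{ll}$, i.e.\ $X=tI^{\pm}_{s,2}B_{ll}^{\langle k\rangle}$, which is the paper's choice. With your $X=t(W^{\pm}_{s})^{-1}B_{ll}^{\langle k\rangle}$ this holds for even $s$, where $W^{\pm}_s=I^{\pm}_{s,2}$. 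For odd $s$ it fails: one gets $B_{lm}S_{ml}=-S_{2n_l}B_{ll}$. The two cross terms then sum to $\tfrac12(S_{2n_l}B_{ll}-B_{ll}S_{2n_l})=0$, because the $2\times 2$ entries commute with $J_2$. So the leading diagonal of $B_{ll}$ is never removed.

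Your fallback of switching to $I^{\pm}_{\cdot,2}$ is the right fix, and no $S_{2n}$ adjustment is needed. It also dissolves your ``main obstacle''. The other cross term is
\[
-\tfrac12 S_{ml}^{\boxstartwo}B_{ml}=\tfrac12(B_{lm}S_{ml})^{\boxstartwo}=\tfrac12B_{ll}^{\boxstartwo}=-\tfrac12B_{ll}.
\]
This uses only $B_{ml}=-B_{lm}^{\boxstartwo}$, $(XY)^{\boxstartwo}=Y^{\boxstartwo}X^{\boxstartwo}$ and the skew-symmetry of $B_{ll}$. No parity bookkeeping for $W^{\pm}$ is needed.

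\textbf{The $m$ side needs a sign $(-1)^{s}$.} Parity does enter here, and your ``similarly for $m$'' skips it; the paper's written definitions also use plain $t$ on both sides. By (\ref{conjugationofEcomlex}), $B_{ml}=-(B_{lm})^{\boxstartwo}=(-1)^{s}tI_{ml,2}^{[k]\pm}$. So in Stage 1, $S_{lm}$ must be built from $(-1)^{s}tI^{\pm}_{s,2}B_{mm}^{\langle k\rangle}$. In Stage 2, $R_{lq}$ must be built from $(-1)^{s}tI^{\pm}_{s_{(m,q;k)},2}B_{mq}^{\langle k\rangle}$. Then $B_{ml}S_{lm}=B_{mm}$, and $B_{ml}R_{lq}$ removes exactly the corner of $B_{mq}$. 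With plain $t$ and odd $s$, these corrections double $B_{mm}$ and that corner instead of cancelling them.

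\textbf{Stage 2 leaves $B_{ll},B_{lm},B_{mm}$ unchanged.} There is nothing to verify for these blocks. The block columns $l$ and $m$ of $R$ are those of the identity, so $R^{\boxstartwo}BR$ agrees with $B$ on these blocks exactly.
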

 \begin{proof}
The proof is similar to Lemma \ref{secondlemma}, so we omit some details.  From Lemma \ref{step1complexcase} we can without loss of generality suppose that $B_{lm}=t I_{lm,2}^{[k]\pm}$, $t\in \lbrace 1,-1\rbrace $. The proof of this case has two parts. First, we eliminate the elements $B_{ll}$ and $B_{mm}$. Define the matrices
    
\begin{align*}
   B_{mm}^{\langle k \rangle,\mathrm{na}}&:=tI^{\pm}_{s_{(m,m;k)},2}B_{mm}^{\langle k \rangle}, \\
  B_{ll}^{\langle k \rangle,\mathrm{na}}&:=tI^{\pm}_{s_{(l,l;k)},2}B_{ll}^{\langle k \rangle}.
\end{align*}
  Then define the corresponding matrices 
  \begin{align*}
    \matrices{2n_l}{2n_m} \ni S_{lm}&:=\mathrm{ToeplitzExt}_{n_l,n_m}(B_{mm}^{\langle k \rangle,\mathrm{na}}) \\
    \matrices{2n_m}{2n_l} \ni S_{ml}&:=\mathrm{ToeplitzExt}_{n_m,n_l}(B_{ll}^{\langle k \rangle,\mathrm{na}})
\end{align*}

Suppose $B_{ll}\neq 0$, from condition (\ref{conditioneigenvaluescomplexhankel}) we have that ( recall the notation for eigenvalues used in such condition) $ a_l=0$. Given that  $B_{lm}\neq 0 $ we must have $a_l=-a_m$, so we have also $a_m=0$. The same analysis for the case $B_{mm}\neq 0$. Therefore in any case we have no problem defining the following matrix $S=[S_{ij}]$ satisfying $(\ref{conditioneigenvaluescomplextoeplitz})$  with blocks given by 
\[
S_{ij} :=
\begin{cases}
I_{2n_i} & i = j,\\
-\frac{1}{2}\mathrm{ToeplitzExt}_{n_l,n_m}(B_{mm}^{\langle k \rangle,\mathrm{na}}) & i =l, j=m,\\
-\frac{1}{2}\mathrm{ToeplitzExt}_{n_m,n_l}(B_{ll}^{\langle k \rangle,\mathrm{na}}) & i = m, j= l,\\
0 & \text{others} .
\end{cases}
\]

We consider  the product
\begin{align}
S^{\boxstartwo}BS=:D=[D_{ij}].
\end{align}
We analyze the blocks $D_{ll},D_{mm},D_{lm}, D_{ml}$ and the same analysis as in Lemma $\ref{secondlemma}$, modified in the obvious ways give us that if we repeat this procedure several times we can suppose without loss of generality that the matrix 
$B$ is of the shape
 
\[
B=\left(\begin{array}{ccccccc} 
&&&&&&  \\ 
&*&&&&*& \\
&&0&\cdots&tI_{lm,2}^{[k]\pm}&& \\
&&\vdots&*&\vdots&& \\
&&-t(I_{lm,2}^{[k]\pm})^{\boxstartwo}&\cdots&0&& \\
&*&&&&*& \\
&&&&&&
\end{array} \right).
\]

 Now we eliminate the rest of elements in the $l$-th  and $m$-th rows and columns. Define, for $q\neq l,m$, the matrices
 
\begin{align*}
    B_{mq}^{\langle k \rangle,\mathrm{na}}&:=tI^{\pm}_{s_{(m,q;k)},2}B_{mq}^{\langle k \rangle},\\
    B_{lq}^{\langle k \rangle,\mathrm{na}} &:=tI^{\pm}_{s_{(l,q;k)},2}B_{lq}^{\langle k \rangle}.         
\end{align*}
 Then define the corresponding matrices 
  \begin{align*}
    \matrices{2n_l}{2n_q} \ni S_{lq}&:=\mathrm{ToeplitzExt}_{n_l,n_q}(B_{mq}^{\langle k \rangle,\mathrm{na}}) \\
    \matrices{2n_m}{2n_q} \ni S_{mq}&:=\mathrm{ToeplitzExt}_{n_m,n_q}(B_{lq}^{\langle k \rangle,\mathrm{na}})
\end{align*}

From our initial conditions $B_{lm}\neq 0$, this means from condition $(\ref{conditioneigenvaluescomplexhankel})$ (and with the notation used in such condition for the eigenvalues)  we have $(a_l,b_l)=(-a_m,b_m)$. If $B_{lq}\neq 0$, $q\neq m,l$, we have $(a_l,b_l)=(-a_q,b_q)$. Combining both of these equations we get $(a_m,b_m)=(a_q,b_q)$. Similarly, if $B_{mq}\neq0$, then $
(a_l,b_l)=(a_q,b_q)$. So we can define the following matrix $S=[S_{ij}]$ satisfying condition (\ref{conditioneigenvaluescomplextoeplitz}) by
 
\[
S_{ij} :=
\begin{cases}
I_{2n_i} & i = j,\\
-\mathrm{ToeplitzExt}_{n_l,n_j}(B_{mj}^{\langle k \rangle,\mathrm{na}}) & i =l, j\neq l,m,\\
-\mathrm{ToeplitzExt}_{n_m,n_j}(B_{lj}^{\langle k \rangle,\mathrm{na}}) & i = m, j\neq m,l ,\\
0 & \text{others} .
\end{cases}
\] 
Finally consider the product
\[
S^{\boxstartwo}BS,
\]
which gives our desired result. 
\end{proof}
\begin{coro}\label{secondlemmacomplexcasecorollary}
Let $B=[B_{ij}] \in \dot{\mathcal{H}}_{\mathcal{J}_{N_{\mathbb{C}}}}^{K}$. Suppose that for $B_{lm}$, $l\neq m$, $n_l=n_m$ , we have that $b_{lm} \neq 0$.  Also, suppose that $b_{ll}=0$ and $b_{mm}= 0$. Then there exists $S=[S_{ij}] \in \dot{\mathcal{T}}_{\mathcal{J}_{N_{\mathbb{C}}}}$ and $t\in\{1,-1\}$  such that 

  \[
S^{\boxstartwo}BS=
\left(\begin{array}{ccccccccccc}
&&&0&&&&0&&& \\
&*&&\vdots&&*&&\vdots&&*&  \\ 
&&&0&&&&0&&&  \\ 
0&\cdots&0&0&0&\cdots&0&tI_{n_l,2}^{\pm}&0&\cdots&0 \\
&&&0&&&&0&&& \\
&*&&\vdots&&*&&\vdots&&*& \\

&&&0&&&&0&&& \\
0&\cdots&0&-t(I_{n_l,2}^{\pm})^{\boxstartwo}&0&\cdots&0&0&0&\cdots&0 \\
&&&0&&&&0&&& \\
&*&&\vdots&&*&&\vdots&&*& \\
&&&0&&&&0&&&

\end{array} \right).
\]          
The remaining part of the matrix, represented by the $*$, is again $2$-$N_{\mathbb{C}}$-upper alternating Toeplitz and $2$-$N_{\mathbb{C}}$-skew-symmetric. 
 \end{coro}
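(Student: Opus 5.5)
The plan is to derive Corollary \ref{secondlemmacomplexcasecorollary} from Lemma \ref{secondlemmacomplex} with $k=1$, in the same way Corollary \ref{corollarytosecondlemma} follows from Lemma \ref{secondlemma} in the real case. We have $b_{lm}=b_{lm}^{(1)}\neq 0$, so $k=1$ is the first nonzero diagonal of $B_{lm}$. The condition ``$b_{lm}^{(q)}=0$ for $q<k$'' is then vacuous. The hypothesis $b_{ll}^{(q)}=b_{mm}^{(q)}=0$ for $q\leq 1$ is exactly $b_{ll}=b_{mm}=0$. The extra hypothesis of the complex lemma, about blocks $B_{zl},B_{lz},B_{zm},B_{mz}$ with $n_z=n_l$ vanishing on diagonals $q<1$, is also vacuous. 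So Lemma \ref{secondlemmacomplex} applies and gives $S\in\dot{\mathcal{T}}_{\mathcal{J}_{N_{\mathbb{C}}}}$ and $t\in\{1,-1\}$ with $D=S^{\boxstartwo}BS$.

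Next I read off the four conclusions of the lemma for $k=1$.
\begin{enumerate}
\item Since $n_l=n_m$, we have $s_{(l,m;1)}=n_l$, so $I_{lm,2}^{[1]\pm}=I_{n_l,2}^{\pm}$ and $D_{lm}=tI_{n_l,2}^{\pm}$.
\item By $2$-$N$-skew-symmetry, $D_{ml}=-t(I_{n_l,2}^{\pm})^{\boxstartwo}$.
\item $D_{ll}=D_{mm}=0$.
\item Conclusion (3) kills every block $D_{il},D_{li},D_{im},D_{mi}$ with $n_i=n_l$, $i\neq l,m$.
\item For $n_i\neq n_l$, conclusion (4) says $d^{(u)}=0$ for all $u\geq 1$, i.e.\ all diagonals of the block vanish. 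Since a $2$-upper Toeplitz block is determined by its diagonals $u=1,\dots,\min(n_i,n_l)$, these blocks are zero too.
\end{enumerate}
Hence the $l$-th and $m$-th block rows and columns have exactly the displayed form.

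Finally I check that the complementary part $*$ is again $2$-$N_{\mathbb{C}}$-upper alternating Toeplitz and $2$-$N_{\mathbb{C}}$-skew-symmetric. The matrix $D=S^{\boxstartwo}BS$ stays in $\dot{\mathcal{H}}_{\mathcal{J}_{N_{\mathbb{C}}}}^{K}$. The reason is that congruence by $\dot{\mathcal T}$ corresponds, through the bijection of Lemma \ref{equivalentmatrixproblemcomplex}, to ordinary congruence of $\mathcal{H}_{\mathcal{J}_{N_{\mathbb{C}}}}^{K}$ by $\mathcal{T}_{\mathcal{J}_{N_{\mathbb{C}}}}$. That ordinary congruence preserves the equation $B\mathcal{J}+\mathcal{J}^tB=0$, skew-symmetry and rank. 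Deleting the block rows and columns $l,m$ then leaves a principal block submatrix. Both properties are blockwise conditions, so they pass to principal block submatrices.

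The only step needing care is the vanishing of the blocks with $n_i\neq n_l$. That is where one must confirm that ``all diagonals $u\geq 1$ vanish'' means the entire rectangular block is zero. In the real case this is the statement that $C^{<1}_{lm}=0$, and the argument here is identical.
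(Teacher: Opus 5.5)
Your proposal is correct and is essentially the paper's argument: the paper proves this corollary simply by saying it follows from Lemma \ref{secondlemmacomplex}. You do the same, specializing to $k=1$ (where the extra hypotheses are vacuous or reduce to $b_{ll}=b_{mm}=0$), reading off the block structure, and also spelling out why the complementary part stays $2$-$N_{\mathbb{C}}$-upper alternating Toeplitz and $2$-$N_{\mathbb{C}}$-skew-symmetric.
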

 \begin{proof}
Follows easily from Lemma \ref{secondlemmacomplex}
\end{proof}
The previous Lemmas give an idea of the shape of the quotient space in the right hand side of (\ref{bijectionquotientspacecomplex}).

Next we answer the question : what Jordan normal forms $\mathcal{J}_{N_{\mathbb{C}}}$ admit solutions of a given rank $R$ for equation (\ref{transposepermutationequation})?   In light of the previous results it will be convenient to separate the Jordan normal form in two parts the blocks with zero real part eigenvalues $\mathcal J_{\mathbb C}^{0}$ and the blocks with nonzero real part $\mathcal J_{\mathbb C}^{\neq 0}$:

\begin{equation}\label{jordannormalforminspecialordercomplex}   
\mathcal J_{N_{\mathbb C}}
=
\mathcal J_{\mathbb C}^{0}
\oplus
\mathcal J_{\mathbb C}^{\neq 0}.
\end{equation}
Here the terms are defined as follows
\[
\mathcal J_{\mathbb C}^{0}
:=
\bigoplus_{\rho=1}^{q}\mathfrak J_{\rho}^{0}
\]
\[
\mathfrak J_{\rho}^{0}
:=
\bigoplus_{r=1}^{k_{\rho}^{0}}
\mathcal C_{n_{\rho,r}^{0}}(0,b_{\rho}),
\qquad
n_{\rho,1}^{0}
\geq
n_{\rho,2}^{0}
\geq
\cdots
\geq
n_{\rho,k_{\rho}^{0}}^{0}.
\]
\[
\mathcal J_{\mathbb C}^{\neq 0}
=
\bigoplus_{\mu=1}^{s}\mathfrak J_{\mu}^{\neq 0}
\]
\[
\mathfrak J_{\mu}^{\neq 0}
:=
\mathfrak J_{\mu}^{+}
\oplus
\mathfrak J_{\mu}^{-}.
\]
\[
\mathfrak J_{\mu}^{+}
:=
\bigoplus_{r=1}^{k_{\mu}^{+}}
\mathcal C_{n_{\mu,r}^{+}}(a_{\mu},b_{\mu}),
\qquad
n_{\mu,1}^{+}
\geq
n_{\mu,2}^{+}
\geq
\cdots
\geq
n_{\mu,k_{\mu}^{+}}^{+}
\]
\[
\mathfrak J_{\mu}^{-}
:=
\bigoplus_{r=1}^{k_{\mu}^{-}}
\mathcal C_{n_{\mu,r}^{-}}(-a_{\mu},b_{\mu}),
\qquad
n_{\mu,1}^{-}
\geq
n_{\mu,2}^{-}
\geq
\cdots
\geq
n_{\mu,k_{\mu}^{-}}^{-}.
\]
Then we define the following partial sizes 

\[
N_{\rho}^{0}
:=2
\sum_{r=1}^{k_{\rho}^{0}}n_{\rho,r}^{0}, \qquad    N_{\mu}^{+}
:=2
\sum_{r=1}^{k_{\mu}^{+}}n_{\mu,r}^{+},
\qquad
N_{\mu}^{-}
:=
2\sum_{r=1}^{k_{\mu}^{-}}n_{\mu,r}^{-}.
\]
Finally, the total sizes 
\[
N_{\mathbb C}^{0}
=
\sum_{\rho=1}^{q}N_{\rho}^{0},
\qquad
N_{\mathbb C}^{\neq 0}
=
\sum_{\mu=1}^{s}(N_{\mu}^{+}+N_{\mu}^{-}).
\]

 Of course we have $ N_{\mathbb C} = N_{\mathbb C}^{0} + N_{\mathbb C}^{\neq 0}$. For the eigenvalue $(a_{\mu},b_{\mu})$, $a_{\mu},b_{\mu}>0$, define the following vectors in  $\mathbb{R}^{\max(k_{\mu}^+,k_{\mu}^-)}$ as

 \begin{align}  
  \vec{L}_{(a_{\mu},b_{\mu})}
&=(n^+_{\mu,1},\ldots,n^+_{\mu,k_{\mu}^+},0,\ldots,0),\\
\vec{L}_{(-a_\mu,b_\mu)}
&=(n^-_{\mu,1},\ldots,n^-_{\mu,k_{\mu}^-},0,\ldots,0).
\end{align}
The $0$ are added as needed to complete the size of the vectors. As before $d_M$ denotes the Manhattan distance.

\begin{equation} \label{maximalrankRcomplexpartial}
      \mathcal{R}_{\mathfrak J_{\mu}^{\neq 0}}:=N_{\mu}^{+}+N_{\mu}^{-}- 2 d_M(\vec{L}_{(a_{\mu},b_{\mu})}, \vec{L}_{(-a_{\mu},b_{\mu})}).
    \end{equation}
Then we have the sum of these terms for the different eigenvalues and we define
 \begin{equation} \label{maximalrankRcomplex}
      \mathcal{R}_{\mathcal J_{\mathbb C}^{\neq 0}}:=N_{\mathbb C}^{\neq 0}- 2 \sum_{\mu=1}^{s} d_M(\vec{L}_{(a_{\mu},b_{\mu})}, \vec{L}_{(-a_{\mu},b_{\mu})}).
    \end{equation}

These two last equations are these analogues of \eqref{maximalranksrealcasepartial} and \eqref{maximalrankreal}.   Notice that there is no term associated with $\mathfrak J_{\rho}^{0}$, that is with the blocks with eigenvalues with real part zero. This is a consequence of the fact that the blocks associated with such eigenvalues can always have a nonzero diagonal (see \eqref{diagonalelemtsincomplexcase}) and this is not true for the real case.  Also notice that the term associated with eigenvalues having nonzero real part is
similar to that in the real case, but it contains a factor of $2$. This comes from the fact that we are considering $2$-upper Toeplitz matrices and not only upper Toeplitz matrices. These two differences will be discussed with more detail in the proofs of the following Lemma and proposition.  
Finally we define the following sets, analogous to the sets \eqref{possibleranksrealsets}, 
\begin{equation}\label{possiblerankscomplexsets}
    \begin{aligned}
    R_{\mathfrak J_{\rho}^{0}}&:=\left\{ r \mid r\in 2\mathbb{Z}, 0\leq r \leq N^0_{\rho}\right\}=\left\{ 0,2,\ldots, N^0_{\rho}\right\}, \\
    R_{\mathcal J_{\mathbb C}^{0}}&:=\left\{ r \mid r\in 2\mathbb{Z}, 0\leq r \leq N_{\mathbb C}^{0}\right\}=\left\{ 0,2,\ldots, N_{\mathbb C}^{0}\right\}, \\    
    R_{\mathfrak J_{\mu}^{\neq 0}}&:=\left\{ r \mid r\in 4\mathbb{Z}, 0\leq r \leq  \mathcal{R}_{\mathfrak J_{\mu}^{\neq 0}}\right\}=\left\{ 0,4,8,\ldots, \mathcal{R}_{\mathfrak J_{\mu}^{\neq 0}}\right\}, \\
    R_{\mathcal J_{\mathbb C}^{\neq 0}}&:=\left\{ r \mid r\in 4\mathbb{Z}, 0\leq r \leq  \mathcal{R}_{\mathcal J_{\mathbb C}^{\neq 0}}\right\}=\left\{ 0,4,8,\ldots, \mathcal{R}_{\mathcal J_{\mathbb C}^{\neq 0}}\right\}, \\
    R_{\mathcal J_{N_{\mathbb C}}}&:=R_{\mathcal J_{\mathbb C}^{\neq 0}}+R_{\mathcal J_{\mathbb C}^{0}}.
    \end{aligned} 
    \end{equation}

 We prove the equivalent of Lemma \ref{simpleformofsolutions}.  
\begin{lemma}\label{simpleformofsolutionscomplexcase}
    If $A=[A_{ij}] \in \dot{\mathcal{H}}_{\mathcal{J}_{N_{\mathbb{C}}}}^K $ , then there is also $B=[B_{ij}] \in \dot{\mathcal{H}}_{\mathcal{J}_{N_{\mathbb{C}}}}^K $ with the following properties
    \begin{enumerate}
        \item For each $i$ there exists at most one $j$ such that $B_{ij}\neq 0$, but for all other $l\neq j$, $B_{il}= 0$.
        \item For each $B_{ij}\neq 0$, $i\leq j$, we have 
        \[
    B_{ij}
    =
    \begin{cases}
    I_{ij,2}^{[k]\pm},&i\neq j,\\
    W_{ii}^{[k]\pm},&i=j.  
    \end{cases}, \quad B_{ji}=-\left(B_{ij}\right)^{\boxstartwo} .
    \]
        
       for some appropriate $k$.
    \end{enumerate}
    \end{lemma}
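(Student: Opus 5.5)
I will mirror the proof of Lemma \ref{simpleformofsolutions}: reduce $A$ by congruences, then discard the normalizing signs. First delete all zero scalar rows of $A$ and the corresponding scalar columns. Because $A$ is $2$-$N_{\mathbb{C}}$-upper alternating Toeplitz and every nonzero $2\times2$ entry $a_{ij}^{(t)}$ satisfying (\ref{8}) is invertible, zero scalar rows come in pairs forming a zero block row of a $2\times 2$ block. Hence the deletion yields a matrix $B\in\dot{\mathcal H}^{K}_{\mathcal J'}$. Here $\mathcal J'$ is obtained from $\mathcal J_{N_{\mathbb C}}$ by shrinking some complex Jordan blocks $\mathcal C_{n_i}(a_i,b_i)$, and $\operatorname{rank}B=K$.

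If $B=0$ there is nothing to do. Otherwise I reduce inductively.
\begin{enumerate}
\item If some diagonal block has $b_{ll}\neq0$, apply Corollary \ref{firstcorollarycomplexcase}. This isolates block row and column $l$ with diagonal block $tW_{n_l}^{\pm}$.
\item Otherwise every $b_{ii}=0$. Pick a block row $l$ of maximal size. Since $B$ has no zero scalar rows, its last scalar rows must be nonzero. By the 2-upper Toeplitz shape, only a square off-diagonal block $B_{lm}$ with $n_m=n_l$ and $b_{lm}\neq0$ can contribute entries there. Then Corollary \ref{secondlemmacomplexcasecorollary} applies and isolates the pair $l,m$ with blocks $\pm tI_{n_l,2}^{\pm}$.
\end{enumerate}

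In both cases the remaining $*$ part is again $2$-$N_{\mathbb{C}}$-skew-symmetric and $2$-$N_{\mathbb{C}}$-upper alternating Toeplitz, with the eigenvalue conditions (\ref{conditioneigenvaluescomplexhankel}) and (\ref{8}). The congruence matrices, completed by identity blocks on the isolated part, lie in $\dot{\mathcal T}$ and do not disturb earlier isolated blocks. I then delete the new zero scalar rows and columns of the complement and repeat. Each step removes at least one block row, so the process terminates.

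Finally I reinsert all deleted zero rows and columns in their original positions. A reduced block $tW^{\pm}_{m}$ sitting inside a block of original size $n_i$ becomes $tW_{ii}^{[k]\pm}$ with $k=n_i-m+1$. Likewise $tI^{\pm}_{m,2}$ becomes $tI^{[k]\pm}_{ij,2}$, because the deleted rows are the top ones and the deleted columns are the left ones. The rank is still $K$, and the result is in $\dot{\mathcal H}^K_{\mathcal J_{N_{\mathbb C}}}$. Since only existence is claimed, I replace each $tW_{ii}^{[k]\pm}$ by $W_{ii}^{[k]\pm}$ and each $tI_{ij,2}^{[k]\pm}$ ($i<j$) by $I_{ij,2}^{[k]\pm}$, redefining $B_{ji}=-(B_{ij})^{\boxstartwo}$. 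This keeps $2$-$N$-skew-symmetry, because $(W^{\pm}_{n_i})^{\boxstartwo}=-W^{\pm}_{n_i}$ and $-\,\cdot$ is linear. It also keeps the rank, since each block is invertible up to the zero padding.

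The main obstacle is the legitimacy of the deletion and reinsertion step. Deleting the top scalar rows and left columns of a $2$-upper Toeplitz block must leave a $2$-upper Toeplitz block of the truncated Jordan structure. The reduced congruences must also stay compatible with (\ref{8}) and the $W$-commutation (\ref{conmmuteswith2altidentity}) after reinsertion. I would check this by noting that the lower-right corner of a 2-upper Toeplitz block is again 2-upper Toeplitz, exactly as in (\ref{blockappearstwice}). For the diagonal case I would check that $W^{\pm}$ of size $m$ padded into size $n_i$ coincides with $W_{ii}^{[k]\pm}$ up to the parity convention in (\ref{thematrixthatcommutescomplexcase}). This works because the parity depends only on $s_{(i,i;k)}=m$.
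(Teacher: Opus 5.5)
Your proposal is correct and is essentially the paper's proof: you delete zero scalar rows and columns in pairs (because nonzero $2\times 2$ coefficients satisfying (\ref{8}) are invertible), isolate block rows via Corollaries \ref{firstcorollarycomplexcase} and \ref{secondlemmacomplexcasecorollary}, iterate on the complement, reinsert the deleted rows and columns, and drop the signs $t$. One correction to your justification of the reinsertion step: in the Toeplitz picture of $\dot{\mathcal H}$ the zero scalar rows of a block row are the bottom ones (the zero columns are the left ones), so the part that survives is the upper-right corner, and this is why the reinserted blocks have the shapes $tI^{[k]\pm}_{ij,2}$ and $tW^{[k]\pm}_{ii}$; deleting top rows and keeping the lower-right corner would not produce these shapes.
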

  
\begin{proof}
The proof is analogous to that of Lemma
\ref{simpleformofsolutions}, using Corollaries
\ref{firstcorollarycomplexcase} and
\ref{secondlemmacomplexcasecorollary}. The only differences are that the scalar rows and columns are
considered in pairs corresponding to the $2\times2$ block
structure, and that every nonzero $2\times2$ coefficient satisfying
\eqref{8} is nonsingular. Hence, zero rows and the corresponding
columns can be deleted and reinserted in pairs.

After applying the same reduction procedure as in the real case,
every nonzero block is of the form
$\varepsilon I_{ij,2}^{[k]\pm}$ or
$\varepsilon W_{ii}^{[k]\pm}$, where
$\varepsilon\in\{-1,1\}$. Since only the existence of a matrix of the same rank is required, we may remove these signs, making the corresponding replacement in the opposite block when $i<j$. The resulting matrix has the required form and the same rank as the original one.
\end{proof}

Next we have the equivalent of Lemma \ref{ranksforzeroeigenvaluesreal}. In the complex case we study a block $\mathfrak J_{\rho}^{0}$. 
\begin{lemma} \label{ranksforzeroeigenvaluescomplex}
We have that 
\[
\dot{\mathcal{H}}_{\mathfrak J_{\rho}^{0}}^{R}\neq \emptyset
\]
if and only if $R \in  R_{\mathfrak J_{\rho}^{0}} $. 
\end{lemma}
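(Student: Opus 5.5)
We follow the structure of the proof of Lemma \ref{ranksforzeroeigenvaluesreal}, in three steps: existence of a matrix of maximal rank $N^0_\rho$, the sliding-up argument for the smaller ranks, and the upper bound. The first and third steps become easier in the complex setting.

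\textbf{Step 1: a matrix of rank $N^0_\rho$.} We construct $A=[A_{ii}]$ block diagonal in the simple form of Lemma \ref{simpleformofsolutionscomplexcase}, setting $A_{ii}:=W_{ii}^{[1]\pm}=W_{n_i}^{\pm}$ for every Jordan block $\mathcal C_{n_{\rho,i}^{0}}(0,b_\rho)$ of $\mathfrak J^0_\rho$. We must check that $A\in\dot{\mathcal H}_{\mathfrak J_\rho^0}^{N^0_\rho}$:
\begin{itemize}
\item Condition \eqref{conditioneigenvaluescomplexhankel} holds since $(-0,b_\rho)=(0,b_\rho)$.
\item Each $2\times 2$ coefficient of $W_{n_i}^{\pm}$ is either $\pm I_2$ or $\pm J_2$, so \eqref{8} holds.
\item $W_{n_i}^{\pm}$ is $2$-upper alternating Toeplitz by construction.
\item $A$ is $2$-$N$-skew-symmetric, because $(W_{n_i}^{\pm})^{\boxstartwo}=-W_{n_i}^{\pm}$, which was verified in the proof of \eqref{conmmuteswith2altidentity}.
\end{itemize}
Since $W_{n_i}^{\pm}$ is invertible, $A$ has full rank $N^0_\rho=2\sum_r n^0_{\rho,r}$. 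This is where the real case differs: by \eqref{diagonalelemtsincomplexcase}, diagonal blocks of odd size may carry the nonzero coefficient $J_2$ on the main diagonal, so no parity obstruction arises.

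\textbf{Step 2: the smaller ranks.} We slide up, as in the real case, replacing a nonzero block $W_{ii}^{[k]\pm}$ by $W_{ii}^{[k+1]\pm}$. The block $W_{s}^{\pm}$ with $s=s_{(i,i;k+1)}$ is again admissible. Its leading coefficients alternate between $I_2$-type and $J_2$-type according to the parity of $s$, which is exactly what \eqref{diagonalelemtsincomplexcase} requires of a diagonal. Each slide removes one $2\times2$ scalar row pair and the corresponding columns, and all other blocks are untouched, so the rank drops by exactly $2$.

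The main point to verify carefully is that $W_{ii}^{[k]\pm}$ really lies in the solution set after the shift. Concretely, that means checking that $(W_{ii}^{[k]\pm})^{\boxstartwo}=-W_{ii}^{[k]\pm}$ holds for every $k$, and not only for $k=1$. I would do this by a direct computation, following the one for \eqref{conjugationofE}: conjugating by $\mathcal P_{n_i,2}$ reverses the block order and transposes each $2\times 2$ coefficient. Here $I_2^t=I_2$ while $J_2^t=-J_2$, and this sign combines with the alternating signs to give the required minus. Iterating the slide from rank $N^0_\rho$ reaches every even $r$ with $0\le r\le N^0_\rho$.

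\textbf{Step 3: the upper bound.} No rank above $N^0_\rho$ is possible because $N^0_\rho$ is the size of the matrix. Odd ranks are excluded because $\mathcal P_{N_{\mathbb C},2}A$ is skew-symmetric in the usual sense, and skew-symmetric matrices have even rank. This completes the proof.
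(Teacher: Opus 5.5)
Your proposal is correct and follows the paper's proof essentially step for step: full rank from the block-diagonal matrix with blocks $W_{n_i}^{\pm}$, all smaller even ranks by sliding $W_{ii}^{[k]\pm}\to W_{ii}^{[k+1]\pm}$ (each slide lowers the rank by $2$), and the bound from the matrix size together with skew-symmetry of $\mathcal{P}_{N_{\mathbb C},2}A$. Your explicit check that $(W_{ii}^{[k]\pm})^{\boxstartwo}=-W_{ii}^{[k]\pm}$ for every $k$, using $J_2^t=-J_2$ against the alternating signs, supplies a verification the paper only asserts is easy.
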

\begin{proof}
First we show that
     \[
     \dot{\mathcal{H}}_{\mathfrak J_{\rho}^{0}}^{N^0_{\rho}}\neq \emptyset.
     \]
    For simplicity, throughout this proof we write
\[
n_i:=n_{\rho,i}^{0},
\qquad
1\leq i\leq k_\rho^{0}.
\]
   The proof is similar to that of Lemma \ref{ranksforzeroeigenvaluesreal}. 
    We will construct a block matrix $B=[B_{ij}]_{1\leq i,j\leq k_{\rho}^{0}} $  
   in this set. We use the notation introduced at the beginning of this section . We will construct it with the properties described in Lemma $\ref{simpleformofsolutionscomplexcase}$, so that we only define one nonzero block in each row and such nonzero blocks will only have one nonzero diagonal.  For all $i$ just define $B_{ii}=W_{n_i}^{\pm}$ (see \eqref{thematrixthatcommutescomplexcase}). The resulting matrix is block diagonal and all blocks are nonsingular, so it has rank  $N^0_{\rho}$. Also is easy to see that indeed $B \in \dot{\mathcal{H}}_{\mathfrak J_{\rho}^{0}}^{N^0_{\rho}}$.
    
 Next we construct matrices with all the smaller ranks in  $R_{\mathfrak J_{\rho}^{0}}$. Again, as in  the proof of Lemma  \ref{ranksforzeroeigenvaluesreal} the idea is to \say{slide} up a nonzero block of a matrix of a given rank to obtain a new one with smaller rank. Let $B$ be the diagonal matrix constructed before, for some $i$ make the following substitution.

\[
W_{n_i}^{\pm}=W_{n_i}^{[1]\pm} \to W_{n_i}^{[2]\pm}
\]
Call the resulting matrix $C^{\prime}$.  We can check easily that $ C^{\prime} \in \dot{\mathcal{H}}_{\mathfrak J_{\rho}^{0}}^{N^0_{\rho}-2} $. We can repeat the process in the obvious way to obtain all the even ranks in $R_{\mathfrak J_{\rho}^{0}}$  by doing the substitutions:
\[
W_{n_i}^{\pm}=W_{n_i}^{[k]\pm} \to W_{n_i}^{[k+1]\pm}.
\]
The odd ranks are immediately forbidden from the skew-symmetry of the matrix. As the maximal possible rank stated in the statement of this Lemma is the full rank of the matrix of such size, there is nothing else to prove. (contrary to the real case where we needed to prove that certain bigger ranks were not possible).
 \end{proof}
 Next we have the equivalent of Lemma \ref{ranksfornonzeroeigenvaluesreal}.
\begin{lemma}\label{ranksfornonzeroeigenvaluescomplex}
    We have that 

    \[
   \dot{\mathcal{H}}_{{\mathfrak J_{\mu}^{\neq 0}}}^{R}  \neq \emptyset
    \]
    if and only if $R \in R_{\mathfrak J_{\mu}^{\neq 0}}$.
\end{lemma}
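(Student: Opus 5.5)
I will follow the proof of Lemma \ref{ranksfornonzeroeigenvaluesreal} step by step. The only changes come from the $2\times2$ coefficients satisfying \eqref{8}. Enumerate the blocks of $\mathfrak J_{\mu}^{\neq 0}$ consecutively, as in the real case. The indices $1,\ldots,k_\mu^+$ correspond to $\mathcal C_{n^+_{\mu,i}}(a_\mu,b_\mu)$ and the indices $k_\mu^++1,\ldots,k_\mu^++k_\mu^-$ correspond to $\mathcal C_{n^-_{\mu,i}}(-a_\mu,b_\mu)$.

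Since $a_\mu\neq0$, condition \eqref{conditioneigenvaluescomplexhankel} forces every diagonal block $B_{ii}$ to vanish. It also forces every block between two indices on the same side to vanish. Hence only blocks $B_{ij}$ with $i$ in the $+$ part and $j$ in the $-$ part, together with their $\boxstartwo$-partners, can be nonzero.

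\textbf{Construction of maximal rank.} For $1\leq i\leq\min(k_\mu^+,k_\mu^-)$ set $B_{i\,k_\mu^++i}=I^{[1]\pm}_{i\,k_\mu^++i,2}$ and define the opposite block by 2-$N$-skew-symmetry. Set all other blocks to zero. One checks directly that $I^{[1]\pm}_{lm,2}$ is 2-upper alternating Toeplitz and that its nonzero coefficients $\pm I_2$ satisfy \eqref{8}, so $B\in\dot{\mathcal H}$.

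Each pairing of sizes $n,n'$ contributes rank $2\cdot 2\min(n,n')=2(n+n')-2\abs{n-n'}$ in scalar terms, because each nonzero coefficient is an invertible $2\times2$ block and the rows and columns of the pair both count. An unpaired block contributes $0$. Summing gives $N_\mu^++N_\mu^--2d_M(\vec L_{(a_\mu,b_\mu)},\vec L_{(-a_\mu,b_\mu)})=\mathcal R_{\mathfrak J_\mu^{\neq0}}$.

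\textbf{Smaller ranks.} I slide a paired block, $I^{[k]\pm}_{ij,2}\to I^{[k+1]\pm}_{ij,2}$, and make the matching change in the opposite block. This creates one new zero $2$-row in block row $i$ and one in block row $j$, so the rank drops by exactly $4$. Repeating the slide reaches every element of $\{0,4,\ldots,\mathcal R_{\mathfrak J_\mu^{\neq0}}\}$.

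\textbf{Every rank lies in $R_{\mathfrak J_\mu^{\neq0}}$.} Take $B\in\dot{\mathcal H}^{S}_{\mathfrak J_\mu^{\neq0}}$ and, by Lemma \ref{simpleformofsolutionscomplexcase}, assume it has the simple form. Every nonzero block is then $I^{[k]\pm}_{ij,2}$ with $i$ in the $+$ part and $j$ in the $-$ part, and each block row carries at most one such block. Such a pair contributes rank $4(\min(n_i,n_j)-k+1)$, which is a multiple of $4$. Summing over pairs shows $S\equiv0 \pmod 4$; this is exactly why the real-case set $2\mathbb Z$ is replaced by $4\mathbb Z$.

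Replacing every $k$ by $1$ does not decrease the rank. The rank is then $N^+_\mu+N^-_\mu-2d_M(\vec L_{(a_\mu,b_\mu)},P\vec L_{(-a_\mu,b_\mu)})$ for some permutation $P$ encoding the matching. By the rearrangement fact already used in Lemma \ref{ranksfornonzeroeigenvaluesreal}, $d_M$ is minimized when both vectors are sorted decreasingly, that is, at $P=I$. Therefore $S\leq\mathcal R_{\mathfrak J_\mu^{\neq0}}$.

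\textbf{Main obstacle.} The delicate step is justifying the reduction to the simple form with only cross blocks, so that no cleverer non-simple solution can exceed the bound or have rank $\equiv2\pmod 4$. This rests on Lemma \ref{simpleformofsolutionscomplexcase}, whose reduction preserves rank. I would also check explicitly that the $\mathrm{mod}\ 4$ count survives: a block pairing contributes $2\times$ its size on each of two sides, with each scalar row coming with its $2\times2$ partner.
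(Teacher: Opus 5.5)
Your proposal is correct and follows the paper's proof essentially step for step. You use the same ingredients: pairing the blocks in order for the maximal rank, sliding $I^{[k]\pm}_{ij,2}\to I^{[k+1]\pm}_{ij,2}$ to lower the rank by $4$, the simple form of Lemma \ref{simpleformofsolutionscomplexcase} for divisibility by $4$, and the matching/rearrangement argument of the real case for the upper bound (which the paper only cites). One small nitpick: for a partial matching that leaves blocks unmatched on both sides, the rank is at most $N_\mu^++N_\mu^--2d_M(\vec L_{(a_\mu,b_\mu)},P\vec L_{(-a_\mu,b_\mu)})$ rather than equal to it, but an upper bound is all your argument needs.
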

\begin{proof}
    The proof is practically the same as Lemma \ref{ranksfornonzeroeigenvaluesreal} with some small modifications. For simplicity, throughout this proof we enumerate the Jordan blocks in
$\mathfrak J_\mu^{\neq0}$ consecutively and write
\[
n_i
:=
\begin{cases}
n_{\mu,i}^{+},
&1\leq i\leq k_\mu^{+},\\[1mm]
n_{\mu,i-k_\mu^{+}}^{-},
&k_\mu^{+}<i\leq k_\mu^{+}+k_\mu^{-}.
\end{cases}
\]
    First we show that
     \[
\dot{\mathcal H}_{\mathfrak J_{\mu}^{\neq 0}}^{
\mathcal R_{\mathfrak J_{\mu}^{\neq 0}}
}
\neq\emptyset.
\]
    We will construct a block matrix $A=[A_{ij}]_{1\leq i,j\leq k_\mu^{+}+k_\mu^{-}} $ in this set.

    We suppose it has the shape of Lemma $\ref{simpleformofsolutionscomplexcase}$, so that we only define one nonzero block in each row and such nonzero blocks will only have one nonzero $2 \times 2$ block diagonal.  We \say{pair} blocks in order, that is for each $i$ such that  $1\leq i \leq \min(k_{\mu}^+,k_{\mu}^-) $ , make $A_{i\,k_\mu^++i}
:=
I_{i\,k_\mu^++i,2}^{[1]\pm}$ (and define the corresponding $A_{k_\mu^++i\,\,i}$ correspondingly). Notice each of such pairings introduces $2\abs{n_i-n_{k_{\mu}^+ +i}}$  zero rows and corresponding columns in the matrix $A$. All blocks not paired in the previous way make them zero. After summing all the ranks of these block matrices and the zero blocks, the resulting rank of the matrix $A$ is precisely $\mathcal R_{\mathfrak J_{\mu}^{\neq 0}}$. 
    
 Next we construct matrices with all the rest of smaller ranks in  $ R_{\mathfrak J_{\mu}^{\neq 0}}$. Again, as in the previous lemma, we \say{slide} up matrix of a given rank to obtain a new one with smaller rank. That is for a nonzero block we make the substitution. 
 \[
 I_{ij,2}^{[k]\pm} \to I_{ij,2}^{[k+1]\pm}.
 \]
As many times as needed. Notice that each substitution introduces 2 zero rows and 2 zero columns  and therefore  the rank is reduced by $4$ each time,  obtaining all the ranks in $ R_{\mathfrak J_{\mu}^{\neq 0}}$. Now if $B$ is any matrix in $\dot{\mathcal H}_{\mathfrak J_{\mu}^{\neq 0}}^{R}$ for some $R$, we can suppose it is given in the shape stated in $\ref{simpleformofsolutionscomplexcase}$. Notice that the rank of such a matrix is the sum of matrices of the shape 

\[
\begin{pmatrix}
0 & I_{ij,2}^{[k]\pm}\\[1mm]
-\left(I_{ij,2}^{[k]\pm}\right)^{\boxstartwo} & 0
\end{pmatrix}.
\]
Therefore the rank of such a matrix is multiple of $4$. 

At last we need to show that 
\[
     \dot{\mathcal H}_{\mathfrak J_{\mu}^{\neq 0}}^{R}= \emptyset,
     \]
     for all $R>\mathcal R_{\mathfrak J_{\mu}^{\neq 0}}$. The proof is practically the same as in Lemma $\ref{ranksfornonzeroeigenvaluesreal}$ so we omit it. We just need to remember that in the complex case the elements of the matrices considered are $2\times2$ matrices. 
\end{proof}
Next the main result of this section: the equivalent of Proposition \ref{sufficientneccesaryconditionsexistence}.
  
\begin{prop}\label{sufficientneccesaryconditionsexistencecomplexcase}
    We have that 
    \[
   \dot{\mathcal{H}}_{\mathcal J_{N_{\mathbb C}}}^{R}  \neq \emptyset
    \]
     if and only if $R\in R_{\mathcal J_{N_{\mathbb C}}}$.  
  
\end{prop}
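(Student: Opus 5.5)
The plan is to mirror the proof of Proposition \ref{sufficientneccesaryconditionsexistence}, using the decomposition \eqref{jordannormalforminspecialordercomplex}. The first step is a block-diagonality observation. By condition \eqref{conditioneigenvaluescomplexhankel}, a block $B_{ij}$ of a matrix in $\dot{\mathcal{H}}_{\mathcal J_{N_{\mathbb C}}}^{R}$ can be nonzero only when $(a_j,b_j)=(-a_i,b_i)$. So block rows attached to $\mathcal C_{n}(0,b_\rho)$ interact only with blocks of the same imaginary part $b_\rho$ and real part zero, that is, with blocks in $\mathfrak J_\rho^0$. Likewise, block rows attached to $\mathcal C_n(\pm a_\mu,b_\mu)$ interact only with blocks in $\mathfrak J_\mu^{\neq0}$.

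Ordering the Jordan form as in \eqref{jordannormalforminspecialordercomplex}, every $B\in\dot{\mathcal{H}}_{\mathcal J_{N_{\mathbb C}}}^{R}$ is therefore block diagonal:
\[
B=\bigoplus_{\rho=1}^{q}B_\rho^{0}\oplus\bigoplus_{\mu=1}^{s}B_\mu^{\neq0},
\]
where each summand lies in the corresponding set $\dot{\mathcal H}_{\mathfrak J_\rho^0}$ or $\dot{\mathcal H}_{\mathfrak J_\mu^{\neq0}}$ for some rank. The $2$-$N$-skew-symmetry, the $2$-upper alternating Toeplitz shape and property \eqref{8} are all blockwise conditions, so they restrict to each summand. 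Conversely, any direct sum of such summands satisfies all the defining conditions. The rank is additive over the direct sum.

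It follows that $\dot{\mathcal{H}}_{\mathcal J_{N_{\mathbb C}}}^{R}\neq\emptyset$ if and only if $R$ can be written as $R=\sum_\rho R_\rho+\sum_\mu R_\mu$ with $\dot{\mathcal H}_{\mathfrak J_\rho^0}^{R_\rho}\neq\emptyset$ and $\dot{\mathcal H}_{\mathfrak J_\mu^{\neq0}}^{R_\mu}\neq\emptyset$. By Lemma \ref{ranksforzeroeigenvaluescomplex} and Lemma \ref{ranksfornonzeroeigenvaluescomplex}, this says exactly that $R$ lies in the Minkowski sum
\[
\sum_{\rho}R_{\mathfrak J_\rho^0}+\sum_\mu R_{\mathfrak J_\mu^{\neq0}}.
\]

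The last step is to identify this Minkowski sum with $R_{\mathcal J_{N_{\mathbb C}}}$. An arithmetic progression $\{0,2,\ldots,M\}$ plus another $\{0,2,\ldots,M'\}$ gives $\{0,2,\ldots,M+M'\}$, and the same holds for step $4$. Hence
\[
\sum_\rho R_{\mathfrak J_\rho^0}=R_{\mathcal J_{\mathbb C}^0}.
\]
Here one uses $\sum_\rho N_\rho^0=N^0_{\mathbb C}$ and the fact that each $N_\rho^0$ is even. Similarly,
\[
\sum_\mu R_{\mathfrak J_\mu^{\neq0}}=R_{\mathcal J_{\mathbb C}^{\neq0}},
\]
using $\sum_\mu\mathcal R_{\mathfrak J_\mu^{\neq0}}=\mathcal R_{\mathcal J_{\mathbb C}^{\neq0}}$ and the fact that each $\mathcal R_{\mathfrak J_\mu^{\neq0}}$ is a multiple of $4$. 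Adding the two gives $R_{\mathcal J_{N_{\mathbb C}}}$ by definition \eqref{possiblerankscomplexsets}.

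The main point needing care is this multiple-of-$4$ check. I would verify that $N_\mu^{+}+N_\mu^{-}-2d_M$ is divisible by $4$ directly. Write $N_\mu^\pm=2\sum_r n^\pm_{\mu,r}$. Then
\[
\mathcal R_{\mathfrak J_\mu^{\neq0}}=2\sum_r\bigl(n^+_{\mu,r}+n^-_{\mu,r}-\abs{n^+_{\mu,r}-n^-_{\mu,r}}\bigr)=4\sum_r\min\bigl(n^+_{\mu,r},n^-_{\mu,r}\bigr),
\]
where missing entries are taken to be zero. This is indeed a multiple of $4$, so the progressions align and the Minkowski sum is as claimed. The case where some of the index sets are empty is handled trivially, since the corresponding rank set is then $\{0\}$.
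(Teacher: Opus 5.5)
Your proposal is correct and follows the paper's route. The paper's proof simply says to combine Lemmas \ref{ranksforzeroeigenvaluescomplex} and \ref{ranksfornonzeroeigenvaluescomplex} via the block-diagonal decomposition \eqref{jordannormalforminspecialordercomplex} and the resulting Minkowski sum of rank sets. Your explicit check that $\mathcal R_{\mathfrak J_\mu^{\neq0}}=4\sum_r\min(n^+_{\mu,r},n^-_{\mu,r})$ is a multiple of $4$ fills in a step the paper leaves implicit in its definition \eqref{possiblerankscomplexsets}.
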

\begin{proof}
    Combine the results of the previous two lemmas. 
\end{proof}

We describe the possible $\mathcal{J}_{N_{\mathbb{C}}}$ such that $\dot{\mathcal{H}}_{\mathcal{J}_{N_{\mathbb{C}}}}^K\neq \emptyset$ when $K$ is maximal.
\begin{coro}\label{maximaldimensioncasecomplex}
$\dot{\mathcal{H}}_{\mathcal{J}_{N_{\mathbb{C}}}}^{N_{\mathbb{C}}}\neq \emptyset$ if and only if   $N_{\mathcal{J}_{N_{\mathbb{C}}}}(m, (a,b))=N_{\mathcal{J}_{N_{\mathbb{C}}}}(m, (-a,b))$ for all  $(a,b),\; a,b>0,\; m\in \mathbb{N}^+$. 
    
\end{coro}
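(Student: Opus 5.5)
This is the complex analogue of Corollary \ref{maximaldimensioncaseevenreal}, and I would derive it directly from Proposition \ref{sufficientneccesaryconditionsexistencecomplexcase}. That proposition gives $\dot{\mathcal{H}}_{\mathcal{J}_{N_{\mathbb{C}}}}^{N_{\mathbb{C}}}\neq\emptyset$ if and only if $N_{\mathbb{C}}\in R_{\mathcal J_{N_{\mathbb C}}}=R_{\mathcal J_{\mathbb C}^{\neq 0}}+R_{\mathcal J_{\mathbb C}^{0}}$. The largest element of $R_{\mathcal J_{\mathbb C}^{0}}$ is $N_{\mathbb C}^0$. The largest element of $R_{\mathcal J_{\mathbb C}^{\neq 0}}$ is $\mathcal{R}_{\mathcal J_{\mathbb C}^{\neq 0}}$, which is a multiple of $4$ by the rank discussion in Lemma \ref{ranksfornonzeroeigenvaluescomplex}. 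Hence the largest element of the Minkowski sum is
\[
N_{\mathbb C}^{0}+N_{\mathbb C}^{\neq 0}-2\sum_{\mu=1}^{s} d_M\bigl(\vec{L}_{(a_{\mu},b_{\mu})},\vec{L}_{(-a_{\mu},b_{\mu})}\bigr)=N_{\mathbb C}-2\sum_{\mu=1}^{s} d_M\bigl(\vec{L}_{(a_{\mu},b_{\mu})},\vec{L}_{(-a_{\mu},b_{\mu})}\bigr).
\]
So $N_{\mathbb C}\in R_{\mathcal J_{N_{\mathbb C}}}$ if and only if every $d_M(\vec{L}_{(a_{\mu},b_{\mu})},\vec{L}_{(-a_{\mu},b_{\mu})})$ vanishes, since each term is nonnegative.

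Next I would translate the vanishing of $d_M$ into the multiplicity condition. The vectors $\vec L$ list the block sizes in nonincreasing order, padded with zeros to a common length. Two such sorted vectors coincide exactly when the multisets of block sizes coincide. That is, for every $m\in\mathbb N^+$ the number of blocks of size $m$ with eigenvalue $(a_\mu,b_\mu)$ equals the number with eigenvalue $(-a_\mu,b_\mu)$, which is $N_{\mathcal{J}_{N_{\mathbb{C}}}}(m,(a,b))=N_{\mathcal{J}_{N_{\mathbb{C}}}}(m,(-a,b))$.

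This condition must also hold for eigenvalues $(a,b)$ with $a>0$ whose partner $(-a,b)$ does not occur in $\mathcal J_{N_{\mathbb C}}$. In the decomposition \eqref{jordannormalforminspecialordercomplex} such an eigenvalue forms a $\mathfrak J_\mu^{\neq0}$ with $k_\mu^-=0$. Then $\vec L_{(-a_\mu,b_\mu)}=0$, $d_M>0$, and correspondingly $N(m,(-a,b))=0\neq N(m,(a,b))$, so the equivalence still holds in this case.

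Eigenvalues with zero real part impose no condition, since $R_{\mathfrak J_\rho^0}$ always reaches full size $N^0_\rho$ by Lemma \ref{ranksforzeroeigenvaluescomplex}. The only point requiring care is the description of the maximum of the Minkowski sum. Every element of either summand set is at most its stated maximum, and both maxima are attained, so the maximum of the sum is the sum of the maxima. No real obstacle remains.
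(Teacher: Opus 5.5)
Your proposal is correct and follows the same route as the paper. You invoke Proposition~\ref{sufficientneccesaryconditionsexistencecomplexcase} and use the nonnegativity of the Manhattan distances to force every $d_M\bigl(\vec{L}_{(a_\mu,b_\mu)},\vec{L}_{(-a_\mu,b_\mu)}\bigr)$ to vanish, and you additionally spell out what the paper leaves implicit: the maximum of the Minkowski sum is $N_{\mathbb C}-2\sum_\mu d_M$, and $d_M=0$ exactly when the multisets of block sizes for $(a,b)$ and $(-a,b)$ agree.
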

\begin{proof}
    From Proposition \ref{sufficientneccesaryconditionsexistencecomplexcase} we get 
    \begin{equation*}
   \sum_{i} d_M(\vec{L}_{(a_i,b_i)}, \vec{L}_{(-a_i,b_i)}) =0.        
    \end{equation*}

All the terms involved are nonnegative, so this gives the desired result. 
\end{proof}
Compare with the result for the real case in Corollary \ref{maximaldimensioncaseoddreal}.
\begin{example}\label{examplecomplex}
   Consider the case $N_{\mathbb{C}}=8$ and 
    \[
    \mathcal{J}_8= \mathcal{C}_4(0, \lambda), \;\; \lambda \neq 0.
    \]
    For this Jordan normal form we have $R_{\mathcal{J}_{N_{\mathbb{C}}}}=\lbrace 0,2,4,6,8 \rbrace$. So according to Proposition $\ref{sufficientneccesaryconditionsexistencecomplexcase}$, all these ranks are attainable. For example we have that 
\[
B_1=\begin{pmatrix}
1&0&0&1&0&0&0&0\\
0&1&-1&0&0&0&0&0\\
0&0&-1&0&0&-1&0&0\\
0&0&0&-1&1&0&0&0\\
0&0&0&0&1&0&0&1\\
0&0&0&0&0&1&-1&0\\
0&0&0&0&0&0&-1&0\\
0&0&0&0&0&0&0&-1
\end{pmatrix} \in \dot{\mathcal{H}}_{\mathcal{J}_{N_{\mathbb{C}}}}^{8}.
\]
If we \say{slide up} this matrix we obtain 
\[
B_2=\begin{pmatrix}
0&0&0&1&1&0&0&0\\
0&0&-1&0&0&1&0&0\\
0&0&0&0&0&-1&-1&0\\
0&0&0&0&1&0&0&-1\\
0&0&0&0&0&0&0&1\\
0&0&0&0&0&0&-1&0\\
0&0&0&0&0&0&0&0\\
0&0&0&0&0&0&0&0
\end{pmatrix} \in \dot{\mathcal{H}}_{\mathcal{J}_{N_{\mathbb{C}}}}^{6}.
\]
\end{example}
Now the second main result of this section: the equivalent of Proposition \ref{canonicalformsreal}. 
\begin{prop}\label{canonicalformscomplex}
Let $B=[B_{ij}] \in \dot{\mathcal{H}}_{\mathcal{J}_{N_{\mathbb{C}}}}^{N_{\mathbb{C}}}$. Then there exists $S=[S_{ij}] \in \dot{\mathcal{T}}_{\mathcal{J}_{N_{\mathbb{C}}}}$ and $P \in \mathrm{Per}(N_{\mathbb{C}},\mathbb{R})$  such that 
\[
S^{\boxstartwo}BS=
\mathcal{P}_{N_{\mathbb{C}},2}P^tJ_{N_{\mathbb{C}}}P.
\]
\end{prop}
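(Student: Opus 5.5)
The plan mirrors the even-dimensional case of Proposition \ref{canonicalformsreal}: isolate the block rows and columns of $B$ one (or two) at a time by congruences in $\dot{\mathcal{T}}_{\mathcal{J}_{N_{\mathbb{C}}}}$, until $\mathcal{P}_{N_{\mathbb{C}},2}S^{\boxstartwo}BS$ is a skew-symmetric matrix whose absolute value is a permutation matrix, and then apply Proposition \ref{skewsymmetricpermutation}. Since $B$ has rank $N_{\mathbb{C}}$, it is nonsingular, and by Corollary \ref{maximaldimensioncasecomplex} the Jordan form satisfies $N_{\mathcal{J}_{N_{\mathbb{C}}}}(m,(a,b))=N_{\mathcal{J}_{N_{\mathbb{C}}}}(m,(-a,b))$ for all $m$ and all $a,b>0$. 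Unlike the real case, $N_{\mathbb{C}}$ is automatically even and there are no rank-deficient exceptional blocks, so no analogue of Lemma \ref{special4dimensionlemma} is needed.

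The reduction step runs as follows.
\begin{enumerate}
\item If some remaining diagonal block has $b_{ll}\neq0$, apply Corollary \ref{firstcorollarycomplexcase}. This replaces the block by $tW^{\pm}_{n_l}$ and clears its block row and column.
\item Otherwise all remaining diagonal leading coefficients vanish. Choose a remaining block row of maximal size $n_l$. Each nonzero block $B_{lj}$ is $2$-upper Toeplitz, so if $n_j<n_l$ the last two scalar rows of $B_{lj}$ vanish. If moreover every square block $B_{lj}$ with $n_j=n_l$ had $b_{lj}=0$, those two rows of the whole block row would be zero, contradicting nonsingularity.
\item Hence some $B_{lm}$ with $n_m=n_l$ and $m\neq l$ has $b_{lm}\neq0$, while $b_{ll}=b_{mm}=0$. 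Corollary \ref{secondlemmacomplexcasecorollary} then gives the block $tI^{\pm}_{n_l,2}$ in position $(l,m)$ and isolates that pair of rows and columns.
\end{enumerate}
The remaining part is again $2$-$N_{\mathbb{C}}$-skew-symmetric and $2$-upper alternating Toeplitz, it still satisfies (\ref{conditioneigenvaluescomplexhankel}) and (\ref{8}), and it is nonsingular. We iterate, extending each congruence by identity blocks on the already isolated components, which are therefore left unchanged. The product $S$ of the finitely many congruences lies in $\dot{\mathcal{T}}_{\mathcal{J}_{N_{\mathbb{C}}}}$.

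After the iteration, every nonzero block of $C'=S^{\boxstartwo}BS$ is $\pm W^{\pm}_{n_i}$ on the diagonal, or $\pm I^{\pm}_{n_l,2}$ off the diagonal together with its $\boxstartwo$-partner. It remains to check that $\mathcal{P}_{N_{\mathbb{C}},2}C'$ is skew-symmetric with a signed permutation pattern. Skew-symmetry holds because $C'$ is $2$-$N$-skew-symmetric. For the pattern:
\begin{itemize}
\item $I^{\pm}_{n,2}$ is diagonal with entries $\pm1$.
\item $W^{\pm}_{n}$ is either $I^{\pm}_{n,2}$ (for $n$ even) or $I^{\pm}_{n,2}S_{2n}$ (for $n$ odd), again one $\pm1$ per row and column.
\item $\mathcal{P}_{n,2}$ is a permutation.
\end{itemize}
So $\lvert\mathcal{P}_{N_{\mathbb{C}},2}C'\rvert$ is a permutation matrix, and Proposition \ref{skewsymmetricpermutation} gives $P$ with $\mathcal{P}_{N_{\mathbb{C}},2}C'=P^tJ_{N_{\mathbb{C}}}P$. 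Multiplying by $\mathcal{P}_{N_{\mathbb{C}},2}$, which is its own inverse, yields the claim.

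The main obstacle is the hypotheses of Lemmas \ref{firstlemmacomplex} and \ref{secondlemmacomplex}, which are more restrictive than their real counterparts. They require that equal-size blocks in the same row have no lower-order nonzero coefficients, and the corollaries are stated at the leading coefficient $k=1$. I would check that the $k=1$ case makes these conditions vacuous, since there is no $q<1$. Then only the claim that the relevant rows and columns are fully cleared needs care: the corollaries assert this for all blocks, and the isolated $(l,m)$ pair is nonsingular, so the cleared blocks $B_{lj}$ with $n_j<n_l$ also vanish entirely, as in the real proof.
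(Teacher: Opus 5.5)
Your proposal is correct and is essentially the paper's proof. Like the paper, you apply Corollaries~\ref{firstcorollarycomplexcase} and~\ref{secondlemmacomplexcasecorollary} successively, choosing a remaining block row of maximal size and using nonsingularity to find a square off-diagonal block with nonzero leading coefficient, then finish with Proposition~\ref{skewsymmetricpermutation}. Your extra checks are details the paper leaves implicit: that the lemmas' hypotheses are vacuous at $k=1$, and that $\lvert\mathcal{P}_{N_{\mathbb{C}},2}C'\rvert$ is a permutation matrix because $W^{\pm}_{n}$ and $I^{\pm}_{n,2}$ are signed permutations. One correction: the vanishing of the cleared blocks with $n_j<n_l$ comes from the elimination built into the corollaries, not from nonsingularity of the isolated pair.
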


\begin{proof}
The proof is analogous to the even-dimensional real case, using
Corollaries \ref{firstcorollarycomplexcase} and
\ref{secondlemmacomplexcasecorollary}. We apply these corollaries successively. If a diagonal block has
nonzero main coefficient, we apply Corollary
\ref{firstcorollarycomplexcase}. Otherwise, all the diagonal main
coefficients are zero, and we choose a block row of maximal size.
Since $B$ is nonsingular, this block row must contain a square
off-diagonal block $B_{ij}$ with nonzero main coefficient.
Otherwise, the shape of the rectangular blocks would give a zero
pair of scalar rows. Since $b_{ii}=b_{jj}=0$, Corollary
\ref{secondlemmacomplexcasecorollary} applies.

We repeat this argument on the block rows and columns that have not
yet been isolated. By construction, each successive congruence
transformation leaves the previously isolated components unchanged.
Taking the product of these transformations, we obtain
$S\in\dot{\mathcal T}_{\mathcal J_{N_{\mathbb C}}}$ such that, for
$B':=S^{\boxstartwo}BS$, every scalar row and column of
$\mathcal P_{N_{\mathbb C},2}B'$ contains exactly one nonzero entry.
Therefore,
\[
\left|\mathcal P_{N_{\mathbb C},2}B'\right|
\]
is a permutation matrix.

Also $\mathcal P_{N_{\mathbb C},2}B'$ is skew-symmetric.
Proposition \ref{skewsymmetricpermutation} therefore gives a
permutation matrix
$P\in\operatorname{Per}(N_{\mathbb C},\mathbb R)$ such that
\[
\mathcal P_{N_{\mathbb C},2}B'
=
P^tJ_{N_{\mathbb C}}P.
\]
Therefore,
\[
S^{\boxstartwo}BS
=
\mathcal P_{N_{\mathbb C},2}
P^tJ_{N_{\mathbb C}}P.
\]
\end{proof}

\section{Main Theorems}

We combine the results of previous sections and restate them in the context of Lie algebras. Recall that we are considering an almost abelian Lie algebra $\mathfrak{g}=\mathbb{R}e_1 \ltimes L$ of dimension $D$, defined by a matrix $\mathcal{J}_{N}$  in real Jordan normal form, where $N=D-1$. Moreover, we suppose that it is given as in Equation $(\ref{jornanrealpluscomplex})$. We also suppose that 
$\mathcal{J}_{N_{\mathbb{R}}}$ is as in Equation (\ref{jordannormalforminspecialorderreal})  and $\mathcal{J}_{N_{\mathbb{C}}}$ is as in Equation (\ref{jordannormalforminspecialordercomplex}).
\begin{theorem}\label{Main1}
    Let $\mathfrak{g}$  be an almost abelian Lie algebra as described at the beginning of this section. Such a Lie algebra admits a  presymplectic form of rank $R$ if and only if  
    \begin{equation}\label{maintheoremequation}
           R \in (R_{\mathcal{J}_{N_{\mathbb{R}}}} + R_{\mathcal{J}_{N_{\mathbb{C}}}}) \cup (R_{\mathcal{J}_{N_{\mathbb{R}}}} + R_{\mathcal{J}_{N_{\mathbb{C}}}} +2),
       \end{equation}    
\end{theorem}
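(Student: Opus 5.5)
We reduce the statement to the matrix results of the previous two sections by way of Proposition \ref{conditionsforexistenceclosedform}. That proposition says $\mathfrak{g}$ admits a presymplectic form of rank $R$ exactly when $\mathcal{H}^{D-2}_{\mathcal{J}_N}\neq\emptyset$ (if $R=D$) or $\mathcal{H}^{R}_{\mathcal{J}_N}\cup\mathcal{H}^{R-2}_{\mathcal{J}_N}\neq\emptyset$ (if $R<D$). So the task is to describe the set of $K$ with $\mathcal{H}^K_{\mathcal{J}_N}\neq\emptyset$.

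\textbf{Step 1.} We split off the real and complex parts. By the block decomposition $\mathcal{H}^{K}_{\mathcal{J}_N}=\bigcup_{Y+W=K}\mathcal{H}^{Y}_{\mathcal{J}_{N_{\mathbb R}}}\oplus\mathcal{H}^{W}_{\mathcal{J}_{N_{\mathbb C}}}$, we have $\mathcal{H}^K_{\mathcal{J}_N}\neq\emptyset$ if and only if $K=Y+W$ with both summands nonempty. Lemmas \ref{equivalentmatrixproblemreal} and \ref{equivalentmatrixproblemcomplex} show that left multiplication by $\mathcal{P}_{N_{\mathbb R}}$ and $\mathcal{P}_{N_{\mathbb C},2}$ gives rank-preserving bijections between $\mathcal{H}$ and $\dot{\mathcal{H}}$. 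Propositions \ref{sufficientneccesaryconditionsexistence} and \ref{sufficientneccesaryconditionsexistencecomplexcase} then give
\[
\mathcal{H}^K_{\mathcal{J}_N}\neq\emptyset \iff K\in R_{\mathcal{J}_{N_{\mathbb R}}}+R_{\mathcal{J}_{N_{\mathbb C}}}=:\mathcal{K}.
\]

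\textbf{Step 2.} We substitute this into Proposition \ref{conditionsforexistenceclosedform}. For $R<D$ the condition becomes $R\in\mathcal{K}$ or $R-2\in\mathcal{K}$, which is exactly $R\in\mathcal{K}\cup(\mathcal{K}+2)$, as in \eqref{maintheoremequation}.

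\textbf{Step 3: the case $R=D$.} This is the only delicate point. Here the condition is $D-2=N-1\in\mathcal{K}$, and we must check that it agrees with \eqref{maintheoremequation}. In one direction, $N-1\in\mathcal{K}$ gives $D\in\mathcal{K}+2$. Conversely, suppose $D\in\mathcal{K}\cup(\mathcal{K}+2)$. Every element of $\mathcal{K}$ is the rank of some $B\in\mathcal{H}_{\mathcal{J}_N}$, so $\max\mathcal{K}\le N<D$ and $D\notin\mathcal{K}$. Hence $D\in\mathcal{K}+2$, that is, $D-2\in\mathcal{K}$. The same bound also shows that no spurious values of $R$ above $D$ arise.

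Since the earlier propositions are assumed, the main obstacle is not analytic. It is bookkeeping: checking that the Minkowski sum of the real and complex rank sets is the correct combination, which relies on every element of $\mathcal{H}_{\mathcal{J}_N}$ being block diagonal with respect to the real/complex split (Theorem 4.4.6 of \cite{topicsmatrixanalysis}, as used earlier). It also requires handling the boundary case $R=D$ carefully, where only the $R-2$ branch is available.
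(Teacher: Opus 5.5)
Your proposal is correct and follows essentially the same route as the paper: you reduce via Proposition \ref{conditionsforexistenceclosedform}, split into real and complex parts, pass to the dotted sets via Lemmas \ref{equivalentmatrixproblemreal} and \ref{equivalentmatrixproblemcomplex}, apply Propositions \ref{sufficientneccesaryconditionsexistence} and \ref{sufficientneccesaryconditionsexistencecomplexcase}, and dispose of $R=D$ by the bound $\max\mathcal{K}\le N<D$. One small correction: your closing claim that this bound rules out spurious $R>D$ is false for odd $D$ (e.g.\ $\mathcal{J}_2=\mathcal{J}_1(1)\oplus\mathcal{J}_1(-1)$ with $D=3$ gives $4\in\mathcal{K}+2$), so the statement, like the paper's proof, must be read with $R\le D$ implicitly.
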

$R_{\mathcal{J}_{N_{\mathbb{R}}}}$ and $R_{\mathcal{J}_{N_{\mathbb{C}}}}$ are as in Equation $(\ref{possibleranksrealsets})$ and Equation $(\ref{possiblerankscomplexsets})$, respectively. 
\begin{proof}
From Proposition \ref{conditionsforexistenceclosedform}, we know that for $R=D$, that is the symplectic case, the existence of a symplectic form is equivalent to the condition that $\mathcal{H}^{N-1}_{\mathcal{J}_{N}}\neq \emptyset$. For a presymplectic form with rank $R<D$ we know that the existence of such form is equivalent to the condition that $\mathcal{H}_{\mathcal{J}_N}^{R} \cup \mathcal{ H}_{\mathcal{J}_N}^{R-2} \neq \emptyset$. 
 
 Suppose first that such sets are non empty. From the discussion at the beginning of Section \ref{matrixproblemsection} there exist $Y, W$ satisfying $0 \leq Y\leq N_{\mathbb{R}}, 0 \leq W \leq N_{\mathbb{C}}$ such that  
\[
\mathcal{ H}_{\mathcal{J}_{N_{\mathbb{R}}}}^{Y} \oplus \mathcal{ H}_{\mathcal{J}_{N_{\mathbb{C}}}}^{W} \neq \emptyset,
\] 
with 
\begin{equation}\label{equationinsidemaintheorem}
Y+W=\begin{cases}
         R-2 & R=D,  \\
        R \; \text{or} \; R-2  & R<D.
     \end{cases}
\end{equation}

From Lemma $\ref{equivalentmatrixproblemreal}$ and Lemma $\ref{equivalentmatrixproblemcomplex}$,  we get that this condition is equivalent to
 \[
\dot{\mathcal{ H}}_{\mathcal{J}_{N_{\mathbb{R}}}}^{Y} \oplus \dot{\mathcal{H}}_{\mathcal{J}_{N_{\mathbb{C}}}}^{W} \neq \emptyset,
\] 
 with the same conditions for $Y$ and $W$.  From Propositions \ref{sufficientneccesaryconditionsexistence} and \ref{sufficientneccesaryconditionsexistencecomplexcase}, we know that
\begin{align*}
Y \in  R_{\mathcal{J}_{N_{\mathbb{R}}}},  \\
W \in  R_{\mathcal{J}_{N_{\mathbb{C}}}}.
\end{align*}
From this we get that 
\[
    Y+W \in R_{\mathcal{J}_{N_{\mathbb{R}}}} + R_{\mathcal{J}_{N_{\mathbb{C}}}}
    \]

To write this equation in terms of $R$ we use \eqref{equationinsidemaintheorem}, there are two cases, but if $R=D$, we get immediately, from the dimension of the sets, that $R\notin R_{\mathcal{J}_{N_{\mathbb{R}}}} + R_{\mathcal{J}_{N_{\mathbb{C}}}} $. Therefore, we can combine both cases and just state that 

\[
R \in (R_{\mathcal{J}_{N_{\mathbb{R}}}} + R_{\mathcal{J}_{N_{\mathbb{C}}}}) \cup (R_{\mathcal{J}_{N_{\mathbb{R}}}} + R_{\mathcal{J}_{N_{\mathbb{C}}}} +2)
\]

Now suppose that Equation $\ref{maintheoremequation}$ is true. If $R=D$ then there exist $Y \in R_{\mathcal{J}_{N_{\mathbb{R}}}} $ and $W \in R_{\mathcal{J}_{N_{\mathbb{C}}}} $ such that  $Y+W=R-2$. If $R<D$  then there exist $Y \in R_{\mathcal{J}_{N_{\mathbb{R}}}} $ and $W \in R_{\mathcal{J}_{N_{\mathbb{C}}}} $  such that either $Y+W=R-2$ or $Y+W=R$. Then by going back in the previous argument (as all the steps depend on if and only if statements) we get that $\mathcal{H}^{R-2}_{\mathcal{J}_{N}}\neq \emptyset$ in the former case and either $\mathcal{H}^{R-2}_{\mathcal{J}_{N}}\neq \emptyset$ or $\mathcal{H}^{R}_{\mathcal{J}_{N}}\neq \emptyset$ in the latter. In either case, Proposition
\ref{conditionsforexistenceclosedform} implies that $\mathfrak g$ admits a
presymplectic form of rank $R$.
\end{proof}

Depending on the dimension $D$ of the almost Abelian Lie algebra and the required rank $R$ of the 2-form, there are many possible different real Jordan normal forms that satisfy the conditions of previous theorem. We list the possible ones in the symplectic case.  This result can also be found in \cite{symplecticlassification}.

\begin{coro}
Let $\mathfrak{g}=\mathbb{R}e_1 \ltimes L$ of dimension $D$, $D$ even . There exists a symplectic form $\omega$ if and only if the following conditions hold
     \begin{enumerate}
     \item $N_{\mathcal{J}_{N_{\mathbb{C}}}}(m, (a,b))=N_{\mathcal{J}_{N_{\mathbb{C}}}}(m, (-a,b))$ for all  $(a,b),\; a,b>0,\; m\in \mathbb{N}^+$

     \item One of the following:
     \begin{enumerate}

         \item $N_{\mathcal{J}_{N_{\mathbb{R}}}}(m, \lambda)=N_{\mathcal{J}_{N_{\mathbb{R}}}}(m, -\lambda)$ for all $\lambda \in \mathbb{R}\setminus\lbrace 0 \rbrace$, $m\in \mathbb{N}^+$  and there exists only one odd $l\in \mathbb{N}^+$ such that $N_{\mathcal{J}_{N_{\mathbb{R}}}}(l,0)$ is odd. 
            \item $N_{\mathcal{J}_{N_{\mathbb{R}}}}(m,0)$ is even for all odd $m\in 
            \mathbb{N}^+$. One of the following:
            \begin{enumerate}
                
                \item There exists a unique $\alpha \in \mathbb R\setminus\{0\}$ such that 
                \[N_{\mathcal J_{N_{\mathbb R}}}(1,\alpha)- N_{\mathcal J_{N_{\mathbb R}}}(1,-\alpha)=1.\]
For every pair $(m,\lambda)\notin\{(1,\alpha),(1,-\alpha)\}$ with $\lambda\in\mathbb R\setminus\{0\}$, we have 
\[
N_{\mathcal J_{N_{\mathbb R}}}(m,\lambda)
=
N_{\mathcal J_{N_{\mathbb R}}}(m,-\lambda).
\]

                 \item  There exist a unique $l\in \mathbb{N}^+$ and a unique $\alpha \in \mathbb{R}\setminus\lbrace 0 \rbrace$  such that $N_{\mathcal{J}_{N_{\mathbb{R}}}}(l,\alpha)-N_{\mathcal{J}_{N_{\mathbb{R}}}}(l,-\alpha)=1$ and $N_{\mathcal{J}_{N_{\mathbb{R}}}}(l+1,-\alpha)-N_{\mathcal{J}_{N_{\mathbb{R}}}}(l+1,\alpha)=1$.  For every pair $(m,\lambda)$ different from $(l,\alpha)$, 
$(l+1,\alpha)$, $(l,-\alpha)$ and 
$(l+1,-\alpha)$ with $\lambda\in\mathbb R\setminus\{0\}$, we have $N_{\mathcal J_{N_{\mathbb R}}}(m,\lambda)
=
N_{\mathcal J_{N_{\mathbb R}}}(m,-\lambda).$   
                \end{enumerate} 
               
\end{enumerate}
\end{enumerate}
    \end{coro}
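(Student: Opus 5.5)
The plan is to specialize Theorem \ref{Main1} to $R=D$ and then unwind the maximal-rank Corollaries \ref{maximaldimensioncaseevenreal}, \ref{maximaldimensioncaseoddreal} and \ref{maximaldimensioncasecomplex}. By Proposition \ref{conditionsforexistenceclosedform}(a), a symplectic form exists if and only if $\mathcal H^{N-1}_{\mathcal J_N}\neq\emptyset$. Since $D$ is even, $N=D-1$ is odd. The decomposition at the start of Section \ref{matrixproblemsection} then says that this holds if and only if there are $Y,W$ with $Y+W=N-1$ and $\mathcal H^{Y}_{\mathcal J_{N_{\mathbb R}}}\oplus\mathcal H^{W}_{\mathcal J_{N_{\mathbb C}}}\neq\emptyset$. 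By Lemmas \ref{equivalentmatrixproblemreal} and \ref{equivalentmatrixproblemcomplex}, the same holds for the dotted sets.

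The key step is to pin down $Y$ and $W$. First, $N_{\mathbb C}$ is even, being a sum of $2n_i$, so $N_{\mathbb R}=N-N_{\mathbb C}$ is odd. Second, every element of $\dot{\mathcal H}$ is (a congruence of) a skew-symmetric matrix, so $Y\le N_{\mathbb R}-1$ and $W\le N_{\mathbb C}$. Since $Y+W=N-1=(N_{\mathbb R}-1)+N_{\mathbb C}$, both bounds must be attained: $Y=N_{\mathbb R}-1$ and $W=N_{\mathbb C}$. Conversely, if both maximal sets are nonempty, a block-diagonal sum of one element from each gives an element of $\mathcal H^{N-1}_{\mathcal J_N}$. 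So symplectic forms exist if and only if
\[
\dot{\mathcal H}^{N_{\mathbb R}-1}_{\mathcal J_{N_{\mathbb R}}}\neq\emptyset
\quad\text{and}\quad
\dot{\mathcal H}^{N_{\mathbb C}}_{\mathcal J_{N_{\mathbb C}}}\neq\emptyset .
\]

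Now I apply the corollaries. Corollary \ref{maximaldimensioncasecomplex} translates the complex condition into condition (1). Corollary \ref{maximaldimensioncaseoddreal}, applicable because $N_{\mathbb R}$ is odd, translates the real condition into the alternatives (2a), (2b-i) and (2b-ii), which are copied verbatim from that corollary.

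The main point needing care is the degenerate cases. If $N_{\mathbb R}$ were $0$, the real condition would be vacuous, but this cannot happen because $N_{\mathbb R}$ is odd, hence $\ge 1$. If $N_{\mathbb C}=0$, condition (1) is vacuous and Corollary \ref{maximaldimensioncasecomplex} trivially holds. I also need to check that the convention on negative or empty rank sets does not interfere. Once these are checked, the equivalence is immediate.
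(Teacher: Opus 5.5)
Your proposal is correct and follows essentially the paper's route. The paper invokes Theorem \ref{Main1} to reduce to $D-2\in R_{\mathcal J_{N_{\mathbb R}}}+R_{\mathcal J_{N_{\mathbb C}}}$, asserts without justification that this forces $\dot{\mathcal H}^{N_{\mathbb R}-1}_{\mathcal J_{N_{\mathbb R}}}\neq\emptyset$ and $\dot{\mathcal H}^{N_{\mathbb C}}_{\mathcal J_{N_{\mathbb C}}}\neq\emptyset$, and then combines Corollaries \ref{maximaldimensioncaseoddreal} and \ref{maximaldimensioncasecomplex}. You go directly through Proposition \ref{conditionsforexistenceclosedform}(a) and the real/complex splitting, and you supply the missing parity step: $N_{\mathbb C}$ is even, so $N_{\mathbb R}$ is odd, giving $Y\le N_{\mathbb R}-1$ and $W\le N_{\mathbb C}$, so both bounds must be attained.

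One small wording fix: elements of $\dot{\mathcal H}$ are not congruent to a skew-symmetric matrix. They are $\mathcal P_{N_{\mathbb R}}$ (resp.\ $\mathcal P_{N_{\mathbb C},2}$) times one. This left multiplication still preserves rank, so your bound is unaffected.
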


\begin{proof}
  By Theorem \ref{Main1} such symplectic $\omega$ exists if and only if 
    \[
  D \in R_{\mathcal{J}_{N_{\mathbb{R}}}} + R_{\mathcal{J}_{N_{\mathbb{C}}}} +2
    \]
  
 This is only possible if $\dot{\mathcal{H}}_{\mathcal{J}_{N_{\mathbb{R}}}}^{N_{\mathbb{R}}-1}\neq \emptyset$ and  $\dot{\mathcal{H}}_{\mathcal{J}_{N_{\mathbb{C}}}}^{N_{\mathbb{C}}}\neq \emptyset$. Then we just need to combine the  Corollaries \ref{maximaldimensioncaseoddreal} and \ref{maximaldimensioncasecomplex}.
\end{proof}
Next we have the second main result of this paper, a finite description of the moduli space of Symplectic forms. 
\begin{theorem}\label{main2}
    Let $\mathfrak{g}$ be an almost abelian Lie algebra as described at the beginning of this section. $\mathfrak{P}\Omega^2_{D,\,closed}(\mathfrak{g})$ is finite. Moreover, if $[\omega] \in \mathfrak{P}\Omega^2_{D,\,closed}(\mathfrak{g})$, then there exists a permutation $P\in \mathrm{Per}(D,\mathbb{R})_{e_1}$ such that $[\omega]= [P.\omega_0]$.   
       \end{theorem}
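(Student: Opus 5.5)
The plan is to reduce, via the surjection
\[
\mathcal{H}_{\mathcal{J}_N}^{D-2}/\text{``congruence over } \mathcal{T}_{\mathcal{J}_N}\text{''} \twoheadrightarrow \mathfrak{P}\Omega^2_{D,\,closed}(\mathfrak{g})
\]
from Section 3, to showing that every $B\in\mathcal{H}_{\mathcal{J}_N}^{N-1}$ is $\mathcal{T}_{\mathcal{J}_N}$-congruent to $P^t(J_{(D,D)})_{1,1}P$ for some $P\in\mathrm{Per}(N,\mathbb{R})$. Corollary \ref{corollaryconditionsforpermutation} then gives $[\omega]=[\bar P^{-1}.\omega_0]$ with $\bar P^{-1}\in\mathrm{Per}(D,\mathbb{R})_{e_1}$. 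Finiteness follows at once, since $\mathrm{Per}(D,\mathbb{R})_{e_1}$ is finite.

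\textbf{Step 1: split into real and complex parts.} Let $B\in\mathcal{H}^{N-1}_{\mathcal{J}_N}$. By the block decomposition at the start of Section \ref{matrixproblemsection}, $B=B_{\mathbb R}\oplus B_{\mathbb C}$ with $B_{\mathbb R}\in\mathcal{H}^{Y}_{\mathcal{J}_{N_{\mathbb R}}}$, $B_{\mathbb C}\in\mathcal{H}^{W}_{\mathcal{J}_{N_{\mathbb C}}}$ and $Y+W=N-1$. Since $N=D-1$ is odd and $N_{\mathbb C}$ is even, $N_{\mathbb R}$ is odd. As $Y\le N_{\mathbb R}-1$ (skew-symmetric of odd size) and $W\le N_{\mathbb C}$, we are forced to have $Y=N_{\mathbb R}-1$ and $W=N_{\mathbb C}$. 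Moreover, $\mathcal{T}_{\mathcal{J}_N}=\mathcal{T}_{\mathcal{J}_{N_{\mathbb R}}}\oplus\mathcal{T}_{\mathcal{J}_{N_{\mathbb C}}}$, so the two parts can be normalized independently.

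\textbf{Step 2: normalize each part.} Using Lemma \ref{equivalentmatrixproblemreal}, pass to $\mathcal{P}_{N_{\mathbb R}}B_{\mathbb R}\in\dot{\mathcal H}^{N_{\mathbb R}-1}_{\mathcal{J}_{N_{\mathbb R}}}$. Proposition \ref{canonicalformsreal} (odd case) gives $S_{\mathbb R}\in\dot{\mathcal T}$ and a permutation $P_{\mathbb R}$ with $S_{\mathbb R}^{\boxstar}\mathcal{P}_{N_{\mathbb R}}B_{\mathbb R}S_{\mathbb R}=\mathcal{P}_{N_{\mathbb R}}P_{\mathbb R}^tJ_{(N_{\mathbb R},N_{\mathbb R}-1)}P_{\mathbb R}$. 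Translating back through the computation in the proof of that lemma yields $S_{\mathbb R}^tB_{\mathbb R}S_{\mathbb R}=P_{\mathbb R}^tJ_{(N_{\mathbb R},N_{\mathbb R}-1)}P_{\mathbb R}$. Here one should check that $\dot{\mathcal T}=\mathcal T$, which holds because the sets are defined identically. The complex part is handled the same way with Lemma \ref{equivalentmatrixproblemcomplex} and Proposition \ref{canonicalformscomplex}, giving $S_{\mathbb C}^tB_{\mathbb C}S_{\mathbb C}=P_{\mathbb C}^tJ_{N_{\mathbb C}}P_{\mathbb C}$.

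\textbf{Step 3: combine and assemble.} Setting $A=S_{\mathbb R}\oplus S_{\mathbb C}\in\mathcal{T}_{\mathcal{J}_N}$, the matrix $A^tBA$ is a skew-symmetric matrix whose absolute value is a partial permutation matrix of rank $N-1$. Proposition \ref{skewsymmetricpermutation} then gives a permutation $P$ with $A^tBA=P^tJ_{(N,N-1)}P$. Since $(J_{(D,D)})_{1,1}$ is skew-symmetric of rank $N-1$ and its absolute value is also a partial permutation matrix, Proposition \ref{skewsymmetricpermutation} applied to it shows it is permutation-congruent to $J_{(N,N-1)}$. Composing the two permutations yields $A^tBA=P'^t(J_{(D,D)})_{1,1}P'$, and Corollary \ref{corollaryconditionsforpermutation} finishes the argument.

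The main obstacle is Step 1's rank bookkeeping: showing that the only possible rank split is $Y=N_{\mathbb R}-1$ and $W=N_{\mathbb C}$, so that the maximal-rank canonical forms apply to both parts. A secondary point is translating correctly between $\boxstar$-congruence and ordinary congruence, which requires keeping track of the factor $\mathcal{P}$ on the canonical form.
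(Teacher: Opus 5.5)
Your proposal is correct and follows the same route as the paper's proof. You reduce via Proposition \ref{bijjectiontomatricesforsymplecticcase} to $\mathcal{T}_{\mathcal{J}_N}$-congruence on $\mathcal{H}^{N-1}_{\mathcal{J}_N}$, split into real and complex parts, pass to the N-block-star problems via Lemmas \ref{equivalentmatrixproblemreal} and \ref{equivalentmatrixproblemcomplex}, apply Propositions \ref{canonicalformsreal} and \ref{canonicalformscomplex}, and conclude with Corollary \ref{corollaryconditionsforpermutation}. Your forced rank split $Y=N_{\mathbb R}-1$, $W=N_{\mathbb C}$, the translation back from block-star congruence to ordinary congruence, and the identification of $(J_{(D,D)})_{1,1}$ with $J_{(N,N-1)}$ up to a permutation via Proposition \ref{skewsymmetricpermutation} are exactly the details the paper leaves implicit when it says ``by joining these results.''
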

    \begin{proof}
    Proposition \ref{bijjectiontomatricesforsymplecticcase} shows that it is enough to study the quotient space 
    \[
    \mathcal{H}^{N-1}_{\mathcal{J}_N}/_{\text{cong}}\mathcal{T}_{\mathcal{J}_{N}}
    \]
     As discussed in Section \ref{matrixproblemsection} , such a matrix problem can be solved for the Real and Complex parts of the real Jordan normal form independently. Then Lemma \ref{equivalentmatrixproblemreal} and Lemma \ref{equivalentmatrixproblemcomplex} transform those quotient spaces into equivalent ones. Finally, Proposition \ref{canonicalformsreal} and \ref{canonicalformscomplex} give us the solution for the real and complex part, respectively. By joining these results and Corollary \ref{corollaryconditionsforpermutation} we complete the proof. 
    \end{proof}

    \begin{example}
      Let $\mathfrak{g}$ be an almost abelian algebra of dimension $14$, defined by the matrix given in real Jordan normal form $\mathcal{J}_{13}= \mathcal{J}_3(\lambda)\oplus \mathcal{J}_2(-\lambda) \oplus \mathcal{C}_4(0, \beta) , \;\; \lambda,\beta \neq 0$. Consider the matrix  

\begin{equation}
    M_{ij}:=\left( \begin{array}{c|c} 
0 & v^t \\ \hline
-v & \mathcal{P}_{5}A_i\oplus \mathcal{P}_{8,2} B_j
\end{array} \right),
\end{equation}
where $A_i,B_j$, $i,j \in \lbrace 1,2 \rbrace$ are as in Example \ref{examplereal} and Example \ref{examplecomplex}. With the identification in (\ref{identificationwithskewsymmetric}) we will not distinguish in the following table between the matrix and the corresponding $2$-form.  We have different results depending on the values of $i,j$ and depending on $v \in  \mathrm{Im}(\mathcal{P}_{5}A_i\oplus \mathcal{P}_{8,2} B_j)$  or  $v \notin  \mathrm{Im}(\mathcal{P}_{5}A_i\oplus \mathcal{P}_{8,2} B_j)$

\[
\begin{array}{c|c}
v\in\operatorname{Im}
\left(\mathcal P_5A_i\oplus\mathcal P_{8,2}B_j\right)
&
v\notin\operatorname{Im}
\left(\mathcal P_5A_i\oplus\mathcal P_{8,2}B_j\right)
\\ \hline
M_{11}\in\Omega^2_{12,\,\mathrm{closed}}(\mathfrak g)
&
M_{11}\in\Omega^2_{14,\,\mathrm{closed}}(\mathfrak g)
\\
M_{21}\in\Omega^2_{10,\,\mathrm{closed}}(\mathfrak g)
&
M_{21}\in\Omega^2_{12,\,\mathrm{closed}}(\mathfrak g)
\\
M_{12}\in\Omega^2_{10,\,\mathrm{closed}}(\mathfrak g)
&
M_{12}\in\Omega^2_{12,\,\mathrm{closed}}(\mathfrak g)
\\
M_{22}\in\Omega^2_{8,\,\mathrm{closed}}(\mathfrak g)
&
M_{22}\in\Omega^2_{10,\,\mathrm{closed}}(\mathfrak g)
\end{array}
\]
      
    \end{example}

\section*{Acknowledgment}   
The author would like to thank Hiroshima University, in particular Dr. Takayuki Okuda and Dr. Shoichi Fujimori, where part of this research was conducted. The author is also grateful for the support of the Osaka Central Advanced Mathematical Institute at Osaka Metropolitan University, where part of this research was also carried out. The author thanks Dr. Hiroshi Tamaru and the members of his seminar for their helpful comments and discussions, as well as for their encouragement to complete this work, without which this manuscript would certainly have taken longer to finish.

\bibliographystyle{amsplain}
\bibliography{references}
\end{document}